\newif\ifdraftmode
\newcommand\thisShorttitle{Weighted sparsity and sparse tensor networks for least squares approximation}
\newcommand\thisTitle{\thisShorttitle}
\newcommand\thisSubject{math.NA, cs.LG}
\newcommand\thisAuthor{Philipp Trunschke, Martin Eigel, Anthony Nouy}
\newcommand\thisKeywords{empirical least-squares, sampling complexity, sparse tensor networks, alternating least squares}
\addspace\UrlFont{\mkbibbrackets{\thefield{eprintclass}}}}}}
\addspace\UrlFont{\mkbibbrackets{\thefield{eprintclass}}}}}}
\def\mscclassname{{\bfseries \emph{AMS subject classifications.}}}%
\def\mscclasses#1{\par\addvspace\medskipamount{\rightskip=0pt plus1cm
\def\and{\ifhmode\unskip\nobreak\fi\ $\cdot$
}\noindent\mscclassname\enspace\ignorespaces#1\par}}
\def\codename{{\bfseries \emph{Code.}}}%
\def\code#1{\par\addvspace\medskipamount{\rightskip=0pt plus1cm\noindent\codename\enspace\ignorespaces\url{#1}\par}}
    \definecolor{amaranth}{rgb}{0.9, 0.17, 0.31}%
    \definecolor{americanrose}{rgb}{1.0, 0.01, 0.24}
    \colorlet{alertcolor}{amaranth}
    \colorlet{notecolor}{MidnightBlue}
    \newcommand{\todo}[1]{\marginpar{\tiny\color{alertcolor}#1}\@latex@warning{#1}\xspace}
    \newcommand{\note}[1]{\marginpar{\tiny\color{notecolor}#1}}
    \definecolor{bleudefrance}{rgb}{0.19, 0.55, 0.91}
    \newcommand{\numRevisions}{2}
    \newcommand{\revision}[2][0]{%
    \begingroup%
        \newcount\colorRatio%
        \colorRatio=\numexpr(100*(#1+1))/\numRevisions\relax%
        \colorlet{revisionColor}{bleudefrance!\the\colorRatio!black}\color{revisionColor}#2%
    \endgroup}
    \newcommand{\todo}[1]{}
    \newcommand{\note}[1]{}
\tikzset{core/.style={inner sep=0pt}}
\tikzset{contraction/.style={line width=0.75}}
\tikzset{contractionDots/.style={contraction, dotted}}
\colorlet{dimgray}{black!35!white}
\colorlet{lightgray}{dimgray!35!white}
\declaretheoremstyle[bodyfont=\itshape, mdframed={backgroundcolor=lightgray, linecolor=dimgray, linewidth=0.75pt, innertopmargin=1.5ex}]{claim}
\declaretheorem[style=claim]{theorem}
\declaretheorem[style=claim, numberlike=theorem]{lemma}
\declaretheorem[style=claim, numberlike=theorem]{proposition}
\declaretheorem[style=claim, numberlike=theorem]{corollary}
\declaretheoremstyle[mdframed={backgroundcolor=lightgray, linecolor=dimgray, linewidth=0.75pt, innertopmargin=1.5ex}]{definition}
\declaretheorem[style=definition, numberlike=theorem]{definition}
\declaretheoremstyle[bodyfont=\itshape, mdframed={backgroundcolor=white, linecolor=dimgray, linewidth=0.75pt, innertopmargin=1.5ex}]{remark}
\declaretheorem[style=remark, numberlike=theorem]{remark}
\declaretheoremstyle[mdframed={backgroundcolor=white, linecolor=dimgray, linewidth=0.75pt, innertopmargin=1.5ex}]{example}
\declaretheorem[style=example, numberlike=theorem]{example}
\newcommand{\indep}{\perp\kern-0.6em\perp}
\newcommand{\bfu}{\boldsymbol{u}}
\newcommand{\bfv}{\boldsymbol{v}}
\newcommand{\bfe}{\boldsymbol{e}}
\newcommand{\bff}{\boldsymbol{f}}
\newcommand{\bfx}{\boldsymbol{x}}
\newcommand{\bfy}{\boldsymbol{y}}
\newcommand{\bfomega}{{\boldsymbol{\omega}}}
\newcommand{\bfalpha}{{\boldsymbol{\alpha}}}
\newcommand{\bfbeta}{{\boldsymbol{\beta}}}
\newcommand{\bfrho}{{\boldsymbol{\rho}}}
\newcommand{\bfsigma}{{\boldsymbol{\sigma}}}
\newcommand*{\mbb}[1]{\mathbb{#1}}
\newcommand*{\mcal}[1]{\mathcal{#1}}
\newcommand*{\dd}{\ensuremath{\mathrm{d}}}
\newcommand*{\dx}[1][x]{\ensuremath{\,\dd{#1}}}
\DeclareMathOperator{\supp}{supp}
\let\inf\relax  
\DeclareMathOperator*{\inf}{inf\vphantom{\sup}}
\DeclareMathOperator*{\argmin}{arg\,min}
\DeclarePairedDelimiter{\pars}{\ensuremath{(}}{\ensuremath{)}}
\DeclarePairedDelimiter{\bracs}{\ensuremath{[}}{\ensuremath{]}}
\DeclarePairedDelimiter{\braces}{\ensuremath{\{}}{\ensuremath{\}}}
\DeclarePairedDelimiter{\ceil}{\lceil}{\rceil}
\DeclarePairedDelimiter{\floor}{\lfloor}{\rfloor}
\DeclarePairedDelimiter{\norm}{\|}{\|}
\DeclarePairedDelimiter{\abs}{\lvert}{\rvert}
\newcommand{\opnorm}{\@ifstar\@opnorms\@opnorm}
\newcommand{\@opnorms}[1]{%
  \left|\mkern-1.5mu\left|\mkern-1.5mu\left|
   #1
  \right|\mkern-1.5mu\right|\mkern-1.5mu\right|
}
\newcommand{\@opnorm}[2][]{%
  \mathopen{#1|\mkern-1.5mu#1|\mkern-1.5mu#1|}
  #2
  \mathclose{#1|\mkern-1.5mu#1|\mkern-1.5mu#1|}
}
\mathchardef\texthyphen="2D
\let\oldbullet\bullet
\newlength{\raisebulletlen}
\renewcommand\bullet{\raisebox{\raisebulletlen}{\,\tiny$\oldbullet$}\,}
\newcommand*{\rom}[1]{\expandafter\@slowromancap\romannumeral #1@}
\providecommand\given{} 
\newcommand\givensymbol[1]{%
  \nonscript\;\delimsize#1\allowbreak\nonscript\;\mathopen{}%
}
\DeclarePairedDelimiterX\Set[1]\{\}{%
  \renewcommand\given{\givensymbol{\vert}}%
  #1%
}
\def\multiset#1#2{\ensuremath{\left(\kern-.3em\left(\genfrac{}{}{0pt}{}{#1}{#2}\right)\kern-.3em\right)}}
\newcommand*{\xhat}[2][0.3em]{#2\kern-#1\hat{\vphantom{#2}}\kern#1}
\title{\thisTitle} 
\date{}
\author{
\href{https://orcid.org/0000-0002-2995-126X}{\includegraphics[height=0.7em]{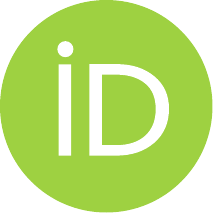}\hspace{1mm}\textcolor{black}{Philipp Trunschke\thanks{Corresponding Author. 
}}} \\
    Centrale Nantes \\
    Nantes Universit\'e \\
    Laboratoire de Math\'ematiques Jean Leray \\
    CNRS UMR 6629 \\
    France \\
    \href{mailto:philipp.trunschke@univ-nantes.fr}{\texttt{philipp.trunschke@univ-nantes.fr}} \\
\And
\href{https://orcid.org/0000-0003-2687-4497}{\includegraphics[height=0.7em]{orcid.pdf}\hspace{1mm}\textcolor{black}{Martin Eigel
}} \\
    Weierstrass Institute \\
    Mohrenstrasse 39 \\
    10117 Berlin, Germany \\
    \href{mailto:martin.eigel@wias-berlin.de}{\texttt{martin.eigel@wias-berlin.de}} \\
\And
\href{https://orcid.org/0000-0002-2149-2986}{\includegraphics[height=0.7em]{orcid.pdf}\hspace{1mm}\textcolor{black}{Anthony Nouy
}} \\
    Centrale Nantes \\
    Nantes Universit\'e \\
    Laboratoire de Math\'ematiques Jean Leray \\
    CNRS UMR 6629 \\
    France \\
    \href{mailto:anthony.nouy@ec-nantes.fr}{\texttt{anthony.nouy@ec-nantes.fr}}
}
\begin{document}
\maketitle

\begin{abstract}
The approximation of high-dimensional functions is a ubiquitous problem in many scientific fields that is only feasible practically if advantageous structural properties can be exploited.
One prominent structure is sparsity relatively to some basis.
For the analysis of these best $n$-term approximations a relevant tool is the Stechkin's lemma.
In its standard form, however, this lemma does not allow to explain convergence rates for a wide range of relevant function classes.
This work presents a new weighted version of Stechkin's lemma that improves the best $n$-term rates for weighted $\ell^p$-spaces and associated function classes such as Sobolev or Besov spaces.
For the class of holomorphic functions, which for example occur as solutions of common high-dimensional parameter dependent PDEs, we recover exponential rates that are not directly obtainable with Stechkin's lemma.

This sparsity can be used to devise weighted sparse least squares approximation algorithms as known from compressed sensing.
However, in high-dimensional settings, classical algorithms for sparse approximation suffer the curse of dimensionality.
We demonstrate that sparse approximations can be encoded efficiently using tensor networks with sparse component tensors.
This representation gives rise to a new alternating algorithm for best $n$-term approximation with a complexity scaling polynomially in $n$ and the dimension.

We also demonstrate that weighted $\ell^p$-summability not only induces sparsity of the tensor but also low ranks.
This is not exploited by the previous format.
We thus propose a new low-rank tensor train format with a single weighted sparse core tensor and an ad-hoc algorithm for approximation in this format.
To analyse the sample complexity for this new model class we derive a novel result of independent interest that allows to transfer the restricted isometry property from one set to another sufficiently close set.
We then prove that the new model class is close enough to the set of weighted sparse vectors such that the restricted isometry property transfers.

Numerical examples illustrate the theoretical results for a benchmark problem from uncertainty quantification.

Although they lead up to the analysis of our final model class, our contributions on weighted Stechkin and the restricted isometry property are of independent interest and can be read independently.
\end{abstract}

\keywords{least squares \and sample efficiency \and sparse tensor networks \and alternating least squares}
\vspace{-1em}
\mscclasses{15A69 \and 41A30 \and 62J02 \and 65Y20 \and 68Q25}
\vspace{-1em}
\code{https://github.com/ptrunschke/sparse_als}

\section{Introduction}
\label{sec:introduction}

Approximating an unknown function from data is a fundamental problem in computational science and machine learning.
In many applications, the sought function may depend on a large number of parameters, rendering the approximation task susceptible to the \textit{curse of dimensionality} (CoD), i.e.\ an exponential complexity in the dimension of the problem or the amount of sample points required to obtain an accurate approximation.
This is particularly problematic when the amount of data available is limited due to practical constraints.
Nevertheless, many practically relevant functions can be approximated efficiently using a judiciously chosen set of functions.
Given a function that can be well approximated by a sparse expansion on some basis,  results in compressive sensing guarantee an accurate approximation from a small number of sample points.
The required sets (or dictionaries) can be found by exploiting regularity properties of the sought function.
A common characterisation of smooth functions is given in terms of the decay of their Fourier series.
This can be viewed as promoting a structured sparsity where low-order Fourier modes are more likely to contribute to the total norm of the function.
As a consequence, smooth functions admit approximately sparse representations, enabling an efficient numerical reconstruction.

Another form of low-dimensional structure induced by smoothness is low-rank approximability. 
This structure is exploited e.g.\ in reduced basis methods or proper orthogonal decomposition~\cite{cohen_devore_2015,nouy:2017_morbook} and in a more general form in hierarchical tensor formats such as the popular tensor trains (TT)~\cite{oseledets_tensor-train_2011}.

The aim of this paper is to develop a sparse approximation algorithm which simultaneously exploits sparsity and low-rank properties, thus enabling an efficient approximation of large function sets.
Central tools for this are a new weighted version of the well-known Stechkin's lemma and a novel analysis of the restricted isometry property which allows to accommodate any space which can be approximated by weighted sparse expansions.
These developments should be of independent interest in the study of sparse and general nonlinear least squares approximations.
We carry out the theoretical analysis of convergence rates for weighted $\ell^p$ functions.
Moreover, we present representations of these sparse vectors (or sequences) in sparse low-rank formats and discuss the application to parametric PDEs.

This is not the first work that proposes the utilisation of sparsity in the component tensors of a tensor network.
In~\cite{grelier2019learning} and~\cite{michel2021learning}, the authors consider the abstract setting of empirical risk minimisation on bounded model classes of potentially sparse tensor networks.
They present model selection strategies for the network topology and sparsity pattern.
Due to the use of empirical risk minimisation, they obtain standard error bounds for arbitrary risk functions satisfying boundedness and Lipschitz continuity assumptions (cf.~\cite{michel2021learning}, which relies on approximation results from \cite{ali2021approximation,ali2023approximation}).
However, in the case of least squares risk, these strong assumptions restrict the application to bounded model classes.
Moreover, they do not guarantee an equivalence of errors, which translates to the error decreasing with a slow Monte Carlo rate.
This is intolerable when striving for small relative errors, which is often the case in numerical schemes.

In~\cite{Chevreuil2015sparseRank1} the authors propose an algorithm that computes a sparse best approximation in the model class of sparse rank-$1$ tensors.
Conceptually, this algorithm is very similar to our Algorithm~\ref{alg:sparse_als} but is restricted to a sum of unweighted sparse rank-$1$ tensors.
The restriction to a sum of rank-$1$ tensors implies a suboptimal convergence with respect to the rank and the use of unweighted sparsity means that, in the worst case, vastly more sample points may be required than are actually necessary.

A very similar ($s^2$-PGD) method is also proposed in~\cite{sancarlos2021pgdbased}.
The method optimises with regard to the same model class as our Algorithm~\ref{alg:sparse_als} but does not orthogonalise the component tensors in between the micro steps.
It is not clear if this lack of orthogonalisation can result in numerical instabilities as it would in the classical ALS method.
Moreover, the lack of orthogonalisation prevents the use of the correct weight sequence in the micro steps of their sparse ALS and does not allow for the same automatic rank adaptation as our algorithm.

Finally, block-sparse tensor networks are a well-known tool in the numerics of quantum mechanics~\cite{singh_2010_block_sparse_dmrg} and were recently introduced to the mathematics community by~\cite{bachmayr2022particle}.
This theory is already used in~\cite{goette2021blocksparse} to perform least squares regression in a model class of tensor trains restricted to subspaces of homogeneous polynomials of fixed degree.
The basis selection performed in our second algorithm is conceptually very similar to the restriction to eigenspaces used in~\cite{bachmayr2022particle} to ensure block-sparsity.
We believe that our algorithm can be interpreted as a generalisation of the regression on block-sparse tensor trains.
In contrast to this approach, where the sparsity structure has to be known in advance,
our algorithms explores the sparsity automatically.

\subsection{Weighted sparsity}
Sparse approximability of a function can be expressed by the $\ell^q$-summability of the coefficients sequence of its basis or frame representation.
The following central result, commonly attributed to Stechkin~\cite{DeVore_1998,COHEN_2011}, is a key tool to provide convergence rates for sparse approximations.
\begin{lemma}[Stechkin]
\label{lem:stechkin} 
    Let $0 < q < p \le \infty$ and let $\bfv\in\ell^q$.
    Define $J_n$ as the set of indices corresponding to the $n$ largest elements of the sequence $\abs{\bfv}$ and $P_{J_n}\bfv = \sum_{j \in J_n} \bfv_j \bfe_j$, where $\bfe_j$ is the sequence with $1$ at index $j$ and $0$ everywhere else.
    Then
    \begin{equation}
        \norm{\bfv - P_{J_n}\bfv}_{\ell^p} \le \pars{n+1}^{-s} \norm{\bfv}_{\ell^q},
        \quad s := \frac{1}{q} - \frac{1}{p} .
    \end{equation}
\end{lemma}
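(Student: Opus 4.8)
The plan is to reduce the statement to a tail estimate on the nonincreasing rearrangement of the sequence, and then to interpolate between the trivial $\ell^q$ control of that tail and a pointwise decay bound on its largest surviving entry. First I would let $(v_k^*)_{k\ge 1}$ denote the nonincreasing rearrangement of $\abs{\bfv}$, so that $v_1^*\ge v_2^*\ge\dots\ge 0$. Since $J_n$ collects the $n$ indices of largest modulus, the residual $\bfv-P_{J_n}\bfv$ consists exactly of the entries $v_k^*$ with $k>n$; hence $\norm{\bfv-P_{J_n}\bfv}_{\ell^p}^p=\sum_{k>n}(v_k^*)^p$ when $p<\infty$, and equals $\sup_{k>n}v_k^*=v_{n+1}^*$ when $p=\infty$.

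The first key step is a pointwise decay bound on $v_k^*$. Because the rearranged sequence is nonincreasing, each of its $k$ largest entries dominates $v_k^*$, so $k(v_k^*)^q\le\sum_{j=1}^k (v_j^*)^q\le\norm{\bfv}_{\ell^q}^q$, which gives $v_k^*\le k^{-1/q}\norm{\bfv}_{\ell^q}$ for every $k$. In particular $v_{n+1}^*\le(n+1)^{-1/q}\norm{\bfv}_{\ell^q}$, and this already settles the endpoint case $p=\infty$, where $s=1/q$.

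The second key step handles $p<\infty$ by factoring one power of $v_k^*$ out of the tail sum. Writing $(v_k^*)^p=(v_k^*)^{p-q}(v_k^*)^q$ and using that $v_k^*\le v_{n+1}^*$ for all $k>n$ (here the strict inequality $q<p$ enters, since it makes the exponent $p-q$ positive so that monotonicity can be exploited), I would bound
\begin{equation}
    \sum_{k>n}(v_k^*)^p \le (v_{n+1}^*)^{p-q}\sum_{k>n}(v_k^*)^q \le (v_{n+1}^*)^{p-q}\norm{\bfv}_{\ell^q}^q.
\end{equation}
Substituting the decay bound on $v_{n+1}^*$ yields $\sum_{k>n}(v_k^*)^p\le(n+1)^{-(p-q)/q}\norm{\bfv}_{\ell^q}^p$, and taking $p$-th roots produces the exponent $-(p-q)/(pq)=-(1/q-1/p)=-s$, which is precisely the claimed rate.

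I do not anticipate a genuine obstacle here; the argument is elementary once the residual is expressed through the decreasing rearrangement. The only points requiring care are ensuring that both regimes $p<\infty$ and $p=\infty$ are covered, and tracking that the strict inequality $q<p$ is used exactly where the factorisation in the second step needs $p-q>0$.
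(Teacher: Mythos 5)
Your proof is correct and follows essentially the same route as the paper: the paper's (weighted) Stechkin proof also establishes the pointwise decay bound $v_{n+1}^\ast \le (n+1)^{-1/q}\norm{\bfv}_{\ell^q}$ from the $\ell^q$ control of the $n+1$ largest entries, settles $p=\infty$ first, and then reduces $p<\infty$ to that case by factoring $(v_k^\ast)^p=(v_k^\ast)^{p-q}(v_k^\ast)^q$ and bounding the first factor by the supremum (stated there as an application of H\"older's inequality). Your argument is the specialisation of that proof to $\bfalpha=\bfsigma=\bfomega\equiv 1$, so there is nothing to add.
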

Given a normalized basis (or frame) $\braces{B_k}_{k\in\mbb{N}}$ of a Banach space $V$ of functions, each element $u$ of $V$ can be identified with its coefficient sequence $\bfu$ with respect to this basis.
Lemma~\ref{lem:stechkin} hence yields the convergence estimate for the best $n$-term approximation $u_n$ of $u$:
\begin{equation}
    \norm{u - u_n}_{V} \le \norm{\bfu - P_{J_n} \bfu}_{\ell^p} \le \pars{n+1}^{-s} \norm{\bfu}_{\ell^q}, \quad s=1/q - 1/p,
\end{equation}
for $p=1>q$ in the general Banach case, or $p=2>q$ when $V$ is a Hilbert space and $\braces{B_k}_{k\in\mbb{N}}$ is an orthonormal basis of $V$.
A disadvantage of the standard Stechkin's lemma is that it can only predict algebraic approximation rates, and these rates are suboptimal for some relevant classes of functions.

\paragraph*{Contributions}
To overcome these issues, we introduce for any sequence $\bfomega\in\bracs{0,\infty}^{\mbb{N}}$ the $\bfomega$-weighted sequence space
\begin{equation}
    \ell_\bfomega^q := \braces{\bfv\in\mbb{R}^{\mbb{N}} \,:\, \norm{\bfv}_{\ell^q_\bfomega} := \norm{\bfomega \bfv}_{\ell^q} < \infty} ,
\end{equation}
where $\bfomega \bfv = (\bfomega_\nu \bfv_\nu)_{\nu}$ denotes the element-wise multiplication of the two sequences $\bfomega$ and $\bfv$. 
With these spaces, a corresponding weighted version of Stechkin's lemma is derived, which enables to better exploit classical regularity in terms of convergence rates, significantly improving results of the classical lemma.
Indeed, in Section~\ref{sec:weighted_stechkin} we recall that the weighted $\ell^q_\bfomega$-spaces correspond to a variety of function spaces such as Barron, Besov and Sobolev spaces.
This makes possible to relate the summability $\bfu\in\ell^q$ directly to more natural regularity assumptions such as $u$ being in the Sobolev space $H^k$ (cf.~Example~\ref{ex:Hk}).
We compare the obtained results to previous works and discuss the relation of the weighted $\ell^p$ spaces to unweighted and monotone $\ell^p$ spaces (cf.~\cite{adcock_2022_isMCbad}).
If the employed weight sequence increases super-algebraically, the new weighted bound has a significantly faster decay than the corresponding unweighted bound.
Moreover, there are also improvements for algebraically increasing weight sequences, namely a significant reduction of multiplicative constants in the approximation estimate.
Because of this, the analysis provides immediate convergence bounds even in the non-asymptotic setting, i.e.\ for finite-dimensional linear spaces.

\subsection{Application to parametric partial differential equations}
\label{sec:model problem}

The numerical solution of high-dimensional parametric operator equations has become a highly active research field in the last decade, particularly in the area of Uncertainty Quantification (UQ) and in relation to modern (scientific) machine learning, see for instance \cite{cohen_devore_2015,schwab2011sparse} and references therein.
The parameter domain is often high- or even infinite-dimensional, making it computationally challenging to approximate the solution in linear spaces due to the CoD.
It hence is mandatory to exploit structural properties of the respective functions.
When relying on sparsity as we do, the required summability constraints can be deduced from smoothness.
Indeed, it is shown in Section~\ref{sec:weighted_stechkin} that weighted summability with algebraically increasing weight sequences can often be derived from standard regularity assumptions.
Morevover, certain assumptions on the data allow us to derive even stronger summability properties for the solution of parametric PDEs as shown e.g.\ in~\cite{Bachmayr_2016_I,Bachmayr_2016_II}.


We recall a prototypical parametric linear second order elliptic problem and its solution properties as a motivation for the results of this work.
For a given bounded Lipschitz domain $D\subseteq\mbb{R}^d$ with $d\in\mbb{N}$ and some source function $f\in L^2\pars{D}$, consider the linear elliptic PDE
\begin{equation}
\label{eq:darcy}
    \begin{aligned}
        -\operatorname{div}_{\!x}\pars{a\pars{x,y}\nabla_{\!x} u\pars{x,y}} &= f(x), &\mbox{in }D,\\
        u\pars{x,y} &= 0, &\mbox{on }\partial D,
    \end{aligned}
\end{equation}
where $y\in\mbb{R}^L$ is a high-dimensional ($L\in\mbb{N}$) or infinite-dimensional ($L=\infty$) parameter vector determining the coefficient field $a$ and hence the solution $u$.
With typical applications in modelling stochastic flow through porous media (such as groundwater flow~\cite{wang_1995_groundwater}), the diffusion coefficient is often defined by a Karhunen--Lo\`eve type expansion~\cite{todor2007convergence,lord2014introduction}, which can be constructed to represent random fields with bounded variance and typically takes the form
\begin{alignat}{4}
\label{eq:a_uniform}
    a(x,y) &= \sum_{j=1}^{L} a_j(x) y_j + a_0(x)
    &\qquad\text{with }
    y&\sim\mcal{U}\pars{\bracs{-1, 1}}^{\otimes L}
    \qquad\text{ or} \\
\label{eq:a_lognormal}
    \ln\pars{a(x,y)} &= \sum_{j=1}^{L} a_j(x) y_j
    &\qquad\text{with }
    y&\sim\mcal{N}\pars{0, 1}^{\otimes L} .
\end{alignat}
In these applications, the functions $a_j:D\to\mathbb{R}$ are scaled $L^2\pars{D}$-orthogonal eigenfunctions of the covariance operator of $a$ or $\ln\pars{a}$.
This specific choice is not necessary for the application of our theory and other more advantageous expansions (cf.~\cite{bachmayr_2017_notKL}) may be considered as well.


We recall some results from \cite{Bachmayr_2016_I,Bachmayr_2016_II} on the analysis and approximation of the parameter-to-solution map
\begin{equation}
    y\mapsto u(y) := u\pars{\bullet, y},
\end{equation}
induced by the model \eqref{eq:darcy}--\eqref{eq:a_lognormal}.
For~\eqref{eq:a_uniform}, the following result was shown recently.
\begin{theorem}[Theorem~3.1~in~\cite{Bachmayr_2016_I}]
\label{thm:Legendre_decay_Bachmayr}
    Consider problem~\eqref{eq:darcy} with affine coefficients~\eqref{eq:a_uniform}.
    Assume that there exists a sequence $\bfrho\in\pars{1,\infty}^L$ such that
    \begin{equation}
        \sup_{x\in D} \frac{1}{a_0} \sum_{j\in\bracs{L}} \bfrho_j\abs{a_j\pars{x}} < 1 .
    \end{equation}
    Then the map $y\mapsto u(y)$ belongs to $L^k\pars{ \bracs{-1, 1}^L, \dx[\gamma] ;  H^1_0\pars{D}}$ for all $k\in\mbb{N}\cup\braces{\infty}$, where $\gamma$ is the uniform measure.
    Hence, there exists an expansion of $u\pars{x,y} = \sum_{\nu\in\mcal{F}} u_\nu\pars{x} L_\nu\pars{y}$ in terms of Legendre polynomials $(L_\nu)_{\nu\in \mcal{F}}$, where $\mcal{F}$ is the set of multi-indices in $\mbb{N}^L$ with finite support.
    Moreover, the sequence of coefficients satisfies
    \begin{equation}
    \label{eq:weighted_coefs_Legendre}
        \sum_{\nu\in\mcal{F}} \bfomega_\nu^2\norm{u_\nu}_{H^1_0\pars{D}}^2 < \infty,
    \end{equation}
    with $\bfomega_\nu := \prod_{j\in\bracs{L}}\pars{2\nu_j+1}^{-1/2}\bfrho_j^{\nu_j}$.
\end{theorem}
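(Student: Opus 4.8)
The plan is to argue in three stages: first establish well-posedness together with a uniform bound (which already yields the $L^k$ claim), then extend the parameter-to-solution map holomorphically onto a complex Bernstein polyellipse, and finally convert this holomorphy into the weighted summability of the Legendre coefficients.

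Stage 1 (uniform ellipticity and the $L^k$ claim). Since $\bfrho_j>1$, the hypothesis gives $\theta := \sup_{x}\tfrac1{a_0}\sum_{j\in\bracs{L}}\bfrho_j\abs{a_j(x)} < 1$, and because $\abs{y_j}\le 1$ on $\bracs{-1,1}^L$ and $\abs{a_j}\le\bfrho_j\abs{a_j}$, I obtain the uniform lower bound $a(x,y) \ge a_0(x)(1-\theta) > 0$. Lax--Milgram then yields a unique $u(y)\in H^1_0\pars{D}$ with $\sup_{y}\norm{u(y)}_{H^1_0\pars{D}} \le C\norm{f}_{L^2\pars{D}}/(1-\theta)$. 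As $\gamma$ is a probability measure, $u\in L^\infty\pars{\bracs{-1,1}^L,\dx[\gamma];H^1_0\pars{D}} \subseteq L^k$ for every $k\le\infty$, which proves the first assertion.

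Stage 2 (holomorphic extension). First I would replace $y_j$ by complex variables $z_j$ and note that $z\mapsto a(x,z)=a_0+\sum_j a_j z_j$ is affine, hence entire. The key geometric observation is that for $z_j$ in the Bernstein ellipse $\mcal{E}_{\bfrho_j}$ (foci $\pm1$, sum of semi-axes $\bfrho_j$) one has $\abs{\operatorname{Re} z_j}\le \tfrac12\pars{\bfrho_j+\bfrho_j^{-1}} < \bfrho_j$; consequently, on the polyellipse $\mcal{E}_{\bfrho}=\bigotimes_j \mcal{E}_{\bfrho_j}$ the (real) coefficients yield $\operatorname{Re} a(x,z) \ge a_0(x)\pars{1 - \sum_j \tfrac{\bfrho_j}{a_0}\abs{a_j(x)}} \ge a_0(x)(1-\theta) > 0$ uniformly in $x$ and $z$. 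This lower bound is exactly the coercivity of the complexified sesquilinear form, so Lax--Milgram over $\mbb{C}$ gives that $z\mapsto u(z)$ is well defined and uniformly bounded, $\sup_{z\in\mcal{E}_{\bfrho}}\norm{u(z)}_{H^1_0\pars{D}}\le B$; holomorphy then follows from the analytic dependence of the solution operator on $a$ (Neumann series for the resolvent, or differentiation of the variational problem).

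Stage 3 (holomorphy to weighted summability) and the main obstacle. From boundedness of $u$ on $\mcal{E}_{\bfrho}$ I would invoke the standard univariate estimate for Legendre series on a Bernstein ellipse: boundedness on $\mcal{E}_{\bfrho_j}$ forces the coefficients in the $j$-th variable to decay at the geometric rate $\bfrho_j^{-\nu_j}$, up to the algebraic factor $(2\nu_j+1)^{1/2}$ arising from the $L^2(\gamma)$-normalisation of $L_\nu$; tensorising over the coordinates yields $\norm{u_\nu}_{H^1_0\pars{D}} \lesssim B\prod_{j}(2\nu_j+1)^{1/2}\bfrho_j^{-\nu_j}$, i.e.\ $\bfomega_\nu\norm{u_\nu}_{H^1_0\pars{D}}\lesssim B$. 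The hard part is that this only gives \emph{boundedness} of $\bfomega_\nu\norm{u_\nu}$, not summability; to close the gap I would exploit that the inequality $\theta<1$ is strict, enlarging the radii to admissible $\tilde{\bfrho}_j>\bfrho_j$ so that $\bfomega_\nu^2\norm{u_\nu}_{H^1_0\pars{D}}^2 \lesssim \prod_j (\bfrho_j/\tilde{\bfrho}_j)^{2\nu_j}$ and hence $\sum_{\nu\in\mcal{F}}\bfomega_\nu^2\norm{u_\nu}_{H^1_0\pars{D}}^2 \lesssim \prod_j \pars{1-(\bfrho_j/\tilde{\bfrho}_j)^2}^{-1}$. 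For finite $L$ this product is automatically finite; for $L=\infty$ the genuine difficulty is to pick the enlarged radii $\tilde{\bfrho}_j$ coordinatewise (using the decay of $\abs{a_j}$) so that $\sum_j(\bfrho_j/\tilde{\bfrho}_j)^2<\infty$ keeps the infinite product convergent, which is the crux of the whole estimate.
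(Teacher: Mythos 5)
Your proposal cannot be compared to a proof of this statement inside the paper, because the paper never proves it: Theorem~\ref{thm:Legendre_decay_Bachmayr} is recalled verbatim as Theorem~3.1 of \cite{Bachmayr_2016_I}. The closest the paper comes is its own \emph{alternative} derivation of similar bounds in Section~\ref{sec:sparse approximation of parametric PDEs}, which is explicitly restricted to the single-parameter case $L=1$, and that derivation takes a genuinely different route from yours. The paper works with Taylor coefficients: it extends $y\mapsto u\pars{y}$ holomorphically to complex discs, applies Cauchy's inequality (Theorem~\ref{thm:cauchy_banach}) to obtain the geometric decay $\norm{\bfu_k}_{H^1_0\pars{D}}\le\norm{f}_{H^{-1}\pars{D}}C^{-1}r^{-k}$ (Theorem~\ref{thm:affine_holomorphy}), and then converts Taylor into Legendre coefficients through the explicit expansion formula of Lemma~\ref{lem:legendre_coefficients}, summing that series directly to get $\norm{\hat{\bfu}_m}_{H^1_0\pars{D}}\lesssim\sqrt{2m+1}\,r^{-m}/\pars{r^2-1}$ (Theorem~\ref{thm:legendre_coefficient_bounds}). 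Your route instead extends to Bernstein polyellipses and invokes the classical estimate for Legendre coefficients of functions bounded on an ellipse, in the style of \cite{cohen_devore_2015}. Your Stages~1 and~2 are sound: the observation $\abs{\operatorname{Re} z_j}\le\tfrac12\pars{\bfrho_j+\bfrho_j^{-1}}<\bfrho_j$ for $\bfrho_j>1$ does yield uniform complex coercivity on the polyellipse under the stated hypothesis (granting, as both routes implicitly do, that $a_0$ is bounded away from zero). What each approach buys: yours produces the multivariate weight $\bfomega_\nu=\prod_j\pars{2\nu_j+1}^{-1/2}\bfrho_j^{\nu_j}$ directly and tensorises naturally over coordinates; the paper's avoids ellipse geometry and Legendre asymptotics entirely, needing only the one-dimensional disc estimate plus a combinatorial identity, which is why the paper advertises it as elementary --- but it is only carried out for $L=1$.

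The gap you flagged in Stage~3 is real, and it is exactly where the cited theorem earns its keep. Ellipse-boundedness alone gives only $\sup_\nu\bfomega_\nu\norm{u_\nu}_{H^1_0\pars{D}}<\infty$; your fix --- enlarging to $\tilde{\bfrho}_j=\pars{1+\epsilon}\bfrho_j$, which remains admissible for small $\epsilon$ precisely because the hypothesis is a strict inequality --- closes the argument for finite $L$, since then $\sum_{\nu\in\mcal{F}}\bfomega_\nu^2\norm{u_\nu}_{H^1_0\pars{D}}^2\lesssim\pars{1-\pars{1+\epsilon}^{-2}}^{-L}<\infty$. However, the theorem as stated also covers $L=\infty$ (this is why $\mcal{F}$ is defined as the \emph{finitely supported} multi-indices), and there your proposal leaves two things unresolved: the coordinatewise choice of enlarged radii making $\sum_j\pars{\bfrho_j/\tilde{\bfrho}_j}^2<\infty$ compatible with the uniform ellipticity constraint, and the control of the tensorised constants in the ellipse estimate (each coordinate contributes a factor of the form $\tilde{\bfrho}_j^2/\pars{\tilde{\bfrho}_j^2-1}$ to the implied constant, and this infinite product must also converge). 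So your outline is essentially a complete proof of the finite-$L$ statement by the classical complex-variable method, but as a proof of the recalled theorem in full generality it stops at precisely the point you yourself identified as the crux.
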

For the case of unbounded Gaussian parameters~\eqref{eq:a_lognormal}, we recall the following result.
\begin{theorem}[Theorems~2.2,~3.3~and~4.2~in~\cite{Bachmayr_2016_II}]
\label{thm:Hermite_decay_Bachmayr}
    Consider the model~\eqref{eq:darcy} with affine coefficients~\eqref{eq:a_lognormal}.
    Assume that there exists an $r\in\mbb{N}$ and a sequence $\bfrho\in\pars{0,\infty}^L$ such that
    \begin{equation}
        \sup_{x\in D}\sum_{j\in\bracs{L}} \bfrho_j \abs{a_j\pars{x}}
        < \frac{\ln 2}{\sqrt{r}}
        \qquad\text{and}\qquad
        \sum_{j\in\bracs{L}} \exp\pars{-\bfrho_j^2}
        < \infty .
    \end{equation}
    Then the map $y\mapsto u(y)$ belongs to $L^k\pars{\mbb{R}^L,\dx[\gamma] ; H^1_0\pars{D} }$ for all $k\in\mbb{N}$, where $\gamma$ is the Gaussian measure on $\mbb{R}^L$.
    Hence, there exists an expansion of $u\pars{x,y} = \sum_{\nu\in\mcal{F}} u_\nu\pars{x} H_\nu\pars{y}$ in terms of Hermite polynomials $(H_\nu)_{\nu\in \mcal{F}}$, where $\mcal{F}$ is the set of multi-indices in $\mbb{N}^L$ with finite support.
    Moreover, the sequence of coefficients satisfies
    \begin{equation}
    \label{eq:weighted_coefs_Hermite}
        \sum_{\nu\in\mcal{F}} \omega_\nu^2\norm{u_\nu}_{H^1_0\pars{D}}^2 < \infty,
    \end{equation}
    with $\bfomega_\nu := \prod_{j\in\bracs{L}} \pars{\sum_{l=0}^r \binom{\nu_j}{l}\bfrho_j^{2l}}^{1/2}$.
\end{theorem}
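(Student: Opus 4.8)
The plan is to prove the statement in three stages: (i) well-posedness of the parametric problem together with $L^k$-integrability of the solution map, which guarantees that the Hermite expansion exists; (ii) an \emph{exact} reduction of the weighted coefficient sum \eqref{eq:weighted_coefs_Hermite} to a weighted sum of mixed parametric derivatives; and (iii) a priori bounds on those derivatives obtained by differentiating the PDE. Throughout write $V:=H^1_0\pars{D}$, $a\pars{x,y}=\exp\pars{\sum_j a_j\pars{x}y_j}$, and $a_{\min}\pars{y}:=\essinf_{x\in D}a\pars{x,y}$.

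For (i) I would use that, since $a_{\min}\pars{y}\ge\exp\pars{-\sum_j\norm{a_j}_\infty\abs{y_j}}$, the Lax--Milgram lemma yields the pointwise bound $\norm{u\pars{y}}_V\le\norm{f}_{V'}\,a_{\min}\pars{y}^{-1}\le\norm{f}_{V'}\exp\pars{\sum_j\norm{a_j}_\infty\abs{y_j}}$. Integrating the $k$-th power against the product Gaussian $\gamma$ and using the one-dimensional moment generating function coordinatewise shows $u\in L^k\pars{\mbb{R}^L,\dx[\gamma];V}$ for every $k$; in particular $u\in L^2\pars{\mbb{R}^L,\dx[\gamma];V}$, so the expansion $u=\sum_{\nu\in\mcal{F}}u_\nu H_\nu$ converges with well-defined coefficients $u_\nu\in V$.

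For (ii) the decisive observation is that for the $L^2\pars{\gamma}$-orthonormal Hermite polynomials differentiation acts as the annihilation operator, $H_n'=\sqrt{n}\,H_{n-1}$, so that $\partial^{l}H_\nu=\sqrt{\prod_j \tfrac{\nu_j!}{\pars{\nu_j-l_j}!}}\,H_{\nu-l}$. Parseval's identity (valid in $\bracs{0,\infty}$ for the term-by-term differentiated series) then gives $\norm{\partial^{l}u}_{L^2\pars{\gamma;V}}^2=\sum_{\nu\ge l}\prod_j\tfrac{\nu_j!}{\pars{\nu_j-l_j}!}\norm{u_\nu}_V^2$. Inserting $\tfrac{\nu_j!}{\pars{\nu_j-l_j}!}=l_j!\binom{\nu_j}{l_j}$, summing over $l$ against the weights $\prod_j\bfrho_j^{2l_j}/l_j!$, and reassembling the product via the binomial theorem, the factorials cancel and one obtains the exact identity
\[ \sum_{\nu\in\mcal{F}}\bfomega_\nu^2\norm{u_\nu}_V^2 = \sum_{\substack{l\in\mcal{F}\\ l_j\le r}}\pars*{\prod_j\frac{\bfrho_j^{2l_j}}{l_j!}}\norm{\partial^{l}u}_{L^2\pars{\gamma;V}}^2 , \]
so it suffices to bound this weighted sum of mixed derivatives of order at most $r$ in each coordinate. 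For (iii), since $\partial^{m}a=\pars{\prod_j a_j^{m_j}}a$, the Leibniz rule applied to the weak form $\int_D a\,\nabla u\cdot\nabla v=\int_D fv$, tested with $v=\partial^{l}u$ and combined with coercivity, produces the recursion $\norm{\partial^{l}u}_{a,y}\le\sum_{0\ne m\le l}\binom{l}{m}\pars*{\prod_j\norm{a_j}_\infty^{m_j}}\norm{\partial^{l-m}u}_{a,y}$ for the energy seminorm $\norm{w}_{a,y}^2:=\int_D a\pars{\cdot,y}\abs{\nabla w}^2$. Encoding $t_l:=\norm{\partial^{l}u}_{a,y}$ in the exponential generating function $T\pars{x}=\sum_l t_l x^l/l!$ turns the convolution into $T\pars{x}\le t_0/\pars{2-\exp\pars{\sum_j\norm{a_j}_\infty x_j}}$, valid for $\sum_j\norm{a_j}_\infty x_j<\ln 2$; this is exactly where the constant $\ln 2$ originates. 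Extracting the coefficient of $x^l$, passing from the energy to the $V$-norm at the cost of a factor $a_{\min}\pars{y}^{-1/2}$, and using $t_0\pars{y}\le\norm{f}_{V'}a_{\min}\pars{y}^{-1/2}$ yields a pointwise bound on $\norm{\partial^{l}u\pars{y}}_V$.

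Substituting these bounds into the identity of (ii) and integrating against $\gamma$ reduces the claim to the convergence of a product over the coordinates: the smallness hypothesis $\sup_x\sum_j\bfrho_j\abs{a_j\pars{x}}<\ln 2/\sqrt{r}$ keeps the generating-function denominator uniformly bounded away from zero once the weights $\bfrho_j$ are absorbed (the $\sqrt{r}$ compensating the passage from the $r$ admissible derivative orders to the $\ell^2$ norm), while $\sum_j\exp\pars{-\bfrho_j^2}<\infty$ is precisely what makes the resulting infinite product over $j$ converge after each coordinate is integrated against its Gaussian factor. I expect this final integrability step to be the main obstacle: because the coefficient is unbounded, $a_{\min}\pars{y}^{-1}$ possesses only finite moments, so one must balance the combinatorial growth of the derivative bounds against Gaussian decay coordinate by coordinate and show that the doubly-infinite sum — over $\nu$, equivalently over derivative multi-indices $l$ followed by Gaussian integration — still converges. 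This is the step that consumes both hypotheses simultaneously.
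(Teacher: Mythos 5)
You should first be aware that the paper does not prove this statement at all: Theorem~\ref{thm:Hermite_decay_Bachmayr} is imported verbatim from \cite{Bachmayr_2016_II}, and the three theorems cited there correspond exactly to your stages --- (i) integrability is their Theorem~2.2, (ii) the identity between $\sum_\nu \bfomega_\nu^2\norm{u_\nu}_{H^1_0}^2$ and the weighted sum of mixed derivatives is their Theorem~3.3, and (iii) the derivative bounds producing the constant $\ln 2/\sqrt{r}$ are their Theorem~4.2. So your proposal is a reconstruction of the reference's argument rather than of anything in this paper; the only related derivation the paper itself contains is the alternative route of Section~\ref{sec:log-affine coefficients} (holomorphy of $y\mapsto u\pars{y}$ in Theorem~\ref{thm:logaffine_holomorphy}, Cauchy's inequality in Banach spaces, and the power-series-to-Hermite conversion of Lemma~\ref{lem:hermite_coefficients}), which is more elementary but is restricted to $L=1$ and yields bounds of a different form. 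Your stage (ii), in particular, is correct and is exactly the right mechanism: expanding $\bfomega_\nu^2=\sum_{\norm{l}_{\ell^\infty}\le r}\prod_j\binom{\nu_j}{l_j}\bfrho_j^{2l_j}$ and using $\norm{\partial^l u}_{L^2\pars{\gamma;V}}^2=\sum_{\nu\ge l}\prod_j\tfrac{\nu_j!}{\pars{\nu_j-l_j}!}\norm{u_\nu}_V^2$ gives the claimed identity.

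As a proof, however, the sketch has genuine gaps, and they are precisely in the infinite-dimensional case that the hypotheses are designed for. First, in stage (i) the coordinatewise moment-generating-function argument applied to $a_{\min}\pars{y}^{-1}\le\exp\pars{\sum_j\norm{a_j}_{L^\infty}\abs{y_j}}$ requires $\sum_j\norm{a_j}_{L^\infty}<\infty$ for the infinite product of one-dimensional integrals to converge, and this does not follow from the hypotheses: take $a_j$ with disjoint supports and $\norm{a_j}_{L^\infty}=c\,\bfrho_j^{-1}$ with $\bfrho_j=\sqrt{2\ln\pars{j+2}}$; then both hypotheses hold while $\sum_j\norm{a_j}_{L^\infty}=\infty$, so your bound on $a_{\min}\pars{y}^{-1}$ is vacuous almost surely. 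The reference handles this by splitting, for each $y$, the coordinates into those with $\abs{y_j}\le\bfrho_j$ (controlled by the uniform smallness assumption) and the almost-surely finitely many exceptional ones (controlled via Borel--Cantelli using $\sum_j\exp\pars{-\bfrho_j^2}<\infty$); the second hypothesis thus enters already here, not only in your ``final product convergence''. Second, in stage (iii) your recursion coefficients are $\prod_j\norm{a_j}_{L^\infty}^{m_j}$, which means you are implicitly assuming $\sum_j\bfrho_j\norm{a_j}_{L^\infty}<\ln 2/\sqrt{r}$, i.e.\ the sum of suprema; the theorem only assumes $\sup_x\sum_j\bfrho_j\abs{a_j\pars{x}}<\ln 2/\sqrt{r}$, with the supremum outside the sum, and in the disjoint-support example above these two quantities differ by an infinite amount. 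To get the stated result one has to keep the $x$-dependence inside the estimates (Cauchy--Schwarz with weight $a$, multinomial theorem at fixed $x$, supremum taken last). Finally, the step you yourself flag as the main obstacle --- summing the derivative bounds in $\ell^2$ with the weights $\bfrho^{2l}/l!$ (this is where $\sqrt{r}$ is consumed) and then integrating against $\gamma$ --- is not carried out; since it requires the corrected versions of both points above, the proposal as it stands is a correct plan whose two hardest steps are missing.
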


\paragraph*{Contributions}
Using the weighted sequence spaces and Stechkin's lemma, we propose an alternative method of proof for the summability of the solution of parametric PDEs in Section~\ref{sec:sparse approximation of parametric PDEs}.
We show that in the case of a single parameter ($L=1$), these summability properties already follow from the analyticity of the parameter to solution map.
The new derivation only relies on the weighted version of Stechkin's lemma and elementary techniques.
This results in similar bounds to those in Theorem~\ref{thm:Legendre_decay_Bachmayr} and~\ref{thm:Hermite_decay_Bachmayr} with exponential decay of the basis coefficients.


\subsection{Numerical methods for weighted sparse approximation using sparse tensor train format}

Equally important as the approximation error analysis is the availability of actual computational methods.
For a probability measure $\gamma$ on some set $Y$, let $\mcal{M} \subseteq L^2\pars{Y,\gamma}$ be a \emph{model class} of functions in which $u\in \mcal{V}$ should be approximated.
Defining the norms
\begin{equation}
\label{eq:norms}
    \norm{\bullet} := \norm{\bullet}_{L^2\pars{Y,\gamma}}
    \qquad\text{and}\qquad
    \norm{\bullet}_{w,\infty} := \norm{w^{1/2}\bullet}_{L^\infty\pars{Y,\gamma}},
\end{equation}
where $w$ is some positive  \emph{weight function}, 
the problem of determining the best-approximation of $u$ in $\mcal{M}$ can be formulated as
\begin{equation}
\label{eq:min}
    u_{\mcal{M}} \in \argmin_{v\in \mcal{M}}\ \norm{u - v} .
\end{equation}
Since the $L^2$-norm cannot be computed exactly in high-dimensional settings, a popular remedy is to introduce an empirical estimator with samples $\boldsymbol{y}:=\{y^i\}_{i=1}^n$ and the respective weighted least-squares minimisation, namely
\begin{equation} \label{eq:min_emp}
     u_{\mcal{M},n} \in \argmin_{v\in\mcal{M}}\ \norm{u - v}_{n}
    \quad\text{with}\quad
      \norm{v}_{n} := \pars*{\frac{1}{n}\sum_{i=1}^n w\pars{y^i} \abs{v\pars{y^i}}^2}^{1/2} .
\end{equation}
A natural choice is to take $w$ such that $\int_Y w^{-1} \dx[\gamma] = 1$, and to draw the points $y_i$
independently from  the measure $w^{-1}\gamma$ for all $i=1,\ldots, n$.
This approach results in approximations with guaranteed error bounds when assuming the \emph{restricted isometry property} (RIP) that is known from compressed sensing~\cite{CANDES2008589,adcock2022sparse}.
It is defined for a given set of functions $A$ by 
\begin{equation}\label{eq:rip}
    \operatorname{RIP}_A\pars{\delta} :\Leftrightarrow \pars{1-\delta}\norm{u}^2 \le \norm{u}_{n}^2 \le \pars{1+\delta}\norm{u}^2 \qquad \forall u\in A .
\end{equation}
If satisfied for a parameter $\delta\in\pars{0,1}$, the error of estimator~\eqref{eq:min_emp} can be bounded as follows.
\begin{proposition}[Theorem~3 in~\cite{trunschke_2022_convergence}]
\label{prop:empirical_projection_error}
    If $\operatorname{RIP}_{\braces{u_{\mcal{M}}}-\mcal{M}}\pars{\delta}$ holds, then
    \begin{equation}
        \norm{u - u_{\mcal{M},n}}
        \le \norm{u - u_{\mcal{M}}} + \tfrac{2}{\sqrt{1-\delta}} \norm{u - u_{\mcal{M}}}_{w,\infty}\ .
    \end{equation}
\end{proposition}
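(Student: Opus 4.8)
The plan is to combine the RIP with the optimality of the empirical minimiser through a chain of triangle inequalities, using only the \emph{lower} RIP bound. First I would split the error by the triangle inequality for $\norm{\bullet}$,
\begin{equation}
    \norm{u - u_{\mcal{M},n}} \le \norm{u - u_{\mcal{M}}} + \norm{u_{\mcal{M}} - u_{\mcal{M},n}},
\end{equation}
so that it suffices to bound the second summand $\norm{u_{\mcal{M}} - u_{\mcal{M},n}}$ by the weighted supremum term. The crucial observation enabling the use of the hypothesis is that $u_{\mcal{M},n}\in\mcal{M}$ by definition, hence the difference $u_{\mcal{M}} - u_{\mcal{M},n}$ lies in the translated set $\braces{u_{\mcal{M}}}-\mcal{M}$ on which $\operatorname{RIP}\pars{\delta}$ is assumed.

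Applying the lower RIP bound to this particular difference gives
\begin{equation}
    \pars{1-\delta}\norm{u_{\mcal{M}} - u_{\mcal{M},n}}^2 \le \norm{u_{\mcal{M}} - u_{\mcal{M},n}}_n^2,
\end{equation}
which transfers the control to the empirical norm. There I would use the triangle inequality for $\norm{\bullet}_n$ and then the defining property of the empirical minimiser, namely $\norm{u - u_{\mcal{M},n}}_n \le \norm{u - v}_n$ for every $v\in\mcal{M}$, specialised to the admissible choice $v = u_{\mcal{M}}$:
\begin{equation}
    \norm{u_{\mcal{M}} - u_{\mcal{M},n}}_n \le \norm{u - u_{\mcal{M}}}_n + \norm{u - u_{\mcal{M},n}}_n \le 2\,\norm{u - u_{\mcal{M}}}_n.
\end{equation}

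Finally, the empirical norm of the \emph{fixed} residual $u - u_{\mcal{M}}$ is bounded deterministically by its weighted supremum: since each summand satisfies $w(y^i)\abs{(u-u_{\mcal{M}})(y^i)}^2 \le \esssup_{y} w(y)\abs{(u-u_{\mcal{M}})(y)}^2 = \norm{u-u_{\mcal{M}}}_{w,\infty}^2$ almost surely (the sampling measure $w^{-1}\gamma$ being absolutely continuous with respect to $\gamma$), averaging over $i=1,\dots,n$ yields $\norm{u - u_{\mcal{M}}}_n \le \norm{u - u_{\mcal{M}}}_{w,\infty}$. Chaining the three estimates produces $\norm{u_{\mcal{M}} - u_{\mcal{M},n}} \le \tfrac{2}{\sqrt{1-\delta}}\norm{u - u_{\mcal{M}}}_{w,\infty}$, and inserting this into the initial decomposition gives exactly the claimed bound. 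I do not expect a genuine obstacle here; the one point requiring care is the membership $u_{\mcal{M}} - u_{\mcal{M},n}\in\braces{u_{\mcal{M}}}-\mcal{M}$, which is precisely why the RIP is posited on this translated set rather than on $\mcal{M}$ itself, together with the minor measure-theoretic remark justifying the pointwise $L^\infty$ bound at the sample points.
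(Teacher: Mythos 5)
Your proof is correct and is precisely the standard argument behind this result: the paper itself does not reprove the proposition (it is quoted from Theorem~3 of the cited reference), and the chain you give --- triangle inequality, lower RIP bound applied to $u_{\mcal{M}} - u_{\mcal{M},n} \in \braces{u_{\mcal{M}}}-\mcal{M}$, empirical quasi-optimality with the competitor $v = u_{\mcal{M}}$, and the deterministic bound $\norm{\bullet}_n \le \norm{\bullet}_{w,\infty}$ --- is exactly the proof given there. Your two flagged points of care (membership in the translated set, and almost-sure validity of the pointwise $L^\infty$ bound under the sampling measure $w^{-1}\gamma$) are indeed the only subtleties, and you handle both correctly.
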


The assumption of $\operatorname{RIP}_{\braces{u_{\mcal{M}}} - \mcal{M}}\pars{\delta}$ is a weaker version of the standard assumption $\operatorname{RIP}_{\mcal{M} - \mcal{M}}\pars{\delta}$, which is often used when considering nested sequences $(\mcal{M}_r)_{r\ge 1}$ of model classes, like $r$-sparse vectors or rank-$r$ tensors, which satisfy  $\mcal{M}_r - \mcal{M}_r \subseteq\mcal{M}_{2r}$ . 
We show that such a nestedness property is also satisfied for the model classes of sparse low-rank tensors considered in this paper.

Because $\operatorname{RIP}_{\mcal{M}}\pars{\delta}$ is a random event, a sufficient number of sample points has to be used to guarantee that it holds true.
For theoretical reasons and since obtaining new sample points may be costly, a practical goal is to achieve this property with a minimal number $n$.

To leverage the existing results from least squares methods~\cite{cohen_2017_optimal} in the development of numerical methods, one may rely on explicit bounds on the coefficients or a weighted summability property of the form $\bfu\in\ell^2_\bfomega$.
Given such a bound, we may define the sets $\Lambda_n$ corresponding to  the $n$ smallest weights and prove that
$$
    \norm{\pars{I-P_{\Lambda_n}}\bfu} \lesssim n^{-s},
$$
where $s$ relates to the summability of the sequence $\bfomega$.
An example of this can be seen in~\cite{cohen_2021_ppde}.
From a statistical point of view, this has the advantage that there exist bounds that guarantee that a small number of parameter evaluations is sufficient to result in a quasi-best sparse approximation with high probability.
Finding sets $\Lambda_n$ with the prescribed error bounds, however, relies on the knowledge of an exponentially increasing weight sequences $\bfomega$.
Such sequences do not exist for every PDE and their existence is not always easy to prove.
Moreover, due to the reliance on optimal sampling, the cited work also requires the ability to draw new samples from a problem adapted measure.

An alternative approach that mitigates these issues is the use of weighted sparsity~\cite{bouchot_2015_CSPG,rauhut_2016_cspg}.
Let $\bfv$ denote the sequence of coefficients of $v\in \mcal{V}$ with respect to a given basis.
Then the set of $\bfomega$-weighted $r$-sparse sequences is given by the ball
\begin{equation}
\label{eq:sparse_model_class}
    B_{\ell_\bfomega^0}\pars{0, r} = \braces{\bfv \,:\, \norm{\bfv}_{\ell^0_\bfomega}\le r} ,
\end{equation}
where the $\ell^0_\bfomega$-``norm'' is a generalisation of the standard $\ell^0$-``norm'' and is defined in Section~\ref{sec:weighted_stechkin}.
In~\cite{rauhut_2016_weighted_l1} the authors show that a significantly improved bound for the probability of the RIP of $B_{\ell_\bfomega^0}\pars{0, r}$ can be derived when the $\ell^0$-``norm'' is replaced by its weighted version.
Although the shown a priori convergence rates still rely on weighted summability assumptions, the method itself does not.
The only requirement is an upper bound on the $L^\infty$-norm of the basis functions.
As a consequence, it can be applied easily in practice and is less reliant on the rate of decay of the sequence $\bfomega$.
The following theorem is a slight generalisation of this result.


\begin{theorem}
\label{thm:sparse_RIP_weighted}
	Fix parameters $\delta,p\in\pars{0,1}$.
	Let $\braces{B_j}_{j\in\bracs{D}}$ be orthonormal in $L^2\pars{Y,\gamma}$ and let $w\ge 0$ be any weight function satisfying $\norm{w^{-1}}_{L^1\pars{Y,\gamma}} = 1$.
	Assume the weight sequence $\bfomega$ in the definition \eqref{eq:sparse_model_class} of the model class $\mcal{M}$ satisfies $\bfomega_j \ge \norm{w^{1/2} B_j}_{L^\infty(Y,\gamma)}$ and fix
	\begin{equation}
		n \ge C \delta^{-2} r\max\braces{\log^3\pars{r}\log\pars{D}, -\log\pars{p}} .
	\end{equation}
	Let $y_1,\ldots, y_n$ be drawn independently from $w^{-1}\gamma$.
	Then the probability of $\operatorname{RIP}_{B_{\ell^0_\bfomega}\pars{0,r}}\pars{\delta}$
	exceeds $1-p$.
\end{theorem}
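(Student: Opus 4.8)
The plan is to reformulate the claim as a concentration statement for a random Gram matrix and then reduce to the weighted restricted isometry estimate of Rauhut--Ward, of which this theorem is a slight generalisation. First I would fix the basis and write $v=\sum_{j\in\bracs{D}}\bfv_j B_j$, so that by orthonormality $\norm{v}^2=\norm{\bfv}_{\ell^2}^2$. Collecting the features into $\phi(y):=w(y)^{1/2}\pars{B_1(y),\dots,B_D(y)}^\top$, the empirical norm becomes $\norm{v}_n^2=\bfv^\top\boldsymbol{G}\bfv$ with $\boldsymbol{G}:=\tfrac1n\sum_{i=1}^n\phi(y^i)\phi(y^i)^\top$. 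A direct computation using $y^i\sim w^{-1}\gamma$ together with $\int_Y B_jB_k\dx[\gamma]=\delta_{jk}$ gives $\mbb{E}\bracs{\boldsymbol{G}}=I$, so that $\operatorname{RIP}_{B_{\ell^0_\bfomega}\pars{0,r}}\pars{\delta}$ is equivalent to
\begin{equation}
    \sup_{S}\ \norm{\pars{\boldsymbol{G}-I}_{S\times S}}_{2\to2}\le\delta,
\end{equation}
where the supremum runs over all supports $S\subseteq\bracs{D}$ that are weighted $r$-sparse, i.e.\ $\sum_{j\in S}\bfomega_j^2\le r$.

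The crucial structural input is the weight hypothesis. Since $w^{1/2}(y)\abs{B_j(y)}\le\norm{w^{1/2}B_j}_{L^\infty\pars{Y,\gamma}}\le\bfomega_j$ pointwise, every restricted feature vector obeys the uniform bound $\norm{\phi_S(y)}_{\ell^2}^2=\sum_{j\in S}w(y)B_j(y)^2\le\sum_{j\in S}\bfomega_j^2\le r$. Equivalently, introducing $\tilde B_j:=w^{1/2}B_j$ and the measure $\tilde\gamma:=w^{-1}\gamma$, which is a probability measure precisely because $\norm{w^{-1}}_{L^1\pars{Y,\gamma}}=1$, the system $\braces{\tilde B_j}$ is orthonormal in $L^2\pars{Y,\tilde\gamma}$ and bounded by the weights, $\norm{\tilde B_j}_{L^\infty}\le\bfomega_j$, while the $y^i$ are i.i.d.\ draws from $\tilde\gamma$. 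This places us exactly in the setting of a weighted bounded orthonormal system, with the weighted sparsity budget $r$ playing the role that $s\,\max_j\norm{B_j}_{L^\infty}^2$ plays in the unweighted theory.

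With these two facts in hand I would invoke the weighted restricted isometry estimate of~\cite{rauhut_2016_weighted_l1}, whose hypotheses are precisely the orthonormality of $\braces{\tilde B_j}$ together with the pointwise envelope $\norm{\tilde B_j}_{L^\infty}\le\bfomega_j$, and whose conclusion is that $n\gtrsim\delta^{-2}r\,\log^3\pars{r}\log\pars{D}$ samples suffice to force the restricted isometry constant below $\delta$; the additional $-\log\pars{p}$ term arises from the standard tail refinement upgrading the bound on the expected supremum to a high-probability statement. The main obstacle is this underlying probabilistic estimate: controlling $\mbb{E}\,\sup_S\norm{\pars{\boldsymbol{G}-I}_{S\times S}}_{2\to2}$ over all weighted $r$-sparse supports requires a generic-chaining / Dudley-type entropy bound for the associated set of weighted-sparse unit vectors, and it is this entropy integral that produces the $\log^3\pars{r}$ factor, while the union over admissible coordinates contributes $\log\pars{D}$. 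A minor technical point specific to the generalisation is that $w$ is only assumed nonnegative, so $\tilde\gamma$ may ignore the set $\braces{w=0}$; one checks that $\norm{w^{1/2}B_j}_{L^\infty\pars{Y,\gamma}}$ nonetheless remains a valid pointwise envelope for the features on the support of $\tilde\gamma$, so the reduction is unaffected.
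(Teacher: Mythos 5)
Your proposal is correct and follows essentially the same route as the paper: both arguments perform the change of measure to view $\braces{w^{1/2}B_j}$ as a weighted bounded orthonormal system in $L^2\pars{Y, w^{-1}\gamma}$ with envelope $\bfomega_j$, and then invoke the Rauhut--Ward weighted RIP theorem for such systems, translating the resulting isometry statement back to $\norm{\bullet}_{L^2\pars{Y,\gamma}}$ and the $w$-weighted empirical norm. Your Gram-matrix reformulation and the remarks on the chaining/entropy origin of the $\log^3\pars{r}\log\pars{D}$ factor are a more explicit unpacking of what the cited theorem provides, but they do not change the substance of the reduction.
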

\begin{proof}
    To make the weight function $w$ that is used explicit, we define $\norm{v}_{n,w} := \frac{1}{n} \sum_{i=1}^n w\pars{y_i} v\pars{y_i}^2$.
    Applying Theorem~5.2 from~\cite{rauhut_ward} to the $L^2\pars{Y,w^{-1}\gamma}$-orthonormal basis $\braces{\psi_j := w^{1/2} B_j}_{j\in\bracs{D}}$ shows that the probability of the event 
    \begin{equation}
		\forall v\in\ell^2: \norm{v}_{\ell^0_\bfomega} \le s
  \end{equation}
  exceeds $1-p$, which implies that 
  \begin{equation}
   \pars{1-\delta} \norm{\psi^\intercal v}_{L^2\pars{Y,w^{-1}\gamma}}^2 \le \norm{\psi^\intercal v}_{n,1}^2 \le \pars{1+\delta} \norm{\psi^\intercal v}_{L^2\pars{Y,w^{-1}\gamma}}^2
	\end{equation}
	holds with probability higher than $1-p$.
	The claim follows, since $\norm{\psi^\intercal v}_{L^2\pars{Y,w^{-1}\gamma}} = \norm{B^\intercal v}_{L^2\pars{Y,\gamma}}$ and $\norm{\psi^\intercal v}_{n,1} = \norm{B^\intercal v}_{n,w}$.
\end{proof}

The index set $J_n$ that contains the $n$ largest coefficients of the function can in principle contain arbitrary large indices.
This is an issue for algorithmic realisations, where $J_n$ must be restricted to lie in a finite set of candidate indices $\Lambda$.
The set $\Lambda$ should be large enough to ensure that $J_n\subseteq\Lambda$ but not too large as to blow up the time complexity of the numerical algorithm, which scales at least linearly with $\abs{\Lambda}$.
Specifically, it has to be chosen carefully not to re-introduce the CoD.
Without assumptions on the summability of the coefficients, such a set is difficult to find.
For the model problems considered in this work, this is not a problem since appropriate candidate sets $\Lambda$ can be designed based on the summability conditions in Theorem~\ref{thm:Legendre_decay_Bachmayr}.
For other problems such conditions are not known, which impedes the application of compressed sensing algorithms.

\paragraph*{Contributions}

Without prior knowledge, the exponentially large candidate set $\Lambda=\braces{0, \ldots, d-1}^M$ is a natural choice but classical algorithms for sparse approximation would yield a complexity polynomial in $\vert\Lambda \vert = d^M$, hence the CoD.
We propose to use tensor trains~\cite{oseledets_tensor-train_2011,holtz_alternating_2012} to alleviate this CoD.
Building upon results from~\cite{li_2022_sparse_tt}, we show that the best $n$-term approximation can be represented in a sparse tensor train format with rank $n$.
A sparse version of the ALS algorithm, which we call SALS, can be used to optimise over the sparse components. The complexity becomes polynomial in $n$ and $d$ and linear in $M$.
This allows almost the same sample complexity bounds as in Theorem~\ref{thm:sparse_RIP_weighted} while also admitting an admissible algorithmic realisation.
We emphasise that this new algorithm is a feasible alternative to sparse approximation algorithm, not for the approximation in low-rank tensor format.


\subsection{Numerical methods for weighted sparse and low-rank tensor train approximation}

The results of Section~\ref{sec:sparse_als} provide an approach to express sparse tensors as TTs with a rank that is bounded by the number of nonzero entries of the component tensors.
We demonstrate that tensors with weighted sparsity are not only sparse but have also low rank, which is not exploited by the model class of sparse tensor trains from the previous section.
Due to the special structure of these sparse tensor trains, the basis of every core tensor is strongly overparameterised.
As a result, the linear systems arising in the microsteps of the SALS may become very large and the optimisation becomes very costly.
Another consequence of this overparameterisation is that the sample size that is required for an accurate microstep is larger than it would have to be if only a minimal basis would have been used.

\paragraph*{Contributions}
As a possible solution to the overparametrisation issue we propose to round the sparse tensor back to minimal rank.
Although this destroys the sparsity of the orthogonal component tensors, it retains the weighted sparsity of the core tensor.
This yields a new model class of sparse and low-rank tensor trains.
Investigating the probability of the RIP for this hybrid model class is the focus of Section~\ref{sec:semisparse_als}.
To do this, we show in Theorem~\ref{thm:rip_close_BA} that the RIP on $B_{\ell^0_\bfomega}\pars{0,r}$ induces a RIP for any model class that is close enough with respect to an appropriate distance.
This is a promising novel result that applies to any model class.
In particular, we show in Theorem~\ref{thm:RIP_MRromega} that our hybrid model class satisfies the conditions of Theorem~\ref{thm:rip_close_BA}.
It thereby inherits the RIP from sparse vectors that is guaranteed by the stability result in Theorem~\ref{thm:sparse_RIP_weighted}.
The results improve upon the previously developed theory for tensor reconstruction of solutions of high-dimensional parametric PDEs as presented in~\cite{trunschke21,trunschke_2022_convergence}.


\section{A weighted version of Stechkin's lemma}
\label{sec:weighted_stechkin}


In what follows, we are concerned with coefficient sequences $\bfx$ indexed by a set $\Lambda$, which may be finite or countably infinite.
If not specified otherwise, we always assume that $\Lambda = \mathbb{N}$.
Since any operation defined on the coefficients can be extended to an element-wise operation defined on sequences, we write e.g.\ $\pars{\bfx\bfy}_j = \bfx_j\bfy_j$ as the element-wise product of two sequences $\bfx$ and $\bfy$ and $\pars{\bfx/\bfy}_j = \bfx_j/\bfy_j$ as the element-wise division.
For any such sequence $\bfx$ and any subset $J\subseteq \Lambda$, define $P_J \bfx$ via
\begin{equation}
    \pars{P_J \bfx}_j := \begin{cases}
        \bfx_j & j\in J\\
        0      & j\in\Lambda\setminus J .
    \end{cases}
\end{equation}
In other words, 
$P_J$ is the canonical projection onto the linear space $\operatorname{span}\braces{\bfe_j\,:\,j\in J}$, with $\bfe_j$ the canonical sequence having $1$ at index $j$ and $0$ everywhere else.
Moreover, let $\supp\pars{\bfx} := \braces{j\in\Lambda\,:\,\bfx_j\ne0}$ denote the support of $\bfx$.
To a vector $\bfomega \in \bracs{0,\infty}^\Lambda$ of weights we associate for $0<p\le\infty$ the weighted $\ell^p$ spaces
\begin{equation}
    \ell^p_{\bfomega}
    := \braces*{ \bfx \in \mbb{R}^\Lambda\ :\ \norm{\bfx}_{\ell^p_\bfomega} := \norm{\bfomega \bfx}_{\ell^p} < \infty} .
\end{equation}
Central to our analysis is the weighted $\ell_0$-``norm'' given by
\begin{equation}
    \norm{\bfx}_{\ell^0_\bfomega} = \sum_{j\in\supp\pars{\bfx}} \bfomega^2_{j},
\end{equation}
which counts the squared weights of the non-zero entries of $\bfx$.
When $\bfomega \equiv 1$, these weighted norms reproduce the standard $\ell^p$ norms.

In sparse approximation theory, the assumption $\bfv\in\ell^q$ is often central for the analysis.
However, it provides no guarantee for the position of the largest elements in the sequence.
For the purpose of numerical discretisation this is problematic since a truncation after the first $n$ terms of the sequence is not guaranteed to contain the largest elements.
Without an explicit bound on the decay of $\bfv$, no bounds for the discretisation error can be given.
We hence argue that it is natural to require an ordering of the terms that is induced by a weight sequence $\bfomega$.
Such a weighting exists for instance in the coefficients of solutions of parametric PDEs~\cite{Bachmayr_2016_I,Bachmayr_2016_II}.
Moreover, we will show later that such a weighting occurs naturally for many classical regularity classes like Sobolev and Besov spaces and for certain bases.

A very elegant proof of Stechkin's Lemma (Lemma~\ref{lem:stechkin}) is provided in~\cite[Lemma~3.6]{cohen_devore_2015}, which relies on a basic bound for the decay of any $\bfv\in\ell^q$ and an application of H\"older's inequality.
The same reasoning can be applied to obtain a proof for the Stechkin inequality in the weighted setting below.

\begin{lemma} \label{lem:weighted_stechkin}
    Let $0 < q < p \le \infty$ and $\bfalpha,\bfsigma,\bfomega\in\bracs{0,\infty}^{\mbb{N}}$ be sequences satisfying $\bfalpha^p = \bfsigma^{p-q} \bfomega^q$ (or $\bfalpha = \bfsigma$ in the case $p=\infty$).
	For a sequence $\bfv\in\mbb{R}^{\mbb{N}}$ with $\norm{\bfsigma \bfv}_{\ell^\infty} < \infty$ and $\norm{\bfomega \bfv}_{\ell^q} < \infty$ let $J_n$ be the set of indices corresponding to the $n$ largest elements of the sequence $\bfsigma\abs{\bfv}$.
	Then
    \begin{alignat}{2}
		\norm{ \bfv -P_{J_n}\bfv}_{\ell^p_\bfalpha}
		&\le \norm{P_{J_{n+1}} \tfrac{\bfomega}{\bfsigma}}_{\ell^q}^{-sq} \norm{\bfv}_{\ell^q_\bfomega},
		&\quad
		s &:= \tfrac{1}{q} - \tfrac{1}{p} .
    \intertext{For $\bfalpha\equiv 1$ it holds that $\bfsigma = \bfomega^{q/(p-q)}$ and the inequality simplifies to}
		\norm{\bfv  -P_{J_n} \bfv}_{\ell^p}
		&\le \norm{P_{J_{n+1}} \bfomega}_{\ell^{1/s}}^{-1} \norm{\bfv}_{\ell^q_\bfomega},
        &\quad
		s &:= \tfrac{1}{q} - \tfrac{1}{p} .
    \end{alignat}
\end{lemma}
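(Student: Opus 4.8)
The plan is to transfer the elegant argument of \cite[Lemma~3.6]{cohen_devore_2015} to the weighted setting, with the sorting now governed by $\bfsigma\abs{\bfv}$ rather than $\abs{\bfv}$. The whole proof reduces to two ingredients: an algebraic factorisation of the summand that exploits the constraint $\bfalpha^p = \bfsigma^{p-q}\bfomega^q$, and a weighted decay bound on the $(n+1)$-th largest value of the ordering sequence.

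First I would treat the case $p<\infty$. Writing the defining sum and inserting the constraint gives, for every index $j$, the factorisation $\bfalpha_j^p\abs{\bfv_j}^p = \pars{\bfsigma_j\abs{\bfv_j}}^{p-q}\pars{\bfomega_j\abs{\bfv_j}}^q$, so that $\norm{\bfv-P_{J_n}\bfv}_{\ell^p_\bfalpha}^p = \sum_{j\notin J_n}\pars{\bfsigma_j\abs{\bfv_j}}^{p-q}\pars{\bfomega_j\abs{\bfv_j}}^q$. Let $\tau := \min_{j\in J_{n+1}}\bfsigma_j\abs{\bfv_j}$ be the $(n+1)$-th largest value of $\bfsigma\abs{\bfv}$. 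By the sorting, $\bfsigma_j\abs{\bfv_j}\le\tau$ for every $j\notin J_n$, so I can pull the first factor out uniformly and bound the remaining weighted $\ell^q$-sum by the full norm, obtaining $\norm{\bfv-P_{J_n}\bfv}_{\ell^p_\bfalpha}^p \le \tau^{p-q}\norm{\bfv}_{\ell^q_\bfomega}^q$.

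The heart of the argument---and the step I expect to be the main obstacle---is the weighted decay bound $\tau \le \norm{P_{J_{n+1}}\tfrac{\bfomega}{\bfsigma}}_{\ell^q}^{-1}\norm{\bfv}_{\ell^q_\bfomega}$, which replaces the elementary $n^{-1/q}$ estimate of the unweighted lemma and is where the two distinct weight sequences interact. I would prove it by restricting the weighted norm to $J_{n+1}$ and rewriting each term through the ratio, $\bfomega_j\abs{\bfv_j} = \tfrac{\bfomega_j}{\bfsigma_j}\,\bfsigma_j\abs{\bfv_j} \ge \tfrac{\bfomega_j}{\bfsigma_j}\,\tau$ for $j\in J_{n+1}$, since there $\bfsigma_j\abs{\bfv_j}\ge\tau$. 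Summing the $q$-th powers yields $\norm{\bfv}_{\ell^q_\bfomega}^q \ge \tau^q\norm{P_{J_{n+1}}\tfrac{\bfomega}{\bfsigma}}_{\ell^q}^q$, which is exactly the claimed decay bound. Substituting it into the previous display and taking $p$-th roots, the exponent works out via $\tfrac{p-q}{p} = sq$, giving $\norm{\bfv-P_{J_n}\bfv}_{\ell^p_\bfalpha} \le \norm{P_{J_{n+1}}\tfrac{\bfomega}{\bfsigma}}_{\ell^q}^{-sq}\norm{\bfv}_{\ell^q_\bfomega}$.

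It remains to dispatch two easy cases. For $p=\infty$ (where $\bfalpha=\bfsigma$) the left-hand side is simply $\sup_{j\notin J_n}\bfsigma_j\abs{\bfv_j} = \tau$ and $sq=1$, so the statement is precisely the decay bound established above. For the simplified inequality, $\bfalpha\equiv 1$ forces $\bfsigma^{p-q}=\bfomega^{-q}$, hence $\tfrac{\bfomega}{\bfsigma}=\bfomega^{p/(p-q)}$; plugging this in and using $1/s = pq/(p-q)$ rewrites $\norm{P_{J_{n+1}}\tfrac{\bfomega}{\bfsigma}}_{\ell^q}^{-sq}$ as $\norm{P_{J_{n+1}}\bfomega}_{\ell^{1/s}}^{-1}$. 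The only points requiring care are making the ``$(n+1)$-th largest'' sorting argument precise (in particular the uniform bound $\bfsigma_j\abs{\bfv_j}\le\tau$ on the complement of $J_n$) and checking that the finiteness hypotheses $\norm{\bfsigma\bfv}_{\ell^\infty}<\infty$ and $\norm{\bfomega\bfv}_{\ell^q}<\infty$ guarantee that the sorted sequence, and hence $\tau$, is well defined.
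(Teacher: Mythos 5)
Your proof is correct and follows essentially the same route as the paper: the uniform bound $\bfsigma_j\abs{\bfv_j}\le\tau$ off $J_n$ is exactly the paper's H\"older step (with exponents $\infty$ and $1$) made explicit, and your decay bound for $\tau$, obtained by summing $\pars{\bfomega_j/\bfsigma_j}^q\tau^q$ over $J_{n+1}$, is precisely the paper's key estimate. The only cosmetic difference is the order of the cases: the paper proves $p=\infty$ first and reduces $p<\infty$ to it, whereas you treat $p<\infty$ directly and get $p=\infty$ as a corollary.
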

\begin{proof}
    We start by proving the assertion for $p=\infty$.
	Without loss of generality, we can assume that $v$ is ordered such that the sequence $\bfsigma\abs{\bfv}$ is decreasing.
	Under this assumption $J_n = \bracs{n}$.
    The choice $p=\infty$ implies $\bfalpha=\bfsigma$ and $\norm{\pars{I-P_{J_n}} \bfalpha \bfv}_{\ell^p} = \norm{\pars{I-P_{\bracs{n}}} \bfsigma \bfv}_{\ell^\infty} = \bfsigma_{n+1}\abs{\bfv_{n+1}}$.
    Now the bound $\bfsigma_{n}\abs{\bfv_{n}} \le \norm{P_{J_n} \frac{\bfomega}{\bfsigma}}_{\ell^q}^{-1} \norm{\bfomega \bfv}_{\ell^q}$ follows from
	\begin{align}
		\sum_{k=1}^{n} \pars*{\tfrac{\bfomega_{k}}{\bfsigma_{k}}}^q \pars{\bfsigma_{n} \abs{\bfv_{n}}}^q
		\le \sum_{k=1}^{n} \pars*{\tfrac{\bfomega_{k}}{\bfsigma_{k}}}^q \bfsigma_{k}^q \abs{\bfv_k^{\vphantom{q}}}^q
		= \sum_{k=1}^{n} \bfomega_k^q \abs{\bfv_k^{\vphantom{q}}}^q
		\le \norm{\bfomega \bfv}_{\ell^q}^q .
	\end{align}
	This proves the claim for $p=\infty$.
	The case $p<\infty$ can be reduced to $p=\infty$ using H\"older's inequality via
	\begin{equation} 
	    \norm{\pars{I-P_{J_n}}\bfalpha \bfv}_{\ell^p}^p
	    = \norm{\pars{\pars{I-P_{J_n}}\bfsigma \bfv}^{p-q} \pars{\pars{I-P_{J_n}}\bfomega \bfv}^q}_{\ell^1}
	    \le \norm{\pars{I-P_{J_n}}\bfsigma \bfv}_{\ell^\infty}^{p-q} \norm{\bfomega \bfv}_{\ell^q}^{q} .
	\end{equation}
	The claim follows by using the weighted Stechkin bound for the factor $\norm{\pars{I-P_{J_n}}\bfsigma \bfv}_{\ell^\infty}
	\le \norm{P_{J_{n+1}} \frac{\bfomega}{\bfsigma}}_{\ell^q}^{-1}\norm{\bfomega \bfv}_{\ell^q}$.
\end{proof}

The preceding lemma is a weighted generalisation of Stechkin's Lemma~\ref{lem:stechkin} with which it coincides for the choice $\bfalpha=\bfsigma=\bfomega\equiv 1$.
In this setting, the parameter $q$ has to be chosen as small as possible to exploit the decay of the sequence $\bfv$ and increase the rate of convergence $s$.
When using the weighted Stechkin estimate of Lemma~\ref{lem:weighted_stechkin} this is not necessary since the decay of the sequence can be measured by means of the sequence $\bfomega$.

To get a better intuition of the derived results, note that each of the sequences $\bfalpha$, $\bfsigma$ and $\bfomega$ controls a different aspect of the estimate.
The sequence $\bfalpha$ determines how the truncation error is measured, $\bfsigma$ controls the truncation strategy and $\bfomega$ measures the decay of the sequence.
However, due to the constraint $\bfalpha^p = \bfsigma^{p-q}\bfomega^q$ only two of these sequences can be chosen freely.
Typically, these are $\bfalpha$ and $\bfomega$.

In the remainder of this section, the roles of the different parameters that occur in Lemma~\ref{lem:weighted_stechkin} are discussed with illustrative examples.
We start with an examination of $p$ and $\bfalpha$, which should be chosen to obtain an appropriate error norm $\norm{\bullet}_{\ell^p_\bfalpha}$ as in the following four examples.

\begin{example}[Sobolev and spectral Barron spaces on the torus]
\label{ex:1d-spaces}
    Suppose that $f$ is a function on the $1$-torus $\mbb{T}$ and let $v$ be its sequence of Fourier coefficients.
    Then $p=1$ and $p=2$ together with the weight sequence $\bfalpha(k)_j := \pars{1+j^2}^{k/2}$ provide a natural choice of parameters since the Sobolev and spectral Barron norms (cf.~\cite{chen_2022_spectral_barron}) of $f$ are then defined by
    \begin{equation}
        \norm{f}_{H^k\pars{\mbb{T}}} := \norm{\bfv}_{\ell^2_{\bfalpha(k)}}
        \qquad\text{and}\qquad
        \norm{f}_{B^k\pars{\mbb{T}}} := \norm{\bfv}_{\ell^1_{\bfalpha(k)}}\qquad \text{for any }k\in\mbb{R} .
    \end{equation}
\end{example}

\begin{example}[Sobolev and Besov spaces]
\label{ex:sparse_grids}
    Consider the Sobolev space $W^{k,p}$ of functions defined on the interval $[0,1]$ equipped with the Lebesgue measure, with $k\ge 1$ and $1 \le p \le \infty$.  
    A natural basis for this space is the hierarchical spline basis of degree $k$.
    It can be shown (see e.g.~\cite{ali_2021_part3}) that for any $v\in W^{k,p}$,
    $$
        \norm{v}_{W^{k,p}}
        \asymp \norm{\bfv}_{\ell^p_{\bfomega\pars{k}}}
        \qquad\text{with}\qquad
        \bfomega\pars{k}_{\ell, j} := 2^{k\ell}.
    $$
    A simple example for such a basis is provided in Appendix~\ref{app:sparse_grid_example}.
    These results can be extended to the wider class of Besov spaces $B^k_q\pars{L^p}$ for $0 < p = q \le \infty$ \cite{ali_2021_part3,leisner_2003_besov}.
\end{example}
    \begin{example}   
    Another useful choice of $p$ and $\bfalpha$ can be made when $f\in L^\infty\pars{\mcal{X},\gamma}$ for any measurable set $\mcal{X}$ and probability measure $\gamma$.
    Let $\bfv$ be the sequence of coefficients of $f$ with respect to the basis $\braces{B_j}_{j\in\mbb{N}}$ and define the sequence $\bfalpha_j := \norm{B_j}_{L^\infty\pars{\mcal{X},\gamma}}$.
    Then, by triangle inequality,
    \begin{equation}
        \norm{f}_{L^p\pars{\mcal{X},\gamma}} \le \norm{f}_{L^\infty\pars{\mcal{X},\gamma}} \le \norm{\bfv}_{\ell^1_\bfalpha} .
    \end{equation}
    By choosing weights so that $\bfalpha_j:= \norm{B_j}_{L^\infty\pars{\mcal{X},\gamma}} + \norm{B_j'}_{L^\infty\pars{\mcal{X},\gamma}}$, one may also arrive at bounds of the form $\norm{f}_{L^\infty\pars{\mcal{X},\gamma}} + \norm{f'}_{L^\infty\pars{\mcal{X},\gamma}} \le \norm{\bff}_{\ell^1_\bfalpha}$, reflecting how steeper weights encourage more smoothness (cf.~\cite{rauhut_2016_weighted_l1}).
    This bound for instance is used in the proof of Theorem~6.1 in~\cite{adcock_2022_isMCbad}, which provides dimension independent convergence rates for unweighted least squares approximation in high dimensions.
    However, the proof relies on a suboptimal weighted version of Stechkin's lemma, which we discuss further in Section~\ref{sec:relation_to_rauhut2016weighted}. 
\end{example}

\begin{example}
    Another useful application of Lemma~\ref{lem:weighted_stechkin} is given by the choice $p=\infty$ and $\bfalpha\equiv 1$.
    The choice $p=\infty$ requires $\bfsigma=\bfalpha\equiv 1$ and since $\bfalpha\equiv 1$, the bound simplifies to
    $$
        \norm{(I - P_{J_n}) \bfv}_{\ell^\infty} \le \norm{P_{J_{n+1}} \bfomega}_{\ell^q}^{-1} \norm{\bfv}_{\ell^q_\bfomega} .
    $$
    Replacing $\bfv$ by the monotonisation
    $$
        \bfv^{\mathrm{min}}_k := \min_{j\le k} \abs{\bfv_j}
        \quad\text{for all}\quad
        k\in\mbb{N}
    $$
    yields $J_n = [n]$, $\norm{(I - P_{[n]}) \bfv^{\mathrm{min}}}_{\ell^\infty} = \bfv^{\mathrm{min}}_{n+1}$ and $\norm{\bfv^{\mathrm{min}}}_{\ell^q_\bfomega} \le \norm{\bfv}_{\ell^q_\bfomega}$ and simplifies the bound even further to
    $$
        \bfv^{\mathrm{min}}_{n+1}
        \le \norm{P_{[n+1]} \bfomega}_{\ell^q}^{-1} \norm{\bfv}_{\ell^q_\bfomega} .
    $$
\end{example}

Next, we examine the parameters $q$ and $\bfomega$ and illustrate the benefits of the weighted version of Stechkin's lemma in terms of convergence.
The subsequent two examples aim to provide an intuition for the choice of $q$ and $\bfomega$, which should be chosen to capture the asymptotic decay of the sequence $\bfv$ in the reference norm $\norm{\bullet}_{\ell^q_\bfomega}$.

\begin{example}[The choice of $q$ and $\bfomega$ for algebraic decay]
\label{ex:q_omega_algebraic_decay}
    Consider the algebraically decaying sequence $\bfv_j = j^{-\rho_{\mathrm{alg}}}$ for some $\rho_{\mathrm{alg}}> 1$.
    To compare Lemma~\ref{lem:stechkin} and Lemma~\ref{lem:weighted_stechkin}, let $\bfalpha \equiv 1$ and $0 < q < p \le \infty$ be arbitrary but fixed.
    Moreover, define $\bar{q} := \frac1q$ and $\bar{p} := \frac1p$.
    Then Lemma~\ref{lem:s-harmonic-rate} provides the equivalence
    \begin{equation}
        \norm{\pars{1-P_{J_n}} \bfv}_{\ell^p}
        \sim \pars{n+1}^{-\pars{\rho_{\mathrm{alg}} - \bar{p}}} .
    \end{equation}
    
    This rate is a benchmark against which both versions of Stechkin's lemma can be compared.
    By Lemma~\ref{lem:s-harmonic-rate} it holds that
    \begin{equation}
        \frac{\bar{q}}{\rho_{\mathrm{alg}} - \bar{q}}
        \le \norm{\bfv}_{\ell^q}^q
        \le \frac{\rho_{\mathrm{alg}}}{\rho_{\mathrm{alg}} - \bar{q}}
    \end{equation}
    for any $\bar{q} \in \pars{\bar{p}, \rho_{\mathrm{alg}}}$.
    Stechkin's lemma thus yields the bound
    \begin{equation}
        \norm{\pars{1-P_{J_n}}\bfv}_{\ell^p}
        \le \pars{n+1}^{-s} \norm{\bfv}_{\ell^q}
        \le \pars{n+1}^{-\pars{\bar{q}-\bar{p}}} \pars*{\frac{\rho_{\mathrm{alg}}}{\rho_{\mathrm{alg}} - \bar{q}}}^{\bar{q}}.
    \end{equation}
    As $\bar{q}$ approaches the upper bound $\rho_{\mathrm{alg}}$, the predicted rate of convergence approaches the optimal rate $\rho_{\mathrm{alg}}-\bar{p}$.
    However, at the same time the factor $\norm{\bfv}_{\ell^q}$ diverges to infinity.
    This makes the bound only useful for large $n$ or small $\bar{q}$, i.e.\ small $s = \bar{q} - \bar{p}$.
    
    Although a different weight sequence $\bfomega$ cannot provide a faster rate of convergence, it can change the asymptotic constant.
    To this end we define the algebraically increasing sequence $\bfomega_j = j^{r}$ for some $r<\rho_{\mathrm{alg}}-\bar{q}$.

    This choice implies $\bfsigma_j = j^{-r/\pars{sp}}$.
    The technical Lemmas~\ref{lem:s-harmonic-partial} and~\ref{lem:s-harmonic-rate} in the appendix yield the bounds
    \begin{equation}
        \norm{P_{J_{n+1}} \tfrac{\bfomega}{\bfsigma}}_{\ell^q}^{q}
        \ge \frac{\pars{n+1}^{r/s+1}}{r/s+1}
        \qquad\text{and}\qquad
        \frac{\bar{q}}{\rho_{\mathrm{alg}}-r-\bar{q}}
        \le \norm{\bfv}_{\ell^q_\bfomega}^q
        \le \frac{\rho_{\mathrm{alg}} - r}{\rho_{\mathrm{alg}} - r - \bar{q}} .
    \end{equation}
    Applying Lemma~\ref{lem:weighted_stechkin}, we obtain the bound
    \begin{equation}
        \norm{\pars{1-P_{J_n}} \bfv}_{\ell^p}
		\le \norm{P_{J_{n+1}} \tfrac{\bfomega}{\bfsigma}}_{\ell^q}^{-sq} \norm{\bfv}_{\ell^q_\bfomega}
        \le \frac{\pars{n+1}^{-\pars{r+s}}}{\pars{r/s+1}^{-s}} \pars*{\frac{\rho_{\mathrm{alg}} - r}{\rho_{\mathrm{alg}} - r - \bar{q}}}^{\bar{q}} .
    \end{equation}
    As in the unweighted case, the factor $\norm{\bfv}_{\ell^q_\bfomega}$ diverges as $r$ increases or $q$ decreases.
    But in contrast to the unweighted case, we can choose different values for $q$ than in the classical Stechkin estimate while maintaining the same rate of convergence.
    Denote by $\tilde{q}$ the value of $\bar{q}$ chosen in the standard Stechkin estimate and recall that $\tilde{q} < \rho_{\mathrm{alg}}$.
    We can hence choose $r = \tilde{q} - \bar{q}$ and take the limit $q\to p$, leading to the estimate
    \begin{equation}
        \norm{\pars{1-P_{J_n}}\bfv}_{\ell^p}
        \le \pars{n+1}^{-\pars{\tilde{q}-\bar{p}}} \pars*{\frac{\bar{p}}{\rho_{\mathrm{alg}} - \tilde{q}}}^{\bar{p}} .
    \end{equation}
    Compared to the unweighted case, the asymptotic constant in the weighted case is significantly smaller.
    A comparison of these constants is given in Figure~\ref{fig:w_bound}.
    These constants makes the Stechkin bound viable even for small values of $n$.
\end{example}


\begin{figure}
    \centering
    \includegraphics[width=\textwidth]{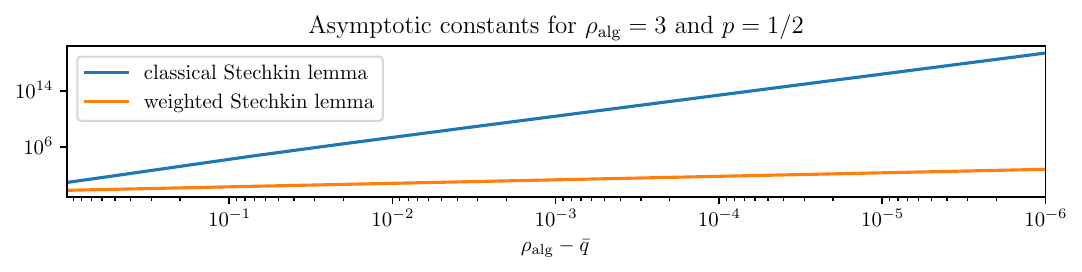}
    \caption{Behaviour of the asymptotic constants of the classical and weighted Stechkin estimates as the desired rate of convergence $s$ approaches the optimal rate $s^* := \rho_{\mathrm{alg}} - p^{-1}$.}
    \label{fig:w_bound}
\end{figure}

\begin{example}[The choice of $q$ and $\bfomega$ for exponential decay]
\label{ex:q_omega_exponential_decay}
    Consider the exponentially decaying sequence $\bfv_j = \rho_{\mathrm{exp}}^{j-1}$ with $\rho_{\mathrm{exp}}\in\pars{0,1}$.
    To compare Lemma~\ref{lem:stechkin} and Lemma~\ref{lem:weighted_stechkin}, let $\bfalpha \equiv 1$ and $0 < q < p \le \infty$ be arbitrary but fixed.
    Moreover, define $\bar{q} := \frac1q$ and $\bar{p} := \frac1p$.
    Then 
    \begin{equation}
        \norm{\pars{1-P_{J_n}} \bfv}_{\ell^p} = \rho_{\mathrm{exp}}^n \pars{1-\rho_{\mathrm{exp}}^p}^{-\bar{p}} .
    \end{equation}
    This rate is a benchmark against which both versions of Stechkin's lemma can be compared.
    The classical Stechkin lemma yields the bound
    \begin{equation}
        \norm{\pars{1-P_{J_n}} \bfv}_{\ell^p}
        \le \pars{n+1}^{-s} \norm{\bfv}_{\ell^q}
        = \pars{n+1}^{-s} \pars{1-\rho_{\mathrm{exp}}^q}^{-\bar{q}} , \quad \text{with } s= \bar q - \bar p.
    \end{equation}
    Notably, even though the factor $\norm{\bfv}_{\ell^q}$ does no longer impose a lower limit on $q$, the optimal exponential rate of convergence cannot be recovered.
    Moreover, the asymptotic constant still grows without bounds when $q$ decreases.
    This illustrates that Lemma~\ref{lem:stechkin} cannot fully exploit the decay of the sequence, which renders the estimates only useful as an asymptotic statement or for small values of $s$.

    To compare the preceding bound with the weighted bound of Lemma~\ref{lem:weighted_stechkin}, we choose the exponentially growing weight sequence $\bfomega_j = r^{-\pars{j-1}}$ for some $r\in\pars{\rho_{\mathrm{exp}} ,1}$.
    This choice implies $\bfsigma_j = r^{\pars{j-1}/\pars{sp}}$ and consequently
    \begin{equation}
        \norm{P_{J_{n+1}} \tfrac{\bfomega}{\bfsigma}}_{\ell^q}^{q}
        = \frac{r^{-\pars{n+1}/s}-1}{r^{-1/s}-1}
        \ge r^{-n/s}
        \qquad\text{and}\qquad
        \norm{\bfv}_{\ell^q_\bfomega}^q
        = \frac{1}{1 - \pars{\rho_{\mathrm{exp}}/r}^q} .
    \end{equation}
    Lemma~\ref{lem:weighted_stechkin} then yields
    \begin{equation}
        \norm{\pars{1-P_{J_n}} \bfv}_{\ell^p}
		\le \norm{P_{J_{n+1}} \tfrac{\bfomega}{\bfsigma}}_{\ell^q}^{-sq} \norm{\bfv}_{\ell^q_\bfomega}
        \le r^n \pars{1 - \pars{\rho_{\mathrm{exp}}/r}^q}^{-\bar{q}}.
    \end{equation}
    This shows that in contrast to the classical Stechkin inequality, the weighted Stechkin inequality can actually recover an exponential rate of convergence.
    This rate is even independent of $q$ but the asymptotic constant grows without bounds when $r$ approaches $\rho_{\mathrm{exp}}$.
\end{example}

\subsection{Relation to previous results}
\label{sec:relation_to_rauhut2016weighted}

Lemma~\ref{lem:weighted_stechkin} is not the first extension of Stechkin's lemma to the weighted case.
Another extension was proposed in~\cite{Rauhut2016weighted_l1}, which we briefly recall.
For a fixed parameter $p$ and sequence $\tilde{\bfomega}$
they consider the \emph{weighted $r$-sparse approximation error}
\begin{equation}
    \sigma_r\pars{v}_{\ell^p_\bfalpha}
    := \min_{\substack{I\subseteq\mbb{N}\\
    \tilde{\bfomega}\pars{I} \le r}}
    \norm{\pars{1 - P_I} \bfv}_{\ell^p_\bfalpha},
    \qquad
    \tilde\bfomega := \bfalpha^{p/\pars{2-p}},
\end{equation}
where for any subset $I\subseteq\mbb{N}$ the weighted cardinality is defined by $\tilde\bfomega\pars{I} := \sum_{i\in I}\tilde\bfomega_i^2$.
To bound this error, for a given sequence $\bfv$ and a threshold $r>0$ the set of indices $J_n$ corresponding to the $n$ largest elements of the sequence $\tilde\bfomega\abs{\bfv}$ is considered.
Moreover, let
$
    n\pars{r} := \max\braces{n\in\mbb{N} : \tilde{\bfomega}\pars{J_n} \le r} 
$.
By maximality of $n\pars{r}$, it holds that $\tilde{\bfomega}\pars{J_{n\pars{r}+1}} > r$.
Consequently,
\begin{equation} \label{eq:weighted_r-sparse_bound}
    \sigma_r\pars{\bfv}_{\ell^p_\bfalpha}
    \le \norm{\pars{1 - P_{J_{n\pars{r}}}} \bfv}_{\ell^p_\bfalpha}.
\end{equation}
Using the unweighted version of Stechkin's lemma, it is concluded in~\cite[Theorem~3.2]{rauhut_2016_weighted_l1} that
\begin{equation}
    \sigma_r\pars{\bfv}_{\ell^p_\bfalpha}
    \le \pars{r-\norm{\tilde{\bfomega}}_{\ell^\infty}^2}^{-s}\norm{\tilde{\bfomega}^{\pars{2-q}/q}}_{\ell^q},
	\quad s := \frac{1}{q} - \frac{1}{p},
\end{equation}
for all $0 < q < p \le 2$ and $r > \norm{\tilde{\bfomega}}_{\ell^\infty}^2$.
The main weakness of this statement comes from the requirement $r > \norm{\tilde{\bfomega}}_{\ell^\infty}^2$.
This condition requires that the sequence $\tilde{\bfomega}$ is bounded and implies at the same time that the bound only applies for a large threshold $r$, i.e.\ asymptotically.
Hence, either the sparse vectors are part of a finite dimensional space, which contradicts the asymptotical nature of the result, or the sparse vectors are in an infinite dimensional space but the sequence is asymptotically constant, which only results in a very limited generalisation of the classical Stechkin lemma.
These shortcomings can be eliminated by using our weighted version of Stechkin's lemma.
In fact, applying Lemma~\ref{lem:weighted_stechkin} with $\bfalpha = \tilde{\bfomega}^{\pars{2-p}/p}$ and $\bfomega = \tilde{\bfomega}^{\pars{2-q}/q}$ to the bound~\eqref{eq:weighted_r-sparse_bound} yields the subsequent corollary of Lemma~\ref{lem:weighted_stechkin}.
\begin{corollary} \label{cor:ward_weighted_stechkin}
    For $\tilde\bfomega\in\bracs{0,\infty}^{\mbb{N}}$ and $0 < q < p \le 2$ define $\bfalpha := \tilde{\bfomega}^{\pars{2-p}/p}$, $\bfsigma := \tilde{\bfomega}^{-1}$ and $\bfomega := \tilde{\bfomega}^{\pars{2-q}/q}$.
    Let $\bfv\in \ell^\infty_\bfsigma \cap\ell^q_\bfomega$ and let $J_n$ be the set of indices corresponding to the $n$ largest elements of $\bfsigma\abs{\bfv}$.
    Then, for any $r \ge 0$,
    \begin{equation}
        \bfsigma_r\pars{\bfv}_{\ell^p_\bfalpha}
        \le \norm{\pars{1 - P_{J_{n\pars{r}}}} \bfv}_{\ell^p_\bfalpha}
    	\le r^{-s}
    	\norm{\bfv}_{\ell^q_\bfomega},
    	\quad s := \frac{1}{q} - \frac{1}{p} .
    \end{equation}
\end{corollary}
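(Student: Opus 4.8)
The plan is to recognise that this corollary is an almost immediate specialisation of Lemma~\ref{lem:weighted_stechkin}, with the only real work being the verification of the compatibility constraint and the identification of the Stechkin prefactor with the weighted cardinality. First I would dispose of the left inequality, which is nothing but~\eqref{eq:weighted_r-sparse_bound}: by the very definition of $n\pars{r}$ the index set $J_{n\pars{r}}$ satisfies $\tilde{\bfomega}\pars{J_{n\pars{r}}}\le r$ and is therefore admissible in the minimisation defining $\sigma_r\pars{\bfv}_{\ell^p_\bfalpha}$, so that $\sigma_r\pars{\bfv}_{\ell^p_\bfalpha}\le\norm{\pars{1-P_{J_{n\pars{r}}}}\bfv}_{\ell^p_\bfalpha}$ holds automatically.

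Next I would check that the triple $\pars{\bfalpha,\bfsigma,\bfomega}$ meets the hypothesis $\bfalpha^p=\bfsigma^{p-q}\bfomega^q$ of Lemma~\ref{lem:weighted_stechkin}. Substituting the definitions gives $\bfalpha^p=\tilde{\bfomega}^{2-p}$, $\bfsigma^{p-q}=\tilde{\bfomega}^{q-p}$ and $\bfomega^q=\tilde{\bfomega}^{2-q}$, whence
$$
    \bfsigma^{p-q}\bfomega^q=\tilde{\bfomega}^{\pars{q-p}+\pars{2-q}}=\tilde{\bfomega}^{2-p}=\bfalpha^p,
$$
as required. Moreover the truncation set $J_n$ in the corollary is defined exactly as in the lemma, via the $n$ largest entries of $\bfsigma\abs{\bfv}$, and the summability hypotheses $\bfv\in\ell^\infty_\bfsigma\cap\ell^q_\bfomega$ are precisely those needed, so the lemma applies verbatim with $n=n\pars{r}$.

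The core computation is then to evaluate the prefactor. Since $\bfomega/\bfsigma=\tilde{\bfomega}^{\pars{2-q}/q+1}=\tilde{\bfomega}^{2/q}$, one has $\pars{\bfomega/\bfsigma}^q=\tilde{\bfomega}^2$, so that
$$
    \norm{P_{J_{n\pars{r}+1}}\tfrac{\bfomega}{\bfsigma}}_{\ell^q}^q=\sum_{j\in J_{n\pars{r}+1}}\tilde{\bfomega}_j^2=\tilde{\bfomega}\pars{J_{n\pars{r}+1}},
$$
and hence $\norm{P_{J_{n\pars{r}+1}}\tfrac{\bfomega}{\bfsigma}}_{\ell^q}^{-sq}=\tilde{\bfomega}\pars{J_{n\pars{r}+1}}^{-s}$. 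Plugging this into Lemma~\ref{lem:weighted_stechkin} and invoking the maximality of $n\pars{r}$, which guarantees $\tilde{\bfomega}\pars{J_{n\pars{r}+1}}>r$, together with $s>0$, yields $\tilde{\bfomega}\pars{J_{n\pars{r}+1}}^{-s}\le r^{-s}$ and therefore the claimed bound $\norm{\pars{1-P_{J_{n\pars{r}}}}\bfv}_{\ell^p_\bfalpha}\le r^{-s}\norm{\bfv}_{\ell^q_\bfomega}$.

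I do not anticipate a genuine obstacle here; the argument is essentially bookkeeping of exponents of $\tilde{\bfomega}$. The one point requiring a little care is the passage from the weighted Stechkin prefactor to the weighted cardinality $\tilde{\bfomega}\pars{J_{n\pars{r}+1}}$ and the subsequent use of the defining inequality $\tilde{\bfomega}\pars{J_{n\pars{r}+1}}>r$. It is exactly this reindexing from the $n$-indexed estimate of the lemma to the threshold-indexed ($r$-indexed) statement that turns the asymptotic prefactor into the clean bound $r^{-s}$, and thereby removes the restriction $r>\norm{\tilde{\bfomega}}_{\ell^\infty}^2$ present in the earlier result of~\cite{rauhut_2016_weighted_l1}.
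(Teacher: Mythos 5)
Your proposal is correct and follows exactly the route the paper intends: the left inequality is the admissibility bound~\eqref{eq:weighted_r-sparse_bound}, and the right inequality comes from Lemma~\ref{lem:weighted_stechkin} with the stated $\pars{\bfalpha,\bfsigma,\bfomega}$, where the prefactor $\norm{P_{J_{n\pars{r}+1}}\bfomega/\bfsigma}_{\ell^q}^{-sq}$ collapses to $\tilde{\bfomega}\pars{J_{n\pars{r}+1}}^{-s}\le r^{-s}$ by maximality of $n\pars{r}$. The exponent bookkeeping and the identification $\pars{\bfomega/\bfsigma}^q=\tilde{\bfomega}^2$ are exactly the computations the paper leaves implicit.
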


This new weighted version of Stechkin's lemma results in improved bounds in the original work~\cite{rauhut_2016_weighted_l1} as well as derived works such as~\cite{adcock_2022_isMCbad}.

\begin{example}
\label{ex:error_bounds}
    Let $\braces{B_j}_{j\in\mbb{N}}$ be an $L^2\pars{Y, \rho}$-orthonormal basis and identify $v\in L^2\pars{Y,\rho}$ with its sequence of coefficients $\bfv\in\ell^2$.
    Moreover, define the weight sequence $\bfomega_j := \norm{B_j}_{w,\infty}$ and the model class
    \begin{equation}
        \mcal{M} := \braces{\bfv\in\ell^2 \,:\, \norm{\bfv}_{\ell^0_\bfomega}\le r} .
    \end{equation}
    Combining Proposition~\ref{prop:empirical_projection_error} and Theorem~\ref{thm:sparse_RIP_weighted} with Corollary~\ref{cor:ward_weighted_stechkin} yields the bound
    \begin{align}
        \norm{v - v_{\mcal{M},\boldsymbol{y}}}
        &\le \norm{v - v_{\mcal{M}}} + \tfrac{2}{\sqrt{1-\delta}} \norm{v - v_{\mcal{M}}}_{w,\infty} \\
        &\le \norm{\bfv - \bfv_{\mcal{M}}}_{\ell^2} + \tfrac{2}{\sqrt{1-\delta}} \norm{\bfv - \bfv_{\mcal{M}}}_{\ell^1_\bfomega} \\
        &\le \pars{1 + \tfrac{2}{\sqrt{1-\delta}}} \norm{\bfv - \bfv_{\mcal{M}}}_{\ell^1_\bfomega} \\
        &\le r^{-1} \pars{1 + \tfrac{2}{\sqrt{1-\delta}}} \norm{\bfv}_{\ell^{1/2}_{\bfomega^{3/2}}} ,
    \end{align}
    which holds with high probability if $n \gtrsim r\log^3\pars{r}\delta^{-2}$.
\end{example}

\subsection{The relation of \texorpdfstring{$\ell^p_\bfomega$}{weighted lp spaces} to other spaces}
It is of general interest to examine the relation of weighted sequence spaces depending on exponents and the weight sequences, which is the topic of this section.
We first substantiate our claim that $\bfomega$ measures the decay of the sequence $\bfv$ by noting that a sequence in $\ell^q_\bfomega$ decays in modulus with a rate of $\bfomega^{-1}$.
\begin{lemma}
\label{lem:wlp_components}
    Let $\bfv\in\ell^q_\bfomega$.
    Then $\abs{\bfv_k} \le \bfomega_k^{-1}\norm{\bfv}_{\ell^q_\bfomega}$.
\end{lemma}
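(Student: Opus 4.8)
The plan is to unwind the definition of the weighted norm and isolate a single coordinate. Recall that $\norm{\bfv}_{\ell^q_\bfomega} = \norm{\bfomega\bfv}_{\ell^q}$, so the quantity on the right is simply an ordinary (unweighted) $\ell^q$-norm of the pointwise product sequence $\bfomega\bfv = (\bfomega_j\abs{\bfv_j})_j$. The whole statement then reduces to the elementary fact that a single term of a sequence is dominated in modulus by the $\ell^q$-norm of that sequence, applied to $\bfomega\bfv$ at index $k$.

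Concretely, I would treat the two cases of $q$ in parallel. For $0 < q < \infty$, I would write
\begin{equation}
    \bfomega_k^q \abs{\bfv_k}^q \le \sum_{j} \bfomega_j^q \abs{\bfv_j}^q = \norm{\bfomega\bfv}_{\ell^q}^q = \norm{\bfv}_{\ell^q_\bfomega}^q ,
\end{equation}
where the inequality just drops the remaining nonnegative summands; taking $q$-th roots gives $\bfomega_k\abs{\bfv_k} \le \norm{\bfv}_{\ell^q_\bfomega}$. For $q = \infty$ the same inequality $\bfomega_k\abs{\bfv_k} \le \sup_j \bfomega_j\abs{\bfv_j} = \norm{\bfv}_{\ell^\infty_\bfomega}$ is immediate from the definition of the supremum. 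In both cases, dividing by $\bfomega_k$ (which is finite and positive whenever the bound is nontrivial) yields the claimed estimate $\abs{\bfv_k} \le \bfomega_k^{-1}\norm{\bfv}_{\ell^q_\bfomega}$.

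There is essentially no obstacle here: the only point requiring a word of care is the degenerate behaviour of the factor $\bfomega_k^{-1}$, namely allowing $\bfomega_k \in \braces{0,\infty}$ as permitted by $\bfomega\in\bracs{0,\infty}^{\mbb{N}}$. If $\bfomega_k = \infty$ then finiteness of $\norm{\bfv}_{\ell^q_\bfomega}$ forces $\bfv_k = 0$ and the bound reads $0\le 0$; if $\bfomega_k = 0$ the right-hand side is $+\infty$ and the inequality is vacuous. Thus the estimate holds in all cases under the usual conventions, and the result is little more than the observation that $\bfomega$ directly controls the coordinatewise decay of any $\ell^q_\bfomega$-sequence, which is exactly the intuition motivating its use as a decay-measuring weight.
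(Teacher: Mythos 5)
Your proof is correct and is essentially the paper's own argument: the paper's entire proof is the one-line observation $\bfomega_k\abs{\bfv_k} \le \norm{\bfomega\bfv}_{\ell^q}$, which is exactly your key inequality. Your additional care with the case $q=\infty$ and the degenerate values $\bfomega_k\in\braces{0,\infty}$ is harmless elaboration of the same idea.
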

\begin{proof}
    Obviously, $\bfomega_k\abs{\bfv_k} \le \norm{\bfomega \bfv}_{\ell^q}$.
\end{proof}

The regularity in the $\ell^q_\bfomega$ space is described by the two parameters $q$ and $\bfomega$.
The preceding lemma implies that a sequence $\bfv\in\ell^p_{\tilde\bfomega}$ also lies in $\ell^q_{\bfomega}$, if its upper bound lies in this space, i.e.
$$
    \norm{\bfv}_{\ell^q_\bfomega}
    \le \norm{\tilde\bfomega^{-1} \norm{\bfv}_{\ell^p_{\tilde\bfomega}}}_{\ell^q_\bfomega}
    = \norm{\tilde\bfomega^{-1}}_{\ell^q_\bfomega} \norm{\bfv}_{\ell^p_{\tilde\bfomega}} .
$$
This indicates that the exponent parameter $q$ can be increased by simultaneously increasing the weight sequence parameter $\bfomega$.
This is made precise in the subsequent lemma.

\begin{lemma}
\label{lem:trade_sparsity}
    Let $0 \le q \le p \le\infty$ and $\bfomega,\tilde{\bfomega}\in\bracs{0,\infty}^{\mbb{N}}$.
    Then for any sequence $\bfv\in\ell^p_{\tilde{\bfomega}}$, it holds that
    $$
        \norm{\bfv}_{\ell^q_\bfomega} \le \norm{\tilde\bfomega^{-1}}_{\ell^{1/s}_{\bfomega}} \norm{\bfv}_{\ell^p_{\tilde\bfomega}},
        \quad
        s = \tfrac1q - \tfrac1p .
    $$
\end{lemma}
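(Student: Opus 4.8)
The plan is to recognise this as a direct consequence of H\"older's inequality, exactly as the crude bound quoted just before the lemma is a consequence of Lemma~\ref{lem:wlp_components}; the improvement from $\ell^q_\bfomega$ to the larger space $\ell^{1/s}_\bfomega$ (note $\tfrac1s \ge q$, so $\ell^q \subseteq \ell^{1/s}$ and the constant is genuinely smaller) comes from splitting the summand more cleverly before applying H\"older rather than first passing to the pointwise bound.

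Concretely, I would first treat the generic case $0 < q < p < \infty$. Unwinding the definition of the weighted norms, the claim is $\norm{\bfomega\bfv}_{\ell^q} \le \norm{\bfomega\tilde\bfomega^{-1}}_{\ell^{1/s}}\norm{\tilde\bfomega\bfv}_{\ell^p}$. I would write each summand of $\norm{\bfomega\bfv}_{\ell^q}^q$ as
$$
    \bfomega_j^q\abs{\bfv_j}^q = \pars*{\tfrac{\bfomega_j}{\tilde\bfomega_j}}^q \pars{\tilde\bfomega_j\abs{\bfv_j}}^q
$$
and apply H\"older's inequality to the sum with the conjugate exponents $r_1 = \tfrac{p}{p-q}$ and $r_2 = \tfrac{p}{q}$, which satisfy $r_1^{-1}+r_2^{-1}=1$. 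The exponent on the second factor becomes $q r_2 = p$, producing $\norm{\tilde\bfomega\bfv}_{\ell^p}^q$; the exponent on the first factor becomes $q r_1 = \tfrac{1}{s}$ since $s = \tfrac1q-\tfrac1p = \tfrac{p-q}{pq}$, producing $\norm{\bfomega\tilde\bfomega^{-1}}_{\ell^{1/s}}^q$. Taking $q$-th roots and rewriting $\norm{\bfomega\tilde\bfomega^{-1}}_{\ell^{1/s}} = \norm{\tilde\bfomega^{-1}}_{\ell^{1/s}_\bfomega}$ via the definition $\norm{\bfx}_{\ell^a_\bfomega}=\norm{\bfomega\bfx}_{\ell^a}$ gives the assertion.

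The remaining cases are the boundary values, which I would dispatch directly rather than through H\"older. When $p=\infty$ one has $s=\tfrac1q$ and $\tfrac1s = q$, and the bound follows from $\bfomega_j\abs{\bfv_j} \le \pars{\bfomega_j/\tilde\bfomega_j}\norm{\tilde\bfomega\bfv}_{\ell^\infty}$ after taking $\ell^q$-norms; symmetrically, when $q=p$ one has $s=0$ and $\tfrac1s=\infty$, and the bound follows from $\bfomega_j\abs{\bfv_j} \le \norm{\bfomega\tilde\bfomega^{-1}}_{\ell^\infty}\pars{\tilde\bfomega_j\abs{\bfv_j}}$ after taking $\ell^p$-norms. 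The degenerate endpoint $q=0$ is read off the definition of the weighted $\ell^0_\bfomega$-``norm''.

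I do not expect any genuine obstacle here: the entire content is the choice of the H\"older pair $\pars{\tfrac{p}{p-q},\tfrac{p}{q}}$ adapted to the outer exponent $q$, and the only points requiring a little care are the arithmetic identity $q r_1 = 1/s$ and the correct identification of the weighted norms with their unweighted counterparts. This is a routine computation, so I would keep the written proof to the one-line H\"older estimate together with a brief remark on the two boundary cases.
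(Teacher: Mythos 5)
Your proposal is correct and is essentially the paper's own proof: the same splitting $\bfomega_j^q\abs{\bfv_j}^q = \pars{\bfomega_j/\tilde\bfomega_j}^q\pars{\tilde\bfomega_j\abs{\bfv_j}}^q$ followed by H\"older with the conjugate pair $\pars{\tfrac{p}{p-q},\tfrac{p}{q}}$, yielding the exponents $1/s$ and $p$. The only difference is cosmetic: you dispatch the boundary cases $p=\infty$, $q=p$, and $q=0$ separately, while the paper's single H\"older computation covers them implicitly.
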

\begin{proof}
    By H\"older's inequality
    \begin{align}
        \norm{\bfv}_{\ell^q_\bfomega}^q
        = \norm{\bfomega^q\bfv^q}_{\ell^1}
        = \norm{\pars{\tfrac{\bfomega}{\tilde\bfomega}}^q\pars{\tilde\bfomega\bfv}^q}_{\ell^1}
        \le \norm{\pars{\tfrac{\bfomega}{\tilde\bfomega}}^q}_{\ell^r} \norm{\pars{\tilde\bfomega\bfv}^q}_{\ell^t}
        = \norm{\tfrac{\bfomega}{\tilde\bfomega}}_{\ell^{rq}}^q \norm{\tilde\bfomega\bfv}_{\ell^{tq}}^q,
    \end{align}
    where $r\in\bracs{1,\infty}$ and $t\in\bracs{1, \infty}$ satisfy $\frac{1}{r} + \frac{1}{t} = 1$.
    Choosing $t = \frac pq$ yields the claim.
\end{proof}


\begin{example}[Sparse polynomial approximation rates in Gaussian Sobolev spaces]
\label{ex:Hk}
    Let $\gamma$ be the standard Gaussian measure on $\mbb{R}$ and $\braces{B_j}_{j\in\mbb{N}}$ be the basis of normalised Hermite polynomial in $L^2\pars{\mbb{R}, \gamma}$.
    Since these polynomials constitute an Appell sequence, it holds that
    \begin{equation}
        \norm{v}_{H^k\pars{\mbb{R}, \gamma}} = \norm{\bfv}_{\ell^2_{\tilde \bfomega\pars{k}}}
        \qquad\text{with}\qquad
        \tilde \bfomega\pars{k}_j
        := \sqrt{\textstyle \sum_{\ell=0}^{\min\braces{j,k}} \frac{\Gamma(j+1)}{\Gamma\pars{j-\ell+1}}}
        \asymp j^{k/2} := \bfomega\pars{k}_j.
    \end{equation}
    Applying Lemma~\ref{lem:weighted_stechkin} yields the following bound for the best $n$-term approximation $v_n$ of $v$:
    \begin{equation}
        \norm{v-v_n}_{L^2\pars{\mbb{R},\gamma}}
        = \norm{  \bfv - P_{J_n}\bfv }_{\ell^2}
        \le \norm{P_{J_{n+1}} \bfomega\pars{k-\varepsilon}}_{\ell^2}^{-1} \norm{\boldsymbol{v}}_{\ell^1_{\bfomega\pars{k-\varepsilon}}}
        \lesssim \pars{n+1}^{-\pars{k+1-\varepsilon}/2} \norm{\boldsymbol{v}}_{\ell^1_{\bfomega\pars{k-\varepsilon}}} .
    \end{equation}
    The required weighted summability $\bfv \in \ell^1_{\bfomega\pars{k-\varepsilon}}$ can usually not be inferred directly from the smoothness of the function.
    However, since $\norm{\tilde \bfomega\pars{k+1}^{-1}}_{\ell^2_{\bfomega\pars{k-\varepsilon}}}$ is finite, Lemma~\ref{lem:trade_sparsity} can be used to obtain the more natural summability condition $\bfv \in \ell^2_{\tilde \bfomega\pars{k+1}}$.
    Consequently, for arbitrary $\varepsilon>0$, 
    \begin{equation}
        \norm{v-v_n}_{L^2\pars{\mbb{R},\gamma}}
        \lesssim \pars{n+1}^{-\pars{k+1-\varepsilon}/2} \norm{v}_{H^{k+1}\pars{\mbb{R},\gamma}} .
    \end{equation}
\end{example}

\todo{
In one dimension, we know that for increasing functions left Riemann sums are lower bounds and right Riemann sums are upper bounds (the roles are reversed for decreasing functions).
In multiple dimensions, we can use these arguments inductively on each variable.
}
\begin{remark}[Best $n$-term rates in higher dimensions]
\label{rmk:best-n-term_hd}
    To briefly discuss the best $n$-term rates in higher dimensions, we consider isotropic weight sequences of the form $\bar{\bfomega}\pars{a} = \bfomega\pars{a}^{\otimes M}$, where $a\in\pars{0, \infty}$ determines growth of $\bfomega\pars{a}$ 
    (rates for anisotropic product weight sequences should follow by similar arguments). 
    To obtain worst-case rates for the approximation, we apply Lemmas~\ref{lem:weighted_stechkin} and~\ref{lem:trade_sparsity}
    $$
        \norm{\bfu - P_{J_n\bfu } \bfu}_{\ell^2}
        \le \norm{P_{J_{n+1}} \bar\bfomega\pars{a}}_{\ell^2}^{-1} \norm{\bfu}_{\ell^1_{\bar\bfomega\pars{a}}}
        \le \norm{P_{J_{n+1}} \bar\bfomega\pars{a}}_{\ell^2}^{-1} \norm{\bar\bfomega\pars{A}^{-1}}_{\ell^2_{\bar\bfomega\pars{a}}} \norm{\bfu}_{\ell^2_{\bar\bfomega\pars{A}}}
    $$
    and compute an upper bound for the decay rate $\varepsilon(n) := \norm{P_{J_{n+1}} \bar\bfomega\pars{a}}_{\ell^2}^{-1}$.
    Then, for fixed $a$, we choose the parameter $A>a$ as small as possible while ensuring that $\norm{\bar\bfomega\pars{A}^{-1}}_{\ell^2_{\bar\bfomega\pars{a}}}$ is finite.
    \paragraph{Exponential decay (analytic regularity)} 
    Consider the weight sequence $\bfomega\pars{a}_j = f_a(j)$ with $f_a(x) := \exp(ax)$.
    In this case, there exists a constant $c_R$ such that
    $$
        \varepsilon(n)
        \lesssim n^{-(M-1)/(2M)} \exp\pars{-c_R a n^{1/M}}
    $$
    and it holds that $\norm{\bar{\bfomega}\pars{A}^{-1}}_{\ell^2_{\bar\bfomega\pars{a}}} < \infty$ for any $a<A$.
    \paragraph{Algebraic decay (mixed Sobolev regularity)}
    Consider the weight sequence $\bfomega\pars{a}_j = g_a(j)$ with $g_a(x) := (x+1)^a$.
    In this case, we obtain the bound
    $$
        \varepsilon(n) \lesssim n^{-(a + 1/2)} \ln\pars{n}^{a(M-1)}.
    $$
    and it holds that $\norm{\bar{\bfomega}\pars{A}^{-1}}_{\ell^2_{\bar\bfomega\pars{a}}} < \infty$ for any $a<A-\tfrac{1}2$.
    
    Proofs for these statements can be found in appendix~\ref{sec:n-term}.
    Note that interestingly, the best $n$-term approximation rate seems to depend on the dimension $M$ in the exponential case while it is independent of $M$ in the algebraic case.
    
    Together with Example~\ref{ex:Hk}, this provides best $n$-term $L^2$-approximation rates in the Sobolev spaces $H^{k,\mathrm{mix}}\pars{\mbb{R}^R, \gamma^{\otimes M}}$ for the tensor product Hermite polynomial basis.
    It can also be shown~\cite{Hansen2010BestMA} that the similar rate $\norm{\pars{I-P_{J_n}}u}_{L^2} \le n^{-k} \norm{u}_{H^{k,\mathrm{mix}}}$ also holds (up to logarithmic factors) for the hierarchical tensor product spline basis in the Sobolev spaces $H^{k,\mathrm{mix}}([0,1]^M, \lambda^{\otimes M})$ from Example~\ref{ex:sparse_grids}.
\end{remark}

\todo{For Sobolev spaces we can always choose a spectral basis. $\to$ Is the hierarchical spline basis only needed for Besov spaces?}

Finally, we use these insights to highlight the relation of weighted sequences spaces to other well-knwon sequence spaces.
We start our discussion with the relation of $\ell^p_\bfomega$ for different values of $p$ and $\bfomega$.
In particular, we show that $\ell^q$ can not be embedded into $\ell^p_\omega$ for any $p$ and any unbounded $\bfomega$.
It is clear that this is not possible, because otherwise Lemma~\ref{lem:wlp_components} would provide decay rates for sequences in $\ell^p$.
A concrete counterexample is provided in the proof of the subsequent lemma.

\begin{lemma} \label{lem:lq_not_in_wlp}
    Let $0 < q \le p \le\infty$ and $\bfomega \lesssim \tilde{\bfomega}\in\bracs{0,\infty}^{\mbb{N}}$.
    Then 
    \begin{itemize}
    \item[(i)] $\ell^q_\bfomega \subseteq \ell^p_\bfomega$ and $\ell^p_{\tilde{\bfomega}} \subseteq \ell^p_{\bfomega}$.
    \end{itemize}
    Moreover,
    \begin{itemize}
        \item[(ii)] if $1\lesssim\bfomega$ is bounded, then $\ell^p_\bfomega \simeq \ell^p$ and
        \item[(iii)] if $\bfomega$ is unbounded, then $\ell^q \not\subseteq \ell^p_\bfomega \subseteq \ell^p$ for any $q>0$.
    \end{itemize}
\end{lemma}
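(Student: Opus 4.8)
The plan is to dispatch parts (i) and (ii) by pure norm monotonicity and to reserve the real work for the non-inclusion in part (iii), which I settle with an explicit witness. Throughout I read $\bfomega\lesssim\tilde\bfomega$ as $\bfomega_j\le C\tilde\bfomega_j$ for some $C$ and all $j$, and $1\lesssim\bfomega$ as a positive lower bound $\bfomega_j\ge c>0$. For part (i) I would apply two elementary bounds to the product sequence $\bfomega\bfv$: the nesting $\norm{\bfx}_{\ell^p}\le\norm{\bfx}_{\ell^q}$ for $0<q\le p\le\infty$, used with $\bfx=\bfomega\bfv$, gives $\norm{\bfv}_{\ell^p_\bfomega}\le\norm{\bfv}_{\ell^q_\bfomega}$ and hence $\ell^q_\bfomega\subseteq\ell^p_\bfomega$; the pointwise inequality $\bfomega_j\abs{\bfv_j}\le C\tilde\bfomega_j\abs{\bfv_j}$ gives $\norm{\bfv}_{\ell^p_\bfomega}\le C\norm{\bfv}_{\ell^p_{\tilde\bfomega}}$ and hence $\ell^p_{\tilde\bfomega}\subseteq\ell^p_\bfomega$. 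Part (ii) is then a sandwich: assuming $c\le\bfomega_j\le C$, the chain $c\norm{\bfv}_{\ell^p}\le\norm{\bfv}_{\ell^p_\bfomega}\le C\norm{\bfv}_{\ell^p}$ is precisely the equivalence $\ell^p_\bfomega\simeq\ell^p$.

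For part (iii) the inclusion $\ell^p_\bfomega\subseteq\ell^p$ is again part (i) under the lower bound $1\lesssim\bfomega$, so the content is the non-inclusion, for which I would build a single witness valid for every $p\in(0,\infty]$. Since $\bfomega$ is unbounded it exceeds any threshold on infinitely many indices, so I can extract $j_1<j_2<\cdots$ with $\bfomega_{j_k}\ge 4^{k/q}$ and set $\bfv:=\sum_k\bfomega_{j_k}^{-1/2}\bfe_{j_k}$. Then $\norm{\bfv}_{\ell^q}^q=\sum_k\bfomega_{j_k}^{-q/2}\le\sum_k 2^{-k}<\infty$, so $\bfv\in\ell^q$, whereas $\bfomega_{j_k}\abs{\bfv_{j_k}}=\bfomega_{j_k}^{1/2}\to\infty$. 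This divergence kills summability for every finite $p$, since the summands of $\norm{\bfomega\bfv}_{\ell^p}^p$ do not vanish, and it kills the supremum for $p=\infty$; hence $\bfv\notin\ell^p_\bfomega$ and $\ell^q\not\subseteq\ell^p_\bfomega$. For $q=\infty$ the same indices with $\bfv_{j_k}=1$ work verbatim.

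The bookkeeping in (i) and (ii) is routine; the one delicate point is the counterexample. The trick is to couple the size of $\bfv$ to the weight itself through $\bfv_{j_k}=\bfomega_{j_k}^{-1/2}$ rather than fixing an a priori decay: this makes the weighted product $\bfomega_{j_k}\abs{\bfv_{j_k}}$ blow up uniformly in $p$ while keeping $\bfv$ summable, so a single sequence resolves both the finite-$p$ and the $p=\infty$ regime at once. I would also flag that the lower-bound convention $1\lesssim\bfomega$ is exactly what underpins $\ell^p_\bfomega\subseteq\ell^p$ in (ii) and (iii); without it this inclusion fails.
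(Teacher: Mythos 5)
Your proposal is correct and follows essentially the same route as the paper: parts (i) and (ii) by elementary norm comparison and sandwiching, and part (iii) by an explicit witness supported on a subsequence where $\bfomega$ is large, shown to lie in $\ell^q$ while its weighted version fails to be in $\ell^p$. The only difference is cosmetic: the paper's witness uses algebraic decay $\bfv_{\sigma(j)} = j^{-(1+\varepsilon)/q}$ against weights $\bfomega_{\sigma(j)}\ge 2^j$, whereas you couple the witness to the weight via $\bfv_{j_k}=\bfomega_{j_k}^{-1/2}$, which handles $p=\infty$ and $q=\infty$ slightly more uniformly.
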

\begin{proof}
    The two inclusions $\ell^q_\bfomega \subseteq \ell^p_\bfomega$ and $\ell^p_{\tilde{\bfomega}} \subseteq \ell^p_{\bfomega}$ follow by definition and the assertion (ii) holds because $\norm{\bfv}_{\ell^p} \lesssim \norm{\bfomega \bfv}_{\ell^p} \le \norm{\bfomega}_{\ell^\infty}\norm{\bfv}_{\ell^p}$.
    Hence, the mapping $\bfv \mapsto \bfomega^{-1}\bfv$ provides an isometry between $\ell^p$ and $\ell^p_\bfomega$.
    To show the assertion (iii), assume that $\bfomega$ is unbounded.
    Then there exists a strictly increasing function $\sigma:\mbb{N}\to\mbb{N}$, which defines a subsequence of $\bfomega$ such that $\bfomega_{\sigma\pars{k}} \ge 2^k$ for all $k\in\mbb{N}$.
    Now let $\varepsilon > 0$ and define the sequence $\bfv$ by
    \begin{equation}
        \bfv_{\sigma\pars{j}} = j^{-\pars{1 + \varepsilon}/q}
        \quad\text{and}\quad
        \bfv_{k} = 0
        \quad\text{otherwise.}
    \end{equation}
    This sequence satisfies $\bfv\in\ell^q$ and $\bfv\not\in\ell^p_\bfomega$ since
    \begin{align}
        \norm{\bfv}_{\ell^q}^q
        &= \sum_{k\in\mbb{N}} \abs{\bfv_k}^q
        = \sum_{j\in\mbb{N}} \abs{\bfv_{\bfsigma\pars{j}}}^q
        = \sum_{j\in\mbb{N}} j^{-\pars{1+\varepsilon}} < \infty
        \quad\text{and} \\
        \norm{\bfv}_{\ell^p_\bfomega}^p
        &= \sum_{k\in\mbb{N}} \abs{\bfomega_k \bfv_k}^p
        = \sum_{j\in\mbb{N}} \abs{\bfomega_{\sigma\pars{j}} \bfv_{\sigma\pars{j}}}^p
        \ge \sum_{j\in\mbb{N}} 2^{jp} j^{-\pars{1+\varepsilon}p/q} = \infty . \qedhere
    \end{align}
\end{proof}

Finally, we examine the relation of the weighted $\ell^q_\bfomega$ spaces to the \emph{monotone $\ell^{q,\mathrm{mon}}$ spaces}  (cf.~\cite{adcock_2022_isMCbad}).
For any sequence $\bfv$, define the \emph{minimal monotone majorant} $\bfv^{\mathrm{mon}}$ by
\begin{equation}
    \bfv^{\mathrm{mon}}_j := \sup_{k\ge j} \abs{\bfv_k}
    \quad\text{for all}\quad
    j\in\mbb{N} .
\end{equation}
The space $\ell^{q,\mathrm{mon}}$ is then defined as the set of all sequences for which the norm $\norm{\bfv}_{\ell^{q,\mathrm{mon}}} := \norm{\bfv^{\mathrm{mon}}}_{\ell^{q}}$ is finite.

\begin{lemma}
\label{lem:lqmon_in_wlp}
    Let $0 < p,q \le\infty$ and $\bfomega\in\bracs{0,\infty}^{\mbb{N}}$ and define $\varkappa_k := (k+1)^{-1/p}$.
    Then,
    \begin{itemize}
        \item[(i)] $\ell^q_\bfomega \subseteq \ell^{p,\mathrm{mon}}$ if $\bfomega^{-1}\in\ell^{p,\mathrm{mon}}$ and
        \item[(ii)] $\ell^{p,\mathrm{mon}} \subseteq \ell^q_\bfomega$ if $\varkappa\in\ell^q_\bfomega$.
    \end{itemize}
\end{lemma}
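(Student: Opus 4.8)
The plan is to establish both inclusions by first deriving a pointwise bound on the modulus of the entries of the sequence in question and then passing to the appropriate norm. No machinery beyond the componentwise decay estimate of Lemma~\ref{lem:wlp_components} and the elementary decay bound for monotone $\ell^p$-sequences is needed, so the proof should be short and essentially computational.

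For part (i), I would start from Lemma~\ref{lem:wlp_components}, which gives $\abs{\bfv_k} \le \bfomega_k^{-1}\norm{\bfv}_{\ell^q_\bfomega}$ for every $\bfv\in\ell^q_\bfomega$ and every index $k$. Taking the supremum over $k\ge j$ on both sides turns the left-hand side into $\bfv^{\mathrm{mon}}_j$ and the right-hand side into $(\bfomega^{-1})^{\mathrm{mon}}_j\,\norm{\bfv}_{\ell^q_\bfomega}$, since the supremum of a nonnegative sequence over a tail is by definition its minimal monotone majorant and the constant $\norm{\bfv}_{\ell^q_\bfomega}$ factors out. Applying the $\ell^p$-norm in $j$ then yields $\norm{\bfv}_{\ell^{p,\mathrm{mon}}} = \norm{\bfv^{\mathrm{mon}}}_{\ell^p} \le \norm{\bfomega^{-1}}_{\ell^{p,\mathrm{mon}}}\,\norm{\bfv}_{\ell^q_\bfomega}$, which is finite precisely under the hypothesis $\bfomega^{-1}\in\ell^{p,\mathrm{mon}}$.

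For part (ii), I would invoke the standard fact that a nonincreasing nonnegative sequence $(a_j)$ obeys $(j+1)\,a_j^p \le \sum_{k\le j} a_k^p \le \norm{a}_{\ell^p}^p$, hence $a_j \le (j+1)^{-1/p}\norm{a}_{\ell^p}$ (with the obvious reading when $p=\infty$). Applying this to $a=\bfv^{\mathrm{mon}}$, which is nonincreasing by construction and satisfies $\abs{\bfv_j}\le\bfv^{\mathrm{mon}}_j$, produces the pointwise bound $\abs{\bfv_j} \le \varkappa_j\,\norm{\bfv}_{\ell^{p,\mathrm{mon}}}$. Multiplying by $\bfomega_j$ and taking the $\ell^q$-norm then gives $\norm{\bfv}_{\ell^q_\bfomega} = \norm{\bfomega\bfv}_{\ell^q} \le \norm{\bfomega\varkappa}_{\ell^q}\,\norm{\bfv}_{\ell^{p,\mathrm{mon}}} = \norm{\varkappa}_{\ell^q_\bfomega}\,\norm{\bfv}_{\ell^{p,\mathrm{mon}}}$, finite under the hypothesis $\varkappa\in\ell^q_\bfomega$.

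I do not expect a genuine obstacle here: both inclusions reduce to a one-line pointwise majorisation followed by monotonicity of the norm. The only points requiring a little care are the degenerate exponents $p=\infty$ or $q=\infty$, where the monotone decay estimate and the $\ell^q$-summation must be read as suprema, and the implicit nonnegativity of $\bfomega^{-1}$ and $\bfv^{\mathrm{mon}}$ that legitimises passing the suprema through the inequalities.
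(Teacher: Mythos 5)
Your proof is correct and follows essentially the same route as the paper: part (i) is verbatim the paper's argument (apply Lemma~\ref{lem:wlp_components}, take the supremum over tails, then the $\ell^p$-norm), and part (ii) uses the same pointwise majorisation $\abs{\bfv_k}\le\bfv^{\mathrm{mon}}_k\le\varkappa_k\norm{\bfv}_{\ell^{p,\mathrm{mon}}}$ followed by monotonicity of the $\ell^q_\bfomega$-norm. The only cosmetic difference is that you re-derive the decay bound for monotone sequences via the counting argument $(j+1)\bfv^{\mathrm{mon},p}_j\le\sum_{k\le j}\bfv^{\mathrm{mon},p}_k$, whereas the paper obtains the same inequality by citing the classical Stechkin Lemma~\ref{lem:stechkin} in the case $p=\infty$.
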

\begin{proof}
    For the first assertion, assume that $\bfv\in\ell^q_\bfomega$.
    Then, by Lemma~\ref{lem:wlp_components}, $\abs{\bfv_k^{\mathrm{mon}}} = \sup_{j\ge k} \abs{\bfv_j} \le \sup_{j\ge k} \bfomega_j^{-1} \norm{\bfv}_{\ell^q_\bfomega} = \pars{\bfomega^{-1}}^{\mathrm{mon}}_k \norm{\bfv}_{\ell^q_\bfomega}$.
    To show the second assertion, let $\bfv\in\ell^{p,\mathrm{mon}}$.
    Then, by Lemma~\ref{lem:stechkin}, $\abs{\bfv_k} \le \abs{\bfv_k^{\mathrm{mon}}} \le \varkappa_k \norm{\bfv}_{\ell^{p,\mathrm{mon}}}$ and consequently $\norm{\bfomega \bfv}_{\ell^q}\le\norm{\bfomega\varkappa}_{\ell^q}\norm{\bfv}_{\ell^{p,\mathrm{mon}}}$.
\end{proof}

\todo{Weak $\ell^p$? For all $p,\varepsilon>0$ it holds that $\ell^p \subset \mathrm{w}\!\ell^p \subset \ell^{p+\varepsilon}$.
The advantage is that $\mathrm{w}\!\ell^p$ are always quasi-normed spaces.
Maybe Stechkin could be generalised to these spaces.
But I don't see an immediate necessity.}
\todo{Lorentz sequence spaces? Our weighting makes the spaces stronger, Lorentz spaces are weaker...}
\section{Sparse approximation of parametric PDEs}
\label{sec:sparse approximation of parametric PDEs}

This section is concerned with an application of the weighted Stechkin lemma for a popular class of functions where weighted sparsity is encountered naturally.
In what follows, we consider solutions of parametric PDEs that have become popular in the field of Uncertainty Quantification.
We restrict our attention to two prototypical examples mentioned above in Theorems~\ref{thm:Legendre_decay_Bachmayr} and~\ref{thm:Hermite_decay_Bachmayr} that exhibit a holomorphic dependence on the parameter $y$.
The proofs of these bounds are typically rather involved and e.g.\ make use of techniques from complex analysis.
With the weighted version of Stechkin's lemma deduced in the preceding section, alternative proofs for such bounds can be derived with more elementary techniques.
The principle is demonstrated in this section for the one-dimensional case $L=1$ as a use case of Lemma~\ref{lem:weighted_stechkin}.

Assuming that the coefficient $a\pars{y} \ge \check{a}\pars{y} > 0$ is finite and bounded from below for every $y\in\mbb{R}$ and $f \in H^{-1}(D)$, Lax--Milgram theorem allows us to define the solution $u(y)$ in the space $H^1_0(D)$ through the variational formulation
\begin{equation}
\label{eq:variational_form}
    \int_D a(x,y)\nabla_{\!x} u(x,y)\cdot \nabla_{\!x} v(x) \dx
    = \int_D f(x)v(x) \dx
    \qquad\text{for all}\qquad
    v\in H^1_0\pars{D} .
\end{equation}
Moreover, a standard Lax--Milgram a priori estimate tells us that
\begin{equation}
    \norm{u\pars{y}}_{H^1_0}
    \le \check{a}\pars{y}^{-1} \norm{f}_{H^{-1}\pars{D}} .
\end{equation}

Using the machinery of weighted $\ell^p_\bfomega$-spaces developed in Section~\ref{sec:weighted_stechkin}, we now derive a priori best $n$-term convergence bounds for the solution of~\eqref{eq:darcy} from first principles.
Both results rely on the holomorphy of the solution map $y\mapsto u\pars{y} := u\pars{\bullet,y}$ and make use of the following extension of Cauchy's inequality to Banach spaces.

\begin{theorem}[Lemma~2.4 in~\cite{COHEN_2011}]
\label{thm:cauchy_banach}
    Let $\rho\in\pars{1,\infty}$, $X$ be a Banach space and $v : B_{\mbb{C}}\pars{0, \rho} \to X$ be holomorphic  such that $\sup_{y\in B_{\mbb{C}}\pars{0,\rho}} \norm{v\pars{y}}_X \le M < \infty$.
    Then the power series coefficients $\boldsymbol{v}\in X^{\mathbb{N}}$ of $v$ satisfy
    \begin{equation}
        \norm{\boldsymbol{v}_k}_X \le M \rho^{-k} .
    \end{equation}
\end{theorem}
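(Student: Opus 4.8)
The plan is to reduce the statement to the classical scalar Cauchy estimate by testing $v$ against continuous linear functionals and then invoking the Hahn--Banach theorem. The starting observation is that the power series $v\pars{y} = \sum_{j\ge 0} \boldsymbol{v}_j y^j$ defining the coefficients $\boldsymbol{v}_j$ converges in $X$ on $B_{\mbb{C}}\pars{0,\rho}$, so that for any $\phi$ in the dual space $X^*$ the composition $g := \phi\circ v$ is an ordinary holomorphic scalar function whose power series coefficients are $\phi\pars{\boldsymbol{v}_j}$, by linearity and continuity of $\phi$ (which may be exchanged with the convergent sum).

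First I would fix $k\in\mbb{N}$ and a functional $\phi\in X^*$ with $\norm{\phi}\le 1$, and note that $\abs{g\pars{y}} \le \norm{\phi}\,\norm{v\pars{y}}_X \le M$ on the whole disc. Next I would apply the scalar Cauchy integral formula on the circle $\abs{z}=r$ for an arbitrary radius $r\in\pars{0,\rho}$, namely $\phi\pars{\boldsymbol{v}_k} = \tfrac{1}{2\pi\iu}\oint_{\abs{z}=r} g\pars{z}\, z^{-(k+1)}\dd z$, and bound the right-hand side by the supremum $M r^{-(k+1)}$ of the integrand times the contour length $2\pi r$, which gives $\abs{\phi\pars{\boldsymbol{v}_k}} \le M r^{-k}$. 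Letting $r\to\rho$ then produces $\abs{\phi\pars{\boldsymbol{v}_k}} \le M\rho^{-k}$. Finally, taking the supremum over the closed unit ball of $X^*$ and using the Hahn--Banach duality $\norm{\boldsymbol{v}_k}_X = \sup_{\norm{\phi}\le 1}\abs{\phi\pars{\boldsymbol{v}_k}}$ would yield the claimed bound.

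The main point to get right is the reduction step, i.e.\ that $g=\phi\circ v$ is genuinely scalar-holomorphic with coefficients $\phi\pars{\boldsymbol{v}_j}$; here this is immediate because a power series for $v$ converging in $X$ is available, so no passage between weak and strong holomorphy is needed. If one preferred to avoid duality altogether, the same estimate follows directly from the vector-valued Cauchy integral formula $\boldsymbol{v}_k = \tfrac{1}{2\pi\iu}\oint_{\abs{z}=r} v\pars{z}\, z^{-(k+1)}\dd z$, read as a Bochner integral of the continuous $X$-valued integrand, and then applying the analogous length-times-supremum bound directly in the norm $\norm{\bullet}_X$.
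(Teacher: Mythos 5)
Your proof is correct. Note, however, that the paper does not prove this statement at all: it is imported verbatim as a known result (Lemma~2.4 in the cited work of Cohen et al.), so there is no in-paper argument to compare against. What you supply is the standard proof of the vector-valued Cauchy estimate, and every step checks out: testing against a norming functional $\phi\in X^*$ with $\norm{\phi}\le 1$ turns $v$ into a scalar holomorphic function $g=\phi\circ v$ with coefficients $\phi\pars{\boldsymbol{v}_j}$ (legitimate, since the statement presupposes a power series for $v$ converging in $X$, so no weak-to-strong holomorphy argument à la Dunford is required); the scalar Cauchy formula on $\abs{z}=r<\rho$ gives $\abs{\phi\pars{\boldsymbol{v}_k}}\le M r^{-k}$; letting $r\to\rho$ and taking the supremum over the unit ball of $X^*$ via Hahn--Banach yields $\norm{\boldsymbol{v}_k}_X \le M\rho^{-k}$. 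Your alternative route --- reading the Cauchy integral formula directly as a Bochner integral of the continuous $X$-valued integrand and applying the length-times-supremum bound in $\norm{\bullet}_X$ --- is equally valid and avoids duality; either argument would serve as a self-contained proof of the quoted lemma.
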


\subsection{Affine coefficients}
\label{sec:affine coefficients}

We first consider the model problem~\eqref{eq:darcy} with affine coefficients~\eqref{eq:a_uniform}.
Summability of the power series of solution $u$ can be shown based on its holomorphy.
\begin{theorem}
\label{thm:affine_holomorphy}
    Let $\rho > 1$ and the \emph{uniform ellipticity assumption} (UEA)
    \begin{equation}
        C := \inf_{x\in D} a_0\pars{x} - \rho \sum_{j\ge 1} \vert a_j(x) \vert  > 0
    \end{equation}
    be satisfied.
    Moreover, let $u$ be the solution of the diffusion equation~\eqref{eq:darcy} with affine coefficients~\eqref{eq:a_uniform}.
    Then the map $y\mapsto u\pars{y}$ is holomorphic from $\bracs{-1, 1}$ to $H^1_0(D)$ and belongs to $L^p\pars{[-1, 1], \gamma; H^1_0\pars{D}}$ for all $p\in\mbb{N}\cup\braces{\infty}$, where $\gamma$ is the uniform measure.
    Moreover, for any $r\in\pars{0, \rho}$, the power series coefficients $\bfu$ of $u$ satisfy the bound
    \begin{equation}
        \norm{\boldsymbol{u}_k}_{H^1_0(D)} \le \norm{f}_{H^{-1}\pars{D}} C^{-1} r^{-k} .
    \end{equation}
\end{theorem}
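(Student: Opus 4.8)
The plan is to realise $y\mapsto u(y)$ as the restriction to the real interval of a map that is holomorphic on a complex disc, to bound this extension uniformly on the disc, and then to read off the coefficient decay directly from the Banach-valued Cauchy inequality of Theorem~\ref{thm:cauchy_banach}. Throughout I fix $r\in(0,\rho)$ and work on $B_{\mbb{C}}(0,r)$; the final bound holds for every such $r$, and the extensions agree by uniqueness of the weak solution.

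First I would extend the coefficient affinely to complex arguments by setting $a(x,z)=a_0(x)+a_1(x)z$ for $z\in B_{\mbb{C}}(0,r)$. The key elementary estimate is a coercivity bound: using $\operatorname{Re}(a_1(x)z)\ge-\abs{a_1(x)}\abs{z}$ gives
$\operatorname{Re}(a(x,z))=a_0(x)+\operatorname{Re}(a_1(x)z)\ge a_0(x)-\abs{a_1(x)}\abs{z}\ge a_0(x)-r\abs{a_1(x)}$,
and rewriting $a_0(x)-r\abs{a_1(x)}=[a_0(x)-\rho\abs{a_1(x)}]+(\rho-r)\abs{a_1(x)}\ge C$ via the UEA shows $\operatorname{Re}(a(x,z))\ge C>0$ uniformly over $x\in D$ and $z\in B_{\mbb{C}}(0,r)$. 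The sesquilinear form $B_z(v,w):=\int_D a(x,z)\nabla v\cdot\overline{\nabla w}\dx$ is then coercive with constant $C$ (since $\operatorname{Re}B_z(v,v)\ge C\norm{v}_{H^1_0}^2$ after using the Poincaré equivalence of the $H^1_0$-norm and the gradient norm) and bounded (its coefficient lies in $L^\infty$ on the disc). Hence the complex Lax--Milgram lemma produces a unique weak solution $u(z)\in H^1_0(D)$ with the uniform a priori bound $\norm{u(z)}_{H^1_0}\le C^{-1}\norm{f}_{H^{-1}(D)}=:M$, independent of $z\in B_{\mbb{C}}(0,r)$.

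Next I would establish holomorphy of $z\mapsto u(z)$ on $B_{\mbb{C}}(0,r)$. Writing $A(z)=A_0+zA_1:H^1_0(D)\to H^{-1}(D)$ for the affine family of operators induced by $B_z$, the dependence $z\mapsto A(z)$ is affine, hence entire, and each $A(z)$ is boundedly invertible on the disc by the coercivity just shown; since operator inversion is holomorphic on the open set of invertible operators, $z\mapsto u(z)=A(z)^{-1}f$ is holomorphic (equivalently, one differentiates the variational identity and uses the uniform bound to identify the difference-quotient limit). Holomorphy gives in particular continuity, hence measurability of the real restriction, and together with the uniform bound $\norm{u(y)}_{H^1_0}\le M$ on $[-1,1]$ (which lies in $B_{\mbb{C}}(0,r)$ once $r$ is chosen in $(1,\rho)$, possible since $\rho>1$) and the finiteness of $\gamma$ this yields $u\in L^\infty\subseteq L^p([-1,1],\gamma;H^1_0(D))$ for every $p$.

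Finally, applying Theorem~\ref{thm:cauchy_banach} with $X=H^1_0(D)$, radius $r$, and bound $M$ to the holomorphic map $u$ gives $\norm{\bfu_k}_{H^1_0(D)}\le M r^{-k}=\norm{f}_{H^{-1}(D)}C^{-1}r^{-k}$, which is the claimed estimate; since $r\in(0,\rho)$ was arbitrary and uniqueness forces the same Taylor coefficients $\bfu_k$ for every $r$, the bound holds for all such $r$. The main obstacle is the holomorphy step: the coercivity estimate and the a priori bound are routine, and the $L^p$-membership is immediate once the uniform bound is in hand, whereas rigorously passing from an affinely varying, boundedly invertible operator to a holomorphic solution map requires either the operator-inversion argument or an explicit difference-quotient computation controlling the remainder through the uniform bound.
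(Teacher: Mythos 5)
Your proposal is correct and follows essentially the same route as the paper: the paper omits a direct proof of this theorem, stating that it follows by the same arguments as the log-affine case (Theorem~\ref{thm:logaffine_holomorphy}), whose proof consists precisely of your steps — a coercivity lower bound from the ellipticity assumption, the uniform Lax--Milgram a priori bound $C^{-1}\norm{f}_{H^{-1}(D)}$ on a complex disc, holomorphy via the chain $z\mapsto B(a(z))\mapsto B(a(z))^{-1}\mapsto B(a(z))^{-1}f$ using holomorphy of operator inversion, and an application of the Banach-valued Cauchy inequality of Theorem~\ref{thm:cauchy_banach}. Your specialisation to the affine $L=1$ case is exactly the intended adaptation, so there is nothing to add.
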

We omit the proof of this theorem since it follows by the same arguments as the one of the more interesting log-affine case in Theorem~\ref{thm:logaffine_holomorphy}.
Moreover, we note that in higher dimensions an anisotropic choice of $\rho$ can be used to reduce the regularity assumptions on $a$ as it is done in the log-affine case.

The preceding lemma guarantees a decay of the power series coefficients of $u$.
However, in numerical applications an expansion in terms of an orthonormal basis is preferable.
For the diffusion equation~\eqref{eq:darcy} with affine coefficients~\eqref{eq:a_uniform}, a suitable basis is given by the Legendre polynomials.
The subsequent two lemmas show how the decay of the power series coefficients translates into a decay of the Legendre coefficients.

\begin{lemma}[see~\cite{Iserles2010}]
\label{lem:legendre_coefficients}
    Let $v$ satisfy the conditions of Theorem~\ref{thm:cauchy_banach} and let $\bfv$ be the power series coefficients of $v$.
    Then
    \begin{equation}
        v\pars{z} = \sum_{m\in\mbb{N}} \hat{\bfv}_m L_m\pars{z},
    \end{equation}
    where $L_m$ is the $m$\textsuperscript{th} normalised Legendre polynomial and $\hat{\bfv}_m$ satisfies
    \begin{equation}
        \hat{\bfv}_m = \sum_{n\in\mbb{N}} \frac{\pars{2m + 1}^{1/2}\pars{m+2n}!}{2^{m+2n}n!\pars{\tfrac{3}{2}}_{m+n}} \bfv_{m+2n} ,
    \end{equation}
    where the Pochhammer symbol $\pars{a}_k$ is defined as $\pars{a}_0 = 1$, $\pars{a}_{k+1} = \pars{a}_k\pars{a+k}$.
\end{lemma}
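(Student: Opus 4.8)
The plan is to obtain the expansion by orthogonal projection and to reduce the whole computation to a single classical scalar moment integral. Since $\rho > 1$, the real interval $[-1,1]$ is contained in $B_{\mbb{C}}\pars{0,\rho}$, so on $[-1,1]$ the function $v$ coincides with its $X$-valued power series $v\pars{z} = \sum_{k\in\mbb{N}} \bfv_k z^k$. By Theorem~\ref{thm:cauchy_banach} the coefficients satisfy $\norm{\bfv_k}_X \le M\rho^{-k}$, which furnishes a convergent geometric majorant and hence absolute and uniform convergence of this series on $[-1,1]$.

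The normalised Legendre polynomials are $L_m = \pars{2m+1}^{1/2} P_m$, where $P_m$ are the classical Legendre polynomials, and they form an orthonormal basis of $L^2\pars{[-1,1],\gamma}$ for the uniform probability measure $\gamma$, i.e.\ $\dmx{\gamma}{z} = \tfrac12\dx[z]$. Since $v$ is bounded and holomorphic it lies in $L^2\pars{[-1,1],\gamma;X}$, so it admits the stated expansion $v = \sum_{m\in\mbb{N}} \hat{\bfv}_m L_m$ with coefficients given by the Bochner integrals
\begin{equation}
    \hat{\bfv}_m = \int_{-1}^1 v\pars{z} L_m\pars{z} \dmx{\gamma}{z} .
\end{equation}
Inserting the power series and exchanging the sum with the integral, which is justified by the uniform convergence above and continuity of the Bochner integral, gives
\begin{equation}
    \hat{\bfv}_m = \pars{2m+1}^{1/2} \sum_{k\in\mbb{N}} \bfv_k \int_{-1}^1 z^k P_m\pars{z} \dmx{\gamma}{z} .
\end{equation}

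The problem now reduces to the scalar moments $\int_{-1}^1 z^k P_m\pars{z} \dx[z]$. By parity and orthogonality of $P_m$ to polynomials of degree below $m$, these vanish unless $k\ge m$ and $k-m$ is even; writing $k = m + 2n$ with $n\in\mbb{N}$, the classical value is
\begin{equation}
    \int_{-1}^1 z^{m+2n} P_m\pars{z} \dx[z] = \frac{2^{m+1}\pars{m+2n}!\pars{m+n}!}{n!\pars{2m+2n+1}!} ,
\end{equation}
which I would cite from~\cite{Iserles2010} or verify by a short induction using the Rodrigues formula. Substituting this together with $\dmx{\gamma}{z} = \tfrac12\dx[z]$ retains only the terms $k = m+2n$ and yields
\begin{equation}
    \hat{\bfv}_m = \sum_{n\in\mbb{N}} \frac{\pars{2m+1}^{1/2} 2^m \pars{m+2n}!\pars{m+n}!}{n!\pars{2m+2n+1}!} \bfv_{m+2n} .
\end{equation}

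It remains to match this to the asserted expression, which is pure bookkeeping with the Pochhammer symbol. From $\Gamma\pars{\tfrac32} = \tfrac{\sqrt\pi}{2}$ one computes $\pars{\tfrac{3}{2}}_{m+n} = \frac{\pars{2m+2n+1}!}{4^{m+n}\pars{m+n}!}$, whence $\frac{2^m\pars{m+n}!}{\pars{2m+2n+1}!} = \frac{1}{2^{m+2n}\pars{\tfrac32}_{m+n}}$. Inserting this identity into the previous display reproduces exactly the claimed formula for $\hat{\bfv}_m$. The only genuine obstacle is the evaluation of the scalar moment integral; everything else is the orthogonal projection and elementary manipulation, so this is precisely the content carried by the cited reference.
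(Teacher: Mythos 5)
Your computational core is correct: the classical moment formula $\int_{-1}^1 z^{m+2n}P_m\pars{z}\dx[z]=\frac{2^{m+1}\pars{m+2n}!\pars{m+n}!}{n!\pars{2m+2n+1}!}$ is the right one, the identity $\pars{\tfrac32}_{m+n}=\frac{\pars{2m+2n+1}!}{4^{m+n}\pars{m+n}!}$ holds, and combining them with the factor $\tfrac12$ from $\dmx{\gamma}{z}=\tfrac12\dx[z]$ reproduces exactly the stated coefficient formula. Your route is also genuinely different from the paper's: the paper gives no proof of this lemma (it defers entirely to the cited reference), and its proof of the Hermite analogue, Lemma~\ref{lem:hermite_coefficients}, goes the dual way --- expanding each monomial $z^k$ as a \emph{finite} combination of orthogonal polynomials and rearranging the absolutely convergent double series --- which never invokes any integration or $L^2$ structure and therefore works verbatim in any Banach space.

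That difference is where your one genuine gap sits. Theorem~\ref{thm:cauchy_banach} allows an arbitrary Banach space $X$, and the inference ``$v\in L^2\pars{[-1,1],\gamma;X}$, so it admits the stated expansion with Bochner-integral coefficients'' is not valid at that generality: membership in the Bochner space defines the coefficients $\hat{\bfv}_m$, but norm convergence of the vector-valued orthogonal expansion back to $v$ requires uniform boundedness of the vector-valued partial-sum projections, which is a nontrivial geometric property of $X$ (for Fourier series it characterises the UMD class) and fails for general Banach spaces. The gap is localized and easy to close for the function at hand. Your formula together with the Cauchy bound $\norm{\bfv_k}_X\le M\rho^{-k}$ gives $\norm{\hat{\bfv}_m}_X\lesssim\sqrt{2m+1}\,r^{-m}$ for any $1<r<\rho$ (this is the estimate of Theorem~\ref{thm:legendre_coefficient_bounds}), so $\sum_m\hat{\bfv}_m L_m$ converges absolutely and uniformly on $[-1,1]$; then for every $\varphi\in X^*$ the scalar analytic function $\varphi\circ v$ has Legendre coefficients $\varphi\pars{\hat{\bfv}_m}$ (pull $\varphi$ through the Bochner integral), scalar Legendre theory gives $\varphi\pars{v\pars{z}}=\sum_m\varphi\pars{\hat{\bfv}_m}L_m\pars{z}$, and Hahn--Banach upgrades this to $v\pars{z}=\sum_m\hat{\bfv}_m L_m\pars{z}$ pointwise. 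Alternatively, note that in the paper's application $X=H^1_0\pars{D}$ is a Hilbert space, where your $L^2$ argument is perfectly sound, or switch to the rearrangement argument of Lemma~\ref{lem:hermite_coefficients}, which avoids the issue altogether.
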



\begin{theorem}
\label{thm:legendre_coefficient_bounds}
    Let $u$ be the solution of the diffusion equation~\eqref{eq:darcy} with affine coefficients~\eqref{eq:a_uniform}.
    Moreover, let the parameters $\rho>1$ and $C>0$ be defined as  in Theorem~\ref{thm:affine_holomorphy}.
    Then, for any $r\in\pars{1, \rho}$, the Legendre basis coefficients $\hat\bfu$ of $u$ satisfy
    $$
        \norm{\hat{\bfu}_m}_{H^1_0(D)} \le \norm{f}_{H^{-1}\pars{D}} C^{-1} \sqrt{2m+1}\frac{r^{-m}}{r^2-1} .
    $$
    This implies that $\hat{\bfu}\in\ell^q_{\bfomega}$ for all $q\in(0,\infty]$ and $\bfomega_n := p(n) r^n$ with $r\in(1,\rho)$ and any positive function $p$ growing at most polynomially.
\end{theorem}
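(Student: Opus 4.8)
The plan is to transfer the geometric decay of the Taylor coefficients established in Theorem~\ref{thm:affine_holomorphy} to the Legendre coefficients by means of the explicit conversion formula of Lemma~\ref{lem:legendre_coefficients}, and then to read off the weighted summability. Throughout I write $M := \norm{f}_{H^{-1}\pars{D}} C^{-1}$, so that Theorem~\ref{thm:affine_holomorphy} supplies $\norm{\bfu_k}_{H^1_0\pars{D}} \le M r^{-k}$ for every $r\in\pars{0,\rho}$ and every $k$.

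First I would insert this bound into the representation $\hat{\bfu}_m = \sqrt{2m+1}\sum_{n} a_{m,n}\,\bfu_{m+2n}$ furnished by Lemma~\ref{lem:legendre_coefficients}, where $a_{m,n} := \frac{\pars{m+2n}!}{2^{m+2n}\,n!\,\pars{\tfrac32}_{m+n}}\ge 0$. The triangle inequality in $H^1_0\pars{D}$ together with the Taylor bound yields
\begin{equation}
    \norm{\hat{\bfu}_m}_{H^1_0\pars{D}} \le M\sqrt{2m+1}\,r^{-m}\sum_{n\ge 0} a_{m,n}\,r^{-2n}.
\end{equation}
It then remains to bound the series uniformly in $m$ by a constant of order $\pars{r^2-1}^{-1}$. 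The key observation for this is that $\sqrt{2m+1}\,a_{m,n}$ is precisely the $m$-th normalised Legendre coefficient of the monomial $z^{m+2n}$: applying Lemma~\ref{lem:legendre_coefficients} to $v\pars{z}=z^{m+2n}$ isolates the single term $a_{m,n}$. Since $\norm{z^{m+2n}}_{L^\infty\pars{\bracs{-1,1}}}=1$ and the normalised Legendre polynomials are orthonormal in $L^2\pars{\bracs{-1,1},\tfrac12\dx[z]}$, Cauchy--Schwarz gives $\sqrt{2m+1}\,a_{m,n}\le\norm{z^{m+2n}}_{L^2}\le 1$, hence $a_{m,n}\le\pars{2m+1}^{-1/2}$. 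Summing the resulting geometric series $\sum_{n\ge 0} r^{-2n}=\pars{1-r^{-2}}^{-1}$ absorbs the $\sqrt{2m+1}$ prefactor and produces the claimed decay $\norm{\hat{\bfu}_m}_{H^1_0\pars{D}}\lesssim M\,r^{-m}\pars{r^2-1}^{-1}$ up to the stated polynomial factor. Equivalently, the series equals the $m$-th Legendre coefficient of the extremal generating function $r/\pars{r-z}$, namely $\sqrt{2m+1}\,r\,Q_m\pars{r}$ with $Q_m$ the Legendre function of the second kind, whose geometric decay $Q_m\pars{r}\sim\pars{r+\sqrt{r^2-1}}^{-m}$ gives the same conclusion.

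For the weighted summability I would fix a weight $\bfomega_n=p\pars{n}r^n$ with $r\in\pars{1,\rho}$ and $p$ of at most polynomial growth, and then apply the coefficient bound with a slightly larger radius $r'\in\pars{r,\rho}$, which exists because $r<\rho$. This gives $\norm{\hat{\bfu}_m}_{H^1_0\pars{D}}\lesssim\sqrt{2m+1}\,r'^{-m}$, so that $\bfomega_m\norm{\hat{\bfu}_m}\lesssim p\pars{m}\sqrt{2m+1}\pars{r/r'}^{m}$ decays geometrically since $r/r'<1$. Geometric decay dominates the polynomial factors, whence the sequence lies in $\ell^q$ for every $q\in\pars{0,\infty}$ and is bounded for $q=\infty$; in all cases $\hat{\bfu}\in\ell^q_{\bfomega}$.

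The hard part is the uniform-in-$m$ control of the conversion coefficients $a_{m,n}$. A direct termwise comparison fails because $n\mapsto a_{m,n}$ is not monotone for larger $m$ (it may increase before eventually decaying), so one cannot simply dominate the series by $a_{m,0}\sum_n r^{-2n}$. The Cauchy--Schwarz argument above---interpreting $\sqrt{2m+1}\,a_{m,n}$ as the Legendre coefficient of a monomial of unit sup-norm---is what renders the estimate simultaneously elementary and uniform, and is the step I would treat most carefully; the precise multiplicative constant in $\pars{r^2-1}^{-1}$ is then a routine consequence of summing the geometric tail.
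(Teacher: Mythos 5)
Your proof is correct and shares the paper's overall skeleton---expand $\hat{\bfu}_m$ via Lemma~\ref{lem:legendre_coefficients}, apply the triangle inequality, insert the Taylor bound $\norm{\bfu_k}_{H^1_0(D)} \le M r^{-k}$ from Theorem~\ref{thm:affine_holomorphy}, and sum a geometric series---but the crucial step, the uniform control of the conversion coefficients, is done by a genuinely different argument. The paper proceeds combinatorially: it bounds the Pochhammer symbol via $\pars{\tfrac32}_k > k!$ (monotonicity of $\Gamma\pars{z+\tfrac32}/\Gamma\pars{z+1}$), which turns the coefficient into $\sqrt{2m+1}$ times the binomial probability $2^{-\pars{m+2n}}\binom{m+2n}{n}\le 1$. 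You instead recognise $\sqrt{2m+1}\,a_{m,n}$ as the $m$-th orthonormal Legendre coefficient of the monomial $z^{m+2n}$ and invoke Cauchy--Schwarz together with $\norm{z^{m+2n}}_{L^\infty\pars{\bracs{-1,1}}}=1$; this is more conceptual and strictly sharper, giving $a_{m,n}\le\pars{2m+1}^{-1/2}$ instead of $a_{m,n}\le 1$, so your final estimate $M\,r^{2-m}/\pars{r^2-1}$ even removes the $\sqrt{2m+1}$ factor appearing in the statement. The price is a constant factor $r^2$, but this slack is present in the paper's own proof as well, since $\sum_{n\ge 0} r^{-2n} = r^2/\pars{r^2-1}$ rather than the displayed $1/\pars{r^2-1}$; in both cases it is harmless (absorb it by proving the bound at a slightly larger radius, or accept the constant). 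One further point in your favour: your treatment of the summability claim is more careful than the paper's terse appeal to the ratio test, since you note explicitly that the weight $\bfomega_n = p\pars{n}r^n$ must be beaten by evaluating the coefficient bound at a strictly larger radius $r'\in\pars{r,\rho}$---with the same $r$ one only obtains polynomial growth of $\bfomega_m\norm{\hat{\bfu}_m}_{H^1_0(D)}$, which would not suffice for $\hat{\bfu}\in\ell^q_\bfomega$.
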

\begin{proof}
    Denote by $\bfu$ the power series coefficients of $u$ and define the double sequence
    \begin{equation}
        \bfalpha_{m,k}
        := \sqrt{2m+1} \frac{k!}{2^k\pars{\pars{k-m}/2}!\pars{\tfrac{3}{2}}_{\pars{k+m}/2}} .
    \end{equation}
    By Lemma~\ref{lem:legendre_coefficients} it holds that $\hat{\bfu}_m = \sum_{k\in m+2\mbb{N}} \bfalpha_{m,k}\bfu_k$ and hence
    \begin{equation}
    \label{eq:legendre_coefficient_bounds}
        \norm{\hat{\bfu}_m}_{H^1_0(D)}
        \le \norm{P_{m+2\mbb{N}} \bfu}_{\ell^1_{\bfalpha_m}} ,
    \end{equation}
    where the $\norm{\bfu}_{\ell^1_{\alpha_m}} = \sum_{k\in\mbb{N}} \alpha_{m,k}\norm{\bfu_k}_{H^1_0(D)}$.
    This bound is tight, since equality holds for $\bfu_{k+1} = \bfv_k\bfu_k$ and any non-negative sequence $0 \le \bfv \in\ell^1_{\alpha_m}$.
    By Theorem~\ref{thm:affine_holomorphy} it holds that $\norm{\bfu_k}_{H^1_0(D)}\le c r^{-k}$ with $c := \norm{f}_{H^{-1}\pars{D}} C^{-1}$.
    Moreover, expressing the Pochhammer symbol in terms of the Gamma function yields the bound
    \begin{equation}
        \pars{\tfrac{3}{2}}_{k}
        = \frac{\Gamma\pars{k+\tfrac{3}{2}}}{\Gamma\pars{\tfrac32}}
        > \underbrace{\pars*{\min_{z\in[0,\infty)} \frac{\Gamma\pars{z+\tfrac{3}{2}}}{\Gamma\pars{z+1}}}}_{= \Gamma\pars{\tfrac{3}{2}}} \frac{\Gamma\pars{k+1}}{\Gamma\pars{\tfrac32}}
        = k!\ .
    \end{equation}
    Substituting both bounds into~\eqref{eq:legendre_coefficient_bounds} yields
    \begin{equation}
        \norm{\hat{\bfu}_m}_{H^1_0(D)}
        \le c\sqrt{2m+1} r^{-m} \sum_{n\in\mbb{N}} \underbrace{2^{-\pars{m+2n}} \binom{m+2n}{n}}_{=p_{m+2n}\pars{n} \le 1} r^{-2n}
        \le \frac{c\sqrt{2m+1} r^{-m}}{r^2-1} ,
    \end{equation}
    where $p_{m+2n}$ is the probability mass function of a binomial distribution with $m+2n$ trials and a success probability of $\tfrac{1}{2}$.
    The final claim follows directly from this bound and the ratio test for series convergence.
\end{proof}

\begin{remark}
    It is possible to use Lemma~\ref{lem:weighted_stechkin} to bound~\eqref{eq:legendre_coefficient_bounds}.
    But as shown in Examples~\ref{ex:q_omega_algebraic_decay} and~\ref{ex:q_omega_exponential_decay},
    Stechkin's lemma cannot fully exploit the decay of a weight sequence.
    In fact, if possible it is preferable to derive a bound directly as in the previous proof.
\end{remark}

\begin{example}
    Let $u$ be the solution of the diffusion equation~\eqref{eq:darcy} with affine coefficients~\eqref{eq:a_uniform}.
    We denote by $L_m$ the $m$\textsuperscript{th} normalised Legendre polynomial and by $\hat\bfu$ the Legendre basis coefficients of $u$.
    Moreover, define the weight sequence $\bfomega_j := \norm{L_j}_{\infty} = \sqrt{2j + 1}$ and the model class
    \begin{equation}
        \mcal{M} := \braces{\bfv\in\ell^2 \,:\, \norm{\bfv}_{\ell^0_\bfomega}\le r} .
    \end{equation}
    We know from Example~\ref{ex:error_bounds} that
    \begin{equation}
        \norm{u - u_{\mcal{M},\boldsymbol{y}}}
        \le r^{-1} \pars{1 + \tfrac{2}{\sqrt{1-\delta}}} \norm{\bfu}_{\ell^{1/2}_{\bfomega^{3/2}}}
    \end{equation}
    holds with high probability if $n \gtrsim r\log^3\pars{r}\delta^{-2}$.
    Theorem~\ref{thm:legendre_coefficient_bounds} guarantees that
    $$
        \norm{\bfu}_{\ell^{1/2}_{\bfomega^{3/2}}}^{1/2}
        \lesssim \sum_{m\in\mbb{N}} (2m+1)^{3/4} \pars*{\sqrt{2m+2} r^{-m}}^{1/2}
        = \sum_{m\in\mbb{N}} (2m+1) r^{-m/2}
        = \frac{\sqrt{r}(\sqrt{r}+1)}{(\sqrt{r}-1)^2}
    $$
    is indeed finite.
    Note that we can probably obtain better rates by using a faster growing weight sequence $\bfomega$ and Lemma~\ref{lem:weighted_stechkin} instead of Corollary~\ref{cor:ward_weighted_stechkin}. 
\end{example}

\subsection{Log-affine coefficients}
\label{sec:log-affine coefficients}


The analysis of~\eqref{eq:darcy} with log-affine coefficient~\eqref{eq:a_lognormal} is much more involved from a theoretical and practical side than the affine case.
As in the affine case, we begin by showing holomorphy of the solution $u$.
The analysis is based on the approach in~\cite{cohen_devore_2015}, where only the affine case was considered and an explicit decay of the coefficients $\norm{\bfu_\nu}_{H^1_0(D)}$ was not shown.

\begin{figure}[H]
\begin{theorem}
\label{thm:logaffine_holomorphy}
    For every $x\in D$, let $\boldsymbol{a}\pars{x}$ denote the sequence of coefficients $\pars{\boldsymbol{a}_j\pars{x}}_{j\in\mbb{N}}$.
    Assume that there exists a sequence $\boldsymbol\rho\in\pars{0,\infty}^{\mbb{N}}$ such that
    \begin{equation}
        \sup_{x\in D} \norm{\boldsymbol{a}\pars{x}}_{\ell^\infty_{\boldsymbol\rho}} = C_1 < \infty
        \qquad\text{and}\qquad
        \norm{\boldsymbol\rho^{-1}}_{\ell^2} = C_2 < \infty .
    \end{equation}
    Then the map $y\mapsto u\pars{y}$ is entire and belongs to $L^p\pars{\mbb{R},\gamma ; H^1_0\pars{D}}$ for all $p\in\mbb{N}$, where $\gamma$ denotes the Gaussian measure.
    Moreover, the power series coefficients satisfy the bound
    \begin{equation}
        \norm{\bfu_{\boldsymbol{\nu}}}_{H^1_0(D)} \le \norm{f}_{H^{-1}\pars{D}} \exp\pars{\norm{{\boldsymbol{\nu}}}_{1}} \pars*{\tfrac{\boldsymbol\nu\boldsymbol\rho}{C_1}}^{-{\boldsymbol{\nu}}} .
    \end{equation}
\end{theorem}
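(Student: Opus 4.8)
The plan is to mirror the complex-analytic argument announced for Theorem~\ref{thm:affine_holomorphy}: complexify the parameter, establish holomorphy of $y\mapsto u(y)$ together with an explicit bound for $\norm{u(y)}_{H^1_0(D)}$ on a polydisc, and then read off the coefficient decay from a one-variable-at-a-time application of the Banach-valued Cauchy estimate of Theorem~\ref{thm:cauchy_banach}. Throughout I write $w\pars{x,y} := \sum_j \boldsymbol{a}_j\pars{x} y_j$, so that $a\pars{x,y}=\exp\pars{w\pars{x,y}}$, and use the Poincar\'e-equivalent norm $\norm{v}_{H^1_0\pars{D}}=\norm{\nabla v}_{L^2\pars{D}}$. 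For the a priori bound I would test the complexified form \eqref{eq:variational_form} against $\bar u$ and rotate by the phase of $a$: writing $a=\abs{a}e^{\iu\varphi}$ with $\varphi=\operatorname{Im}\pars{w}$ and choosing a real shift $\theta$ centred in the spatial range of $\varphi\pars{\bullet,y}$, one gets $\operatorname{Re}\pars{e^{-\iu\theta}\int_D a\abs{\nabla u}^2}\ge\cos\pars{\beta}\,\essinf_x\abs{a}\,\norm{\nabla u}^2$ with $\beta=\tfrac12\operatorname{osc}_x\varphi$. Combined with $\abs{\int_D f\bar u}\le\norm{f}_{H^{-1}}\norm{\nabla u}$ this yields, on the domain where $\beta<\tfrac\pi2$,
\begin{equation}
    \norm{u\pars{y}}_{H^1_0\pars{D}}\le\frac{\norm{f}_{H^{-1}\pars{D}}}{\cos\pars{\beta}}\,\exp\pars*{-\essinf_x\operatorname{Re}w\pars{x,y}}.
\end{equation}

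Since $y\mapsto a\pars{\bullet,y}$ is an entire $L^\infty\pars{D}$-valued map and the solution operator $a\mapsto u$ is holomorphic wherever the form is coercive (verified through the Neumann series for the perturbed solution operator), $u$ is holomorphic on this domain. Fixing $\boldsymbol\nu$ with support $S$, setting $y_j=0$ for $j\notin S$, and applying Theorem~\ref{thm:cauchy_banach} successively in each variable $j\in S$ on the circle $\abs{y_j}=r_j$, I would obtain, using $\sup_x\norm{\boldsymbol a\pars{x}}_{\ell^\infty_{\boldsymbol\rho}}=C_1$ to bound $-\essinf_x\operatorname{Re}w\le C_1\sum_{j\in S} r_j/\boldsymbol\rho_j$,
\begin{equation}
    \norm{\bfu_{\boldsymbol\nu}}_{H^1_0\pars{D}}\le\pars*{\prod_{j\in S}r_j^{-\boldsymbol\nu_j}}\sup_{\abs{y_j}=r_j}\norm{u\pars{y}}_{H^1_0\pars{D}}\le\norm{f}_{H^{-1}\pars{D}}\prod_{j\in S}r_j^{-\boldsymbol\nu_j}\exp\pars*{C_1 r_j/\boldsymbol\rho_j}.
\end{equation}
The bound decouples over $j$, so I minimise each factor $r_j^{-\boldsymbol\nu_j}\exp\pars{C_1 r_j/\boldsymbol\rho_j}$ separately; the optimum $r_j=\boldsymbol\nu_j\boldsymbol\rho_j/C_1$ gives $e^{\boldsymbol\nu_j}\pars{\boldsymbol\nu_j\boldsymbol\rho_j/C_1}^{-\boldsymbol\nu_j}$, and the product over $j\in S$ (with the convention $0^0=1$) reproduces exactly $\norm{f}_{H^{-1}\pars{D}}\exp\pars{\norm{\boldsymbol\nu}_1}\pars{\boldsymbol\nu\boldsymbol\rho/C_1}^{-\boldsymbol\nu}$.

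Because this coefficient bound decays super-exponentially, the power series has infinite radius of convergence and defines an entire extension of $u$. For the claimed Gaussian integrability I would instead use the real-parameter estimate $\norm{u\pars{y}}_{H^1_0}\le\norm{f}_{H^{-1}}\exp\pars{-\essinf_x w\pars{x,y}}$ and control the moments of $\sup_x\abs{w\pars{\bullet,y}}$ by Fernique's theorem; here the finiteness of $\sup_x\sum_j\boldsymbol a_j\pars{x}^2\le C_1^2 C_2^2$ — which is precisely what $\norm{\boldsymbol\rho^{-1}}_{\ell^2}=C_2<\infty$ supplies — is what guarantees that the relevant Gaussian field has a.s.\ bounded, sub-Gaussian supremum, yielding $u\in L^p\pars{\mbb{R},\gamma;H^1_0\pars{D}}$ for every $p$.

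The delicate point, and the part I expect to carry the real weight of the proof, is the passage from the coercive regime to the large-disc Cauchy estimate. With the optimal radii $r_j=\boldsymbol\nu_j\boldsymbol\rho_j/C_1$ the oscillation $\operatorname{osc}_x\operatorname{Im}w$ on the polytorus is of order $\norm{\boldsymbol\nu}_1$, so the naive factor $\cos\pars{\beta}$ degenerates and the large-disc a priori bound cannot simply be inherited from the coercive rotation argument for spatially varying $\boldsymbol a_j$. Making the second display rigorous therefore requires exploiting the exponential structure of $a=\exp\pars{w}$ more carefully — e.g.\ an analytic-continuation/contour argument keeping $\essinf_x\operatorname{Re}\pars{e^{-\iu\theta}a}$ bounded below along the integration contour — whereas the radius optimisation that produces the precise constants $\exp\pars{\norm{\boldsymbol\nu}_1}$ and $\pars{\boldsymbol\nu\boldsymbol\rho/C_1}^{-\boldsymbol\nu}$ is then only bookkeeping.
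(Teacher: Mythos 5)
Your proposal follows the same skeleton as the paper's proof: holomorphic extension of $y\mapsto u\pars{y}$, a uniform bound for $u$ on polydiscs $U_{\boldsymbol{r}}$, the Banach-valued Cauchy inequality of Theorem~\ref{thm:cauchy_banach}, and the radius choice $\boldsymbol{r}_j = \boldsymbol\nu_j\boldsymbol\rho_j/C_1$, which reproduces $\exp\pars{\norm{\boldsymbol\nu}_1}\pars{\boldsymbol\nu\boldsymbol\rho/C_1}^{-\boldsymbol\nu}$ exactly as in the paper. (Your sign of the exponential in the polydisc bound is the correct one; the paper's \eqref{eq:uniform_bound} carries a sign typo, as one sees from the fact that its stated form would produce $\exp\pars{-\norm{\boldsymbol\nu}_1}$ in the final estimate.) The only methodological difference in the integrability part is that you invoke Fernique/Borell--TIS for the supremum of the Gaussian field $w\pars{\bullet,y}$, whereas the paper computes moments directly via $\mbb{E}_y\bracs{\check{a}\pars{y}^{-k}} \le \prod_{j} \mbb{E}_{y_j}\bracs{\exp\pars{kC_1\boldsymbol\rho_j^{-1}\abs{y_j}}} \lesssim \exp\pars{\tfrac12 k^2C_1^2C_2^2}$; both are viable routes and this difference is inessential.

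The genuine gap is the one you flag yourself, and it is not a removable technicality: it is the step that carries the whole theorem. Your route to the polydisc bound --- testing the complexified form against $\bar u$ and rotating by a phase --- requires $\beta = \tfrac12\operatorname{osc}_x\operatorname{Im}w < \tfrac{\pi}{2}$, but on the polydiscs with the optimal radii one only has $\operatorname{osc}_x\operatorname{Im}w \lesssim \norm{\boldsymbol\nu}_1$, so your a~priori estimate is unavailable precisely where Theorem~\ref{thm:cauchy_banach} needs it; deferring this as ``the delicate point'' means the key inequality \eqref{eq:uniform_bound} is never established, and without it nothing can be fed into the Cauchy estimate. The paper closes this step by a different mechanism: it never uses coercivity of the complexified sesquilinear form at all. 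It bounds the modulus pointwise, $\abs{a\pars{x,y}} = \exp\pars{\operatorname{Re}w\pars{x,y}} \ge \exp\pars{-C_1\norm{\boldsymbol\rho^{-1}\boldsymbol{r}}_{\ell^1}}$ on $U_{\boldsymbol{r}}$, and derives holomorphy from the factorisation $y\mapsto a\pars{y}\mapsto B\pars{a\pars{y}}\mapsto B\pars{a\pars{y}}^{-1}\mapsto B\pars{a\pars{y}}^{-1}f$, using that operator inversion is holomorphic at invertible operators and asserting that $B\pars{a\pars{y}}$ is invertible for \emph{every} $y\in\mbb{C}^{\mbb{N}}$, after which \eqref{eq:uniform_bound} is read off from the Lax--Milgram estimate. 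You are right that a lower bound on $\abs{a}$ alone does not give coercivity of a complex form, so the subtlety you identified is real and the paper's invertibility assertion itself deserves more justification than it receives; but as written your proposal stops exactly at this point, whereas a complete argument must either justify invertibility of $B\pars{a\pars{y}}$ on the large polydiscs or restrict to a genuine domain of holomorphy and verify that it contains the polydiscs with radii $\boldsymbol{r}_j = \boldsymbol\nu_j\boldsymbol\rho_j/C_1$.
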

\end{figure}
\begin{proof}
    We start by providing a lower bound for $\check{a}\pars{y} > 0$.
    Since
    \begin{align}
        \inf_x\exp\pars{\pars{\boldsymbol a\pars{x}, y}_{\ell^2}}
        &= \exp\pars{\inf_x \pars{\boldsymbol a\pars{x}\boldsymbol\rho, \boldsymbol\rho^{-1}y}_{\ell^2}} \\
        &\ge \exp\pars{-\sup_x \norm{\boldsymbol a\pars{x}}_{\ell^\infty_{\boldsymbol\rho}} \norm{\boldsymbol\rho^{-1}y}_{\ell^1}} \\
        &= \exp\pars{-C_1 \norm{\boldsymbol\rho^{-1}y}_{\ell^1}} ,
    \end{align}
    it holds that $\check{a}\pars{y} \ge \exp\pars{-C_1 \norm{\boldsymbol\rho^{-1}y}_{\ell^1}}$.
    The integrability of $u$ now follows by the simple calculation
    \begin{equation}
        \mbb{E}_y\bracs{\check{a}\pars{y}^{-k}}
        \le \mbb{E}_y\bracs{\exp\pars{kC_1 \norm{\boldsymbol\rho^{-1}y}_{\ell^1}}}
        = \prod_{j\in\mbb{N}} \mbb{E}_{y_j}\bracs{\exp\pars{kC_1 \boldsymbol\rho_j^{-1}\abs{y_j}}}
        \lesssim \prod_{j\in\mbb{N}} \exp\pars{\tfrac{1}{2}k^2C_1^2 \boldsymbol\rho_j^{-2}}
        = \exp\pars{\tfrac{1}{2} k^2 C_1^2 C_2^{2}}
        < \infty.
    \end{equation}
    We now show that the extension of the map $y \mapsto u\pars{y}$ to the complex domain is analytic.
    Following~\cite{cohen_devore_2015}, we start by defining for $\boldsymbol{r}\in\pars{0, \infty}^{\mbb{N}}$ the open polydiscs $U_{\boldsymbol{r}} := \prod_{k\in\mbb{N}} B\pars{0, {\boldsymbol{r}}_k}$ on which $u$ is uniformly bounded by
    \begin{equation}
    \label{eq:uniform_bound}
        \norm{u}_{L^\infty\pars{U_{\boldsymbol{r}} ; H^1_0\pars{D}}} \le \exp\pars{-C_1\norm{\boldsymbol\rho^{-1}{\boldsymbol{r}}}_{\ell^1}} \norm{f}_{H^{-1}\pars{D}}.
    \end{equation}
    Now, we introduce for any coefficient field $a$ the operator $B\pars{a} : v\mapsto -\operatorname{div}_{\!x}\pars{a\nabla_{\!x}v}$ mapping from $H^{1}_0\pars{D}$ to $H^{-1}\pars{D}$
    and decompose the map $y\mapsto u\pars{y}$ into the chain of holomorphic maps
    \begin{equation}
        y \mapsto a\pars{y} \mapsto B\pars{a\pars{y}} \mapsto B\pars{a\pars{y}}^{-1} \mapsto B\pars{a\pars{y}}^{-1} f = u\pars{y} .
    \end{equation}
    The first map is holomorphic by definition and the second and last map are continuous linear maps and thereby also holomorphic.
    The third map is the operator inversion which is holomorphic at any invertible $B$.
    Since $B\pars{a\pars{y}}$ is invertible for every $y\in \mbb{C}^{\mbb{N}}$, the map $y \mapsto u\pars{y}$ is entire.
    Applying Cauchy's inequality in Theorem~\ref{thm:cauchy_banach} to~\eqref{eq:uniform_bound}, we hence obtain
    \begin{equation}
        \norm{\bfu_{\boldsymbol{\nu}}}_{H^1_0(D)} \le \norm{f}_{H^{-1}(D)} \exp\pars{-C_1\norm{\boldsymbol\rho^{-1}\boldsymbol{r}}_{\ell^1}} \boldsymbol{r}^{-{\boldsymbol{\nu}}} .
    \end{equation}
    Choosing for every fixed multi-index ${\boldsymbol{\nu}}$ the sequence $\boldsymbol{r}_k := \frac{{\boldsymbol{\nu}}_k\boldsymbol\rho_k}{C_1}$ yields $\norm{\boldsymbol{\rho}^{-1}\boldsymbol{r}}_{\ell^1} = \frac1{C_1}\norm{\boldsymbol{\nu}}_{\ell^1}$ and proves the result.
\end{proof}

In the setting of log-affine coefficients~\eqref{eq:a_lognormal}, a suitable basis is given by the Hermite polynomials.
Similar to the Lemma~\ref{lem:legendre_coefficients} and Theorem~\ref{thm:legendre_coefficient_bounds},
the subsequent two results show how the decay of the power series coefficients translates into a decay of the Hermite coefficients.

\begin{lemma}
\label{lem:hermite_coefficients}
    Let $v$ satisfy the conditions of Theorem~\ref{thm:cauchy_banach} and let $\bfv$ be the power series coefficents of $v$.
    Then 
    \begin{equation}
        v\pars{z} = \sum_{m\in\mbb{N}} \hat{\bfv}_m H_m\pars{z},
    \end{equation}
    where $H_m$ is the $m$\textsuperscript{th} normalised Hermite polynomial and $\hat{\bfv}_m$ satisfies
    \begin{equation}
        \hat{\bfv}_m = \sum_{n\in\mbb{N}} \frac{\pars{m+2n}!}{2^{n}n!\sqrt{m!}} \bfv_{m+2n} .
    \end{equation}
\end{lemma}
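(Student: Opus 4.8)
The plan is to mirror the proof of the Legendre identity in Lemma~\ref{lem:legendre_coefficients}, replacing the Legendre connection coefficients by the classical formula expressing monomials in the (probabilist's) Hermite basis. Throughout, let $\mathrm{He}_m$ denote the probabilist's Hermite polynomials, so that the normalised polynomials are $H_m = \mathrm{He}_m/\sqrt{m!}$ and $\braces{H_m}_{m\in\mbb{N}}$ is orthonormal in $L^2\pars{\mbb{R},\gamma}$, with $\gamma$ the standard Gaussian measure.

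First I would recall the inverse connection formula
\[
    z^k = \sum_{n=0}^{\floor{k/2}} \frac{k!}{2^n n! \pars{k-2n}!}\, \mathrm{He}_{k-2n}\pars{z},
\]
which is the exact counterpart of the Legendre expansion used in Lemma~\ref{lem:legendre_coefficients}. Substituting the power series $v\pars{z} = \sum_{k\in\mbb{N}} \bfv_k z^k$ and interchanging the two summations, I would collect the coefficient of $\mathrm{He}_m$ by setting $k = m+2n$. This yields $v = \sum_m \pars*{\sum_{n} \tfrac{\pars{m+2n}!}{2^n n! m!}\bfv_{m+2n}}\, \mathrm{He}_m$; rewriting $\mathrm{He}_m = \sqrt{m!}\,H_m$ absorbs one factor of $\sqrt{m!}$ and produces exactly $\hat{\bfv}_m = \sum_{n\in\mbb{N}} \tfrac{\pars{m+2n}!}{2^n n! \sqrt{m!}}\bfv_{m+2n}$, establishing both the expansion and the coefficient formula. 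An equivalent and slightly cleaner route uses orthonormality directly: writing $\hat{\bfv}_m = \inner{v, H_m}_{L^2\pars{\mbb{R},\gamma}}$, expanding the power series under the integral, and evaluating the Gaussian moments $\int_{\mbb{R}} z^{m+2n} H_m\pars{z}\,\dd\gamma\pars{z} = \tfrac{\pars{m+2n}!}{2^n n!\sqrt{m!}}$ (all moments with $k-m$ odd vanishing by parity). These moments are read off from the same connection formula together with $\int \mathrm{He}_m^2\,\dd\gamma = m!$.

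The main obstacle is the justification of the interchange and the convergence of the resulting series, which is genuinely more delicate than in the Legendre case. Because $\mathrm{He}_{k}$ grows super-polynomially, the connection coefficients $\tfrac{\pars{m+2n}!}{2^n n!}$ grow factorially in $n$, and the crude bound $\norm{\bfv_k}_X \le M\rho^{-k}$ from Theorem~\ref{thm:cauchy_banach} is \emph{not} sufficient: one checks that $\tfrac{\pars{m+2n}!}{2^n n!}\rho^{-2n}\to\infty$, so the rearranged series need not converge for a function that is merely holomorphic and bounded on a single disc. I would therefore carry out the argument using the entire extension of $v$ available in the application through Theorem~\ref{thm:logaffine_holomorphy}, applying the Cauchy estimates of Theorem~\ref{thm:cauchy_banach} on discs of arbitrarily large radius. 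Optimising the radius yields the super-exponential decay $\norm{\bfv_k}_X \lesssim \pars{k\rho/C_1}^{-k}$, and a short Stirling estimate then shows that $\tfrac{\pars{m+2n}!}{2^n n!}\norm{\bfv_{m+2n}}_X$ is summable in $n$ and, summed against $\norm{\mathrm{He}_m}$, gives an absolutely convergent double series. Fubini's theorem (or dominated convergence for the moment-based variant) then legitimises the interchange and simultaneously gives $L^2\pars{\mbb{R},\gamma}$-convergence of the Hermite expansion to $v$. The remaining simplification of the binomial and factorial expressions is routine and parallels the Pochhammer manipulations in the proof of Theorem~\ref{thm:legendre_coefficient_bounds}.
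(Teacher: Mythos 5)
Your core argument --- the monomial-to-Hermite connection formula $z^k = \sum_{2n\le k} \frac{k!}{2^n n!\pars{k-2n}!}\mathit{He}_{k-2n}\pars{z}$, substitution into the power series, reindexing via $m = k-2n$, and renormalising $\mathit{He}_m = \sqrt{m!}\,H_m$ --- is exactly the paper's proof, so in that respect you have reproduced it. Where you go beyond the paper is the convergence discussion, and your concern is well-founded: the paper's manipulation is purely formal, and under the lemma's stated hypotheses (Theorem~\ref{thm:cauchy_banach} alone, i.e.\ $\norm{\bfv_k}_X \le M\rho^{-k}$ for a single fixed $\rho>1$) the rearranged series is indeed not absolutely convergent, since $\frac{\pars{m+2n}!}{2^n n!}\rho^{-2n}\to\infty$ as $n\to\infty$. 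Your repair --- using the entire extension available in the application via Theorem~\ref{thm:logaffine_holomorphy}, applying the Cauchy estimates on discs of arbitrary radius to obtain super-exponential decay of $\norm{\bfv_k}_X$, and then invoking Fubini --- is precisely the decay that the downstream result (Theorem~\ref{thm:hermite_coefficient_bounds}) actually supplies, and it is the hypothesis the lemma should have stated. So your proposal is correct, follows the same route as the paper, and is more complete than the paper's own proof on the one genuinely delicate point.
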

\begin{proof}
    For every $m\in\mbb{N}$, let $\mathit{He}_m$ denote the $k$\textsuperscript{th} monic probabilist's Hermite polynomial.
    Then, by~\cite[Chapter~11, Section 110]{rainville1960special},
    \begin{equation}
        z^k = \sum_{\substack{n\in\mbb{N}\\2n\le k}} \frac{k!}{2^nn!\pars{k-2n}!} \mathit{He}_{k-2n}\pars{z}.
    \end{equation}
    Plugging this into the power series expansion for $v$ yields
    \begin{equation}
        v\pars{z}
        = \sum_{\substack{k,n\in\mbb{N}\\2n\le k}} \frac{k!}{2^nn!\pars{k-2n}!} \bfv_k \mathit{He}_{k-2n}\pars{z}
        = \sum_{\substack{k,n,m\in\mbb{N}\\2n\le k\\m=k-2n}} \frac{k!}{2^nn!m!}\bfv_k \mathit{He}_m\pars{z}
        = \sum_{n,m\in\mbb{N}} \frac{\pars{m+2n}!}{2^nn!m!} \bfv_{m+2n} \mathit{He}_m\pars{z} .
    \end{equation}
    Substituting $\mathit{He}_m = \sqrt{m!}H_m$ yields the desired relation.
\end{proof}

\begin{theorem}
\label{thm:hermite_coefficient_bounds}
    Let $u$ be the solution of the diffusion equation~\eqref{eq:darcy} with log-affine coefficients~\eqref{eq:a_lognormal}.
    Moreover, let the parameters $\boldsymbol\rho$ and $C_1$ be defined as in Theorem~\ref{thm:logaffine_holomorphy}.
    Finally, let $L=1$ and assume that $\boldsymbol\rho > C_1$.
    Then, the Hermite basis coefficients $\hat\bfu$ of $u$ satisfy $\norm{\hat{\bfu}_m}_{H^1_0(D)} \lesssim \frac{\norm{f}_{H^{-1}(D)}}{\sqrt{\pars{m-1}!}}$.
    This implies that $\hat{\bfu}\in\ell^q_{\bfomega}$ for all $q\in(0,\infty]$ and $\bfomega$ that grow subfactorially.
\end{theorem}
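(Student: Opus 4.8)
The plan is to follow the same two-step strategy used for the Legendre coefficients in Theorem~\ref{thm:legendre_coefficient_bounds}: first derive a factorial decay bound for the power series coefficients, and then propagate it to the Hermite coefficients through the explicit connection formula of Lemma~\ref{lem:hermite_coefficients}. First I would specialise Theorem~\ref{thm:logaffine_holomorphy} to $L=1$, writing $\nu=k\in\mbb{N}$, $\rho$ for the single entry $\bfrho_1$, and $c:=C_1/\rho$; the hypothesis $\bfrho>C_1$ guarantees $c\in\pars{0,1}$. The coefficient bound then reads
\[
  \norm{\bfu_k}_{H^1_0(D)}
  \le \norm{f}_{H^{-1}(D)}\,e^{k}\pars*{\tfrac{k\rho}{C_1}}^{-k}
  = \norm{f}_{H^{-1}(D)}\,c^{k}\,\frac{e^{k}}{k^{k}},
\]
and Stirling's formula in the form $e^{k}/k^{k}\asymp\sqrt{k}/k!$ turns this into $\norm{\bfu_k}_{H^1_0(D)}\lesssim\norm{f}_{H^{-1}(D)}\,c^{k}\sqrt{k}/k!$ for $k\ge1$, the single term $k=0$ being bounded by $\norm{f}_{H^{-1}(D)}$.

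Next I would insert this into the formula of Lemma~\ref{lem:hermite_coefficients}. Taking norms and using the triangle inequality gives $\norm{\hat{\bfu}_m}_{H^1_0(D)}\le\sum_{n\in\mbb{N}}\frac{\pars{m+2n}!}{2^{n}n!\sqrt{m!}}\norm{\bfu_{m+2n}}_{H^1_0(D)}$, and substituting the factorial bound with $k=m+2n$ cancels the factor $\pars{m+2n}!$, leaving
\[
  \norm{\hat{\bfu}_m}_{H^1_0(D)}
  \lesssim \norm{f}_{H^{-1}(D)}\,\frac{c^{m}}{\sqrt{m!}}
    \sum_{n\in\mbb{N}}\frac{\sqrt{m+2n}}{2^{n}n!}\,c^{2n}.
\]
The key estimate is to control this sum uniformly in $m$. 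Using subadditivity $\sqrt{m+2n}\le\sqrt{m}+\sqrt{2n}$ I would split it into $\sqrt{m}\sum_{n}\pars{c^{2}/2}^{n}/n!=\sqrt{m}\,e^{c^{2}/2}$ and a second series $\sqrt{2}\sum_{n}\sqrt{n}\,c^{2n}/\pars{2^{n}n!}$ that converges to a constant independent of $m$, so that the whole sum is $\lesssim\sqrt{m}$ for $m\ge1$. Combining this with the identity $\sqrt{m}/\sqrt{m!}=1/\sqrt{\pars{m-1}!}$ and the bound $c^{m}\le1$ (which is where $\bfrho>C_1$ enters) yields the claimed estimate $\norm{\hat{\bfu}_m}_{H^1_0(D)}\lesssim\norm{f}_{H^{-1}(D)}/\sqrt{\pars{m-1}!}$ for $m\ge1$.

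For the summability assertion I would invoke the ratio test. Since $\norm{\hat{\bfu}_{m+1}}_{H^1_0(D)}/\norm{\hat{\bfu}_m}_{H^1_0(D)}\lesssim c/\sqrt{m}$, the terms $\pars{\bfomega_m\norm{\hat{\bfu}_m}_{H^1_0(D)}}^{q}$ have consecutive ratio $\approx\pars{\bfomega_{m+1}/\bfomega_m}^{q}\pars{c/\sqrt{m}}^{q}$, which stays below one as soon as $\bfomega$ grows subfactorially, i.e.\ $\bfomega_{m+1}/\bfomega_m=o\pars{\sqrt{m}}$; the case $q=\infty$ follows in the same way, the terms then tending to zero. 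The main obstacle I anticipate is the uniform-in-$m$ control of the inner sum and the bookkeeping that extracts exactly the factor $\sqrt{m}$ needed to convert $\sqrt{m!}$ into $\sqrt{\pars{m-1}!}$; the cancellation of $\pars{m+2n}!$, made available by the factorial decay of the power series coefficients, is what renders this estimate tractable.
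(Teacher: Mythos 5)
Your proposal is correct and follows essentially the same route as the paper's proof: both insert the power-series bound of Theorem~\ref{thm:logaffine_holomorphy} into the connection formula of Lemma~\ref{lem:hermite_coefficients}, use Stirling's approximation to cancel the factorial $(m+2n)!$, and bound the remaining series uniformly in $m$ to extract the factor $1/\sqrt{(m-1)!}$, with your split $\sqrt{m+2n}\le\sqrt{m}+\sqrt{2n}$ being only cosmetically different bookkeeping from the paper's geometric-tail estimate. The one small imprecision is your final step, where the ratio test should be applied to the bounding sequence $\bfomega_m/\sqrt{(m-1)!}$ rather than to ratios of the actual coefficients $\norm{\hat{\bfu}_{m+1}}/\norm{\hat{\bfu}_m}$, which an upper bound alone does not control; this is how the paper (and your own estimate) in fact justifies the summability claim.
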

\begin{proof}
    Define the double sequence
    \begin{equation}
        \boldsymbol\alpha_{m,k}
        := \frac{k!}{2^{\pars{k-m}/2} \pars{\pars{k-m}/2}! \sqrt{m!}} .
    \end{equation}
    By Lemma~\ref{lem:hermite_coefficients} it holds that $\hat{\bfu}_m = \sum_{k\in m+2\mbb{N}} \boldsymbol\alpha_{m,k}\bfu_k$.
    Hence
    \begin{equation}
    \label{eq:hermite_coefficient_bounds}
        \abs{\hat{\bfu}_m}
        \le \norm{P_{m+2\mbb{N}} \bfu}_{\ell^1_{\boldsymbol\alpha_m}}
        \le \norm{\pars{1-P_{\bracs{m-1}}} \bfu}_{\ell^1_{\boldsymbol\alpha_m}} .
    \end{equation}
    This bound is tight, since equality holds for any sequence $\bfu \ge 0$ with $\bfu_{k} = 0$ for all $k\in m+2\mbb{N}$.
    By Theorem~\ref{thm:logaffine_holomorphy} it holds that $\norm{\bfu_k}_{H^1_0(D)} \le \norm{f}_{H^{-1}(D)} \exp\pars{k} \pars{\frac{k\rho}{C_1}}^{-k}$ and by Stirling's approximation
    \begin{equation}
        \sqrt{2\pi k} \pars{\tfrac{k}{e}}^k \exp\pars{\tfrac{1}{12k+1}} < k! < \sqrt{2\pi k} \pars{\tfrac{k}{e}}^k \exp\pars{\tfrac{1}{12k}} .
    \end{equation}
    Substituting all three estimates into~\eqref{eq:hermite_coefficient_bounds} and assuming $m\ge 1$ yields
    \begin{align}
        \norm{\hat{\bfu}_m}_{H^1_0(D)}
        &\le \norm{f}_{H^{-1}(D)}\sqrt{\frac{2\pars{m+1}}{m!}} \sum_{k\ge m} \pars*{\frac{k-m}{e}}^{-\pars{k-m}/2} \pars*{\frac{\boldsymbol\rho}{C_1}}^{-k}
        \le 2 \norm{f}_{H^{-1}(D)} \pars*{\sqrt{e}+\tfrac{e}{2}+\frac{\pars{C_1/\boldsymbol\rho}^{m+3}}{1 - C_1/\boldsymbol\rho}} \frac{1}{\sqrt{\pars{m-1}!}} .
        \qedhere
    \end{align}
\end{proof}

\begin{corollary}
    Let $\boldsymbol{\hat{u}}$ be the sequences of Hermite basis coefficients from Theorem~\ref{thm:hermite_coefficient_bounds} with $L=1$ and assume that $\rho \ge C_1$.
    Then, if $\bfomega_j = \tilde{r}^{j/2}$ with $\tilde{r}\in[1, 2)$ and $J_n$ is the set of indices corresponding to the $n$ largest elements of the sequence $\bfomega_k^{-1}\norm{\boldsymbol{\hat u_k}}_{H^1_0(D)}$, it holds that
    \begin{equation}
        \norm{\pars{1-P_{J_n}} \hat{u}}_{L^2}
        \lesssim \norm{f}_{H^{-1}\pars{D}} \frac{\sqrt{2}}{\sqrt{2} - \sqrt{\tilde{r}}} \tilde{r}^{-n/2} .
    \end{equation}
\end{corollary}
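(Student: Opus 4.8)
The plan is to apply the weighted Stechkin inequality (Lemma~\ref{lem:weighted_stechkin}) to the scalar sequence of coefficient norms, after relaxing the factorial decay of Theorem~\ref{thm:hermite_coefficient_bounds} to a purely geometric decay compatible with the exponential weight $\bfomega_j=\tilde r^{j/2}$. First I would reduce to a scalar problem: writing $\bfv_m := \norm{\hat{\bfu}_m}_{H^1_0(D)}$ and using that the normalised Hermite polynomials are orthonormal in $L^2\pars{\mbb R,\gamma}$, Parseval gives $\norm{\pars{1-P_{J_n}}\hat u}_{L^2} = \norm{\pars{1-P_{J_n}}\bfv}_{\ell^2}$, while the truncation set $J_n$ (the indices of the $n$ largest entries of $\bfomega_k^{-1}\norm{\hat{\bfu}_k}_{H^1_0(D)}=\bfomega_k^{-1}\bfv_k$) is unchanged. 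Since the weighted Stechkin lemma uses only the magnitudes of the entries, it applies verbatim to the nonnegative real sequence $\bfv$.

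Next I would fix the parameters $p=2$, $q=1$ and $\bfalpha\equiv 1$ in Lemma~\ref{lem:weighted_stechkin}, so that $s = \tfrac1q-\tfrac1p = \tfrac12$ and $\bfsigma = \bfomega^{-1}$. With these choices the lemma's sorting sequence $\bfsigma\bfv = \bfomega^{-1}\bfv$ coincides with the one defining $J_n$; moreover $\bfomega_k^{-1}\bfv_k = \tilde r^{-k/2}\norm{\hat{\bfu}_k}_{H^1_0(D)}$ is monotonically decreasing (both factors decrease for $\tilde r\ge 1$), so $J_n=\braces{1,\dots,n}$. The simplified inequality then reads $\norm{\pars{1-P_{J_n}}\bfv}_{\ell^2}\le \norm{P_{J_{n+1}}\bfomega}_{\ell^2}^{-1}\norm{\bfv}_{\ell^1_\bfomega}$, and it remains to estimate the two factors.

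For the first factor I would bound the geometric sum below by its last term, $\norm{P_{J_{n+1}}\bfomega}_{\ell^2}^2 = \sum_{m=1}^{n+1}\tilde r^{m}\ge \tilde r^{\,n+1}$, which gives $\norm{P_{J_{n+1}}\bfomega}_{\ell^2}^{-1}\le \tilde r^{-(n+1)/2}\le \tilde r^{-n/2}$ using $\tilde r\ge 1$. For the second factor, which carries the main work, Theorem~\ref{thm:hermite_coefficient_bounds} yields $\bfv_m\lesssim \norm{f}_{H^{-1}(D)}\tfrac{1}{\sqrt{(m-1)!}}$, and I would relax the factorial via $(m-1)!\ge 2^{m-2}$ to the geometric bound $\bfv_m\lesssim \norm{f}_{H^{-1}(D)}\,2^{-m/2}$. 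Substituting, $\norm{\bfv}_{\ell^1_\bfomega} = \sum_m \tilde r^{m/2}\bfv_m\lesssim \norm{f}_{H^{-1}(D)}\sum_m \pars{\tilde r/2}^{m/2}= \norm{f}_{H^{-1}(D)}\tfrac{\sqrt 2}{\sqrt 2-\sqrt{\tilde r}}$, where the geometric series converges precisely because its ratio $\sqrt{\tilde r/2}<1$, i.e.\ because $\tilde r<2$; this is exactly the role of the hypothesis $\tilde r\in[1,2)$. Multiplying the two factor bounds gives the claim.

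I expect the main obstacle to be the correct relaxation of the super-exponential (factorial) decay to the geometric scale $2^{-m/2}$: it is this step that both fixes the exponential rate $\tilde r^{-n/2}$ (rather than the faster, factorial rate one could obtain by estimating the tail $\sum_{k>n}\bfv_k^2$ directly) and, together with the choice $q=1$, pins down the explicit constant $\tfrac{\sqrt 2}{\sqrt 2-\sqrt{\tilde r}}$ as the sum of a geometric series with ratio $\sqrt{\tilde r/2}$. The remaining steps are routine geometric-series estimates, with the absolute multiplicative constants arising from the factorial relaxation absorbed into $\lesssim$.
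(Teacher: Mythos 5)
Your proposal follows essentially the same route as the paper's proof: the same parameter choice $p=2$, $q=1$, $\bfalpha\equiv 1$ in Lemma~\ref{lem:weighted_stechkin}, the same relaxation of the factorial decay $1/\sqrt{\pars{m-1}!}\lesssim 2^{-m/2}$, the same geometric series giving the constant $\tfrac{\sqrt{2}}{\sqrt{2}-\sqrt{\tilde{r}}}$, and the same lower bound $\tilde{r}^{n}$ (up to indexing) on $\norm{P_{J_{n+1}}\bfomega}_{\ell^2}^2$. The conclusion and all quantitative steps match.

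There is, however, one unjustified intermediate claim: you assert that $\bfomega_k^{-1}\norm{\hat{\bfu}_k}_{H^1_0(D)}$ is monotonically decreasing ``since both factors decrease,'' and conclude $J_n=\braces{1,\dots,n}$. Theorem~\ref{thm:hermite_coefficient_bounds} only provides a decreasing \emph{upper bound} on $\norm{\hat{\bfu}_m}_{H^1_0(D)}$; the coefficient norms themselves need not be monotone, so $J_n$ need not be the first $n$ indices. Fortunately this claim is not needed: since $\bfomega_j=\tilde{r}^{j/2}$ is increasing, \emph{any} index set of cardinality $n+1$ satisfies $\norm{P_{J_{n+1}}\bfomega}_{\ell^2}^2=\sum_{j\in J_{n+1}}\tilde{r}^{j}\ge\sum_{j\in\bracs{n+1}}\tilde{r}^{j}\ge\tilde{r}^{n}$, which is exactly how the paper argues ($\norm{P_{J_{n+1}}\bfomega^2}_{\ell^1}\ge\norm{P_{\bracs{n+1}}\bfomega^2}_{\ell^1}\ge\tilde{r}^n$). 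Replacing your monotonicity argument by this one-line observation makes the proof fully rigorous and identical in substance to the paper's.
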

\begin{proof}
    From Theorem~\ref{thm:hermite_coefficient_bounds}, we know that $\norm{\boldsymbol{\hat{u}}_m}_{H^1_0(D)} \lesssim \frac{\norm{f}_{H^{-1}\pars{D}}}{\sqrt{\pars{m-1}!}}$.
    Since $m! \ge 2^{m-1}$, it holds that $\frac{1}{\sqrt{\pars{m-1}!}} \lesssim 2^{-m/2} =: \boldsymbol\gamma_m$.
    Applying Lemma~\ref{lem:weighted_stechkin} with $p=2$, $q=1$ and $\boldsymbol{\alpha}\equiv 1$ yields
    $$
        \norm{\pars{1-P_{J_n}}\hat{\bfu}}_{L^2}
        = \norm{\pars{1-P_{J_n}}\boldsymbol{\hat{\bfu}}}_{\ell^2}
        \le \norm{P_{J_{n+1}} \boldsymbol{\omega}^2}^{-1/2}_{\ell^1} \norm{\boldsymbol{\hat{\bfu}}}_{\ell^1_{\boldsymbol{\omega}}}
        \lesssim \norm{f}_{H^{-1}\pars{D}} \norm{P_{J_{n+1}} \boldsymbol{\omega}^2}^{-1/2}_{\ell^1} \norm{\boldsymbol{\gamma}}_{\ell^1_{\boldsymbol{\omega}}} .
    $$
    The claim follows since $
        \norm{\boldsymbol{\gamma}}_{\ell^1_{\boldsymbol{\omega}}}
        = \frac{\sqrt{2}}{\sqrt{2} - \sqrt{\tilde{r}}} $
    and
    $
        \norm{P_{J_{n+1}}\boldsymbol{\omega}^2}_{\ell^1} \ge \norm{P_{\bracs{n+1}}\boldsymbol{\omega}^2}_{\ell^1} \ge \tilde{r}^n
    $.
\end{proof}

\begin{remark}
    Note that the proofs of Theorem~\ref{thm:legendre_coefficient_bounds} and~\ref{thm:hermite_coefficient_bounds} rely essentially on the the formulas in Lemma~\ref{lem:legendre_coefficients} and Lemma~\ref{lem:hermite_coefficients}.
    The similarity of these formulas indicates a deeper relation stemming from the explicit representations of $\pars{p_n, L_m}_{L^2}$ and $\pars{p_n, H_m}_{L^2}$ with $p_n\pars{z} := z^n$.
    We conjecture that similar representations can be derived for all families of orthonormal polynomials by means of the corresponding three-term recurrence relation.
\end{remark}

\section{Sparse approximation using tensor trains}
\label{sec:sparse_als}

In this section we consider sparse approximation problems in a high-dimensional setting where weighted sparse vectors can be identified with tensors.
We show that tensors in $\ell^q_\omega$ can be approximated efficiently in a model class of tensor trains with (weighted) sparse component tensors.
The derivation of this relies heavily on results in~\cite{li_2022_sparse_tt}, from which we recall some theorems.
For the sake of completeness and since the proofs foster some interesting insights, they are also provided.

Finally, we provide a practical algorithm to obtain these representations which provides an alternative for classical sparse approximation algorithms (such as weighted $\ell^1$-minimisation) that circumvents the CoD.

\subsection{Tensor train representation of sparse tensors}
This section recalls basic representation results for sparse tensors that are originally due to~\cite{li_2022_sparse_tt}.
We first introduce some basic operations on tensors.

\begin{definition}[Vectorisation]
    For any tensor $A\in\mbb{R}^{d_1\times\cdots\times d_M}$ , the vectorisation of $A$ is a vector $\operatorname{vec}\pars{A}\in \mbb{R}^{d_1\cdots d_M}$ defined by the equality
    \begin{equation}
        A_{i_1,i_2,\ldots,i_d} = \operatorname{vec}\pars{A}_{\sum_{k\in\bracs{M}} i_k D_k}
        \qquad\text{with}\qquad
        D_k := \prod_{\ell=k+1}^{M}d_\ell
        \qquad\text{and}\qquad
        D_M := 1 .
    \end{equation}
\end{definition}

\begin{definition}[Unfolding~\cite{oseledets_tensor-train_2011}]
    For any tensor $A\in\mbb{R}^{d_1\times\cdots\times d_M}$ and $k\in\bracs{M}$,
    the $k$-unfolding of $A$ is a matrix $\operatorname{unfold}_k\pars{A}\in \mbb{R}^{C_1\times C_2}$ with $C_1=\prod_{j=1}^{k}d_j$ and $C_2=\prod_{j=k+1}^{M}d_j$,
    defined by the equality $\operatorname{vec}\pars{A} = \operatorname{vec}\pars{\operatorname{unfold}_k\pars{A}}$.
\end{definition}

\begin{definition}[Orthogonality]
    A tensor $A\in\mbb{R}^{d_1\times \cdots\times d_M}$ is called left-orthogonal, if
    $$
        \operatorname{unfold}_{M-1} \pars{A}^\intercal
        \operatorname{unfold}_{M-1} \pars{A} = I .
    $$
    It is called right-orthogonal if
    $$
        \operatorname{unfold}_1 \pars{A}
        \operatorname{unfold}_1 \pars{A}^\intercal = I .
    $$
\end{definition}

\begin{definition}[Contraction]
    Given two tensors $A\in\mbb{R}^{d_1\times\cdots\times d_M}$ and $B\in\mbb{R}^{d_M\times\cdots\times d_N}$,
    we define the \emph{contraction} of $A$ and $B$ along the last dimension of $A$ and the first dimension of $B$ as
    \begin{equation}
        \pars{A \circ B}_{i_1,\ldots,i_{M-1},i_{M+1},\ldots,i_N}
        := \sum_{i_M\in\bracs{d_M}} A_{i_1,\ldots,i_M} B_{i_M,\ldots,i_N} .
    \end{equation}
\end{definition}

The \emph{tensor train} (TT) decomposition~\cite{oseledets_tensor-train_2011} represents a tensor of order $M$ as the contraction of $M$ lower order tensors.
A tensor $A\in\mbb{R}^{d_1\times\cdots\times d_M}$ is said to have a TT representation of \emph{rank} $r\in\mbb{N}^{M-1}$ if
\begin{equation}
    A = A^{\pars{1}} \circ \cdots \circ A^{\pars{M}}
\end{equation}
with \emph{component tensors} $A^{\pars{k}} \in \mbb{R}^{r_{k-1}\times d_k\times r_k}$ and the convention that $r_{-1} = r_M = 1$.
By fixing the second index of every $A^{(k)}$ to $i_k$, we obtain a matrix $A^{(k)}_{i_k}$.
The entries of $A$ can then be computed by
\begin{equation}
    A_{i_1, \ldots, i_M} = A^{(1)}_{i_1} \cdots A^{(M)}_{i_M} .
\end{equation}

Now suppose that the tensor $A$ is $R$-sparse, i.e.\ that there exists a set $J$ of size $R$ such that $A_i \ne 0$ if and only if $i = \pars{i_1, \ldots, i_M}\in J$.
Then $A$ can be represented as the sum of $R$ rank-$1$ tensors,
\begin{equation}
\label{eq:sparse_sum}
    A = \sum_{i\in J} e_{i_1} \otimes\cdots\otimes \pars{A_i e_{i_k}} \otimes\cdots\otimes e_{i_M},
\end{equation}
where $e_{i_l}\in \mbb{R}^{d_l}$ are the standard basis vectors and the choice of the index $k\in\bracs{M}$ is arbitrary.
Since every summand is a TT of rank $1$, the sum~\eqref{eq:sparse_sum} can be represented as a TT of rank $R$.

\begin{lemma}[Section~4.1~in~\cite{oseledets_tensor-train_2011}]
\label{lem:tt_sum}
    Let $A, B \in \mbb{R}^{d_1 \times\cdots\times d_M}$ be two tensors given in TT format
    \begin{equation}
        A_i = A^{(1)}_{i_1} \cdots A^{(M)}_{i_M},
        \qquad
        B_i = B^{(1)}_{i_1} \cdots B^{(M)}_{i_M} .
    \end{equation}
    The sum $C = A + B$ can be represented in TT format with components
    \begin{equation}
        C^{(1)}_{i_1} = \begin{bmatrix} A^{(1)}_{i_1} & B^{(1)}_{i_1} \end{bmatrix},
        \qquad
        C^{(k)}_{i_k} = \begin{bmatrix} A^{(k)}_{i_k} & \\ & B^{(k)}_{i_k} \end{bmatrix},
        \qquad
        C^{(M)}_{i_M} = \begin{bmatrix} A^{(M)}_{i_M}\\[2ex] B^{(M)}_{i_M} \end{bmatrix},
    \end{equation}
    where $k = 2,\ldots,M-1$ and empty spaces denote blocks of zeros of appropriate dimension.
\end{lemma}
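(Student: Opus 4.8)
The plan is to fix an arbitrary multi-index $i=(i_1,\ldots,i_M)$ and verify the claimed decomposition entrywise, which reduces the statement to a single identity between products of the slice matrices $A^{(k)}_{i_k}$, $B^{(k)}_{i_k}$ and $C^{(k)}_{i_k}$. The TT convention $r_{-1}=r_M=1$ forces the first slices $A^{(1)}_{i_1}$, $B^{(1)}_{i_1}$ to be row vectors and the last slices $A^{(M)}_{i_M}$, $B^{(M)}_{i_M}$ to be column vectors. Consequently the stated cores $C^{(1)}_{i_1}$ and $C^{(M)}_{i_M}$ are again a row and a column of compatible length, while each interior core $C^{(k)}_{i_k}$ is block diagonal; a short dimension count confirms that every matrix product appearing below is well defined.

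First I would prove by induction on $k\in\braces{1,\ldots,M-1}$ the partial-product identity
\begin{equation}
    C^{(1)}_{i_1}\cdots C^{(k)}_{i_k}
    = \begin{bmatrix} A^{(1)}_{i_1}\cdots A^{(k)}_{i_k} & B^{(1)}_{i_1}\cdots B^{(k)}_{i_k}\end{bmatrix}.
\end{equation}
The base case $k=1$ is exactly the definition of $C^{(1)}_{i_1}$. For the inductive step, right-multiplying the block row supplied by the hypothesis by the block-diagonal core $C^{(k+1)}_{i_{k+1}}$ keeps the two blocks decoupled and appends one further factor to each, yielding precisely the identity at level $k+1$.

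Finally I would contract with the last core: applying the identity at $k=M-1$ and right-multiplying by the column $C^{(M)}_{i_M}$ collapses the block row into the scalar
\begin{equation}
    A^{(1)}_{i_1}\cdots A^{(M)}_{i_M} + B^{(1)}_{i_1}\cdots B^{(M)}_{i_M}
    = A_i + B_i = C_i,
\end{equation}
which is the desired representation. The argument is entirely routine; the only point demanding attention is the bookkeeping of block sizes at the two boundary cores, ensuring that the off-diagonal zero blocks of the interior cores and the row/column shapes at the ends are mutually consistent.
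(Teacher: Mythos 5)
Your proof is correct and takes essentially the same route as the paper, which simply remarks that the lemma ``follows directly from the definition of the TT decomposition''; your entrywise verification with the block-row/block-diagonal/block-column induction is precisely the computation that remark leaves implicit. Nothing is missing, and the dimension bookkeeping you flag at the boundary cores is handled correctly.
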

The proof of Lemma~\ref{lem:tt_sum} follows directly from the definition of the TT decomposition.
Together with the decomposition~\eqref{eq:sparse_sum} it implies that any $R$-sparse tensor $A\in\mbb{R}^{d_1\times\cdots\times d_M}$ can be represented as a TT of rank $R$.

If $k\not\in\braces{1,d}$, this decomposition can be written as
\begin{equation}
\label{eq:sparse_tt}
    A = P^{(1)}\circ\cdots\circ P^{(k-1)}\circ C\circ P^{(k+1)}\circ\cdots\circ P^{(M)},
\end{equation}
with $P^{(1)}\in\braces{0,1}^{1\times d_1\times R}$,
$P^{(j)}\in\braces{0,1}^{R\times d_j\times R}$ for $1 < j < M$,
$P^{(M)}\in\braces{0,1}^{R\times d_M\times1}$, and $C\in\mbb{R}^{R\times d_k\times R}$.
If $J = \braces{i^1, \ldots, i^R}$, then by the definition of the component tensors in Lemma~\ref{lem:tt_sum} it holds that
\begin{equation}
    \operatorname{unfold}_2\pars{P^{(1)}} = \begin{bmatrix}
        e_{i^1_1} & \cdots & e_{i^R_1}
    \end{bmatrix},
    \qquad
    \operatorname{unfold}_2\pars{P^{(j)}} = \begin{bmatrix}
        e_{i^1_j} & & \\
        & \ddots & \\
        & & e_{i^R_j}
    \end{bmatrix},
    \qquad
    \operatorname{unfold}_1\pars{P^{(M)}} = \begin{bmatrix}
        e_{i^1_M}^\intercal \\ \vdots \\ e_{i^R_M}^\intercal
    \end{bmatrix},
\label{eq:qpm_cores}
\end{equation}
where $C$ exhibits the same sparsity pattern as the corresponding $P^{(j)}$.
Note that all components in this representation are $R$-sparse and that similar representations exist for $k=1$ or $k=d$.

Now consider the case that $i^1_1 = i^2_1 = k$.
Then the column $e_k$ appears at least twice in the matricisation $\operatorname{unfold}_2\pars{P^{(1)}}$ 
resulting in an ambiguous representation of the tensor.
This is a principal effect of the representation in Lemma~\ref{lem:tt_sum} and is not specific to the sparse TT decomposition.
In classical tensor algorithms, uniqueness of the representation is restored (up to orthogonal transformations) by performing a sequence of rank-revealing QR decompositions on the factors $P^{(j)}$.
However, since the QR decomposition is not guaranteed to preserve sparsity, we introduce a sparse QC decomposition $X=QC$, where $Q$ is orthogonal and sparse and $C$ is sparse.
The idea behind this decomposition is that the image space of $X$ is spanned by those standard basis vectors $e_i$ for which the row vector $e_i^\intercal X$ is non-zero.
We can hence define $Q$ as the sparse orthogonal matrix containing these standard basis vectors as its columns.

To rigorously define this decomposition, recall that any $R$-sparse matrix $A\in\mbb{R}^{n\times m}$ can be represented by the three $R$-tuples
\begin{equation}
    \operatorname{row}\pars{A} \in \bracs{n}^R,
    \qquad
    \operatorname{col}\pars{A} \in \bracs{m}^R
    \qquad\text{and}\qquad
    \operatorname{data}\pars{A} \in \mbb{R}^R .
\end{equation}
Here $\operatorname{row}\pars{A}_i$ and $\operatorname{col}\pars{A}_i$ are the row and column indices of the $i$\textsuperscript{th} non-zero entry in $A$ and $\operatorname{data}\pars{A}_i$ is its value.
Conversely, given three $R$-tuples $r\in\bracs{n}^R$, $c\in\bracs{m}^R$ and $d\in\mbb{R}^R$
such that the pairs $\braces{\pars{r_i, c_i}}_{i\in\bracs{R}}$ are unique,
we can uniquely define an $R$-sparse matrix $\operatorname{coo}\pars{r, c, d}$ with
\begin{equation}
    \operatorname{row}\pars{\operatorname{coo}\pars{r, c, d}} = r,
    \qquad
    \operatorname{col}\pars{\operatorname{coo}\pars{r, c, d}} = c
    \qquad\text{and}\qquad
    \operatorname{data}\pars{\operatorname{coo}\pars{r, c, d}} = d .
\end{equation}
Finally, define for every $R\in\mbb{N}$ the $R$-tuples 
\begin{equation}
    \operatorname{range}\pars{R} := \pars{ 1, \ldots, R}  
    \qquad\text{and}\qquad
    \operatorname{ones}\pars{R} := \pars{1, \ldots, 1}  
\end{equation}
as well as the tuple $\operatorname{unique}\pars{r}$ for every tuple $r\in\mbb{N}^R$, containing only the unique elements of $r$.
As usual, we define for any vector $x\in\mbb{R}^d$ the dimension $\operatorname{dim}\pars{x} := d$.

\begin{definition}
\label{def:sparse_qc}
    Let $X\in\mbb{R}^{n\times m}$ be an $R$-sparse matrix.
    Then the \emph{sparse QC decomposition} $X=QC$ is given by
    \begin{equation}
        Q := \operatorname{coo}\pars{s, \operatorname{range}\pars{r}, \operatorname{ones}\pars{r}} \in \mbb{R}^{n\times r}
        \qquad\text{and}\qquad
        C := Q^\intercal X ,
    \end{equation}
    where $s := \operatorname{unique}\pars{\operatorname{row}\pars{X}}$ and $r := \dim\pars{s} \le R$.
\end{definition}
\begin{lemma}
    Let $X\in\mbb{R}^{n\times m}$ be an $R$-sparse matrix and $X=QC$ be its sparse QC decomposition.
    Then $Q\in\mbb{R}^{n\times r}$ is orthogonal and $r$-sparse with $r\le R$ and $C\in\mbb{R}^{r\times m}$ is $R$-sparse.
\end{lemma}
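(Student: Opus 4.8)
The plan is to unwind the combinatorial notation of Definition~\ref{def:sparse_qc} and read off the three asserted properties directly; the statement contains no analytic content, so the argument is pure bookkeeping once the structure of $Q$ is made explicit. First I would record what $s$ and $Q$ are. By construction $s = \operatorname{unique}\pars{\operatorname{row}\pars{X}}$ is the tuple of \emph{distinct} row indices at which $X$ has a nonzero entry, and since $X$ carries at most $R$ nonzero entries there are at most $R$ such indices, whence $r = \dim\pars{s}\le R$. The matrix $Q = \operatorname{coo}\pars{s,\operatorname{range}\pars{r},\operatorname{ones}\pars{r}}$ places a single $1$ in position $\pars{s_i,i}$ for each $i\in\bracs{r}$ and zeros elsewhere; equivalently its $i$-th column is the standard basis vector $e_{s_i}\in\mbb{R}^n$. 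Hence $Q$ has exactly $r$ nonzero entries and is $r$-sparse.

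Next I would verify orthogonality. Because the entries of $s$ are pairwise distinct, the columns $e_{s_1},\ldots,e_{s_r}$ are orthonormal, so
\begin{equation}
    \pars{Q^\intercal Q}_{ij} = e_{s_i}^\intercal e_{s_j} = \delta_{s_i s_j} = \delta_{ij},
\end{equation}
that is $Q^\intercal Q = I_r$, which is the claimed orthogonality. I would then analyse $C = Q^\intercal X$. Since row $i$ of $Q^\intercal$ equals $e_{s_i}^\intercal$, we get $C_{i,\bullet} = e_{s_i}^\intercal X = X_{s_i,\bullet}$, so $C$ merely stacks the rows of $X$ indexed by $s$. By the very definition of $s$, every nonzero entry of $X$ sits in one of these rows, and the relabelling $X_{s_i,\bullet}\mapsto C_{i,\bullet}$ neither creates nor destroys nonzeros; therefore $C$ has the same number of nonzero entries as $X$, at most $R$, and is $R$-sparse.

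To close I would note, although the statement only concerns the factors, that $X = QC$ genuinely holds: $QC = QQ^\intercal X$, and $QQ^\intercal$ is the orthogonal projection onto $\operatorname{span}\braces{e_j : j\in s}$, which fixes every row of $X$ indexed by $s$ and annihilates the rest; as all rows of $X$ outside $s$ already vanish, $QQ^\intercal X = X$. There is no real obstacle here. The only mild subtlety worth stating carefully is that uniqueness of $s$ is exactly what forces the columns of $Q$ to be orthonormal, and that left-multiplication by $Q^\intercal$ acts as a pure row-selection, which is what preserves the sparsity count when passing from $X$ to $C$.
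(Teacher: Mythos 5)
Your proof is correct and follows essentially the same route as the paper's: unwind Definition~\ref{def:sparse_qc}, use distinctness of the entries of $s$ to get orthonormal columns $e_{s_i}$ (hence $Q^\intercal Q = I$ and $r$-sparsity with $r\le R$), and observe that $C = Q^\intercal X$ is just the row selection $C_{i,\bullet} = X_{s_i,\bullet}$, which preserves the nonzero count. Your closing verification that $QC = QQ^\intercal X = X$ is a small addition the paper leaves implicit, but the core argument is the same.
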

\begin{proof}
    Recall that $Q := \operatorname{coo}\pars{s, \operatorname{range}\pars{r}, \operatorname{ones}\pars{r}}$ with $s := \operatorname{unique}\pars{\operatorname{row}\pars{X}}$ and $r := \dim\pars{s}$.
    This means that $Q$ is $r$-sparse with $r = \dim\pars{s} \le \dim\pars{\operatorname{row}\pars{X}} = R$.
    Moreover, since the $k$\textsuperscript{th} column of $Q$ is the standard basis vector $e_{s_k}$ and since the indices in $s$ are unique, it follows that $Q$ is orthogonal.
    For the same reason, $C = Q^\intercal X$ is just a version of $X$ with the non-zero rows removed.
    Therefore, $C$ is $R$-sparse.
\end{proof}

Applying the sparse QC decomposition sequentially to the unfoldings of all component tensors results in a TT representation
\begin{equation}
\label{eq:A_sparse_depared}
    A = U^{(1)}\circ\cdots\circ U^{(k-1)}\circ C\circ V^{(k+1)}\circ\cdots\circ V^{(M)} .
\end{equation}
An implementation of this procedure is listed in Algorithm~\ref{alg:sparse_rounding}.
The resulting component tensors $U^{(j)}\in\braces{0,1}^{r_{j-1}\times d\times r_j}$ are $r_j$-sparse and left-orthogonal and the component tensors $V^{(j)}\in\braces{0,1}^{r_{j-1}\times d\times r_j}$ are $r_{j-1}$-sparse and right-orthogonal.
The ranks $r_j$ are uniformly bounded by $R$ and the \emph{core tensor} $C$ remains $R$-sparse.
These properties are summarised in the following definition.

\begin{definition}
\label{def:sparse_rounding}
    A tensor train representation
    \begin{equation}
        A = U^{(1)}\circ\cdots\circ U^{(k-1)}\circ C\circ V^{(k+1)}\circ\cdots\circ V^{(M)} 
    \end{equation}
    is called \emph{sparsely canonicalised with core position $k$} if
    \begin{enumerate}
        \item $U^{(j)}\in\braces{0,1}^{r_{j-1}\times d\times r_{j}}$ are left-orthogonal and $r_j$-sparse for all $1\le j < k$,
        \item $V^{(j)}\in\braces{0,1}^{r_{j-1}\times d\times r_{j}}$ are right-orthogonal and $r_{j-1}$-sparse for all $k < j \le M$ and
        \item $C\in\mbb{R}^{r_{k-1}\times d\times r_k}$ is $\min\braces{r_{k-1},r_k}$-sparse.
    \end{enumerate}
\end{definition}

\begin{algorithm}[htb]
    \SetKwInput{KwInput}{input}
    \SetKwInput{KwOutput}{output}
    \KwInput{Tensor train representation $A = A^{(1)}\circ\cdots\circ A^{(M)}$, desired core position $k$.}
    \KwOutput{Sparsely canonicalised representation of $A$ with core position $k$.}
    \BlankLine
    
    Initialise $C^{(0)} := I$.\\
    \For{$j = 1$ \KwTo $k-1$}{
        Define $X^{(j)} := C^{(j-1)} \operatorname{unfold}_2\pars{A^{(j)}}$. \\
        Compute the sparse QC decomposition (cf.~Definition~\ref{def:sparse_qc}) $X^{(j)} = Q^{(j)} C^{(j)}$. \\
        Define $\operatorname{unfold}_2\pars{U^{(j)}} := Q^{(j)}$ .
    }
    Initialise $C^{(M+1)} := I$.\\
    \For{$j = M$ \KwTo $k+1$}{
        Define $X^{(j)} := \operatorname{unfold}_1\pars{A^{(j)}} C^{(j+1)}$. \\
        Compute the sparse QC decomposition (cf.~Definition~\ref{def:sparse_qc}) $\pars{X^{(j)}}^\intercal = \pars{Q^{(j)}}^\intercal \pars{C^{(j)}}^\intercal$. \\
        Define $\operatorname{unfold}_1\pars{V^{(j)}} := Q^{(j)}$ .
    }
    Define $C := C^{(k-1)} A^{(k)} C^{(k+1)}$. \\
    \Return{$A = U^{(1)}\circ\cdots\circ U^{(k-1)} \circ C\circ V^{(k+1)}\circ\cdots\circ V^{(M)}$.}
\caption{Sparse canonicalisation}
\label{alg:sparse_rounding}
\end{algorithm}

\subsection{Approximation results}

The deliberations of the preceding section give rise to a model class of tensor trains with sparse component tensors.
Moreover, due to the special structure of the component tensors any weighted summability condition on the full tensor translates into a weighted summability condition on the core tensor.
This is made precise in the subsequent theorem.

\begin{theorem}
\label{thm:sparse_tt}
    Every $R$-sparse tensor $A\in\mbb{R}^{d_1\times\cdots\times d_M}$ can be represented in a sparsely canonicalised TT format 
    \begin{equation}
        A = U^{(1)}\circ\cdots\circ U^{(k-1)}\circ C\circ V^{(k+1)}\circ\cdots\circ V^{(M)} 
    \end{equation}
    with ranks that are uniformly bounded by $R$, independent of the chosen core position $k\in\bracs{M}$.
    Moreover, we can define the operator 
    $Q\in \mcal{L}\pars{\mbb{R}^{r_{k}\times d_k\times r_{k+1}}, \mbb{R}^{d_1\times\cdots \times d_M}} \simeq \mbb{R}^{d_1\cdots d_M \times r_{k}d_kr_{k+1}}$ via
    \begin{equation}
    \label{eq:core_basis}
        Q
        = \operatorname{unfold}_{k}\pars{U^{(1)}\circ\cdots\circ U^{(k-1)}}
        \otimes I_{d_k} \otimes
        \operatorname{unfold}_{1}\pars{V^{(k+1)}\circ\cdots\circ V^{(M)}}^\intercal ,
    \end{equation}
    where $\otimes $ denotes the matrix Kronecker product. 
    This means that $A = QC$.
    Interpreted as a matrix, $Q$ is left-orthogonal and its columns are standard basis vectors and for every $q\in\bracs{0,\infty}$ and $\bfomega\in\bracs{0,\infty}^{d_1\times\cdots\times d_M}$ it holds that
    $$
        \norm{A}_{\ell^q_{\bfomega}} = \norm{C}_{\ell^q_{\bfbeta}},
    $$
    where $\bfbeta := Q^\intercal \bfomega$ is a (reshaped) subsequence of $\bfomega$.
\end{theorem}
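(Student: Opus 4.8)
The plan is to assemble the statement from the construction already developed in this section. The existence of a sparsely canonicalised representation with ranks bounded by $R$ is guaranteed by applying Algorithm~\ref{alg:sparse_rounding} to the rank-$R$ representation obtained from~\eqref{eq:sparse_sum}. Placing the scalar factor $A_i$ at the chosen core position $k$ in~\eqref{eq:sparse_sum} produces a TT of rank $R$ whose only nontrivial core sits at position $k$; since every sparse QC decomposition satisfies $r\le R$ (by Definition~\ref{def:sparse_qc}), the canonicalisation never increases the ranks, so they remain uniformly bounded by $R$ for every core position $k\in[M]$. This settles the first assertion.

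For the factorisation $A=QC$, I would recall that a TT contraction can be written as a matrix–vector product through the mixed-product property $(X\otimes Y)(X'\otimes Y')=(XX')\otimes(YY')$ of the Kronecker product. Unfolding $A = U^{(1)}\circ\cdots\circ U^{(k-1)}\circ C\circ V^{(k+1)}\circ\cdots\circ V^{(M)}$ at position $k$ then yields precisely $\operatorname{vec}(A)=Q\operatorname{vec}(C)$ with $Q$ as in~\eqref{eq:core_basis}: the factor $\operatorname{unfold}_k(U^{(1)}\circ\cdots\circ U^{(k-1)})$ is the matrix of the map from the left rank space into $\mbb{R}^{d_1\cdots d_{k-1}}$, the factor $\operatorname{unfold}_1(V^{(k+1)}\circ\cdots\circ V^{(M)})^\intercal$ is the analogous map for the right rank space, and $I_{d_k}$ retains the free mode~$k$.

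The central structural claim is that the columns of $Q$ are standard basis vectors and that $Q$ is left-orthogonal. The key observation is that, by Definition~\ref{def:sparse_qc}, each relevant unfolding $\operatorname{unfold}_2(U^{(j)})$ and $\operatorname{unfold}_1(V^{(j)})$ is a $\{0,1\}$-valued matrix whose columns (respectively, after transposition, columns of the right factor) are distinct standard basis vectors. I would show by induction over the contraction that this property is preserved: contracting two such factors again yields a factor each of whose columns is a tensor product of standard basis vectors, hence itself a standard basis vector of the larger physical space, with distinctness inherited from left-/right-orthogonality. Since the Kronecker product of standard basis vectors is again a standard basis vector, and the Kronecker product of matrices with orthonormal columns again has orthonormal columns, $Q$ inherits both properties, so $Q^\intercal Q=I$. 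I expect the bookkeeping of the index maps through the contraction---matching the flattening conventions of $\operatorname{vec}$, $\operatorname{unfold}$ and the Kronecker product---to be the only genuinely delicate part.

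Finally, the weighted-norm identity follows almost immediately. Writing $Q\bfe_j=\bfe_{\phi(j)}$ for an injection $\phi$ (injectivity being exactly left-orthogonality), the relation $A=QC$ gives $A_{\phi(j)}=C_j$ and $A_i=0$ for $i\notin\operatorname{im}(\phi)$. Consequently $\bfbeta=Q^\intercal\bfomega$ has entries $\bfbeta_j=\bfomega_{\phi(j)}$, i.e.\ it is the subsequence of $\bfomega$ indexed by $\operatorname{im}(\phi)$, reshaped to the core dimensions $r_{k-1}\times d_k\times r_k$. The identity $\norm{A}_{\ell^q_\bfomega}=\norm{C}_{\ell^q_\bfbeta}$ then reduces to the change of summation index $i=\phi(j)$ over the common support, valid uniformly for $q\in(0,\infty)$, for $q=\infty$ (the maximum over the support), and for the weighted $\ell^0$-``norm'' (the weighted count of the support).
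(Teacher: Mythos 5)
Your proposal is correct and takes essentially the same route as the paper: the heart of both arguments is the induction showing that partial contractions of the sparse-QC factors remain left-orthogonal with distinct standard-basis-vector columns (properties preserved under matrix and Kronecker products), so that $Q$ in~\eqref{eq:core_basis} inherits them. You additionally spell out two steps the paper leaves implicit, namely the rank bound via Algorithm~\ref{alg:sparse_rounding} and the change-of-index argument $\bfbeta_j = \bfomega_{\phi(j)}$ for the weighted-norm identity, which only makes the argument more complete.
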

\begin{proof}
    $Q$ is a linear operator mapping a component tensor from $\mbb{R}^{r_{k}\times d_k\times r_{k+1}}$ to the space of full tensors $\mbb{R}^{d_1\times\cdots\times d_M}$.
    After vectorising these tensor spaces, we can interpret $Q$ as a matrix $Q\in\braces{0,1}^{d_1\cdots d_M\times r_kd_kr_{k+1}}$.
    We now show that $Q$ is an orthogonal matrix where every column is a standard product basis vector.
    We begin by showing that the matrices
    \begin{equation}
        B_{k}
        := \operatorname{unfold}_{k+1}\pars{U^{(1)}\circ\cdots\circ U^{(k)}}
    \end{equation}
    are left-orthogonal with columns that are standard basis vectors.
    Following the lines of~\cite[Appendix B]{wang_2018_tt_embedding}, this can be proved by induction.
    For $k=1$ the assertion is true by construction of $U^{(1)}$.
    For $k>1$ it holds that $B_k = \pars{I_{r_{k-1}} \boxtimes B_{k-1}} \operatorname{unfold}_{2}\pars{U^{(k)}}$, where $I_{r_{k-1}} \boxtimes B_{k-1}$ denotes the Kronecker product.
    The two matrices $\pars{I_{r_{k-1}} \boxtimes B_{k-1}}$ and $\operatorname{unfold}_{2}\pars{U^{(k)}}$ are left-orthogonal and their columns are standard basis vectors.
    This implies the assertion, since the matrix product preserves these properties.
    A similar argument shows that the matrices
    \begin{equation}
        D_k := \operatorname{unfold}_1\pars{U^{(k)}\circ\cdots\circ U^{(M)}}^\intercal
    \end{equation}
    are left-orthogonal with columns that are standard basis vectors.
    This proves the claim, since $Q = B_{k-1}\otimes I_{d_k}\otimes D_{k+1}$.
\end{proof}

Let $A$ be an $R$-sparse coefficient tensor with $\norm{A}_{\ell^0_\bfomega} \le r$.
Then Theorem~\ref{thm:sparse_tt} ensures that $A = QC$ with
\begin{alignat}{2}
    Q\in\mcal{Q}_{R,k} :=\ 
    &\{\,
    \operatorname{unfold}_{k}\pars{U^{(1)}\circ\cdots\circ U^{(k-1)}}
    \otimes I_{d_k} \otimes
    \operatorname{unfold}_{1}\pars{V^{(k+1)}\circ\cdots\circ V^{(M)}}^\intercal\\
    &\!: U^{(j)}, V^{(j)}\in\braces{0,1}^{r_{j-1}\times d_j\times r_j}\text{ with }r_{j-1},r_j \le R \\
    &\!: \text{all } U^{(j)} \text{ are left-orthogonal and  $r_j$-sparse} \\
    &\!: \text{all } V^{(j)} \text{ are right-orthogonal and $r_{j-1}$-sparse}
    \,\}
\end{alignat}
and
\begin{equation}
    C \in \mathcal{C}_{Q,r,\bfomega} := B_{\ell^0_{Q^\intercal\bfomega}}\pars{0, r} = \braces{C \in\mbb{R}^{r_{k-1}\times d_k\times r_k} \,:\, \norm{C}_{\ell^0_{Q^\intercal \bfomega}}\le r} .
\end{equation}
Since such a representation exists for all $k=1,\ldots,M$, this motivates the definition of the model class
\begin{equation}
    \mcal{M}_{R,r,\bfomega}
    := \bigcap_{k\in\bracs{M}} \bigcup_{Q\in\mcal{Q}_{R,k}} Q\mcal{C}_{Q,r,\bfomega} .
\end{equation}
As before, we identify the set of coefficient tensors $\mcal{M}_{R,r,\bfomega}$ with the corresponding set of functions.
The following corollary then translates the result of Corollary~\ref{cor:ward_weighted_stechkin} to the model class $\mcal{M}_{R,r,\bfomega}$
and shows that it exhibits similar approximation rates as the more classical sets of weighted sparsity.
\begin{corollary}
\label{cor:weighted_sparse_tt}
    Let $\boldsymbol\tau\in\pars{\bracs{0,\infty}^{\mbb{N}}}^{M}$ 
    and $T(R) := \min\braces{\norm{P_{J} \boldsymbol\tau}_{\ell^2}^2 \,:\, \abs{J} = R+1}$ be the sum of the $R+1$ smallest elements in $\boldsymbol\tau^2$.
    Moreover, let $0 < q < p \le 2$ and define $\bfalpha := \boldsymbol\tau^{\pars{2-p}/p}$ and $\bfomega := \boldsymbol\tau^{\pars{2-q}/q}$.
    Then every $\bfv\in\ell^q_\bfomega\pars{\mbb{N}^M}$ can be approximated by a tensor $\tilde{\bfv}\in\mcal{M}_{R,r,\bfomega}$ with accuracy
    \begin{equation}
        \norm{\bfv - \tilde{\bfv}}_{\ell^p_\bfalpha} \le \min\braces{T(R), r}^{-s} \norm{\bfv}_{\ell^q_\bfomega},
        \qquad
        s := \tfrac{1}{q} - \tfrac{1}{p} .
    \end{equation}
\end{corollary}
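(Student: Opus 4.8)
The plan is to realise the approximant as a hard threshold $\tilde\bfv = P_J\bfv$ of $\bfv$ and to read off both the error and the membership in the model class from the weighted Stechkin estimate of Lemma~\ref{lem:weighted_stechkin}. By Theorem~\ref{thm:sparse_tt}, any $R$-sparse tensor $A$ with $\norm{A}_{\ell^0_{\boldsymbol\tau}}\le r$ lies in the model class, because for every core position $k$ the sparse canonicalisation has ranks bounded by $R$ and a core $C$ with $\norm{C}_{\ell^0_{Q^\intercal\boldsymbol\tau}} = \norm{A}_{\ell^0_{\boldsymbol\tau}}$. Hence it suffices to produce an index set $J\subseteq\mbb{N}^M$ that is feasible for both budgets, namely $\abs{J}\le R$ and weighted cardinality $\norm{P_J\bfv}_{\ell^0_{\boldsymbol\tau}} = \sum_{j\in J}\boldsymbol\tau_j^2\le r$, and then to bound $\norm{\pars{I-P_J}\bfv}_{\ell^p_\bfalpha}$.

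First I would set $\bfsigma := \boldsymbol\tau^{-1}$ and verify the compatibility condition $\bfalpha^p = \bfsigma^{p-q}\bfomega^q$ of Lemma~\ref{lem:weighted_stechkin}, which reduces to $\boldsymbol\tau^{2-p} = \boldsymbol\tau^{-\pars{p-q}}\boldsymbol\tau^{2-q}$ and therefore holds. Ordering the flattened index set $\mbb{N}^M$ by decreasing $\bfsigma\abs{\bfv}$ and letting $J_n$ collect the first $n$ indices, Lemma~\ref{lem:weighted_stechkin} gives $\norm{\pars{I-P_{J_n}}\bfv}_{\ell^p_\bfalpha}\le\norm{P_{J_{n+1}}\tfrac{\bfomega}{\bfsigma}}_{\ell^q}^{-sq}\norm{\bfv}_{\ell^q_\bfomega}$. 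The decisive computation is $\pars{\tfrac{\bfomega}{\bfsigma}}^q = \boldsymbol\tau^2$, so that the Stechkin factor equals the weighted cardinality $\norm{P_{J_{n+1}}\tfrac{\bfomega}{\bfsigma}}_{\ell^q}^q = \sum_{j\in J_{n+1}}\boldsymbol\tau_j^2$; this is simultaneously the quantity that must be lower bounded for the error and the quantity that controls membership of the truncation in the model class.

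Next I would fix $n^\ast := \min\braces{R, n\pars{r}}$, where $n\pars{r}$ is the largest $n$ with $\sum_{j\in J_n}\boldsymbol\tau_j^2\le r$ (well defined because this sum is nondecreasing in $n$), and set $\tilde\bfv := P_{J_{n^\ast}}\bfv$. Then $\tilde\bfv$ has at most $R$ nonzero entries and weighted cardinality at most $r$, so it lies in the model class by the membership observation. To bound the error I would lower bound $\sum_{j\in J_{n^\ast+1}}\boldsymbol\tau_j^2$ in two cases: if $n^\ast = R$ then $J_{n^\ast+1}$ has $R+1$ elements and the sum is at least the sum $T\pars{R}$ of the $R+1$ smallest entries of $\boldsymbol\tau^2$; if $n^\ast = n\pars{r} < R$ then maximality of $n\pars{r}$ forces the sum to exceed $r$. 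In either case the factor is at least $\min\braces{T\pars{R}, r}$, and raising to the power $-s$ yields the claim. Equivalently, the $r$-branch is exactly Corollary~\ref{cor:ward_weighted_stechkin} applied with base weight $\boldsymbol\tau$, while the $T\pars{R}$-branch is the plain weighted Stechkin estimate truncated at rank $R$.

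The main obstacle is the bookkeeping that fuses the two independent budgets --- the rank/support bound $R$ and the weighted $\ell^0$ threshold $r$ --- into a single nested truncation, together with the verification that stopping at the first violated constraint still controls the Stechkin denominator from below by $\min\braces{T\pars{R}, r}$. One also has to keep track of the fact that the weighted cardinality $\sum_{j\in J}\boldsymbol\tau_j^2$ is exactly the weight appearing in the core constraint of Theorem~\ref{thm:sparse_tt}, so that feasibility of $J$ and membership of $\tilde\bfv$ coincide. A minor preliminary point is that $\bfv\in\ell^q_\bfomega$ makes the ordering by $\bfsigma\abs{\bfv}$ well defined and guarantees $\bfv\in\ell^\infty_\bfsigma$, both of which follow from Lemma~\ref{lem:wlp_components} as soon as $\boldsymbol\tau$ is bounded away from $0$.
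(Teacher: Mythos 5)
Your proposal is correct and follows essentially the same route as the paper's proof: truncate to $\tilde\bfv = P_{J_{n}}\bfv$ with $n = \min\braces{R, n\pars{r}}$, observe that the weighted Stechkin denominator $\norm{P_{J_{n+1}}\tfrac{\bfomega}{\bfsigma}}_{\ell^q}^{q}$ equals the weighted cardinality $\norm{P_{J_{n+1}}\boldsymbol\tau}_{\ell^2}^2$, invoke Theorem~\ref{thm:sparse_tt} for membership in the model class, and conclude by the same two-case lower bound $\min\braces{T(R), r}$. Your explicit verification of the compatibility condition $\bfalpha^p = \bfsigma^{p-q}\bfomega^q$ and the well-definedness remark are minor additions; otherwise the arguments coincide.
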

\begin{proof}
    Let $J_n$ be defined as in Corollary~\ref{cor:ward_weighted_stechkin} and recall that $\bfv$ can be approximated by the $n$-sparse tensor $\tilde{\bfv} := P_{J_n} \bfv$ with an error of at most 
    \begin{equation}
        \norm{\pars{1-P_{J_n}} \bfv}_{\ell^p_\bfalpha}
        \le \pars*{\norm{P_{J_{n+1}}\bfsigma^{-1}}_{\ell^q_\bfomega}^q}^{-s} \norm{\bfv}_{\ell^q_\bfomega}
        = \pars*{\norm{P_{J_{n+1}}\boldsymbol\tau}_{\ell^2}^2}^{-s} \norm{\bfv}_{\ell^q_\bfomega} .
    \end{equation}
    Define $n\pars{r} := \max\braces{n\in\mbb{N} \,:\, \norm{P_{J_{n}}\boldsymbol\tau}_{\ell^2}^2 \le r}$ and choose $n = \min\braces{R, n\pars{r}}$.
    Then $\tilde{\bfv}$ is $R$-sparse and $\boldsymbol\tau$-weighted $r$-sparse and Theorem~\ref{thm:sparse_tt} guarantees that it can be represented in $\mcal{M}_{R,r,\bfomega}$.
    The desired error bounds follows by case distinction.
    If $n=R$ then $\norm{P_{J_{n+1}} \boldsymbol\tau}_{\ell^2}^2 \ge T(R)$ by definition of $T(R)$.
    Hence $\pars{\norm{P_{J_{n+1}}\boldsymbol\tau}_{\ell^2}^{2}}^{-s} \le T(R)^{-s}$.
    If $n=n\pars{r}$ then $\norm{P_{J_{n+1}} \boldsymbol\tau}_{\ell^2}^2 \ge r$ by maximality of $n\pars{r}$.
    Hence $\pars{\norm{P_{J_{n+1}}\boldsymbol\tau}_{\ell^2}^{2}}^{-s} \le r^{-s}$.
\end{proof}

Using the model class $\mcal{M}_{R,r,\bfomega}$, the optimisation~\eqref{eq:min_emp} becomes feasible on product basis.
We use the remainder of this section to provide theoretical guarantees for this optimisation.
To apply Proposition~\ref{prop:empirical_projection_error},
we first show that the model class satisfies the required nestedness property $\mcal{M}_{R,r,\boldsymbol\omega} - \mcal{M}_{R,r,\boldsymbol\omega} \subseteq \mcal{M}_{2R,2r,\boldsymbol\omega}$
and then show $\operatorname{RIP}_{\mcal{M}_{R,r,\bfomega}}\pars{\delta}$ holds with high probability.
For this to make sense, we let $b : Y\to\mbb{R}^{d}$ be a vector of $L^2\pars{Y,\rho}$-orthonormal basis functions,
define the tensor product basis $B\pars{y} := b\pars{y_1}\otimes \cdots\otimes b\pars{y_M}$ and suppose that the weight sequence $\bfomega$ satisfies $\bfomega_j \ge \norm{B_j}_{L^\infty}$.
We now identity the space of coefficents $v \in \mcal{M}_{R,r,\bfomega}$ with the corresponding space of functions $y\mapsto\pars{B\pars{y}, v}_{\ell^2}$.

\begin{proposition}
    It holds that $\mcal{M}_{R,r,\boldsymbol\omega} - \mcal{M}_{R,r,\boldsymbol\omega} \subseteq \mcal{M}_{2R,2r,\boldsymbol\omega}$.
\end{proposition}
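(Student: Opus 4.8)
The plan is to attach to every element of $\mcal{M}_{R,r,\bfomega}$ two quantities — its weighted sparsity $\norm{\bullet}_{\ell^0_\bfomega}$ and, for each cut $j\in\bracs{M}$, the number of distinct index prefixes (and suffixes) occurring in its support — and to show that both behave subadditively when passing from two tensors $v,w$ to their difference $v-w$. The weighted sparsity governs the admissible core sparsity, while the prefix count is exactly the rank produced by the sparse canonicalisation underlying Theorem~\ref{thm:sparse_tt}. Controlling both at once then lets me reassemble $v-w$ as an element of $\mcal{M}_{2R,2r,\bfomega}$.

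Concretely, I would fix a core position $k\in\bracs{M}$. Since $v,w\in\mcal{M}_{R,r,\bfomega}=\bigcap_{k}\bigcup_{Q\in\mcal{Q}_{R,k}}Q\mcal{C}_{Q,r,\bfomega}$, for this $k$ there exist $Q_v,Q_w\in\mcal{Q}_{R,k}$ and cores $C_v\in\mcal{C}_{Q_v,r,\bfomega}$, $C_w\in\mcal{C}_{Q_w,r,\bfomega}$ with $v=Q_vC_v$ and $w=Q_wC_w$. Expanding these products yields sparse TT representations of $v$ and $-w$ with ranks bounded by $R$, whose component tensors are the sparse orthogonal factors $U^{(j)},V^{(j)}$ together with the cores $C_v,C_w$. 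Applying Lemma~\ref{lem:tt_sum} to add them produces a TT representation of $v-w$ whose component tensors are the block combinations of these sparse factors and whose ranks are bounded by $2R$.

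I would then feed this explicit representation into the sparse canonicalisation (Algorithm~\ref{alg:sparse_rounding}) with core position $k$. The key point is that no sparse QC step can create ranks beyond those already carried by the combined factors: because the columns of the $U^{(j)},V^{(j)}$ are standard basis vectors, at every cut the new rank equals the number of active rows, which is bounded by the number of distinct prefixes carried by the combined factors and hence by the sum $R+R=2R$ of the prefix counts of $v$ and $w$. Thus the canonicalised representation $v-w=QC$ has $Q\in\mcal{Q}_{2R,k}$. For the core, the weighted-sparsity identity of Theorem~\ref{thm:sparse_tt} gives $\norm{C}_{\ell^0_{Q^\intercal\bfomega}}=\norm{v-w}_{\ell^0_\bfomega}$, and since $\supp\pars{v-w}\subseteq\supp\pars{v}\cup\supp\pars{w}$ we get $\norm{v-w}_{\ell^0_\bfomega}\le\norm{v}_{\ell^0_\bfomega}+\norm{w}_{\ell^0_\bfomega}\le 2r$, so $C\in\mcal{C}_{Q,2r,\bfomega}$. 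As $k$ was arbitrary, $v-w\in\bigcap_{k}\bigcup_{Q\in\mcal{Q}_{2R,k}}Q\mcal{C}_{Q,2r,\bfomega}=\mcal{M}_{2R,2r,\bfomega}$.

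The main obstacle is the simultaneous control of both parameters in this last step: the rank bound $2R$ originates from adding the two representations via Lemma~\ref{lem:tt_sum}, whereas the weighted-sparsity bound $2r$ originates from the union of supports, and one must verify that the single sparse canonicalisation respects both. The delicate part is ruling out rank inflation — one has to check that the sparse QC sweeps never produce more active prefixes than are present in the combined selection-type factors, which is what keeps the canonicalised ranks at $2R$ even though the plain number of nonzeros of $v-w$ may far exceed $2R$; the norm identity of Theorem~\ref{thm:sparse_tt} then transports the weighted-sparsity budget from $v-w$ directly to its core.
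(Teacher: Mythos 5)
Your proposal is correct and follows essentially the same route as the paper's proof: both combine the two representations via Lemma~\ref{lem:tt_sum}, restore the canonical (orthogonal, sparse) form of the non-core factors, and bound the core's weighted sparsity by $\norm{v-w}_{\ell^0_\bfomega}\le\norm{v}_{\ell^0_\bfomega}+\norm{w}_{\ell^0_\bfomega}\le 2r$ using the norm identity of Theorem~\ref{thm:sparse_tt}. The only cosmetic difference is that the paper phrases the cleanup step as ``removing duplicate columns'' in the matricisations, whereas you run the sparse canonicalisation of Algorithm~\ref{alg:sparse_rounding} and verify it cannot inflate the rank beyond $2R$ --- these are the same operation, since the sparse QC decomposition merges repeated standard basis columns.
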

\begin{proof}
    Let $A,B\in\mcal{M}_{R,r,\boldsymbol\omega}$ with core position $k$.
    By Lemma~\ref{lem:tt_sum}, the difference $C = A - B$ can be represented in TT format with components
    \begin{equation}
        C^{(1)}_{i_1} = \begin{bmatrix} A^{(1)}_{i_1} & -B^{(1)}_{i_1} \end{bmatrix},
        \qquad
        C^{(j)}_{i_j} = \begin{bmatrix} A^{(j)}_{i_j} & \\ & B^{(j)}_{i_j} \end{bmatrix},
        \qquad
        C^{(M)}_{i_M} = \begin{bmatrix} A^{(M)}_{i_M}\\[2ex] B^{(M)}_{i_M} \end{bmatrix} ,
    \end{equation}
    for $j = 2,\ldots,M-1$.
    After removing duplicate columns in the matricisations of $C^{(j)}$ for $j\ne k$, the resulting tensor satisfies the unweighted sparsity, orthogonality and rank constraints of $\mcal{M}_{2R,2r,\boldsymbol\omega}$.
    And since
    $$
        \norm{C^{(k)}}_{\ell^0_{Q^\intercal\boldsymbol\omega}}
        = \norm{A-B}_{\ell^0_{\boldsymbol\omega}}
        \le \norm{A}_{\ell^0_{\boldsymbol\omega}}
        + \norm{B}_{\ell^0_{\boldsymbol\omega}}
        \le 2r,
    $$
    the weighted sparsity constraints are satisfied as well.
\end{proof}

The following immediate consequence of Theorem~\ref{thm:sparse_RIP_weighted} provides a bound for the probability of $\operatorname{RIP}_{\mcal{M}_{R,r,\bfomega}}\pars{\delta}$.

\begin{corollary}
\label{cor:sparse_sample_complexity}
	Fix parameters $\delta,\gamma\in\pars{0,1}$.
	Let $\braces{B_j}_{j\in\bracs{D}}$ be orthonormal with respect to the measure $\rho$ and let $w\ge 0$ be any weight function satisfying $\norm{w^{-1}}_{L^1} = 1$.
	Assume the weight sequence satisfies $\bfomega_j \ge \norm{w^{1/2} B_j}_{L^\infty}$ and fix
	\begin{equation}
		n \ge C \delta^{-2} r\max\braces{\log^3\pars{r}\log\pars{d^M}, -\log\pars{\gamma}} .
	\end{equation}
	Let $y_1,\ldots, y_n$ be drawn independently from $w^{-1}\rho$.
	Then the probability of $\operatorname{RIP}_{\mcal{M}_{R,r,\bfomega}}\pars{\delta}$ exceeds $1-\gamma$.
\end{corollary}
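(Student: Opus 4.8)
The plan is to derive this corollary directly from the structural result of Theorem~\ref{thm:sparse_tt} together with the sparse RIP bound of Theorem~\ref{thm:sparse_RIP_weighted}, using the elementary fact that the restricted isometry property is monotone under set inclusion: if $\operatorname{RIP}_{A}\pars{\delta}$ holds and $A'\subseteq A$, then $\operatorname{RIP}_{A'}\pars{\delta}$ holds as well, since the defining inequalities in~\eqref{eq:rip} are quantified universally over the set. Thus it suffices to establish the inclusion $\mcal{M}_{R,r,\bfomega}\subseteq B_{\ell^0_\bfomega}\pars{0,r}$ and then to invoke Theorem~\ref{thm:sparse_RIP_weighted} for the weighted sparse ball.

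First I would verify the inclusion. Let $v\in\mcal{M}_{R,r,\bfomega}$. By definition of the model class there exist a core position $k$, an operator $Q\in\mcal{Q}_{R,k}$ and a core tensor $C\in\mcal{C}_{Q,r,\bfomega}=B_{\ell^0_{Q^\intercal\bfomega}}\pars{0,r}$ with $v=QC$. Theorem~\ref{thm:sparse_tt} guarantees that, interpreted as a matrix, $Q$ is left-orthogonal and that every column of $Q$ is a standard product basis vector. Consequently $\supp\pars{v}$ is exactly the image of $\supp\pars{C}$ under the injection induced by the columns of $Q$, and the weighted cardinalities agree: $\norm{v}_{\ell^0_\bfomega}=\norm{C}_{\ell^0_{Q^\intercal\bfomega}}\le r$. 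Hence $v\in B_{\ell^0_\bfomega}\pars{0,r}$, which yields the desired inclusion. Note that the rank and sparsity constraints encoded in $\mcal{Q}_{R,k}$ play no role here; only the standard-basis-column structure of $Q$ matters for the weighted support count.

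Next I would apply Theorem~\ref{thm:sparse_RIP_weighted} to the tensor product basis $\braces{B_j}_{j\in\bracs{d^M}}$ given by $B\pars{y}=b\pars{y_1}\otimes\cdots\otimes b\pars{y_M}$, taking $D=d^M$. The hypothesis of the corollary, $\bfomega_j\ge\norm{w^{1/2}B_j}_{L^\infty}$, is precisely the weight assumption required there, and the samples are drawn from $w^{-1}\rho$ as stipulated. With the chosen sample size $n\ge C\delta^{-2}r\max\braces{\log^3\pars{r}\log\pars{d^M},-\log\pars{\gamma}}$, Theorem~\ref{thm:sparse_RIP_weighted} yields $\operatorname{RIP}_{B_{\ell^0_\bfomega}\pars{0,r}}\pars{\delta}$ with probability exceeding $1-\gamma$. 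Combining this with the inclusion from the previous step and the monotonicity of the RIP shows that $\operatorname{RIP}_{\mcal{M}_{R,r,\bfomega}}\pars{\delta}$ holds on the same event, which completes the argument.

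I do not expect a genuine obstacle here; the only point requiring care is that the identity $\norm{v}_{\ell^0_\bfomega}=\norm{C}_{\ell^0_{Q^\intercal\bfomega}}$ must hold for an \emph{arbitrary} admissible pair $\pars{Q,C}$, rather than merely for a pair obtained by sparsely canonicalising a prescribed tensor. This is exactly what the standard-basis-column description of $Q$ in Theorem~\ref{thm:sparse_tt} delivers, so once that theorem is granted the corollary is immediate.
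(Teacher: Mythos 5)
Your proposal is correct and follows essentially the same route as the paper's own proof: use Theorem~\ref{thm:sparse_tt} to obtain the inclusion $\mcal{M}_{R,r,\bfomega}\subseteq B_{\ell^0_\bfomega}\pars{0,r}$ via the identity $\norm{QC}_{\ell^0_\bfomega}=\norm{C}_{\ell^0_{Q^\intercal\bfomega}}\le r$, then apply Theorem~\ref{thm:sparse_RIP_weighted} with $D=d^M$ and conclude by monotonicity of the RIP under set inclusion. Your extra care about the identity holding for an \emph{arbitrary} admissible pair $\pars{Q,C}$, justified by the standard-basis-column structure of $Q$, is a point the paper's terser proof leaves implicit.
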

\begin{proof}
    Theorem~\ref{thm:sparse_tt} guarantees that every $A\in\mcal{M}_{R,r,\bfomega}$ can be written as $A = QC$ with $\norm{A}_{\ell^0_\bfomega} = \norm{C}_{\ell^0_\bfbeta} \le r$ and where $\bfbeta = Q^\intercal \bfomega$.
    This implies that $\mcal{M}_{R,r,\bfomega}\subseteq B_{\ell^0_\bfomega}\pars{0, r}$.
    The assertion follows, since Theorem~\ref{thm:sparse_RIP_weighted} implies $\operatorname{RIP}_{B_{\ell^0_\bfomega}\pars{0,r}}\pars{\delta}$ and, consequently, $\operatorname{RIP}_{\mcal{M}_{R,r,\bfomega}}\pars{\delta}$.
\end{proof}

\subsection{Numerical method}

In order to present an efficient numerical realisation of the optimisation problem~\eqref{eq:min_emp}, define the vector $F\in\mbb{R}^n$ and the bounded linear operator $M : \pars{\mbb{R}^{d}}^{\otimes M} \to \mbb{R}^n$ by
\begin{equation}
\label{eq:F_and_M}
    F_i = \sqrt{w\pars{y^i}} u\pars{y^i}
    \qquad\text{and}\qquad
    \pars{Mv}_i = \sqrt{w\pars{y^i}}\pars{v, B\pars{y^i}}_{\mathrm{Fro}} .
\end{equation}
Then equation~\eqref{eq:min_emp} is equivalent to the optimisation problem
\begin{equation} \label{eq:min_emp_coef}
    \operatorname*{minimise}_{v\in\mcal{M}_{R,r,\bfomega}}\ \norm{F - Mv}_{\ell^2}^2 .
\end{equation}
We propose to solve this problem by a sparse variant of the \emph{alternating least squares} (ALS) algorithm introduced in~\cite{oseledets_2011_tensor_trains,holtz_alternating_2012}.
The ALS method solves~\eqref{eq:min_emp_coef} by refining an initial guess in a sequence of \emph{microsteps}, each optimising a single component tensor while keeping the others fixed.
Since every $v\in\mcal{M}_{R,r,\bfomega}$ can be written as a sparsely canonicalised tensor train $v = QC$ with core position $k$, the microstep optimising the $k$\textsuperscript{th} component tensor can be written as 
\begin{equation}
    \operatorname*{minimise}_{C\in\mcal{C}_{Q,r,\bfomega}}\ \norm{F - MQC}_{\ell^2}^2 .
\label{eq:min_emp_coef_core}
\end{equation}
The operator $Q$ can be efficiently computed by Algorithm~\ref{alg:sparse_rounding}\footnote{Indeed the operator $Q$ at core position $k$ can be efficiently updated from its value at position $k-1$ or $k+1$ by means of a single sparse QC decomposition.} and the resulting sparse tensor train representation allows for an efficient evaluation of $MQ$.
A classical approach to handle the weighted sparsity constraints in $\mcal{C}_{Q,r,\bfomega} = \braces{C \in\mbb{R}^{r_{k-1}\times d_k\times r_k} \,:\, \norm{C}_{\ell^0_{Q^\intercal \bfomega}}\le r}$ is to promote the weighted $\ell^0$-constraints via a weighted $\ell^1$-regularisation term.
The resulting problem then reads
\begin{equation} \label{eq:weighted_lasso}
    \operatorname*{minimise}_{C\in\mbb{R}^{r_{k-1}\times d_k\times r_k}}\ \norm{F - MQC}_{\ell^2}^2 + \lambda \norm{\bfbeta_Q\odot C}_1 ,
\end{equation}
where $\bfbeta_Q := Q^\intercal \bfomega$ can be efficiently computed due to the tensor train representation and sparsity structure of $Q$.
Substituting $D = \beta_{Q}\odot C$ into~\eqref{eq:weighted_lasso} we obtain the standard LASSO problem~\cite{Santosa1986,Efron2004}
\begin{equation} \label{eq:std_lasso}
    \operatorname*{minimise}_{D\in\mbb{R}^{r_{k-1}\times d_k\times r_k}}\ \norm{F - MQ\pars{\bfbeta_Q^{-1}\odot D}}_{\ell^2}^2 + \lambda \norm{D}_1 .
\end{equation}

The regularisation parameter $\lambda$ controls the sparsity of $C$ and must be chosen appropriately to remain in the model class $\mcal{M}_{R,r,\boldsymbol{\omega}}$.
To do this recall the following two facts.
\begin{enumerate}
    \item Theorem~\ref{thm:sparse_tt} implies $\norm{C}_{\ell^0_{\bfbeta_Q}} = \norm{\bfv}_{\ell^0_\bfomega}$, which ensures that the weighted sparsity constraint is satisfied for all components as soon as it is satisfied for the optimised component.
    \item 
    Theorem~\ref{thm:sparse_tt} implies $\norm{C}_{\ell^0} = \norm{\bfv}_{\ell^0}$, which ensures that the rank $R$ is bounded by the number of nonzero entries of the core tensor $\norm{C}_{\ell^0}$.
\end{enumerate}
It is thus sufficient to choose $\lambda$ such that $\norm{C}_{\ell^0_{\bfbeta_Q}} \le r$ and $\norm{C}_{\ell^0} \le R$ to remain in $\mcal{M}_{R,r,\bfomega}$ during optimisation.
Although this would be easy to implement, we propose to choose $\lambda$ by $10$-fold cross-validation instead.
This allows the algorithm to choose a different regularisation parameter $\lambda$, i.e.\ a different sparsity level, for every core position $k$.
Moreover, since the rank $R$ in the sparsely canonicalised representation depends on the sparsity of the solution of the microstep, the resulting algorithm is inherently rank-adaptive.
We call this algorithm \emph{sparse alternating least-squares} (SALS) since it modifies a standard ALS method to work on sparse tensors.
A listing of the complete algorithm, in pseudo-code, is provided in Algorithm~\ref{alg:sparse_als}.
There it can be seen that the algorithm differs from a standard ALS only in two points.
\begin{enumerate}
    \item The standard regression in the microstep is replaced by a (weighted) LASSO.
    \item The rank revealing QR decomposition, commonly used to compute the operators $Q$, is replaced by a sparse QC decomposition.
\end{enumerate}
It is therefore straight-forward to implement.

\begin{algorithm}[t!]
    \SetKwInput{KwInput}{input}
    \SetKwInput{KwOutput}{output}
    \KwInput{Data pairs $(x^{i},y^{i})\in\mathbb{R}^M\times \mathbb{R}$ for $i = 1,\ldots,n$, univariate basis functions $\braces{b_1,\ldots, b_d}$, and weight sequences $\omega_m\in\mbb{R}^d$ for $m=1,\ldots,M$.}
    \KwOutput{A coefficient tensor $\bfv\in\mcal{M}$ such that $y\mapsto\pars{\bfv, B\pars{y}}_{\mathrm{Fro}}$ approximates the data.}
    \BlankLine

    Initialize the coefficient tensor $v$.\\
    \While{not converged}{
        \For{$k=1$ \KwTo $M$}{
            Compute the sparse canonicalisation~\eqref{eq:A_sparse_depared} with core position $k$. \\
            Compute $Q$ as in~\eqref{eq:core_basis} and $\bfbeta_Q := Q^\intercal\bfomega$. \\
            Update $C$ by solving equation~\eqref{eq:std_lasso} and use cross-validation to select $\lambda$.
        }
    }
    \Return{$\boldsymbol{v}$.}
\caption{Sparse Alternating Least Squares (SALS)}
\label{alg:sparse_als}
\end{algorithm}

    
\section{Low-rank and sparse Tensor Train approximation}
\label{sec:semisparse_als}

Despite its straightforward sample bound and built-in rank adaptivity, the sparse tensor train model class from the previous section and the associated  Algorithm~\ref{alg:sparse_als} are not optimal, since the resulting tensor representation does not have minimal rank in general.
Motivated by promising practical results with low-rank tensor reconstructions for holomorphic functions as considered in~\cite{trunschke21,eigel2019variational}, this section introduces a new tensor train format which incorporates sparsity and low-rank.

To illustrate the advantage of this new format, we consider the approximation of the rank-$1$ function $x\mapsto \exp\pars{x_1 + \ldots + x_M}$ by Legendre polynomials in Appendix~\ref{app:low-rank_advantage}.
The remainder of this section is devoted to investigating this idea in the general setting.

\subsection{Approximation results}

To obtain an operator $Q$ which still allows for a meaningful concept of sparsity in the component tensor $C$, we replace the sparse QC decomposition from the preceding section with an $\boldsymbol{\omega}$-weighted QC decomposition.
\begin{definition}
    We say that a matrix $Q$ is \emph{$\boldsymbol{\omega}$-orthogonal} if $Q^\intercal\operatorname{diag}\pars{\boldsymbol{\omega}}Q$ is diagonal.
\end{definition}
\begin{definition}
    Let $A\in\mbb{R}^{n\times m}$ be a rank-$r$ matrix.
    A \emph{$\boldsymbol{\omega}$-orthogonal QC decomposition of $A$} is a decomposition $A=QC$ with $Q\in\mbb{R}^{n\times r}$ and $C\in\mbb{R}^{r\times m}$
    for which $Q$ is orthogonal and $\boldsymbol{\omega}$-orthogonal, i.e.\ $Q^\intercal Q = I$, and $Q^\intercal\operatorname{diag}\pars{\boldsymbol{\omega}}Q$ is diagonal.
\end{definition}

Even though this new decomposition may not retain the sparsity as well as the sparse QC decomposition did,
the resulting factors still exhibit a considerable amount of sparsity.

\begin{lemma}
\label{lem:not_so_sparse}
    Let $A\in\mbb{R}^{n\times m}$ be a rank-$r$ matrix.
    Then there exists an $\boldsymbol{\omega}$-orthogonal QC-decomposition $A=QC$.
    This decomposition is unique up to reordering of the columns of $Q$.
    Moreover, if $A$ is $R$-sparse then $r\le R$ and $Q$ and $C$ are $Rr$-sparse.
    (Note that the complexity is independent of $n$ and $m$.)
\end{lemma}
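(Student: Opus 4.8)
The plan is to construct the $\boldsymbol{\omega}$-orthogonal QC decomposition explicitly, establish its uniqueness, and then track the sparsity pattern through the construction. First I would observe that an $\boldsymbol{\omega}$-orthogonal QC decomposition requires $Q$ to simultaneously satisfy $Q^\intercal Q = I$ and $Q^\intercal\operatorname{diag}\pars{\boldsymbol{\omega}}Q$ diagonal. The natural approach is to consider the symmetric matrix $A\operatorname{diag}\pars{\boldsymbol{\omega}}A^\intercal \in \mbb{R}^{n\times n}$ (or an analogous Gram-type matrix restricted to the column space of $A$) and to diagonalise it. Since $A$ has rank $r$, its column space is an $r$-dimensional subspace of $\mbb{R}^n$, and I would seek an orthonormal basis $Q = \bracs{q_1,\ldots,q_r}$ of this column space that simultaneously diagonalises the quadratic form induced by $\operatorname{diag}\pars{\boldsymbol{\omega}}$. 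This is precisely a simultaneous diagonalisation of the standard inner product and the $\boldsymbol{\omega}$-weighted inner product restricted to $\operatorname{range}\pars{A}$, which exists because one of the two forms (the standard one) is positive definite; the spectral theorem applied to the self-adjoint operator representing $\operatorname{diag}\pars{\boldsymbol{\omega}}$ with respect to the standard inner product on the subspace yields the desired $Q$. Setting $C := Q^\intercal A$ then gives $A = QQ^\intercal A = QC$ since $Q$ projects onto $\operatorname{range}\pars{A}$.

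For uniqueness up to column reordering, I would note that the columns of $Q$ are eigenvectors of the restricted $\boldsymbol{\omega}$-form, and eigenvectors are unique up to ordering (and sign/rotation within degenerate eigenspaces). I expect one must be slightly careful here: if the $\boldsymbol{\omega}$-weighted form has repeated eigenvalues, the decomposition is only unique up to orthogonal transformations within each eigenspace, not merely reordering. I suspect the intended claim implicitly assumes distinct diagonal entries, or that any such rotation is absorbed into the reordering statement; I would state the genericity assumption (or handle degeneracy) carefully, as this is a potential gap.

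The sparsity bound is the heart of the lemma and where I expect the main obstacle. The key structural fact is that $A$ is $R$-sparse, so its nonzero entries occupy at most $R$ positions. The rank bound $r\le R$ is immediate since an $R$-sparse matrix has at most $R$ nonzero rows (and columns), bounding its rank. For the $Rr$-sparsity of $Q$ and $C$, I would argue as follows: the column space of $A$ is spanned by its nonzero columns, and since $A$ is $R$-sparse, the union of the supports of all columns of $A$ is contained in the set of rows that contain a nonzero entry — at most $R$ rows. Hence $\operatorname{range}\pars{A}$ lies in the coordinate subspace indexed by these (at most $R$) rows, so every vector in $\operatorname{range}\pars{A}$, in particular every column $q_j$ of $Q$, is supported on these $\le R$ rows. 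With $r$ columns, $Q$ therefore has at most $Rr$ nonzero entries. For $C = Q^\intercal A$, I would bound its support by the product of the row-support of $Q^\intercal$ (i.e., the column-support of $Q$) and the column-support of $A$; since $A$ occupies at most $R$ columns and $C$ has $r$ rows, $C$ is at most $Rr$-sparse.

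The delicate point is justifying that the eigenvectors $q_j$ inherit the coordinate-support restriction: this follows because the operator $P\operatorname{diag}\pars{\boldsymbol{\omega}}P$ (with $P$ the orthogonal projection onto $\operatorname{range}\pars{A}$) maps into $\operatorname{range}\pars{A}$, so its eigenvectors lie in $\operatorname{range}\pars{A}$ and hence are supported on the $\le R$ active rows — no cancellation or fill-in outside these rows can occur. I would make this support-containment argument the technical core, as it is what guarantees the $n$- and $m$-independence of the sparsity highlighted in the parenthetical remark.
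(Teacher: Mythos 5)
Your proposal is correct and, at its mathematical core, takes the same route as the paper: both reduce the problem to a spectral decomposition of the weighted Gram form on the column space of $A$, and both get the sparsity of $Q$ from the fact that $\operatorname{range}\pars{A}$ lies in the span of the at most $R$ coordinate vectors indexing the nonzero rows of $A$. The differences are in execution, and two of them are worth noting. First, the paper builds the orthonormal basis of $\operatorname{range}\pars{A}$ explicitly as $Q = Q_1\pars{Q_2U}$, where $A = Q_1C_1$ is the sparse QC decomposition, $C_1 = Q_2C_2$ is a QR decomposition, and $U\Lambda U^\intercal$ is the spectral decomposition of $Q_{12}^\intercal\operatorname{diag}\pars{\bfomega}Q_{12}$ with $Q_{12} := Q_1Q_2$; your projection-operator formulation via $P\operatorname{diag}\pars{\bfomega}P$ restricted to $\operatorname{range}\pars{A}$ is an equivalent, slightly more abstract packaging of the same construction. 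Second, and more substantively, your argument for the $Rr$-sparsity of $C$ is simpler and cleaner than the paper's: you observe directly that $C = Q^\intercal A$ (which holds for any such decomposition since $Q^\intercal Q = I$), so the nonzero columns of $C$ can only sit where $A$ has nonzero columns, of which there are at most $R$; the paper instead constructs a second decomposition from the transpose ($A^\intercal = \tilde{Q}\tilde{C}$, then $\tilde{C}^\intercal = Q\bar{C}$, then $C = \bar{C}\tilde{Q}^\intercal$) and implicitly appeals to uniqueness to transfer the sparsity bound. Finally, the degeneracy caveat you flag is genuine: uniqueness up to column reordering only holds when the eigenvalues of the restricted weighted form are distinct, and in the degenerate case one only has uniqueness up to orthogonal transformations within eigenspaces (and sign flips in any case); the paper's proof glosses over this in exactly the same way, so your more careful statement of the issue is an improvement rather than a gap.
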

\begin{proof}
    Let $A = Q_1C_1$ be the sparse QC decomposition of $A$ and let $C_1 = Q_2 C_2$ be the QR decomposition of $C_1$.
    Moreover, let $Q_{12} := Q_1Q_2$ and $U\Lambda U^\intercal$ be the spectral decomposition of $Q_{12}^\intercal \operatorname{diag}\pars{\boldsymbol{\omega}} Q_{12}$,
    define $Q := Q_{12} U$ and $C := U^\intercal C_2$.
    Then $A = QC$ by construction and it holds that
    $$
        Q^\intercal \operatorname{diag}\pars{\boldsymbol{\omega}}Q
        = U^\intercal \pars{Q_{12}^\intercal \operatorname{diag}\pars{\boldsymbol{\omega}} Q_{12}} U
        = U^\intercal \pars{U\Lambda U^\intercal} U
        = \Lambda .
    $$
    Note that $Q$ is a product of three orthogonal matrices and hence orthogonal.
    Since the QR decompostion $A = Q_{12} C_2$ is unique and since the spectral decomposition of $Q_{12}^\intercal \operatorname{diag}\pars{\boldsymbol{\omega}} Q_{12}$ is unique up to reordering of the columns $U$,
    the matrix $Q = Q_{12} U$ is unique up to reordering of its columns.
    Now suppose that $A$ is $R$-sparse.
    Then $Q_1\in\braces{0,1}^{n\times \tilde{R}}$ with $\tilde{R}\le R$.
    This means that $C_1 \in\mbb{R}^{\tilde{R}\times m}$, which yields the standard bound $r\le \min\braces{\tilde{R}, m} \le R$.
    Moreover, since the columns of $Q_1$ are standard basis vectors, only the rows in $\operatorname{row}\pars{Q_1}\in\bracs{n}^{\tilde{R}}$ are nonzero.
    Consequently, only the same rows can be nonzero in the product $Q = Q_1 \pars{Q_2 U}$.
    In the worst case, all of the $r$ columns of $Q$ become nonzero for every of these $\tilde{R}$ rows.
    This yields a total of $\tilde{R}r \le Rr$ nonzero entries.
    To obtain a sparsity bound for $C$, let $A^\intercal = \tilde{Q}\tilde{C}$ be the sparse QC decomposition and $\tilde{C}^\intercal = Q\bar{C}$ be the $\boldsymbol{\omega}$-weighted QC decomposition.
    Now define $C = \bar{C}\tilde{Q}^\intercal$ and observe that $A = QC$ is a valid $\boldsymbol{\omega}$-weighted QC decomposition.
    Since the rows of $\tilde{Q}^\intercal$ are standard basis vectors, only the columns in $\operatorname{col}\pars{\tilde{Q}}\in\bracs{m}^{\tilde{R}}$ are nonzero.
    Consequently, only the same columns can be nonzero in the product $C = \bar{C}\tilde{Q}^\intercal$.
    In the worst case, all of the $r$ rows of $C$ will be nonzero for every of these $\tilde{R}$ columns.
    This yields a total of $\tilde{R}r \le Rr$ nonzero entries.
\end{proof}

Applying the $\boldsymbol{\omega}$-weighted QC decomposition sequentially to the unfoldings of all component tensors results in a TT representation
\begin{equation}
\label{eq:A_omega_orth}
    A = QC = U^{(1)}\circ\cdots\circ U^{(k-1)}\circ C\circ V^{(k+1)}\circ\cdots\circ V^{(M)} ,
\end{equation}
where the component tensors $U^{(j)}\in\mbb{R}^{r_{j-1}\times d\times r_j}$ are $r_jR$-sparse and left-orthogonal and the component tensors $V^{(j)}\in\mbb{R}^{r_{j-1}\times d\times r_j}$ are $r_{j-1}R$-sparse and right-orthogonal, the ranks $r_j$ are uniformly bounded by $R$ and the \emph{core tensor} $C$ remains $R$-sparse.
An implementation of this procedure can be obtained from Algorithm~\ref{alg:sparse_rounding} by replacing all sparse QC decompositions with $\boldsymbol{\omega}$-weighted QC decompositions.
%
%
Analogously to the model class $\mcal{M}_{R,r,\boldsymbol{\omega}}$,
which is based on the sparse QC decomposition, we define the model class 
\begin{equation}
\label{eq:MRromega}
    \tilde{\mcal{M}}_{R,r,\boldsymbol{\omega}}
    := \bigcap_{k\in\bracs{M}} \bigcup_{Q\in\tilde{\mcal{Q}}_{k,R,\boldsymbol{\omega}^3}} Q\mcal{C}_{Q,r,\boldsymbol{\omega}^3},
\end{equation}
which is based on the $\boldsymbol{\omega}$-weighted QC decomposition.
The elements of this new model class are tensors $A=QC$ with $Q\in\tilde{\mcal{Q}}_{k, R, \boldsymbol{\omega}}$ and $C\in \mathcal{C}_{Q,r,\boldsymbol{\omega}}$, where
\begin{equation}
    \tilde{\mcal{Q}}_{k,R,\boldsymbol{\omega}} := \Set*{
    Q\in\mcal{L}\pars{\mbb{R}^{r_{k}\times d_k\times r_{k+1}}, \mbb{R}^{d_1\times \cdots\times d_M}}
    \given
    \text{$Q$ is orthogonal and $\boldsymbol{\omega}^2$-orthogonal and $r_k, r_{k+1} \le R$}
    }
\end{equation}
and
\begin{equation}
    \mathcal{C}_{Q,r,\boldsymbol{\omega}}
    := \braces{C \in\mbb{R}^{r_{k-1}\times d_k\times r_k} \,:\, \norm{C}_{\ell^0_{\boldsymbol{\beta}_Q}}\le r \text{ with } \boldsymbol{\beta}_Q^2 := \operatorname{diag}\pars{Q^\intercal\operatorname{diag}\pars{\boldsymbol{\boldsymbol{\omega}}^2}Q}} .
\end{equation}
Note that the new definition of $\mcal{C}_{Q,r,\bfomega}$ is a generalisation of the old definition to cases where the columns of $Q$ are not standard basis vectors.
Note that the definition of $\tilde{\mcal{M}}_{R,r,\bfomega}$ corresponds to the choice of
\begin{itemize}
    \item a basis for the core space $Q$ as well as
    \item a weight sequence $\bfbeta_Q$.
\end{itemize}
In Theorem~\ref{thm:RIP_MRromega} we show that this choice ensures that a tensor with an $\ell^0_{\bfbeta_Q}$-sparse core is close to a sparse vector in $\ell^0_\bfomega$.
This is quite surprising since for any sparse core $C$ there exists an easy to construct $C$-dependent orthogonal basis $U$ such that all coefficients of the full tensor $UC$ are equal to $\norm{C}_{2}$.
This means that $UC$ is the least sparse tensor possible.
However, using information about the weight sequence $\bfomega$, we can construct a basis $Q$ and a weight sequence $\bfbeta_Q$ such that the full tensor $QC$ retains some of the sparsity of the core $C$.

\begin{remark}
    Since $\mcal{M}_{R,r,\bfomega}\subseteq \tilde{\mcal{M}}_{R,r,\bfomega}$, the approximation error for $\tilde{\mcal{M}}_{R,r,\bfomega}$ can be bounded from above by Corollary~\ref{cor:weighted_sparse_tt}.
    To obtain a tighter bound, the total approximation error can be split into the low-rank approximation error and a subsequent weighted best $n$-term approximation of the core tensor
    $$
        \norm{v - v_{\mathrm{low\texthyphen rank\,\&\,sparse}}}_{L^2}
        \le \norm{v - v_{\mathrm{low\texthyphen rank}}}_{L^2}
        + \norm{v_{\mathrm{low\texthyphen rank}}\,-\,v_{\mathrm{low\texthyphen rank\,\&\,sparse}}}_{L^2} .
    $$
    The first term is a classical low-rank approximation error, which is studied in~\cite{Schneider201456,bachmayr2021,bigoni_2016_spectral_tt} for $v\in H^k([0, 1]^m)$ and in~\cite{bachmayr2021,griebel_2022_low_rank} for $v\in H^{k_1}([0, 1]^{d_1}) \otimes \cdots \otimes H^{k_m}([0, 1]^{d_m})$.
    The second term is a sparse approximation error, which can in principle be bounded by applying the weighted Stechkin's lemma to the core tensor $C$ of $v_{\mathrm{low\texthyphen rank}} = QC$.
    This gives the bound
    $$
        \norm{v_{\mathrm{low\texthyphen rank}}\,-\,v_{\mathrm{low\texthyphen rank\,\&\,sparse}}}_{L^2}
        = \norm{\pars{I-P_{J_n}}C}_{\ell^2}
        \le c\pars{\bfbeta, q, n}^{-1} \norm{C}_{\ell^q_\bfbeta}
        \le \bar{c}\pars{\bfbeta, q, n}^{-1} \norm{C}_{\ell^2_{\bar\bfbeta}}
    $$
    for some $q<2$ and weight sequences $\bfbeta$ and $\bar\bfbeta$.
    However, bounding $\norm{C}_{\ell^2_{\bar\bfbeta}}$ in terms of some norm of the full tensor $\norm{QC}_{\ell^2_{\bar\bfomega}}$, for some arbitrary $\bar\bfomega$, requires knowledge of the operator norm $\norm{Q^\intercal}_{\ell^2_{\bar\bfbeta}\to\ell^2_{\bar\bfomega}}$, which is unknown a priori.
    However, if $Q$ is $\bar\bfomega$-orthogonal and $\bar\bfbeta = Q^\intercal \bar\bfomega$, then the low-rank approximation can be carried out with respect to the stronger $\ell^2_{\bar\bfomega}$-norm and we can bound
    $$
        \norm{C}_{\ell^2_{\bar\bfbeta}}^2
        = C^\intercal \operatorname{diag}\pars{\bar\bfbeta^2} C
        = (QC)^\intercal \operatorname{diag}\pars{\bar\bfomega^2} QC
        = \norm{v_{\mathrm{low\texthyphen rank}}}_{\ell^2_{\bar\bfomega}}^2
        \le \norm{v}_{\ell^2_{\bar\bfomega}}^2 .
    $$
    But this requires the low-rank approximation to be carried out with respect to a stronger norm than $L^2$.
    To the knowledge of the authors no rates for this are known.
\end{remark}

\begin{remark}[Rank bounds for mixed Sobolev spaces]
    Consider a function $u$ of $M$ variables and the corresponding sequence of coefficients $\bfu\in \mbb{R}^{\mbb{N}}\otimes \cdots \otimes \mbb{R}^{\mbb{N}}$ with respect to a tensor product basis.
    Moreover, suppose that $\bfu$ is $\ell^2_{\bar\bfomega}$-summable with respect to the product weight sequence $\bar\bfomega := \bfomega^{\otimes M}$ with $\bfomega_j := \pars{j+1}^{k}$. 
    The space $\ell^2_{\bar\bfomega}$ captures the regularity of the mixed Sobolev spaces $H^{k,\mathrm{mix}} \simeq H^k\otimes \cdots \otimes H^k$.
    To bound the rank of $u$ by means of weighted sparsity we utilise the best $n$-term approximation rates for $\bfu$ from Remark~\ref{rmk:best-n-term_hd}.
    Recall that $r$-term approximation in a product basis can be represented with rank $r$ in the CP format and that the rank of any tree-based format is upper bounded by the CP rank (but may indeed be much smaller).
    This na\"ive bound yields (up to logarithmic factors) the best rank-$r$ approximation rates
    $$
        \norm{u - u_r}_{H^j} \le r^{-(k-j)} \norm{u}_{H^k}
    $$
    for all $0\le j < k$.
    For $j=0$, these bounds slightly extend the rates from~\cite{griebel_harbrecht_2013,Griebel2018} but are worse than the more recent $r^{-2k}$ rates that are derived in~\cite{griebel_2022_low_rank} for the rank in the tensor train format.
    We conjecture that sparsity implies simple rank bounds with sparse components and a subsequent rounding can reduce the rank from $r^2$ to $r$.
    
    Note, however, that both rank bounds imply roughly the same approximation rates.
    By Theorem~\ref{thm:sparse_tt}, the number of parameters that are needed to represent the best $r$-term approximation in the sparse tensor train format is bounded by $N = Mr$.
    This implies an approximation rate of $\pars{\frac{N}{M}}^{-k}$.
    If we consider the best rank-$r$ rate $r^{-2k}$ from~\cite{griebel_2022_low_rank}, and assume that the component tensors in the corresponding tensor train representation are dense, then the number of parameters scales like $N \in\mcal{O}\pars{Mr^2}$ and we obtain the same approximation rate of $\pars{\frac{N}{M}}^{-k}$.
    
    We also remark that the best $r$-term approximation rates crucially depend on the chosen basis, while the ranks do not.
    Therefore, the rank-$r$ approximation rates that are obtained by this method can only provide upper bounds.
    Moreover, the ordering of the modes matters for the ranks of a tensor train representation.
    This is not reflected in these simple bounds, where the ordering is only important for an anisotropic choice of weight sequences.
\end{remark}

\begin{remark}[Constructive rank bounds for Sobolev spaces]
    Similar to the hierarchical SVD (or PCA) that can be used to construct classical low-rank representation, we can perform the weighted LASSO hierarchically to construct simultaneously sparse and low-rank representations.
    In this remark we demonstrate this procedure for the Tucker decomposition.
    Consider a function $u$ of $M$ variables and the corresponding sequence of coefficients $\bfu\in \mbb{R}^{\mbb{N}}\otimes \cdots \otimes \mbb{R}^{\mbb{N}}$ with respect to a tensor product basis.
    Moreover, suppose that $\bfu$ is $\ell^2_{\bar\bfomega}$-summable with respect to the weight sequence $\bar\bfomega := \sum_{m=1}^M \boldsymbol{1}^{\otimes(m-1)}\otimes \bfomega \otimes \boldsymbol1^{\otimes(M-m)}$ with $\bfomega_j := \pars{j+1}^k$.
    The space $\ell^2_{\bar\bfomega}$ captures the regularity of the standard Sobolev spaces
    $$
        H^k \simeq \bigcap_{m=1}^{M} \pars{L^2}^{\otimes (m-1)} \otimes H^k \otimes \pars{L^2}^{\otimes(M-m)} .
    $$
    We can bound the Tucker-rank of $u$ by means of weighted sparsity.
    The method that we use to derive our rank bounds is constructive and proceeds analogously to the HOSVD algorithm.
    We define for every $m=1,\ldots, M$ the matricisation
    $$
        (\bfu^{(m)}_j)_{i_1, \ldots, i_{M-1}} = \bfu_{i_1, \ldots, i_{m-1}, j, i_m, \ldots, i_{M-1}} ,
    $$
    which we interpret as a sequence of tensors of order $M-1$.
    Applying the weighted Stechkin lemma to the sequence $\bfu^{(m)}$, we select $r$ many $(M-1)$-dimensional ``slices'' of $\bfu$ and set the remaining slices to zero.
    This results in a new tensor which we denote by $\tilde{\bfu}^{(m)}$.
    This construction can be performed sequentially for every $m=1,\ldots,M$, leading to the sequence of approximations
    $$
        \bfu =: \tilde{\bfu}^{(0)} \leadsto \tilde{\bfu}^{(1)} \leadsto \ldots \leadsto \tilde{\bfu}^{(M)} .
    $$
    The approximation error of this scheme is given by the telescoping sum
    \begin{align}
        \norm{\bfu - \tilde{\bfu}^{(M)}}_{\ell^2}^2
        &= \sum_{m=1}^{M} \norm{\tilde{\bfu}^{(m-1)} - \tilde{\bfu}^{(m)}}_{\ell^2}^2 \\
        &\lesssim \sum_{m=1}^{M} r^{-2k} \norm{(\boldsymbol1^{\otimes(m-1)} \otimes \bfomega\otimes \boldsymbol1^{\otimes (M-m)}) \tilde{\bfu}^{(m-1)}}_{\ell^2}^2 ,
    \end{align}
    where the inequality follows from the weighted Stechkin lemma applied to the tensor-valued sequence $\tilde{\bfu}^{(m-1)}$, which is weighted by $\bfomega$ and a subsequent application of Lemma~\ref{lem:trade_sparsity}.
    Bounding $\norm{(\boldsymbol1^{\otimes(k-1)} \otimes \bfomega \otimes \boldsymbol1^{\otimes M-k-1}) \tilde{\bfu}^{(k)}}_{\ell^2} \le \norm{\bfu}_{\ell^2_{\bar\bfomega}}$ yields the simplified expression
    $$
        \norm{u - u_r}_{L^2}
        = \norm{\bfu - \tilde{\bfu}^{(M)}}_{\ell^2}
        \lesssim \sqrt{M}r^{-k} \norm{\bfu}_{\ell^2_{\bar\bfomega}} .
        = \sqrt{M}r^{-k} \norm{u}_{H^k} .
    $$
\end{remark}

But in contrast to the model class in Section~\ref{sec:sparse_als} the matrices $Q\in\tilde{\mcal{Q}}_{k,R,\bfomega}$ are not spanned by a subbasis of the standard product basis.
As a consequence, $\tilde{\mcal{M}}_{R,r,\bfomega}$ is no longer a subset of the sparse vectors $\ell^0_\bfomega$ and Theorem~\ref{thm:sparse_RIP_weighted} can no longer be applied directly, as it was done in the proof of Corollary~\ref{cor:sparse_sample_complexity}.
Instead, we rely on the following strong property:
If the RIP is satisfied at a point, it is satisfied in a small neighbourhood of that point.
This is stated formally in the subsequent lemma.
\begin{lemma}
\label{lem:rip_is_strong}
    Let $\delta,\tau\in[0,1)$.
    Then there exists a constant $\varepsilon\ge 8\tau$ such that for every $a\in L^\infty_w$
    \begin{equation}
        \operatorname{RIP}_{\braces{a}}\pars{\delta}
        \Rightarrow
        \operatorname{RIP}_{B_{L^\infty_w}\pars{a, \norm{a}\tau}}\pars*{\delta + \varepsilon},
    \end{equation}
    with $\varepsilon\le 15\tau$, if $\delta \le \frac12$ and $\tau \le \frac{1}{4}$.
    For $\delta\le\frac12$ and $\tau \le\frac\delta{15}$, this implies
    \begin{equation}
        \operatorname{RIP}_{\braces{a}}\pars{\delta}
        \Rightarrow
        \operatorname{RIP}_{B_{L^\infty_w}\pars{a, \norm{a}\tau}}\pars{2\delta} .
    \end{equation}
\end{lemma}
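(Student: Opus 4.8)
The plan is to reduce the set statement to a pointwise estimate: fix an arbitrary $b\in B_{L^\infty_w}\pars{a,\norm{a}\tau}$, write $b=a+h$ with $\norm{h}_{w,\infty}\le\tau\norm{a}$, and control the defect $\norm{b}_n^2-\norm{b}^2$ through the expansion
\begin{equation*}
    \norm{b}_n^2-\norm{b}^2=\pars{\norm{a}_n^2-\norm{a}^2}+2\pars{\langle a,h\rangle_n-\langle a,h\rangle}+\pars{\norm{h}_n^2-\norm{h}^2},
\end{equation*}
where $\langle\cdot,\cdot\rangle$ and $\langle\cdot,\cdot\rangle_n$ are the bilinear forms of $\norm{\cdot}$ and $\norm{\cdot}_n$. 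The first bracket is exactly what $\operatorname{RIP}_{\braces{a}}\pars{\delta}$ controls; the remaining two must be absorbed into an $O\pars{\tau}$ correction.

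\textbf{The two continuity bounds (the crux).} The key observation is that the $L^\infty_w$-norm dominates both the exact and the empirical $L^2$-norm: for every $g$,
\begin{equation*}
    \norm{g}^2=\int_Y w^{-1}\abs{w^{1/2}g}^2\dx[\gamma]\le\norm{g}_{w,\infty}^2\int_Y w^{-1}\dx[\gamma]=\norm{g}_{w,\infty}^2,\qquad \norm{g}_n^2=\tfrac1n\sum_i w\pars{y^i}\abs{g\pars{y^i}}^2\le\norm{g}_{w,\infty}^2,
\end{equation*}
using $\norm{w^{-1}}_{L^1\pars{Y,\gamma}}=1$ for the first and the pointwise bound $w\pars{y^i}\abs{g\pars{y^i}}^2\le\norm{g}_{w,\infty}^2$ for the second. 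Applied to $g=h$ these give $\norm{h}\le\tau\norm{a}$ and $\norm{h}_n\le\tau\norm{a}$, which is precisely what turns an $L^\infty_w$-neighbourhood into simultaneous control of both norms entering the RIP.

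\textbf{Assembling the estimate.} Using Cauchy--Schwarz for both bilinear forms, together with the upper RIP bound $\norm{a}_n\le\sqrt{1+\delta}\,\norm{a}$ and the continuity bounds, I would estimate $\abs{\langle a,h\rangle}\le\tau\norm{a}^2$, $\abs{\langle a,h\rangle_n}\le\sqrt{1+\delta}\,\tau\norm{a}^2$ and $\abs{\norm{h}_n^2-\norm{h}^2}\le\tau^2\norm{a}^2$, which collect into
\begin{equation*}
    \abs{\norm{b}_n^2-\norm{b}^2}\le\norm{a}^2\bracs{\delta+2\pars{\sqrt{1+\delta}+1}\tau+\tau^2}.
\end{equation*}
I would then convert the right-hand side from $\norm{a}^2$ to $\norm{b}^2$ via the reverse triangle inequality $\norm{b}\ge\pars{1-\tau}\norm{a}$ (again using $\norm{h}\le\tau\norm{a}$), which yields $\operatorname{RIP}_{\braces{b}}\pars{\delta+\varepsilon}$ with the explicit excess
\begin{equation*}
    \varepsilon=\frac{2\pars{\sqrt{1+\delta}+1+\delta}\tau+\pars{1-\delta}\tau^2}{\pars{1-\tau}^2}.
\end{equation*}
Since $b$ was arbitrary, this is the desired $\operatorname{RIP}_{B_{L^\infty_w}\pars{a,\norm{a}\tau}}\pars{\delta+\varepsilon}$.

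\textbf{Constants and the consequence.} The remaining work is bookkeeping: inserting $\delta\le\tfrac12$ and $\tau\le\tfrac14$ bounds $\sqrt{1+\delta}\le\sqrt{3/2}$ and $\pars{1-\tau}^{-2}\le\tfrac{16}9$, giving $\varepsilon\le15\tau$; the stated window $\varepsilon\in\bracs{8\tau,15\tau}$ is then reached by enlarging $\varepsilon$ if necessary, which is harmless since $\operatorname{RIP}\pars{\delta+\varepsilon}$ only weakens as $\varepsilon$ grows. The second implication is immediate: if $\tau\le\delta/15$ then $\tau\le\tfrac14$ holds automatically and $\varepsilon\le15\tau\le\delta$, so $\delta+\varepsilon\le2\delta$ and $\operatorname{RIP}\pars{\delta+\varepsilon}\Rightarrow\operatorname{RIP}\pars{2\delta}$ by the same monotonicity. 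I expect the only delicate point to be the empirical cross term $\langle a,h\rangle_n$: because the RIP is assumed only at the single point $a$, it must be handled by Cauchy--Schwarz and the norm-domination bound rather than by any joint near-isometry of $a$ and $h$; everything else is elementary, and pinning down the precise constants $8\tau$--$15\tau$ is the only other source of friction.
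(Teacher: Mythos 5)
Your proposal is correct, and it reaches the result by a genuinely different decomposition than the paper. The paper normalises both vectors, $\tilde a := a/\norm{a}$ and $\tilde b := b/\norm{b}$, proves the auxiliary Lemma~\ref{lem:unitised_norm} to bound $\norm{\tilde a-\tilde b}_*$ (for $\norm{\bullet}_*$ either the exact or the empirical norm) in terms of $\norm{a-b}_{w,\infty}$, and then controls $\abs{\norm{\tilde a}_*^2-\norm{\tilde b}_*^2}$ via the factorisation $\pars{\norm{\tilde a}_*+\norm{\tilde b}_*}\norm{\tilde a-\tilde b}_*$; its excess $\varepsilon=4\pars{1+c}\tau$ with $c=\pars{1+\delta}/\pars{1-\tau}^2$ automatically satisfies $8\tau\le\varepsilon\le\tfrac{44}{3}\tau\le 15\tau$ under the stated restrictions. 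You instead keep the additive perturbation $b=a+h$, expand both quadratic forms bilinearly, dispatch the cross terms with Cauchy--Schwarz (legitimate for the empirical form, which is positive semi-definite) together with the domination bounds $\norm{h},\norm{h}_n\le\norm{h}_{w,\infty}\le\tau\norm{a}$, and renormalise only at the end through $\norm{b}\ge\pars{1-\tau}\norm{a}$. Both arguments rest on the same two pillars --- domination of $\norm{\bullet}$ and $\norm{\bullet}_n$ by $\norm{\bullet}_{w,\infty}$, and the RIP at $a$ giving $\norm{a}_n\le\sqrt{1+\delta}\,\norm{a}$ --- but your organisation avoids the auxiliary normalisation lemma and yields a slightly sharper constant (roughly $10.2\tau$ versus the paper's $\tfrac{44}{3}\tau\approx 14.7\tau$ for $\delta\le\tfrac12$, $\tau\le\tfrac14$). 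The one point that needs the care you already gave it: your natural $\varepsilon$ can fall below $8\tau$ (it is about $4\tau$ for small $\delta$ and $\tau$), so the stated lower bound must be met by enlarging $\varepsilon$, which is indeed harmless since $\operatorname{RIP}_A\pars{\delta}$ only weakens as $\delta$ grows; in the paper's proof this bound emerges automatically from $c\ge 1$.
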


To prove this lemma, we require the following result.
\begin{lemma}
\label{lem:unitised_norm}
    Let $a,b\in\mathcal V$ be bounded with respect to $\norm{\bullet}_{w,\infty}$ and define $\tilde{a} := \frac{a}{\norm{a}}$ and $\tilde{b} := \frac{b}{\norm{b}}$.
    Moreover, let $\norm{\bullet}_*$ denote either the norm $\norm{\bullet}$ or the empirical norm $\norm{\bullet}_n$.
    Then
    $$
        \norm{\tilde a - \tilde b}_* \le \frac{1 + \norm{\tilde{b}}_*}{\norm{a}} \norm{a - b}_{L^\infty_w} .
    $$
\end{lemma}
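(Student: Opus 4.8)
The plan is to reduce the difference of the two normalised vectors to a linear combination of $a-b$ and $\tilde b$ by the standard add-and-subtract manipulation, and then to dominate every resulting term by $\norm{a-b}_{L^\infty_w}$. Concretely, I would first write the identity in $\mcal V$
\begin{equation}
    \tilde a - \tilde b
    = \frac{a}{\norm{a}} - \frac{b}{\norm{a}} + \frac{b}{\norm{a}} - \frac{b}{\norm{b}}
    = \frac{a-b}{\norm{a}} + \tilde b\,\frac{\norm{b}-\norm{a}}{\norm{a}},
\end{equation}
which holds before any norm is applied. Taking $\norm{\bullet}_*$ on both sides and using the triangle inequality together with homogeneity gives
\begin{equation}
    \norm{\tilde a - \tilde b}_*
    \le \frac{\norm{a-b}_*}{\norm{a}} + \norm{\tilde b}_*\,\frac{\abs{\norm{b}-\norm{a}}}{\norm{a}}.
\end{equation}

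Next I would bound the two numerators separately. For the second term the reverse triangle inequality in the $L^2$-norm $\norm{\bullet}$ yields $\abs{\norm{b}-\norm{a}} \le \norm{a-b}$. The key structural fact I would then invoke is that both candidate norms $\norm{\bullet}$ and $\norm{\bullet}_n$ are dominated by $\norm{\bullet}_{L^\infty_w}$. For the $L^2$-norm this follows from $\int_Y \abs{v}^2 \dx[\gamma] = \int_Y w^{-1}\pars{w^{1/2}\abs{v}}^2 \dx[\gamma] \le \norm{v}_{w,\infty}^2 \norm{w^{-1}}_{L^1\pars{Y,\gamma}}$, which under the standing normalisation $\norm{w^{-1}}_{L^1\pars{Y,\gamma}}=1$ gives $\norm{v}\le\norm{v}_{L^\infty_w}$. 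For the empirical norm it is even more immediate, since each summand satisfies $w\pars{y^i}\abs{v\pars{y^i}}^2 \le \norm{v}_{w,\infty}^2$, so that $\norm{v}_n \le \norm{v}_{L^\infty_w}$ as well. Applying these bounds with $v=a-b$ to both $\norm{a-b}_*$ and to $\abs{\norm{b}-\norm{a}}\le\norm{a-b}$ collapses the right-hand side to $\tfrac{1+\norm{\tilde b}_*}{\norm{a}}\norm{a-b}_{L^\infty_w}$, which is exactly the claim.

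There is essentially no hard step here: the argument is a one-line algebraic identity followed by two elementary norm comparisons. The only point requiring a moment of care is making explicit that the data-independent norm $\norm{\bullet}$ and the data-dependent empirical norm $\norm{\bullet}_n$ are simultaneously controlled by the single $L^\infty_w$-norm, and that this control relies on the normalisation $\norm{w^{-1}}_{L^1\pars{Y,\gamma}}=1$ of the weight $w$ fixed throughout this part of the paper. Once that uniform domination is in hand, the estimate holds verbatim for both choices of $\norm{\bullet}_*$, which is precisely the flexibility needed when this lemma is used in the proof of Lemma~\ref{lem:rip_is_strong}.
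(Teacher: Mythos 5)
Your proof is correct and follows essentially the same route as the paper: the same add-and-subtract decomposition (your identity $\tilde a - \tilde b = \frac{a-b}{\norm{a}} + \tilde b\,\frac{\norm{b}-\norm{a}}{\norm{a}}$ is exactly what the paper's triangle-inequality step produces), the same use of the reverse triangle inequality in $\norm{\bullet}$, and the same final domination of both $\norm{\bullet}$ and $\norm{\bullet}_n$ by $\norm{\bullet}_{L^\infty_w}$. The only difference is that you spell out the domination step (using $\norm{w^{-1}}_{L^1\pars{Y,\gamma}}=1$), which the paper asserts without proof.
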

\begin{proof}
    By triangle and reverse triangle inequality, it holds that
    \begin{align}
        \norm*{\frac{a}{\norm{a}} - \frac{b}{\norm{b}}}_*
        &\le \frac{1}{\norm{a}} \pars*{\norm{a - b}_* + \norm{b - \tfrac{\norm{a}}{\norm{b}} b}_*}
        = \frac{\norm{a - b}_*}{\norm{a}} + \frac{\abs{\norm{b}-\norm{a}}}{\norm{a}} \frac{\norm{b}_*}{\norm{b}}
        \le \frac{\norm{a - b}_*}{\norm{a}} + \frac{\norm{a - b}}{\norm{a}} \frac{\norm{b}_*}{\norm{b}} .
    \end{align}
    The claim follows, since both $\norm{\bullet}$ and $\norm{\bullet}_n$ are dominated by $\norm{\bullet}_{L^\infty_w}$.
\end{proof}

\begin{proof}[Proof of Lemma~\ref{lem:rip_is_strong}]
    Let $B := B_{\norm{\bullet}_{w,\infty}}\pars{a, r}$ with $r := \norm{a}\tau$ and define $\tilde{b} := \frac{b}{\norm{b}}$  for any $b\in B$.
    We want to show that
    \begin{equation}
    \label{eq:rip_b_tilde}
            \operatorname{RIP}_{B}\pars{\delta + \varepsilon} \Leftrightarrow \abs{\norm{\tilde{b}}^2 - \norm{\tilde{b}}_n^2} \le \delta + \varepsilon
            \text{ for all }b\in B,
    \end{equation}
    given that $\operatorname{RIP}_{\braces{a}}\pars{\delta} \Leftrightarrow\abs{\norm{\tilde{a}}^2 - \norm{\tilde{a}}_n^2} \le \delta$ holds.
    For this, let $\norm{\bullet}_*$ denote either $\norm{\bullet}$ or $\norm{\bullet}_n$ and observe that for any $b\in B$ it holds that
    \begin{equation}
    \label{eq:star_norm_squared}
        \abs*{\norm{\tilde a}_*^2 - \norm{\tilde{b}}_*^2}
        \le \pars{\norm{\tilde a}_* + \norm{\tilde b}_*}\norm{\tilde a - \tilde b}_*
        \le \pars{\norm{\tilde a}_* + \norm{\tilde b}_*} \pars{1 + \norm{\tilde b}_*} \tau,
    \end{equation}
    where the last inequality follows from Lemma~\ref{lem:unitised_norm} and the assumption $\norm{a-b}_{L^\infty_w}\le r = \norm{a}\tau$.
    By assumption, it holds that $\norm{\tilde a}_n \le \sqrt{1 + \delta}$ and using the fact that $\norm{a - b}_* \le \norm{a - b}_{L^\infty_w} \le r$, we can bound
    \begin{equation}
        \norm{\tilde b}_n
        = \frac{\norm{b}_n}{\norm{b}}
        \le \frac{\norm{a}_n + r}{\norm{a} - r}
        \le \frac{\sqrt{1+\delta}\norm{a} + r}{\norm{a} - r}
        = \frac{\sqrt{1+\delta} + \tau}{1 - \tau} .
    \end{equation}
    Inserting these estimates into equation~\eqref{eq:star_norm_squared} gives the bounds
    \begin{equation}
        \abs*{\norm{\tilde a}^2 - \norm{\tilde b}^2}
        \le 4\tau
        \qquad\text{and}\qquad
        \abs*{\norm{\tilde a}_n^2 - \norm{\tilde b}_n^2}
        \le 4\tau c ,
    \end{equation}
    with $c := \frac{1+\delta}{\pars{1 - \tau}^2}$.
    We can now use the triangle inequality to prove~\eqref{eq:rip_b_tilde} via
    \begin{align}
        \abs{\norm{\tilde b}^2 - \norm{\tilde b}^2_n}
        &\le \abs{\norm{\tilde b}^2 - \norm{\tilde b}^2_n - \pars{\norm{\tilde a}^2 - \norm{\tilde a}^2_n}} + \abs{\norm{\tilde a}^2 - \norm{\tilde a}^2_n} \\
        &\le \abs{\norm{\tilde b}^2 - \norm{\tilde a}^2} + \abs{\norm{\tilde b}^2_n - \norm{\tilde a}^2_n} + \abs{\norm{\tilde a}^2 - \norm{\tilde a}^2_n} \\
        &\le \underbrace{4\tau + 4\tau c}_{=: \varepsilon} + \delta .
    \end{align}
    To obtain the lower bound for $\varepsilon$, observe that $c \ge 1$ with equality when $\delta = \tau = 0$.
    Therefore, $\varepsilon = 4\pars{1+c}\tau \ge 8\tau$.
    
    \medskip
    
    To obtain the other bound, observe that the function $(\delta, \tau) \mapsto c(\delta, \tau)$ is increasing in both arguments.
    Therefore $c(\delta, \tau) \le c(\frac12, \frac14) = \frac{8}{3}$ for any $\delta\le\frac12$ and $\tau\le\frac14$.
    This results in the loose upper bound
    $$
        \varepsilon \le 15\tau .
    $$
    The special case $\tau \le \frac\delta{15}$ follows immediately.
\end{proof}

For the sake of brevity, let $A := \mcal{M}_{R,r,\bfomega}$ and $B := B_{\ell^0_\bfomega}\pars{0, r}$.
In the preceding section, we used the fact that $A\subseteq B$ to trivially obtain the restricted isometry property of $A$ from that of $B$.
Lemma~\ref{lem:rip_is_strong} implies that this inclusion is not necessary for the RIP of $B$ to extend to $A$ if the set $A$ is close enough to $B$.
To make this intuition rigorous, we define the scale-invariant non-symmetric distance function
\begin{equation}
    \label{eq:dsiL}
    d_{\mathrm{si}L^\infty_w}\pars{A,B}
    := \sup_{a\in A} d_{L^\infty_w}\pars{\operatorname{Cone}\pars{a}, U\pars{B}} ,
\end{equation}
where the distance function $d_{L^\infty_w}\pars{A,B}$ is defined as
\begin{equation}
    d_{L^\infty_w}\pars{A,B}
    := \inf_{a\in A}\inf_{b\in B} \norm{a - b}_{L^\infty_w} .
\end{equation}
With this definition, we can formulate the following theorem.
\begin{theorem}
    \label{thm:rip_close_BA}
    Let $\delta,r\in[0,1)$ and assume that $d_{\mathrm{si}L^\infty_w}\pars{A, B} \le r$.
    Then there exists $\varepsilon \ge 8r$ such that
    \begin{equation}
        \operatorname{RIP}_{B}\pars{\delta}
        \Rightarrow
        \operatorname{RIP}_{A}\pars{\delta + \varepsilon} ,
    \end{equation}
    with $\varepsilon \le 15r$ if $\delta\le\frac12$ and $r\le\frac14$.
    For $\delta \le \frac12$ and $r \le \frac{\delta}{15}$, this implies
    \begin{equation}
        \operatorname{RIP}_{B}\pars{\delta}
        \Rightarrow
        \operatorname{RIP}_{A}\pars{2\delta} .
    \end{equation}
\end{theorem}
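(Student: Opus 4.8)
The plan is to reduce the global statement about the whole set $A$ to the pointwise statement of Lemma~\ref{lem:rip_is_strong}, exploiting that the restricted isometry property is invariant under rescaling. Indeed, both inequalities defining $\operatorname{RIP}_{\braces{c}}\pars{\delta'}$ are homogeneous of degree two in $c$, so $\operatorname{RIP}_{\braces{c}}\pars{\delta'}$ depends only on the ray $\operatorname{Cone}\pars{c}$; equivalently, it suffices to test the RIP on normalised representatives. In particular, from $\operatorname{RIP}_{B}\pars{\delta}$ we obtain $\operatorname{RIP}_{\braces{\tilde b}}\pars{\delta}$ for every normalised element $\tilde b\in U\pars{B}$, since dividing the two RIP inequalities for $b\in B$ by $\norm{b}^2$ yields exactly the RIP for $b/\norm{b}$.

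First I would fix an arbitrary $a\in A$. By the hypothesis $d_{\mathrm{si}L^\infty_w}\pars{A,B}\le r$ and the definition~\eqref{eq:dsiL}, for every $\eta>0$ there are a scaling $c\in\operatorname{Cone}\pars{a}$ and a normalised element $\tilde b\in U\pars{B}$ with $\norm{c-\tilde b}_{L^\infty_w}\le r+\eta$. Because $\tilde b$ is normalised, $\norm{\tilde b}=1$ (and $c\ne 0$, since $\norm{\cdot}$ is dominated by $\norm{\cdot}_{L^\infty_w}$, so $0$ is at distance $\ge 1>r$ from $\tilde b$), hence this says precisely that $c\in B_{L^\infty_w}\pars{\tilde b,\norm{\tilde b}\pars{r+\eta}}$. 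I would then apply Lemma~\ref{lem:rip_is_strong} centred at $\tilde b$ with parameter $\tau=r+\eta$: since $\operatorname{RIP}_{\braces{\tilde b}}\pars{\delta}$ holds by the previous paragraph, the lemma yields $\operatorname{RIP}_{B_{L^\infty_w}\pars{\tilde b,\tau}}\pars{\delta+\varepsilon_\eta}$ and hence $\operatorname{RIP}_{\braces{c}}\pars{\delta+\varepsilon_\eta}$, with $\varepsilon_\eta\ge 8\tau$ and, in the regime $\delta\le\tfrac12$, $\tau\le\tfrac14$, also $\varepsilon_\eta\le 15\tau=15\pars{r+\eta}$. By scale invariance, $\operatorname{RIP}_{\braces{c}}\pars{\delta+\varepsilon_\eta}$ is the same as $\operatorname{RIP}_{\braces{a}}\pars{\delta+\varepsilon_\eta}$.

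Next I would remove the auxiliary $\eta$. Tracking the constant in the proof of Lemma~\ref{lem:rip_is_strong}, the admissible value is $\varepsilon_\eta=4\pars{1+c_\eta}\pars{r+\eta}$ with $c_\eta=\tfrac{1+\delta}{\pars{1-\pars{r+\eta}}^2}$, which decreases to $\varepsilon:=4\bigl(1+\tfrac{1+\delta}{\pars{1-r}^2}\bigr)r$ as $\eta\to 0^+$. Since $\operatorname{RIP}_{\braces{a}}\pars{\delta+\varepsilon_\eta}$ holds for all $\eta>0$, letting $\eta\to 0$ gives $\operatorname{RIP}_{\braces{a}}\pars{\delta+\varepsilon}$; here $\varepsilon\ge 8r$ always and $\varepsilon\le 15r$ whenever $\delta\le\tfrac12$ and $r\le\tfrac14$. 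As $a\in A$ was arbitrary, this is exactly $\operatorname{RIP}_{A}\pars{\delta+\varepsilon}$. Finally, for $\delta\le\tfrac12$ and $r\le\tfrac{\delta}{15}$ we have $\varepsilon\le 15r\le\delta$, so $\delta+\varepsilon\le 2\delta$; since $\operatorname{RIP}_{A}\pars{\delta'}$ is monotone in $\delta'$ (a larger constant only enlarges the admissible interval in~\eqref{eq:rip}), $\operatorname{RIP}_{A}\pars{\delta+\varepsilon}$ implies $\operatorname{RIP}_{A}\pars{2\delta}$, the claimed corollary.

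The main obstacle I anticipate is conceptual rather than computational: making the scale invariance of the RIP interact correctly with the scale-invariant distance $d_{\mathrm{si}L^\infty_w}$, so that the single anchor point $\tilde b\in U\pars{B}$ supplied by the distance bound is simultaneously a legitimate centre for Lemma~\ref{lem:rip_is_strong} (normalised, so that its ball radius $\norm{\tilde b}\tau$ exactly matches the $L^\infty_w$-distance budget $r$) and a point at which the RIP is inherited from $\operatorname{RIP}_{B}\pars{\delta}$. The $\eta$-limiting step, needed because $d_{L^\infty_w}$ is an infimum, is routine once the constant from Lemma~\ref{lem:rip_is_strong} is kept explicit.
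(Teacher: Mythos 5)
Your proof is correct and follows essentially the same route as the paper: for each $a\in A$ it anchors at a normalised element of $U\pars{B}$ supplied by the distance bound, invokes Lemma~\ref{lem:rip_is_strong} there, and transfers the conclusion back to $a$ via the scale invariance of the RIP, exactly as in the paper's chain $\operatorname{RIP}_{B}\Rightarrow\operatorname{RIP}_{U\pars{B}}\Rightarrow\operatorname{RIP}_{\braces{b}}\Rightarrow\operatorname{RIP}_{B_{L^\infty_w}\pars{b,r}}\Rightarrow\operatorname{RIP}_{\braces{ta}}\Rightarrow\operatorname{RIP}_{\braces{a}}$. Your $\eta$-limiting step is a minor (and welcome) refinement rather than a different approach: it rigorously handles the fact that $d_{L^\infty_w}$ is an infimum, which the paper's proof glosses over by asserting a pair $\pars{t,b}$ at distance exactly $\le r$.
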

\begin{proof}
    Since $d_{\mathrm{si}L^\infty_w}\pars{{A},{B}} \le r$, it holds that for all $a\in{A}$ there exists $t\in\pars{0,\infty}$ and $b\in U\pars{{B}}$ such that $d_{L^\infty_w}\pars{ta, b}\le r$.
    Assuming $\operatorname{RIP}_{{B}}\pars{\delta}$, Lemma~\ref{lem:rip_is_strong} guarantees that there exists a constant $\varepsilon$ satisfying the given bounds such that
    \begin{equation}
        \operatorname{RIP}_{{B}}\pars{\delta}
        \Rightarrow
        \operatorname{RIP}_{U\pars{{B}}}\pars{\delta}
        \Rightarrow
        \operatorname{RIP}_{\braces{b}}\pars{\delta}
        \Rightarrow
        \operatorname{RIP}_{B_{L^\infty_w}\pars{b,r}}\pars{\delta + \varepsilon}
        \Rightarrow
        \operatorname{RIP}_{\braces{ta}}\pars{\delta + \varepsilon}
        \Rightarrow
        \operatorname{RIP}_{\braces{a}}\pars{\delta + \varepsilon} .
    \end{equation}
    This implies $\operatorname{RIP}_{\braces{a}}\pars{\delta+\varepsilon}$ for all $a\in{A}$ and consequently $\operatorname{RIP}_{{A}}\pars{\delta+\varepsilon}$.
\end{proof}

\begin{remark}
    Note that
    $$
        d_{\mathrm{si}L^\infty_w}(A, B)
        := \sup_{a\in A} d_{L^\infty_w}\pars{\operatorname{Cone}\pars{a}, U(B)}
        \le \sup_{\tilde a\in U(A)} d_{L^\infty_w}\pars{\tilde a, U(B)}
        =: d_{\mathrm{h}}(U(A), U(B)), 
    $$
    where $d_{\mathrm{h}}$ is the directed Hausdorff distance between sets.
    This bound can be used in conjunction with Theorem~\ref{thm:rip_close_BA} to provide a simple proof of one of the major corollaries in~\cite{trunschke_2022_convergence}.
    Consider the setting of Proposition~\ref{prop:empirical_projection_error}.
    Assume that $\mcal{M}$ is a manifold with strictly positive reach $R := \operatorname{rch}\pars{\mcal{M}, u_{\mcal{M}}}$ at point $u_{\mcal{M}}$,  and define $\mcal{M}_r := \mcal{M}\cap B\pars{u_{\mcal{M}}, r}$ for any $r\le R$.
    By Proposition~16 in~\cite{trunschke_2022_convergence}, it holds that
    $$
        d_{\mathrm{h}}\pars{U(\braces{u_{\mcal{M}}} - \mcal{M}_r), U(\mbb{T}_{u_{\mcal{M}}}\mcal{M})} \le \frac{r}{R} .
    $$
    From this follows that there exists $\varepsilon > 0$ such that
    $$
        \operatorname{RIP}_{\mbb{T}_{u_{\mcal{M}}} \mcal{M}}\pars{\delta}
        \Rightarrow
        \operatorname{RIP}_{\braces{u_{\mcal{M}}} - \mcal{M}_r}\pars{\delta + \varepsilon}.
    $$
    This means that if the RIP holds for the tangent space at $u_{\mcal{M}}$ then it also holds for a neighbourhood of $u_{\mcal{M}}$ in $\mcal{M}$.
    This is precisely the property that is required in Proposition~\ref{prop:empirical_projection_error}.
\end{remark}

\begin{remark}
    Note that Theorem~\ref{thm:rip_close_BA} may also be used to verify $\operatorname{RIP}_{A}\pars{2\delta}$ by checking $\operatorname{RIP}_{B}\pars{\delta}$ for a finite subset $B\subseteq A$ with $d_{\mathrm{si}L^\infty_w}\pars{A, B}\le \frac{\delta}{15}$.
\end{remark}

The preceding theorem can be utilised to prove the RIP for our model class of semi-sparse tensors $\tilde{\mcal{M}}_{R,r,\omega}$.
This is done in the subsequent theorem, which chooses $A := \tilde{\mcal{M}}_{R,r,\bfomega}$ and $B := B_{\tilde{r}}\pars{\ell^0_\bfomega}$ and shows that
\begin{equation}
    d_{\mathrm{si}L^\infty_w}\pars{A, B} \le \mcal{O}\pars*{\sqrt{\frac{r}{\tilde{r}}}} .
\end{equation}

\begin{theorem}
\label{thm:RIP_MRromega}
	Let $\braces{B_j}_{j\in\bracs{D}}$ be orthonormal with respect to the measure $\rho$ and let $w\ge 0$ be any weight function satisfying $\norm{w^{-1}}_{L^1} = 1$.
	Assume the weight sequence satisfies $\bfomega_j \ge \norm{w^{1/2} B_j}_{L^\infty}$ and fix $c, r > 0$ and $\tilde{r} := \pars{1+c^2} \norm{\omega^{-1}}_{\ell^{2/3}}^2r$.
    Then it holds that
    \begin{equation}
        d_{\mathrm{si}L^\infty_w}\pars{\tilde{\mcal{M}}_{R,r,\bfomega}, B_{\ell^0_\bfomega}\pars{0,\tilde{r}}} \le \frac{1}{c} .
    \end{equation}
    Hence, if $\delta\le\frac{1}{2}$ and $c\ge\frac{15}{\delta}$, then $\operatorname{RIP}_{B_{\ell^0_\bfomega}\pars{0, \tilde{r}}}\pars{\delta}$ implies $\operatorname{RIP}_{\tilde{\mcal{M}}_{R,r,\bfomega}}\pars{2\delta} $.
\end{theorem}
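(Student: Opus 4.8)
The statement bundles two assertions — a distance bound and an RIP transfer — and the plan is to put all the work into the distance bound and read off the RIP statement in one line. Indeed, granting the distance bound $d_{\mathrm{si}L^\infty_w}\pars{\tilde{\mcal{M}}_{R,r,\bfomega}, B_{\ell^0_\bfomega}\pars{0,\tilde r}}\le\tfrac1c$, I would apply Theorem~\ref{thm:rip_close_BA} with $A := \tilde{\mcal{M}}_{R,r,\bfomega}$, $B := B_{\ell^0_\bfomega}\pars{0,\tilde r}$ and distance parameter $\tfrac1c$. Since $\delta\le\tfrac12$ and $c\ge\tfrac{15}{\delta}$ give $\tfrac1c\le\tfrac{\delta}{15}$, this is exactly the regime in which $\operatorname{RIP}_B\pars{\delta}\Rightarrow\operatorname{RIP}_A\pars{2\delta}$. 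So everything reduces to the distance bound.

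For the distance bound, note first that $\norm{C}_{\ell^0_{\bfbeta_Q}}$ is invariant under scaling of $C$, so $\tilde{\mcal{M}}_{R,r,\bfomega}$ is a cone, as is $B_{\ell^0_\bfomega}\pars{0,\tilde r}$; hence in evaluating $d_{\mathrm{si}L^\infty_w}$ I may fix an arbitrary $a = QC\in\tilde{\mcal{M}}_{R,r,\bfomega}$ normalised to $\norm a = \norm a_{\ell^2} = 1$ (equivalently $\norm C_{\ell^2}=1$, as $Q$ is $\ell^2$-orthogonal). I would then select a support $J$ for a weighted-sparse approximant, set $\tilde b := P_J a/\norm{P_J a}_{\ell^2}\in U\pars{B}$ and $t := 1/\norm{P_J a}_{\ell^2}$, giving
\[
    d_{L^\infty_w}\pars{\operatorname{Cone}\pars a, U\pars B}\le\norm{ta - \tilde b}_{L^\infty_w} = \frac{\norm{\pars{1-P_J}a}_{L^\infty_w}}{\norm{P_J a}_{\ell^2}}.
\]
The numerator is controlled through coefficients: since $\bfomega_j\ge\norm{w^{1/2}B_j}_{L^\infty}$ one has $\norm{v}_{L^\infty_w}\le\norm{v}_{\ell^1_\bfomega}$ for every coefficient vector $v$, and moreover $\bfomega\ge1$ because $1 = \int B_j^2\dx[\rho]\le\norm{w^{1/2}B_j}_{L^\infty}^2\int w^{-1}\dx[\rho]$, so that also $\norm{\pars{1-P_J}a}_{\ell^2}\le\norm{\pars{1-P_J}a}_{\ell^1_\bfomega}$.

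Two ingredients feed the estimate. The first is the weighted $\ell^2$-isometry produced by the $\bfomega$-weighted QC decomposition: because $Q$ is $\bfomega^6$-orthogonal and $\bfbeta_Q^2 = \operatorname{diag}\pars{Q^\intercal\operatorname{diag}\pars{\bfomega^6}Q}$, I get $\norm a_{\ell^2_{\bfomega^3}}^2 = \pars{QC}^\intercal\operatorname{diag}\pars{\bfomega^6}\pars{QC} = C^\intercal\operatorname{diag}\pars{\bfbeta_Q^2}C = \norm C_{\ell^2_{\bfbeta_Q}}^2$; combining with $\norm C_{\ell^0_{\bfbeta_Q}} = \sum_{k\in\supp C}\pars{\bfbeta_Q}_k^2\le r$ and $\norm C_{\ell^2}=1$ gives $\norm a_{\ell^2_{\bfomega^3}}\le\sqrt r$. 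The second ingredient is a weighted Stechkin estimate: applying Corollary~\ref{cor:ward_weighted_stechkin} (with $p=1$, $q=\tfrac23$, so the error lives in $\ell^1_\bfomega$ and the rate is $s=\tfrac12$) and then Lemma~\ref{lem:trade_sparsity} to pass from the resulting quasi-norm to the controlled $\ell^2_{\bfomega^3}$-norm, I would produce a support $J$ with $\norm{P_J a}_{\ell^0_\bfomega}\le\tilde r$ and
\[
    \norm{\pars{1-P_J}a}_{\ell^1_\bfomega}\le\norm{\bfomega^{-1}}_{\ell^{2/3}}\,\tilde r^{-1/2}\,\norm a_{\ell^2_{\bfomega^3}}.
\]

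Combining the two, with $\norm a_{\ell^2_{\bfomega^3}}\le\sqrt r$ and $\tilde r = \pars{1+c^2}\norm{\bfomega^{-1}}_{\ell^{2/3}}^2 r$, the last display collapses to $\norm{\pars{1-P_J}a}_{\ell^1_\bfomega}\le\pars{1+c^2}^{-1/2}$, whence both the numerator $\norm{\pars{1-P_J}a}_{L^\infty_w}\le\pars{1+c^2}^{-1/2}$ and the $\ell^2$-tail $\eta := \norm{\pars{1-P_J}a}_{\ell^2}\le\pars{1+c^2}^{-1/2}$. Since $P_J$ is an orthogonal projection, $\norm{P_J a}_{\ell^2} = \sqrt{1-\eta^2}\ge\sqrt{1-\tfrac1{1+c^2}} = \tfrac{c}{\sqrt{1+c^2}}$, so the ratio above is at most $\tfrac{\pars{1+c^2}^{-1/2}}{c\pars{1+c^2}^{-1/2}} = \tfrac1c$; taking the supremum over $a$ yields the distance bound. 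The main obstacle is the Stechkin step: one must order the truncation so that the retained set respects the weighted-cardinality budget $\sum_{j\in J}\bfomega_j^2\le\tilde r$ defining $\ell^0_\bfomega$, while at the same time extracting precisely the factor $\norm{\bfomega^{-1}}_{\ell^{2/3}}$ and the reference norm $\norm a_{\ell^2_{\bfomega^3}}$ that the isometry controls. This is a delicate balancing of the Hölder exponents in Corollary~\ref{cor:ward_weighted_stechkin} and Lemma~\ref{lem:trade_sparsity}, and it is what pins down the powers $\bfomega^3$ in the model class and $\ell^{2/3}$ in $\tilde r$; by contrast the cone normalisation, the Pythagorean control of the denominator, and the isometry are routine.
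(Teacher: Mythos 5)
Your proposal is correct and follows essentially the same route as the paper's own proof: the same Stechkin/trade-sparsity chain (Corollary~\ref{cor:ward_weighted_stechkin} with $p=1$, $q=2/3$, followed by Lemma~\ref{lem:trade_sparsity}) to bound the tail by $\sqrt{\smash[b]{\norm{\bfomega^{-1}}_{\ell^{2/3}}^2 r/\tilde r}}$, the same $\bfomega^6$-orthogonality identity $\norm{QC}_{\ell^2_{\bfomega^3}}^2 = \norm{C}_{\ell^2_{\bfbeta_Q}}^2 \le \norm{C}_{\ell^0_{\bfbeta_Q}}\norm{C}_{\ell^2}^2 \le r\norm{C}_{\ell^2}^2$, the same Pythagorean lower bound on the retained part, and the same rescaling into $U\pars{B_{\ell^0_\bfomega}\pars{0,\tilde r}}$ before invoking Theorem~\ref{thm:rip_close_BA}. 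If anything, your treatment is slightly more careful than the paper's on one point: you bound the $L^2$- and $L^\infty_w$-tails separately by $\norm{\cdot}_{\ell^1_\bfomega}$ (justifying $\bfomega\ge 1$ from orthonormality), whereas the paper's chain $\norm{\cdot}_{L^2}\le\norm{\cdot}_{L^\infty}\le\norm{\cdot}_{\ell^1_\bfomega}$ tacitly replaces the hypothesis $\bfomega_j\ge\norm{w^{1/2}B_j}_{L^\infty}$ by $\bfomega_j\ge\norm{B_j}_{L^\infty}$, a gap the authors themselves flag and which your version closes.
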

\todo{It could be a problem that we acutally need $\norm\bullet_{L^\infty_w}$ instead of $\norm\bullet_{L^\infty}$! --- It is not really. Equation~\eqref{eq:bound_L2L_inf_ell05omega} may no longer hold. But we can show $\norm{A-\tilde{A}}_{L^2} \le \norm{A-\tilde{A}}_{\ell^1_\bfomega}$ and $\norm{A-\tilde{A}}_{L^\infty_w} \le \norm{A-\tilde{A}}_{\ell^1_\bfomega}$ individually. This just means that $\abs{\tilde{J}}$ is twice as large...}
\begin{proof}
    Let $A\in \tilde{\mcal{M}}_{R,r,\bfomega}$.
    Since $\bfomega_\nu \ge \norm{B_\nu}_{L^\infty}$, Corollary~\ref{cor:ward_weighted_stechkin} states that for every $\tilde{r}>0$ there exists $\tilde{J}\subseteq\mbb{N}$ and $\tilde{A} := P_{\tilde{J}}A$ such that $\tilde{A} \in B_{\ell^0_\bfomega}\pars{0, \tilde{r}}$ and
    \begin{equation}
    \label{eq:bound_L2L_inf_ell05omega}
        \norm{A - \tilde{A}}_{L^2}
        \le \norm{A - \tilde{A}}_{L^\infty}
        \le \norm{A - \tilde{A}}_{\ell^1_\bfomega}
        \le \tilde{r}^{-1/2} \norm{A}_{\ell^{2/3}_{\bfomega^2}}.
    \end{equation}
    The final $\ell^{2/3}_{\bfomega^2}$-norm can be bounded by Lemma~\ref{lem:trade_sparsity} via
    \begin{equation}
    \label{eq:bound_ell05omega_ell2omega}
        \norm{A}_{\ell^{2/3}_{\bfomega^{2}}}
        \le \norm{\bfomega^{-3}}_{\ell^{2/3}_{\bfomega^2}} \norm{A}_{\ell^2_{\bfomega^3}}
        = \norm{\bfomega^{-1}}_{\ell^{2/3}} \norm{A}_{\ell^2_{\bfomega^3}} .
    \end{equation}
    Recall that $A\in\tilde{\mcal{M}}_{R,r,\bfomega}$ can be written as $A=QC$ with $Q\in\tilde{\mcal{Q}}_{k,R,\bfomega}$ for some $k\in\bracs{M}$ and $C\in\mcal{C}_{Q,r,\bfomega}$.
    Thus,
    \begin{equation}
    \label{eq:bound_ell2omega_ell2beta}
        \norm{A}_{\ell^2_{\bfomega^3}}^2
        = \pars{QC}^\intercal \operatorname{diag}\pars{\bfomega^6} \pars{QC}
        = C^\intercal \operatorname{diag}\pars{\beta_Q^2} C
        = \norm{C}_{\ell^2_{\beta_Q}}^2 .
    \end{equation}
    Now let $J := \operatorname{supp}\pars{C}$ and bound
    \begin{equation}
    \label{eq:bound_ell2beta_ell2}
        \norm{C}_{\ell^2_{\beta_Q}}^2
        = \sum_{j\in J} \beta_{Q,j}^2 C_{j}^2
        \le \sum_{j\in J} \Big(\sum_{j\in J}\beta_{Q,j}^2\Big) C_{j}^2
        = \norm{C}_{\ell^0_{\beta_Q}} \norm{C}_{\ell^2}^2
        \le r \norm{C}_{\ell^2}^2
        = r \norm{A}_{\ell^2}^2 .
    \end{equation}
    Combining equations~\eqref{eq:bound_L2L_inf_ell05omega},~\eqref{eq:bound_ell05omega_ell2omega},~\eqref{eq:bound_ell2omega_ell2beta} and~\eqref{eq:bound_ell2beta_ell2} results in the bound
    \begin{equation}
        \norm{A - \tilde{A}}_{L^2}
        \le \norm{A - \tilde{A}}_{L^\infty}
        \le \sqrt{c_1\tfrac{r}{\tilde{r}}} \norm{A}_{L^2},
    \end{equation}
    with $c_1 := \norm{\bfomega^{-1}}_{\ell^{2/3}}^2$.
    Finally, recall that $\tilde{A} := P_{\tilde J} A$ and hence
    \begin{equation}
    \label{eq:bound_L2_A_Atilde}
        \norm{\tilde{A}}_{L^2}^2
        = \norm{A}_{L^2}^2 - \norm{A - \tilde{A}}_{L^2}^2
        \ge \tfrac{\tilde{r} - c_1r}{\tilde{r}} \norm{A}_{L^2}^2 .
    \end{equation}
    This means that for every $A\in\tilde{M}_{R,r,\bfomega}$ there exists $\tilde{A}\in B_{\tilde{r}}\pars{\ell^0_\bfomega}$ such that 
    \begin{equation}
        \norm{A - \tilde{A}}_{L^\infty}
        \le \sqrt{c_1\tfrac{r}{\tilde{r}}} \norm{A}_{L^2}
        \le \sqrt{c_1\tfrac{r}{\tilde{r}}} \cdot \sqrt{\tfrac{\tilde{r}}{\tilde{r} - c_1r}} \norm{\tilde{A}}_{L^2}
        = \sqrt{\tfrac{c_1r}{\tilde{r} - c_1r}} \norm{\tilde{A}}_{L^2}
        = \tfrac{1}{c} \norm{\tilde{A}}_{L^2},
    \end{equation}
    where the final equality follows from the choice $\tilde{r} := \pars{1+c^2} \norm{\bfomega^{-1}}_{\ell^{2/3}}^2 r = c^2c_1r + c_1r$.
    This shows that for all $A\in\tilde{\mcal{M}}_{R,r,\bfomega}$ there exists a constant $t := \norm{\tilde{A}}_{L^2}^{-1} > 0$ and an element $\bar{A} := t\tilde{A}\in U\pars{B_{\ell^0_\bfomega}\pars{0,\tilde{r}}}$ such that $d_{L^\infty}\pars{tA, \bar{A}} \le \frac{1}{c}$.
    In other words,
    \begin{equation}
        d_{\mathrm{si}L^\infty}\pars{\tilde{\mcal{M}}_{R,r,\bfomega}, B_{\ell^0_\bfomega}\pars{0, \tilde{r}}} \le \frac{1}{c} .
    \end{equation}
    The claim now follows from Theorem~\ref{thm:rip_close_BA}.
\end{proof}

Even though Theorem~\ref{thm:RIP_MRromega} is valid for any weight sequence $\bfomega$, an increasing sequence $\bfomega$ is necessary in practice.
If $\bfomega\propto 1$, then $\tilde{r} = \pars{1+c^2}\norm{\bfomega^{-1}}_{\ell^{2/3}}^2r \propto \pars{1+c^2} \operatorname{dim}\pars{\bfomega}^3r$, where the dimension of the weight vector $\bfomega$ is the dimension of the ambient tensor product space.
Hence, applying Theorem~\ref{thm:RIP_MRromega} with a constant weight sequence would require the RIP to hold for the entire ambient tensor product space.

The nestedness property of the model classes $\tilde{\mcal{M}}_{R,r,\boldsymbol\omega}$ can be proved in the same way as for the model class $\mcal{M}_{R,r,\boldsymbol\omega}$.
\begin{proposition}
    It holds that $\tilde{\mcal{M}}_{R,r,\boldsymbol\omega} - \tilde{\mcal{M}}_{R,r,\boldsymbol\omega} \subseteq \tilde{\mcal{M}}_{2R,2r,\boldsymbol\omega}$.
\end{proposition}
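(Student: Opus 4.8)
The plan is to mirror the proof of the nestedness of $\mcal{M}_{R,r,\bfomega}$ and to adapt every step to the $\bfomega$-weighted QC decomposition. First I would fix a core position $k\in\bracs{M}$ and write $A=Q_AC_A$ and $B=Q_BC_B$ with $Q_A,Q_B\in\tilde{\mcal{Q}}_{k,R,\bfomega^3}$, $C_A\in\mcal{C}_{Q_A,r,\bfomega^3}$ and $C_B\in\mcal{C}_{Q_B,r,\bfomega^3}$; here the underlying left components $U^{(j)}$ are left-orthogonal, the right components $V^{(j)}$ are right-orthogonal, and all ranks are bounded by $R$. Applying Lemma~\ref{lem:tt_sum} represents $A-B$ as a tensor train whose components are the block concatenations of those of $A$ and $B$, which at most doubles each rank and hence keeps them bounded by $2R$.

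Next I would verify that the stacked components retain their one-sided orthogonality. This follows exactly as in the induction of Theorem~\ref{thm:sparse_tt}: for an interior stacked component $C^{(j)}_{i_j}=\operatorname{diag}(A^{(j)}_{i_j},B^{(j)}_{i_j})$ the cross terms in $\operatorname{unfold}_2(C^{(j)})^\intercal\operatorname{unfold}_2(C^{(j)})$ vanish by the block structure, so the accumulated left and right factors, whose columns split into an $A$-part and a $B$-part, have orthonormal columns and the combined operator $Q'$ is orthogonal. For the core I would use that its support equals $\supp(C_A)\sqcup\supp(C_B)$ and that the associated weight $\bfbeta_{Q'}^2=\operatorname{diag}(Q'^\intercal\operatorname{diag}(\bfomega^6)Q')$ depends only on the diagonal of the weighted Gram matrix, which block-decomposes; the additivity of the weighted $\ell^0$ over disjoint supports then gives $\norm{C'}_{\ell^0_{\bfbeta_{Q'}}}=\norm{C_A}_{\ell^0_{\bfbeta_{Q_A}}}+\norm{C_B}_{\ell^0_{\bfbeta_{Q_B}}}\le 2r$.

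The main obstacle is the requirement that the combined operator belong to $\tilde{\mcal{Q}}_{k,2R,\bfomega^3}$, i.e.\ that it be $\bfomega^6$-orthogonal. In contrast to the sparse QC decomposition, where duplicate standard-basis columns could simply be merged, the stacked operator satisfies $Q'^\intercal Q'=I$ but its weighted Gram matrix retains a nonzero inter-block coupling $Q_A^\intercal\operatorname{diag}(\bfomega^6)Q_B$, so $Q'$ need not be $\bfomega^6$-orthogonal. The analogue of ``removing duplicate columns'' is therefore a re-orthogonalisation of the accumulated left and right factors by the $\bfomega$-weighted QC decomposition of Lemma~\ref{lem:not_so_sparse} (equivalently, one sweep of the $\bfomega$-weighted variant of Algorithm~\ref{alg:sparse_rounding}), producing a genuinely $\bfomega^6$-orthogonal operator $Q''$ of rank at most $2R$ with $A-B=Q''C''$. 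The delicate point, where I expect the real work to lie, is to carry the bound $\norm{C''}_{\ell^0_{\bfbeta_{Q''}}}\le 2r$ through this re-orthogonalisation: since $A$ and $B$ are already $\bfomega$-canonicalised, the weighted orthogonality holds within each block and only the inter-block coupling must be eliminated, and I would exploit this together with the $\mcal{O}(Rr)$-sparsity of the stacked components to keep the support of $C''$ controlled. Performing the construction for every $k\in\bracs{M}$ then covers the intersection in the definition of $\tilde{\mcal{M}}_{2R,2r,\bfomega}$ and yields $A-B\in\tilde{\mcal{M}}_{2R,2r,\bfomega}$.
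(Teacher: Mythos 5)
Your proposal mirrors the paper's argument for $\mcal{M}_{R,r,\bfomega}$ (stacking via Lemma~\ref{lem:tt_sum}, then additivity of the weighted $\ell^0$-norm over the two diagonal blocks of the stacked core), and you correctly locate the point where that argument stops carrying over. However, the step you defer --- carrying the bound $\norm{C''}_{\ell^0_{\bfbeta_{Q''}}}\le 2r$ through the re-orthogonalisation --- is not a technical detail that can be postponed: it is the entire content of the proposition, and the mechanism you sketch cannot close it. The sparse proof works because of the transfer identity $\norm{QC}_{\ell^0_{\bfomega}}=\norm{C}_{\ell^0_{Q^\intercal\bfomega}}$ from Theorem~\ref{thm:sparse_tt}, which holds only because the columns of $Q$ are standard basis vectors, so that merging duplicate columns never increases the count. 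For $\tilde{\mcal{M}}_{R,r,\bfomega}$ no such identity exists. Writing the repaired representation as $Q''=Q'T^{-1}$ and $C''=TC'$, the change of basis $T$ acts on the rank indices and is generically dense, so a single nonzero entry of $C_A$ spreads over up to $\pars{2R}^2$ entries of $C''$. Since the weighted $\ell^0$-norm charges every support index its \emph{full} squared weight no matter how small the entry, an entry whose contribution to $\norm{C_A}_{\ell^0_{\bfbeta_{Q_A}}}$ is $\sum_l \Lambda_{ll}T_{jl}^2$ (with $\Lambda$ the diagonal weighted Gram of $Q''$ and $T_{jl}$ the mixing coefficients) contributes $\sum_{l:T_{jl}\ne 0}\Lambda_{ll}$ after mixing; this exceeds the former by a factor of order $2R$ for uniform mixing, and by arbitrarily more when $T$ has many small coefficients in directions of large weight. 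The $\mcal{O}\pars{Rr}$-sparsity of the orthogonal factors from Lemma~\ref{lem:not_so_sparse} does not make $T$ sparse, so it does not rescue the bound.

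There is also a concrete error upstream: the stacked operator $Q'$ is \emph{not} orthogonal, contrary to your claim $Q'^\intercal Q'=I$. Block-diagonality removes the cross terms only for the interior components; the first and last stacked components are concatenations, so the accumulated left and right factors have nonvanishing cross-Gram $Q_A^\intercal Q_B$ in general (take $B=A$, which gives cross-Gram equal to the identity). In the sparse class this is exactly the failure that ``removing duplicate columns'' repairs; in the weighted class both the plain and the weighted orthogonality must be restored by the same generically dense change of basis, which is what destroys the sparsity bound. For context, the paper itself provides no proof of this proposition beyond the assertion that it follows ``in the same way'' as for $\mcal{M}_{R,r,\bfomega}$; your attempt has the merit of exposing that this analogy is not routine, but as written it does not establish the statement.
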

This allows the application of Proposition~\ref{prop:empirical_projection_error}.
As in the preceding section, we can use Theorem~\ref{thm:sparse_RIP_weighted} to provide a bound for the required number of samples when the model class $\tilde{\mcal{M}}_{R,r,\bfomega}$ is used in the optimisation problem~\eqref{eq:min_emp}.
As before, let $b : Y\to\mbb{R}^{d}$ be a vector of $L^2\pars{Y,\rho}$-orthonormal basis functions, define the tensor product basis $B\pars{y} := b\pars{y_1}\otimes \cdots\otimes b\pars{y_M}$ and suppose that the weight sequence $\bfomega$ satisfies $\bfomega_j \ge \norm{B_j}_{L^\infty}$.
Then the following proposition holds true.

\begin{corollary}
\label{cor:semi-sparse_sample_complexity}
	Fix parameters $\gamma\in\pars{0,1}$ and $\delta\in\pars{0, \frac{1}{2}}$.
	Let $\braces{B_j}_{j\in\bracs{D}}$ be orthonormal with respect to the measure $\rho$ and let $w\ge 0$ be any weight function satisfying $\norm{w^{-1}}_{L^1} = 1$.
    Assume the weight sequence satisfies $\bfomega_j \ge \norm{w^{1/2} B_j}_{L^\infty}$ and 
    fix $\tilde{r} := \pars{1+c^2}\norm{\bfomega^{-1}}_{\ell^{2/3}}^2r$ for some $c > \frac{15}{\delta}$.
    Then, if
    \begin{equation}
        n \ge C \delta^{-2} \tilde{r}\max\braces{\log^3\pars{\tilde{r}}\log\pars{d^M}, -\log\pars{\gamma}}
    \end{equation}
    and $y_1,\ldots, y_n$ are drawn independently from $w^{-1}\rho$,
    the probability of $\operatorname{RIP}_{\tilde{\mcal{M}}_{R,r,\bfomega}}\pars{2\delta}$ exceeds $1-\gamma$.
\end{corollary}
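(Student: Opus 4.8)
The plan is to obtain this corollary as a direct composition of the probabilistic bound in Theorem~\ref{thm:sparse_RIP_weighted} with the deterministic RIP-transfer result of Theorem~\ref{thm:RIP_MRromega}, mirroring the proof of Corollary~\ref{cor:sparse_sample_complexity}. The essential difference is that for $\mcal{M}_{R,r,\bfomega}$ one had the trivial inclusion $\mcal{M}_{R,r,\bfomega}\subseteq B_{\ell^0_\bfomega}\pars{0,r}$, so the RIP passed immediately to the submodel; here $\tilde{\mcal{M}}_{R,r,\bfomega}$ is \emph{not} a subset of any ball of weighted sparse vectors, so the inclusion must be replaced by the closeness estimate in the scale-invariant distance $d_{\mathrm{si}L^\infty_w}$ supplied by Theorem~\ref{thm:RIP_MRromega}.

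First I would invoke Theorem~\ref{thm:sparse_RIP_weighted} with the sparsity level $\tilde{r}$ in place of $r$, the failure probability $\gamma$ in place of $p$, and the $d^M$ tensor product basis functions $B\pars{y} = b\pars{y_1}\otimes\cdots\otimes b\pars{y_M}$. These are $L^2\pars{Y,\rho}$-orthonormal and satisfy $\bfomega_j \ge \norm{w^{1/2}B_j}_{L^\infty}$ by hypothesis, so the assumptions of Theorem~\ref{thm:sparse_RIP_weighted} hold with ambient dimension $D = d^M$. The sample bound $n \ge C\delta^{-2}\tilde{r}\max\braces{\log^3\pars{\tilde{r}}\log\pars{d^M}, -\log\pars{\gamma}}$ is exactly the instantiation of the bound there, and therefore $\operatorname{RIP}_{B_{\ell^0_\bfomega}\pars{0,\tilde{r}}}\pars{\delta}$ holds with probability at least $1-\gamma$.

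Second, on this high-probability event I would apply Theorem~\ref{thm:RIP_MRromega}. Since $\delta\le\tfrac12$ and $c > \tfrac{15}{\delta}$, the choice $\tilde{r} = \pars{1+c^2}\norm{\bfomega^{-1}}_{\ell^{2/3}}^2 r$ yields $d_{\mathrm{si}L^\infty_w}\pars{\tilde{\mcal{M}}_{R,r,\bfomega}, B_{\ell^0_\bfomega}\pars{0,\tilde{r}}} \le \tfrac1c < \tfrac{\delta}{15}$, so the deterministic implication of Theorem~\ref{thm:RIP_MRromega} turns $\operatorname{RIP}_{B_{\ell^0_\bfomega}\pars{0,\tilde{r}}}\pars{\delta}$ into $\operatorname{RIP}_{\tilde{\mcal{M}}_{R,r,\bfomega}}\pars{2\delta}$. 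Combining the two steps gives $\operatorname{RIP}_{\tilde{\mcal{M}}_{R,r,\bfomega}}\pars{2\delta}$ with probability exceeding $1-\gamma$, as claimed. There is no genuine obstacle remaining at this stage: the analysis has already been carried out in the two cited theorems, and the only care needed is the bookkeeping of parameters — replacing the sparsity $r$ by the inflated $\tilde{r}$ (which is precisely what makes the sample complexity scale with $\tilde{r}$ rather than $r$) and using the ambient dimension $d^M$ of the tensor product space in the logarithmic factor.
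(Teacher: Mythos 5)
Your proposal is correct and follows exactly the paper's own two-step argument: Theorem~\ref{thm:sparse_RIP_weighted} applied with sparsity level $\tilde{r}$ and ambient dimension $d^M$ to obtain $\operatorname{RIP}_{B_{\ell^0_\bfomega}\pars{0,\tilde{r}}}\pars{\delta}$ with probability at least $1-\gamma$, followed by the deterministic transfer of Theorem~\ref{thm:RIP_MRromega} (valid since $\delta\le\tfrac12$ and $c>\tfrac{15}{\delta}$) to conclude $\operatorname{RIP}_{\tilde{\mcal{M}}_{R,r,\bfomega}}\pars{2\delta}$. Your write-up merely makes the parameter bookkeeping explicit, which the paper's terser proof leaves implicit.
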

\begin{proof}
    Under the given assumptions, Theorem~\ref{thm:sparse_RIP_weighted} guarantees $\operatorname{RIP}_{B_{\ell^0_\bfomega\pars{\bracs{d}^M}}\pars{0, \tilde{r}}}\pars{\delta}$.
    This implies $\operatorname{RIP}_{\tilde{\mcal{M}}_{R,r,\bfomega}}\pars{2\delta}$ by Theorem~\ref{thm:RIP_MRromega}.
\end{proof}

Although it is not clear how to write an algorithm that remains in this model class, this is not a significant drawback, since we can again choose $r$ by cross-validation and $R$ by standard rank adaptation strategies.

\subsection{Numerical method}

We call the resulting algorithm semisparse ALS (SSALS).
The only difference of this method to the sparse ALS (Algorithm~\ref{alg:sparse_als}) is the usage of the $\boldsymbol{\omega}$-orthogonal QC decomposition instead of a sparse QC decomposition.
Due to this change, the SSALS looses the intrinsic rank-adaptivity of the SALS.
But since SSALS is stable by design, the tensor train rank of the coefficient tensor can be chosen arbitrary.
Note that from an approximation error point of view, it would even be optimal to perform SSALS on a full rank tensor, which is infeasible due to the size of the resulting component tensors.
We hence propose to implement a rank-adaptive algorithm that is based on the rank-adaptation strategy proposed in~\cite{Grasedyck2019SALSA}.
This approach splits the sequence of singular values of a singular value decomposition into two groups.
The first group contains all singular values that exceed a certain significance threshold and the second group contains all remaining singular values.
By fixing the size of the second group, dropping the smallest singular values or adding small random singular values if necessary, adaptivity is achieved.
Moreover, since the second group is assumed insignificant, the corresponding singular vectors can be perturbed randomly without adversely affecting the approximation error.
This allows to randomly explore the space of singular vectors in order to find those that are necessary to represent the sought function.
If a singular vector in the second group is important to represent the sought function, the corresponding singular value increases during optimisation and is eventually assigned to the first group.

\section{Experiments} \label{sec:experiments}

This section is concerned with numerical experiments that illustrate the practical performance of the sparse ALS algorithms derived from the theoretical results in the previous sections.
We examine the reconstruction of a quantity of interest of the finite dimensional Darcy problem~\eqref{eq:darcy} with affine and log-affine coefficients.
From Theorem~\ref{thm:Hermite_decay_Bachmayr} it is known that the solution lies in an exponentially weighted $\ell^2$ space.
As a consequence, a weighted LASSO as used in the SALS should (at least theoretically) provide good approximation rates.
Since the bases of the micro steps may become very large, as discussed in Section~\ref{sec:semisparse_als}, we modify the SALS to terminate after a fixed maximal time.

The source code of the implementation is available at \url{github.com/ptrunschke/sparse_als}.
Moreover, we compare our results to the highly optimised \texttt{tensap} library~\cite{nouy_anthony_2020_3894378}, which can be found at \url{github.com/anthony-nouy/tensap}.

\subsection{Affine Darcy equation}
\label{sec:affine_experiment}

Our first experiment is taken from~\cite{bouchot_2015_CSPG}, where a weighted $\ell^1$ minimisation was used.
We consider model problem~\eqref{eq:darcy} on the unit interval $D = \bracs{0,1}$ and parameter domain $Y = \bracs{-1,1}^L$ with $L=20$.
We consider the forcing term $f\equiv 10$ and the diffusion coefficient
\begin{equation}
    a(x,y) := \frac{1}{10} + \frac{\pi^2}{3} + \sum_{m=1}^L k^{-2} a_m\pars{x} y_m,
\end{equation}
where $a_{2m-1}\pars{x} = \cos\pars{m\pi x}$ and $a_{2m} = \sin\pars{m\pi x}$.
The PDE in its variational form is solved on a uniform grid with $50$ nodes using conforming $P1$ finite elements.
In this first experiment we consider the quantity of interest
    \begin{equation}
        U(y) := \int_{D} u(x,y) \dx .
    \end{equation}
We use the probability measure $\rho = \frac{1}{2^L}\dx[y]$ and weight function $w\equiv 1$ (cf.~\eqref{eq:norms}) and search for the best approximation with respect to $\norm{\bullet}_{L^2\pars{Y,\rho}}$, using a product basis with $d=20$ Legendre polynomials in each variable.
Concerning the weight sequence, we utilise the smallest possible choice $\bfomega_\alpha := \norm{B_\alpha}_{L^\infty}$.
Note that this is not the exponential weighting, that we could have used according to Lemma~\ref{lem:weighted_stechkin} and Theorem~\ref{thm:Legendre_decay_Bachmayr}.
Numerical results for the proposed algorithms for the empirical best-approximation of $U$ is provided in Table~\ref{tbl:darcy_uniform}.

\begin{table}[ht]
    \centering
    \includegraphics[width=\textwidth]{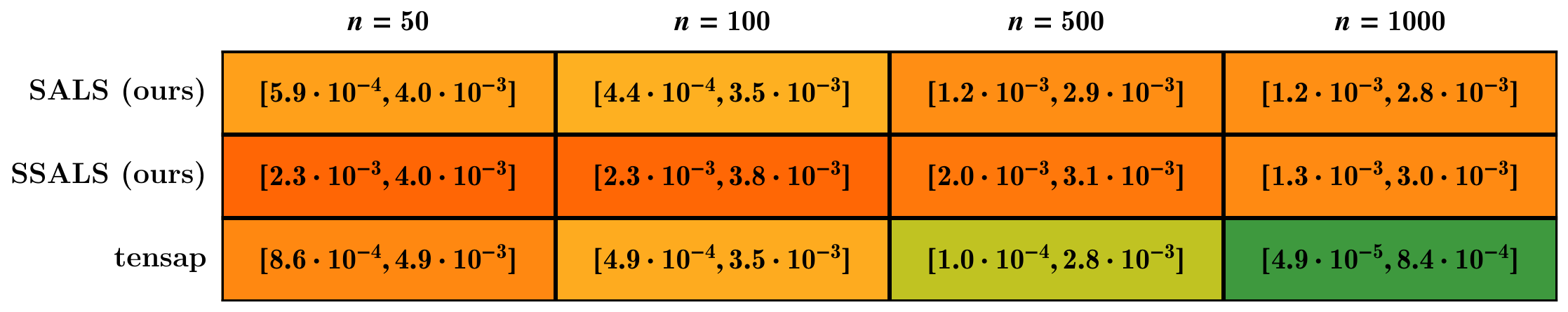}
    \caption{%
    Relative $L^2$-approximation error for the quantity of interest in Section~\ref{sec:affine_experiment}.
    The relative error in the $L^2$-norm is estimated on a test set of $1\,000$ independent samples.
    The experiments are performed $10$ times and the $5\%$ and $95\%$ quantiles are displayed.
    All algorithms use the same samples to compute the empirical approximation (in each column) and the errors are always computed on the same test set.
    SALS and SSALS are compared to the \texttt{TreeBasedTensorLearning} procedure with basis adaptation from \texttt{tensap}.%
    }
    \label{tbl:darcy_uniform}
\end{table}

\subsection{Log-affine Darcy equation}
\label{sec:logaffine_experiments}
The second example considers the Darcy equation with log-affine coefficient with $D=\bracs{0, 1}^2$ and $Y=\mbb{R}^L$ where $L=20$.
We define $f\equiv 1$ and
\begin{equation}
    a(x,y) := \exp\pars*{S_k \sum_{m=1}^L m^{-k} \sin(\pi\floor{\tfrac{m}{2}} x_1) \sin(\pi\ceil{\tfrac{m}{2}} x_2) y_m},
\end{equation}
where $k\in\braces{1, 2}$, $S_1 := 2.4$ and $S_2 := 1.9$.
As before, we solve the resulting PDE in its variational form on a uniform grid with $50\times 50$ nodes using conforming $P1$ elements.
The examined quantity of interest now is the coefficient of the most important POD mode corresponding to $20\,000$ sample points.
The POD is computed by performing a SVD on the matrix of solution snapshots.
The resulting singular vectors constitute an (almost) orthogonal basis and the coefficeient for the basis function that is associated with the largest singular value is used as the QoI.
We choose $\rho = \mcal{N}\pars{0, I}$ as a multivariate standard normal distribution and $w$ such that $w^{-1}\rho$ is a multivariate centred normal distribution with variance $2I$ or $4I$.

In this experiment we search for the best approximation with respect to $\norm{\bullet}_{L^2\pars{Y,\rho}}$, using a basis of $d=20$ Hermite polynomials in each mode.
Similar to Theorem~\ref{thm:Hermite_decay_Bachmayr}, the used weight sequence $\omega_\alpha := \norm{\sqrt{w} B_\alpha}_{L^\infty}$ exhibits an exponential scaling.
The results are depicted in Table~\ref{tbl:darcy_lognormal_1} for $k=1$ and in Tables~\ref{tbl:darcy_lognormal_2} and~\ref{tbl:darcy_lognormal_2_var4} for $k=2$.

\begin{table}[ht]
    \centering
    \includegraphics[width=\textwidth]{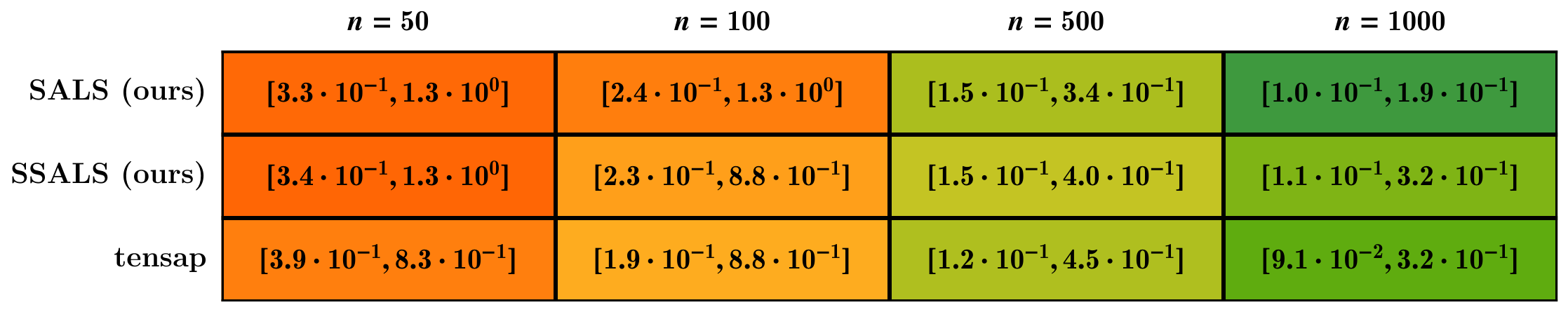}
    \caption{%
    Relative $L^2$-approximation error for the quantity of interest for $k=1$ in Section~\ref{sec:logaffine_experiments}.
    The relative error in the $L^2$-norm is estimated on a test set of $1\,000$ independent samples.
    The experiments are performed $10$ times and the $5\%$ and $95\%$ quantiles are displayed.
    All algorithms use the same samples to compute the empirical approximation (in each column) and the errors are always computed on the same test set.
    SALS and SSALS are compared to the \texttt{TreeBasedTensorLearning} procedure with basis adaptation from \texttt{tensap}.%
    }
    \label{tbl:darcy_lognormal_1}
\end{table}

\begin{table}[ht]
    \centering
    \includegraphics[width=\textwidth]{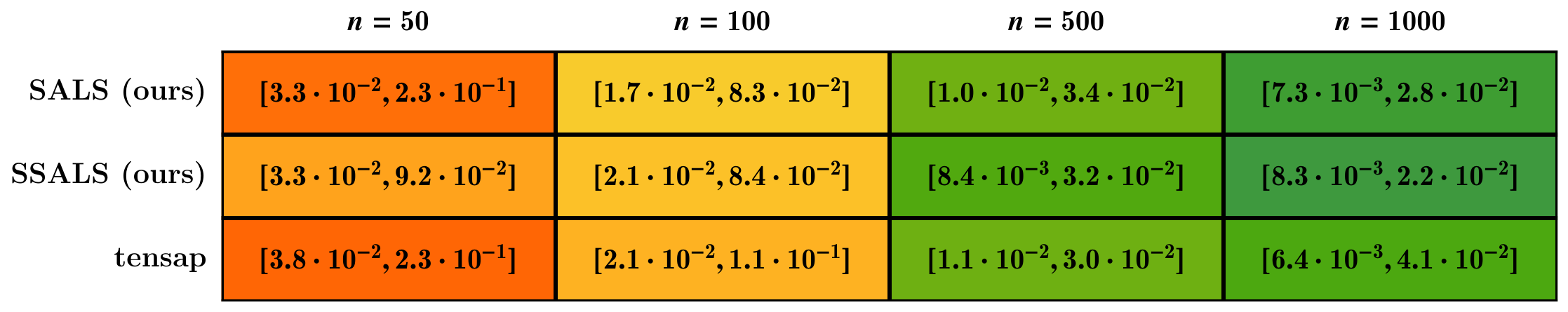}
    \caption{%
    Relative $L^2$-approximation error for the quantity of interest for $k=2$ and a sampling distribution $w^{-1}\rho = \mcal{N}\pars{0, 2I}$ in Section~\ref{sec:logaffine_experiments}.
    The relative error in the $L^2$-norm is estimated on a test set of $1\,000$ independent samples.
    The experiments are performed $10$ times and the $5\%$ and $95\%$ quantiles are displayed.
    All algorithms use the same samples to compute the empirical approximation (in each column) and the errors are always computed on the same test set.
    SALS and SSALS are compared to the \texttt{TreeBasedTensorLearning} procedure with basis adaptation from \texttt{tensap}.%
    }
    \label{tbl:darcy_lognormal_2}
\end{table}

\begin{table}[ht]
    \centering
    \includegraphics[width=\textwidth]{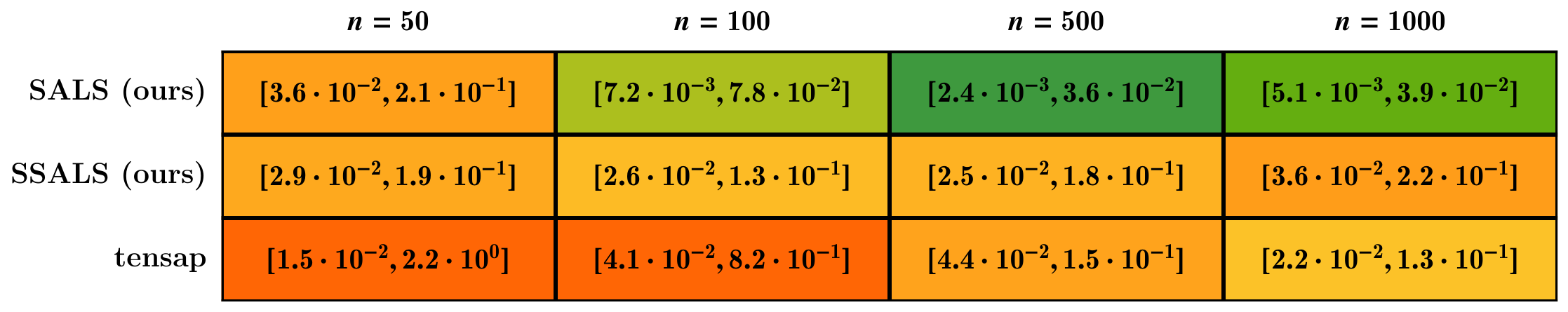}
    \caption{%
    Relative $L^2$-approximation error for the quantity of interest for $k=2$ and a sampling distribution $w^{-1}\rho = \mcal{N}\pars{0, 4I}$ in Section~\ref{sec:logaffine_experiments}.
    The relative error in the $L^2$-norm is estimated on a test set of $1\,000$ independent samples.
    The experiments are performed $10$ times and the $5\%$ and $95\%$ quantiles are displayed.
    All algorithms use the same samples to compute the empirical approximation (in each column) and the errors are always computed on the same test set.
    SALS and SSALS are compared to the \texttt{TreeBasedTensorLearning} procedure with basis adaptation from \texttt{tensap}.%
    }
    \label{tbl:darcy_lognormal_2_var4}
\end{table}

\subsection{Discussion}


The numerical results illustrate that the obtained accuracy of the newly proposed sparse ALS algorithms SALS and SSALS is comparable to the highly optimised algorithm implemented in \texttt{tensap}, which we consider as base line.
To understand the constraints of our sparse approach, recall the sample complexity bound 
\begin{equation}
    n \ge C \delta^{-2} r\max\braces{\log^3\pars{r}\log\pars{D}, -\log\pars{p}}
\end{equation}
from Theorem~\ref{thm:sparse_RIP_weighted} with $D$ denoting the dimension of the full tensor product space.
Given a fixed sample size $n$, stability parameter $\delta$ and probability $p$, this provides a heuristic upper bound on the weighted sparsity $r$ that can be achieved with our adaptive algorithms.
To make this concrete, let $\delta=\sqrt{C}$ and $p\ge\frac12$.
Then
\begin{equation}
    r \le \frac{n}{\log\pars{D}} = \frac{n}{20\log\pars{20}} \le \frac{n}{50},
\end{equation}
indicating that our adaptive algorithms are restricted to model classes with $r\le \frac{n}{50}$.
We suspect that this bound is implicitly imposed by the cross-validation inside the microstep.
Since $1 \le r$, this argument provides a theoretical explanation for the poor performance in the small-data regime.
For $n=50$, the bound $r \le 1$ allows only to recover the mean since $B_0 \equiv 1$ is the only basis function with $\norm{B_j}_{L^\infty}\le 1$.
This indicates that the problem in subsection~\ref{sec:affine_experiment} is almost trivial to solve.
In general, it can be seen from the numerical experiments that the errors decrease when the sample size increases.
This is to be expected, since the probability of the restricted isometry property increases with the number of samples.

We should also note that, although the experiments seem to work very well, the automatic rank-adaptivity of SALS may fail in pathological scenarios.
It is probably easy to construct an example where the automatic rank adaptation of the sparse QR can not increase the rank in a productive way.
In this case, the semi-sparse ALS should nevertheless succeed, since the rank is adapted randomly.

In comparison to the baseline \texttt{tensap}, the shown results are quantitatively similar.
We consider this very promising since the proposed algorithms only require simple modifications of the standard ALS method.
Note that the \texttt{tensap} algorithm adapts the basis functions with strategies based on leave-one-out cross validation~\cite{Cawley2004,Chapelle2002} or slope heuristics~\cite{Michel2022}, and adapts the ranks based on a strategy similar to \cite{Grasedyck2019SALSA}.

In general, the regularity of the considered function is encoded in $\bfomega$ in the weighted Stechkin lemma.
Since the target functionals in all experiments are (anisotropically) holomorphic functions, the best $n$-term sets $J_n$ are (anisotropic) balls in the index space.
These are downward-closed sets matching exactly the basis selection strategy of \texttt{tensap}.
It hence should almost be impossible to improve practical results on the selected model problems.

\section*{Acknowledgements}
This project is funded by the ANR-DFG project COFNET (ANR-21-CE46-0015).
ME acknowledges the partial support by the DFG SPP 2298 (Mathematics of Deep Learning).
This work was partially conducted within the France 2030 framework programme, Centre Henri Lebesgue ANR-11-LABX-0020-01.

Our code makes extensive use of the Python packages: \texttt{numpy}~\cite{numpy}, \texttt{scipy}~\cite{scipy}, and \texttt{matplotlib}~\cite{matplotlib}.

{
    \emergencystretch=3em
    \printbibliography
}

\appendix

\section{Basic harmonic summation formulas}

\begin{lemma}
\label{lem:s-harmonic-partial}
    For any $s>0$ and $n\in\mathbb N$, we have $\frac{n^{s+1}}{s+1} \le \sum_{k=1}^n k^s \le \frac{\pars{n+1}^{s+1}}{s+1}$.
\end{lemma}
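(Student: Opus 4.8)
The plan is to prove both inequalities by comparing the sum to integrals of the monotone function $f\pars{x} := x^s$, which is increasing on $\bracs{0,\infty}$ because $s>0$. This is the standard Riemann-sum sandwich, and the key observation is simply that for any integer $k\ge 1$ and any $x$ in a unit interval adjacent to $k$, the value $k^s$ dominates or is dominated by $x^s$ according to whether the interval lies to the left or the right of $k$.

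\medskip

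First I would record the two one-step bounds. Since $f$ is increasing, on $\bracs{k-1,k}$ we have $x^s \le k^s$, so integrating over this interval gives the lower bound $\int_{k-1}^{k} x^s \dx \le k^s$; and on $\bracs{k,k+1}$ we have $k^s \le x^s$, so integrating gives the upper bound $k^s \le \int_{k}^{k+1} x^s \dx$. These are valid for every $k\ge 1$ (for the lower bound when $k=1$ we use $f\pars{0}=0$, which is fine since $s>0$).

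\medskip

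Next I would sum each estimate over $k=1,\ldots,n$ and exploit that the resulting bounds telescope. Summing the lower bounds yields
\begin{equation}
    \frac{n^{s+1}}{s+1} = \int_0^n x^s \dx = \sum_{k=1}^n \int_{k-1}^{k} x^s \dx \le \sum_{k=1}^n k^s .
\end{equation}
Summing the upper bounds yields
\begin{equation}
    \sum_{k=1}^n k^s \le \sum_{k=1}^n \int_{k}^{k+1} x^s \dx = \int_1^{n+1} x^s \dx = \frac{\pars{n+1}^{s+1}-1}{s+1} \le \frac{\pars{n+1}^{s+1}}{s+1},
\end{equation}
where the last step just drops the nonnegative term $\tfrac{1}{s+1}$. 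Combining the two displays gives exactly the claimed two-sided bound.

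\medskip

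I do not expect any genuine obstacle here: the only things to be slightly careful about are the monotonicity direction of $f$ on each subinterval (so that the inequality signs are correct) and the telescoping of the integration limits (the lower sum accumulates to $\int_0^n$, the upper sum to $\int_1^{n+1}$). The evaluation $\int_a^b x^s \dx = \tfrac{b^{s+1}-a^{s+1}}{s+1}$ is valid for all $s>0$, so no edge cases arise. If one wanted to avoid calculus entirely, the same result could be obtained by induction on $n$ using the elementary inequalities $\tfrac{n^{s+1}}{s+1}+\pars{n+1}^s \le \tfrac{\pars{n+1}^{s+1}}{s+1}$ and its counterpart, but the integral comparison is cleaner and is the approach I would present.
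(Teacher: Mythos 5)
Your proof is correct, and for the lower bound it takes a genuinely different (and more robust) route than the paper. For the upper bound the two arguments coincide: both observe that $f\pars{x} = x^s$ is increasing, so $\sum_{k=1}^n k^s$ is a left Riemann sum underestimating $\int_1^{n+1} x^s\,\mathrm{d}x = \frac{\pars{n+1}^{s+1}-1}{s+1}$. For the lower bound, however, the paper invokes the trapezoidal rule together with the claim that $x^s$ is convex, obtaining $\sum_{k=1}^n k^s \ge \frac{n^s+1}{2} + \int_1^n x^s\,\mathrm{d}x$. That convexity claim holds only for $s \ge 1$; for $0 < s < 1$ the function $x^s$ is concave, the trapezoid sum underestimates the integral, and the paper's argument as written does not cover the full stated range $s>0$. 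Your argument — bounding $\int_{k-1}^{k} x^s\,\mathrm{d}x \le k^s$ using only monotonicity and telescoping down to $\int_0^n x^s\,\mathrm{d}x = \frac{n^{s+1}}{s+1}$ — needs no convexity, is valid for every $s>0$, and produces exactly the constant in the statement rather than an intermediate bound that still has to be compared with $\frac{n^{s+1}}{s+1}$. So your proof is not only correct but quietly repairs a gap in the paper's own proof of the lower estimate.
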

\begin{proof}
    Both estimates rely on estimating the sum by an integral of the function $f\pars{x} := x^s$.
    Since this function is convex, we know that the trapezoidal rule overestimates the integral and
    \begin{equation}
        \sum_{k=1}^n k^s
        = \frac{1}{2}\pars{f\pars{1} + f\pars{n}} + \sum_{k=1}^{n-1} \frac{f\pars{k}+f\pars{k+1}}{2}
        \ge \frac{n^s+1}{2} + \int_1^n f\pars{x} \dx
        = \frac{n^s+1}{2} + \frac{n^{s+1}-1}{s+1} .
    \end{equation}
    This gives the lower bound.
    For the upper bound, observe that $f$ is increasing and therefore a left Riemann sum underestimates the integral.
    Consequently,
    \begin{align}
        \sum_{k=1}^n k^s
        &\le \int_1^{n+1} f\pars{x} \dx
        = \frac{\pars{n+1}^{s+1}-1}{s+1}
        \le \frac{\pars{n+1}^{s+1}}{s+1} . \qedhere
    \end{align}
\end{proof}

\begin{lemma}
\label{lem:s-harmonic-rate}
    For any $s>1$ and $s\in\mathbb N$, we have $\tfrac{1}{s-1} k^{1-s} \le \sum_{j=k}^\infty j^{-s} \le \tfrac{s}{s-1} k^{1-s}$.
\end{lemma}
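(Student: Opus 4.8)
The plan is to mirror the integral-comparison argument already used for Lemma~\ref{lem:s-harmonic-partial}, now applied to the positive and \emph{decreasing} function $f\pars{x} := x^{-s}$. Since $s>1$, the improper integral converges and a direct computation gives $\int_k^\infty f\pars{x}\dx = \tfrac{1}{s-1}k^{1-s}$, which is exactly the claimed lower bound. Both inequalities will then follow by sandwiching the tail sum between right and left Riemann sums of this integral, exploiting only the monotonicity of $f$.

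For the lower bound I would use that, because $f$ is decreasing, $f\pars{j} \ge \int_j^{j+1} f\pars{x}\dx$ for every $j\ge k$. Summing over $j\ge k$ telescopes the intervals $[j,j+1]$ into $[k,\infty)$, so that $\sum_{j=k}^\infty j^{-s} \ge \int_k^\infty f\pars{x}\dx = \tfrac{1}{s-1}k^{1-s}$.

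For the upper bound, the same monotonicity gives $f\pars{j} \le \int_{j-1}^{j} f\pars{x}\dx$ for $j\ge k+1$. Summing over $j\ge k+1$ and adding the isolated term $f\pars{k}=k^{-s}$ yields $\sum_{j=k}^\infty j^{-s} \le k^{-s} + \int_k^\infty f\pars{x}\dx = k^{-s} + \tfrac{1}{s-1}k^{1-s}$. It then remains to absorb the leading term into the stated constant: since $k\ge 1$ we have $k^{-s} = k^{-1}k^{1-s} \le k^{1-s}$, whence $k^{-s} + \tfrac{1}{s-1}k^{1-s} \le \pars*{1 + \tfrac{1}{s-1}}k^{1-s} = \tfrac{s}{s-1}k^{1-s}$.

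I expect no genuine obstacle here; the argument is routine. The only two points that require a little care are the convergence of the improper integral, which is guaranteed precisely by the hypothesis $s>1$, and the elementary estimate $k^{-s}\le k^{1-s}$, valid exactly because $k\ge 1$. The latter is what converts the natural but slightly awkward bound $k^{-s} + \tfrac{1}{s-1}k^{1-s}$ into the clean constant $\tfrac{s}{s-1}$ appearing in the statement.
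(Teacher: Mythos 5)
Your proposal is correct and follows essentially the same route as the paper's proof: both bound the tail sum by the integral $\int_k^\infty x^{-s}\,\mathrm{d}x = \tfrac{1}{s-1}k^{1-s}$ using monotonicity of $x^{-s}$, with the left Riemann sum giving the lower bound and the right Riemann sum plus the isolated term $k^{-s}$ giving the upper bound, absorbed via $k^{-s}\le k^{1-s}$. Your write-up merely makes explicit the telescoping and the $k\ge 1$ step that the paper leaves implicit.
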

\begin{proof}
    Both estimates rely on estimating the sum by an integral of the function $f\pars{x} := x^{-s}$.
    Since the function is decreasing, we know that a left Riemann sum overestimates the integral and
    \begin{equation}
        \sum_{j=k}^\infty j^{-s}
        \ge \int_{k}^\infty f\pars{x}\dx
        = \frac{1}{s-1}k^{1-s} .
    \end{equation}
    For the same reason, a right Riemann sum underestimates the integral and
    \begin{align}
        \sum_{j=k}^\infty j^{-s}
        &\le f\pars{k} + \int_k^{\infty} f\pars{x} \dx
        = k^{-s} + \frac{k^{1-s}}{s-1}
        \le \pars*{1 + \frac{1}{s-1}} k^{1-s}
        = \frac{s}{s-1} k^{1-s} . \qedhere
    \end{align}
\end{proof}
\section{Example of a hierarchical basis}
\label{app:sparse_grid_example}

For $k=1$ and $p=2$ the equivalence of Example~\ref{ex:sparse_grids} is easy to see.
In this case, we can use the basis
$$
    \phi_{\ell ,j}(x) \propto \max\braces{1 - \abs{2^\ell {x}  - j}, 0},
    \qquad
    \norm{\phi_{\ell ,j}}_{L^2} = 1,
$$
for any $\ell\in\mbb{N}$ and odd $0<j<2^\ell$.
\todo{Move this to the appendix.}
If we represent $v\in W^{1,2}$ by successive $L^2$-projections onto the spaces $V_\ell := \operatorname{span}\braces{\phi_{\ell, j} : 0<j<2^\ell\text{ odd}}$, it is easy to see that
$$
    \norm{v}_{L^2 }^2
    = \norm*{\sum_{\ell\in\mbb{N}} \sum_j \bfv_{\ell, j} \phi_{\ell, j}}_{L^2\pars{\lambda}}^2
    = \sum_{\ell\in\mbb{N}} \norm*{\sum_j \bfv_{\ell, j} \phi_{\ell, j}}_{L^2\pars{\lambda}}^2
    = \sum_{\ell\in\mbb{N}} \sum_j \abs{\bfv_{\ell, j}}^2,
$$
where the second equality follows due to the hierarchical projection and the third follows since $\phi_{\ell, j}$ have disjoint support for fixed $\ell$.
Moreover, it is easy to see that all $\phi_{j,\ell}$ are orthogonal with respect to the $H^1_0$ semi inner product.
This implies that
$$
    \norm{v}_{H^1_0\pars{\lambda}}^2
    = \sum_{\ell\in\mbb{N}} \sum_j \abs{\bfv_{\ell ,j}}^2 \norm{\phi_{\ell j}}_{H^1_0\pars{\lambda}}^2
    = 3 \sum_{\ell\in\mbb{N}} \sum_j 2^{2\ell} \abs{\bfv_{\ell j}}^2 .
$$
\section{Best $n$-term rates in higher dimensions}
\label{sec:n-term}

Recall that we consider isotropic weight sequences of the form $\bar{\bfomega}\pars{a} = \bfomega\pars{a}^{\otimes M}$, where $a\in\pars{0, \infty}$ determines growth of $\bfomega\pars{a}$.
To obtain worst-case rates for the approximation, we apply Lemmas~\ref{lem:weighted_stechkin} and~\ref{lem:trade_sparsity}
$$
    \norm{\bfu - P_{J_n\bfu } \bfu}_{\ell^2}
    \le \norm{P_{J_{n+1}} \bar\bfomega\pars{a}}_{\ell^2}^{-1} \norm{\bfu}_{\ell^1_{\bar\bfomega\pars{a}}}
    \le \norm{P_{J_{n+1}} \bar\bfomega\pars{a}}_{\ell^2}^{-1} \norm{\bar\bfomega\pars{A}^{-1}}_{\ell^2_{\bar\bfomega\pars{a}}} \norm{\bfu}_{\ell^2_{\bar\bfomega\pars{A}}}
$$
and compute an upper bound for the decay rate $\varepsilon(n) := \norm{P_{J_{n+1}} \bar\bfomega\pars{a}}_{\ell^2}^{-1}$.
Then, for fixed $a$, we choose the parameter $A>a$ as small as possible while ensuring that $\norm{\bar\bfomega\pars{A}^{-1}}_{\ell^2_{\bar\bfomega\pars{a}}}$ is finite.

\subsection{Exponential decay (analytic regularity)}
First, consider the weight sequence $\bfomega\pars{a}_j = f_a(j)$ with $f_a(x) := \exp(ax)$ and note that $\bar\bfomega\pars{a}_j = \bar{f}_a(j) := \prod_{m=1}^M f_a(j_m)$.
To obtain a worst-case bound for the best $n$-term approximation of sequences $\bfu$ with $\norm{\bfu}_{\ell^2_{\bar\bfomega\pars{A}}} = 1$, we seek sets $J^*_n$ of size $n$ that maximise the factor $\norm{P_{J^*_n} \bar\bfomega\pars{a}}_{\ell^2}^{-1}$.
Finding such a set is equivalent to finding a set that minimises $\norm{P_{J^*_n} \bar\bfomega\pars{a}}_{\ell^2}^2$.
Since $\bar{f}_a(j) = \exp(a\norm{j}_1)$ is monotonic in $\norm{j}_1$, we can define for every $R\in \mbb{N}$ the set $J^\circ_R := \braces{j\in\mbb{N}^M : \norm{j}_1 \le R}$, which minimises
$$
    d(R) := \norm{P_{J^\circ_R} \bar\bfomega\pars{a}}_{\ell^2}^2 = \sum_{\norm{j}_1 \le R} \bar{f}_a(j)^2
$$
over all sets $J^\circ_R$ with cardinality bounded by
$$
    n(R) := \abs{J^\circ_R} = \sum_{\norm{j}_1 \le R} 1 .
$$
This means that for every $R\in\mbb{N}$ we can find a set of size $n(R)$ which results in the error bound $d(R)^{-1/2}$.
Solving this relation for $n$ gives an expression for $\varepsilon(n) = d(R(n))^{-1/2}$.
Note that this expression is technically only correct if $n=n(R)$ for some $R\in \mbb{N}$.
To obtain an explicit bound that is valid for all values of $n$,
we derive monotonic lower and upper bounds $\underline{d}\pars{R}\le d\pars{R}$ and $\overline{n}\pars{R}\ge n\pars{R}$ and define the inverse $\underline{R}\pars{n} := \overline{n}^{-1}\pars{n}$ as well as $\overline{\varepsilon}(n) := \underline{d}(\underline{R}(n))^{-1/2}$.
Since these bounds are monotonic, it holds that $\underline{R}(n) \le R(n)$ and thus
$$
    \varepsilon(n) = d(R(n))^{-1/2} \le \underline d(\underline R(n))^{-1/2} = \overline{\varepsilon}(n)
$$
if $n = n(R)$ for some $R$.
Moreover, since $n(R)$ increases monotonically with $R$, we can choose for any $n\in\mbb{N}$ a value $R\in\mbb{N}$ such that $n(R) \le n \le n(R+1) \le \overline{n}(R+1)$.
Then we can bound
$$
    \frac{\varepsilon\pars{n}}{\overline{\varepsilon}\pars{n}}
    \le \frac{\varepsilon\pars{{n}\pars{R}}}{\overline{\varepsilon}\pars{\overline{n}\pars{R+1}}}
    = \pars*{\frac{\underline{d}\pars{\underline{R}\pars{\overline{n}\pars{R+1}}}}{d\pars{R\pars{n(R)}}}}^{1/2}
    = \pars*{\frac{\underline{d}\pars{R+1}}{d\pars{R}}}^{1/2}
    \le \pars*{\frac{\underline{d}\pars{R+1}}{\underline{d}\pars{R}}}^{1/2} .
$$
Hence, if $c_\varepsilon := \big(\sup_{R\in\mbb{N}} \frac{\underline{d}\pars{R+1}}{\underline{d}\pars{R}}\big)^{1/2}$ remains bounded, we obtain
for any $n\in\mbb{N}$ the bound
$$
    \varepsilon\pars{n} \le c_\varepsilon \overline{\varepsilon}\pars{n} .
$$
To find analytic expressions for $\underline{d}$ and $\overline{n}$, we interpret the sums in $d(R)$ and $n(R)$ as Riemann sums 
$$
    d(R)
    \gtrsim \int_{\substack{\norm{x}_1 \le R\\x\ge 0}} \bar{f}_a(x)^2 \dx
\qquad
\text{and}
\qquad
    n(R)
    \lesssim \int_{\substack{\norm{x}_1 \le R\\x\ge 0}} 1 \dx .
$$

To compute the integrals, recall that the volume and surface area of the $M$-dimensional $\ell^1$-ball are given by 
$$
    V_M(R) = 2^M \frac{R^M}{M!}
    \qquad\text{and}\qquad
    A_M(R) = 2^M \sqrt{M} \frac{R^(M-1)}{(M-1)!} .
$$
Utilising the symmetry of the $\ell^1$-ball, this immediately yields
$$
    n(R)
    \lesssim \int_{\substack{\norm{x}_1 \le R\\x\ge 0}} 1 \dx
    = 2^{-M} \int_{\norm{x}_1 \le R} 1 \dx
    = 2^{-M} V_M(R)
    = \frac{R^M}{M!} .
$$
To bound $d(R)$, note that Fubini's theorem implies
\begin{align}
    \int_{\substack{\norm{x}_1\le R\\x\ge0}} \exp\pars{a\norm{x}_1} \dx
    &= \int_0^R\int_{\substack{\norm{x}_1=r\\x\ge0}} \exp\pars{a\norm{x}_1} \dx\dx[r]
    = \int_0^R \exp\pars{ar} \int_{\substack{\norm{x}_1=r\\x\ge0}} 1 \dx\dx[r] \\
    &= \int_0^R \exp\pars{ar} 2^{-M} \int_{\norm{x}_1=r} 1 \dx\dx[r]
    = \int_0^R \exp\pars{ar} 2^{-M} A_M(r) \dx[r] \\
    &= \int_0^R \exp\pars{ar} \sqrt{M}\frac{r^{M-1}}{(M-1)!} \dx[r]
    \overset{(\ast)}{=} \sqrt{M} (-a)^{-M} \pars*{1 - \exp(aR)\sum_{k=0}^{M-1}\tfrac{(-aR)^k}{k!}} \\
    &= \sqrt{M} a^{-M} \exp\pars{aR} \abs*{\exp\pars{-aR} - \sum_{k=0}^{M-1}\tfrac{(-aR)^k}{k!}},
\end{align}
where the equality $(\ast)$ follows from the definition of the incomplete gamma function.
Note that the last line approaches $\sqrt{M} a^{-M}\exp(aR)\frac{(aR)^{M-1}}{(M-1)!} = \sqrt{M} \frac{R^{M-1}}{a(M-1)!}\exp(aR)$ as $R$ increases.
For the sake of simplicity, we hence compute the rates only up to asymptotic equivalence.
This yields the bounds
\begin{itemize}
    \item $\overline{n}(R) = c_n \frac{R^M}{M!}$,
    \item $\underline{R}(n) = c_R n^{1/M}$ with $c_R := \pars{\tfrac{M!}{c_n}}^{1/M}$ and
    \item $\underline{d}(R) = c_d \sqrt{M} \frac{R^{M-1}}{2a(M-1)!}\exp(2aR)$.
\end{itemize}
Moreover, it holds that $c_\varepsilon = \sup_{R\in\mbb{N}_{\ge1}} \pars*{1 + \frac1R}^{(M-1)/2} \exp(2a) < \infty$ and, consequently,
$$
    \varepsilon(n)
    \le c_\varepsilon \overline{\varepsilon}(n)
    = c_\varepsilon c_d n^{-(M-1)/(2M)} \exp\pars{-c_R a n^{1/M}} .
$$

Finally, observe that $\norm{\bar{\bfomega}\pars{A}^{-1}}_{\ell^2_{\bar\bfomega\pars{a}}} < \infty$ is valid for any $a<A$.

\subsection{Algebraic decay (mixed Sobolev regularity)}
Consider the weight sequence $\bfomega\pars{a}_j = g_a(j)$ with $g_a(x) := (x+1)^a$ and note that $\bar\bfomega\pars{a}_j = \bar{g}_a(j) := \prod_{m=1}^M g_a(j_m)$.
Since $g_a(x) = f_a(\ln(x+1))$, this case can be reduced to the case of exponential decay.
The set of indices which minimise the decay rate are $J^\circ_R := \braces{j\in\mbb{N}^M : \norm{\ln\pars{j+1}}_1 \le R}$.
Replacing the (Riemann) sums by integrals and performing the change of variables $y := \ln\pars{x+1}$, we obtain
\begin{align}
    d(R)
    &\gtrsim \int_{\substack{\norm{\ln\pars{x+1}}_1 \le R \\ x\ge 0}} \bar{g}_a(x)^2 \dx
    = \int_{\substack{\norm{y}_1 \le R \\ y\ge 0}} \bar{f}_a\pars{y}^2 \exp(\norm{y}_1) \dx[y]
    = \int_{\substack{\norm{y}_1 \le R \\ y\ge 0}} \exp\pars{(2a+1)\norm{y}_1} \dx[y], \\
    n(R)
    &\lesssim \int_{\substack{\norm{\ln\pars{x+1}}_1 \le R \\ x\ge 0}} 1 \dx
    = \int_{\substack{\norm{y}_1 \le R \\ y\ge 0}} \exp(\norm{y}_1) \dx[y] .
\end{align}

This yields the bounds
\begin{itemize}
    \item $\overline{n}(R) = c_n \sqrt{M} \frac{R^{M-1}}{(M-1)!}\exp(R)$,
    \item $R(n) = (M-1) W\pars*{c_R n^{1/(M-1)}}$ with $c_R := \frac{1}{M-1} \pars*{\tfrac{(M-1)!}{c_n\sqrt{M}}}^{1/(M-1)}$ and
    \item $\underline{d}(R) = c_d \sqrt{M} \frac{R^{M-1}}{(2a+1)(M-1)!}\exp((2a+1)R)$.
\end{itemize}
Moreover, it holds that $c_\varepsilon = \sup_{R\in\mbb{N}_{\ge1}} \pars*{1 + \frac1R}^{(M-1)/2} \exp(2a+1) < \infty$.
To obtain a decay rate, we define $y := c_R n^{1/(M-1)}$ and recall that~\cite{Hoorfar2008}
$$
    \ln\pars{y} - \ln\ln\pars{y} \le W(y) \le \ln\pars{y} - \tfrac12\ln\ln\pars{y} \le \ln\pars{y}
$$
for every $y\ge e$.
This implies
\begin{align}
    \underline{d}\pars{\underline{R}\pars{n}}
    &\propto W\pars{y}^{M-1} \exp((2a+1) (M-1) W\pars{y}) \\
    &= y^{(2a + 1)(M-1)} W(y)^{-2a(M-1)} \\
    &\ge y^{(2a + 1)(M-1)} \ln\pars{y}^{-2a(M-1)} \\
    &\propto n^{2a + 1} \ln\pars{n}^{-2a(M-1)},
\end{align}
which yields the bound
$$
    \varepsilon(n) \lesssim n^{-(a + 1/2)} \ln\pars{n}^{a(M-1)}.
$$
Finally, observe that $\norm{\bar{\bfomega}\pars{A}^{-1}}_{\ell^2_{\bar\bfomega\pars{a}}} < \infty$ is valid for any $a<A-\tfrac{1}2$.

\section{The advantage of low ranks for approximation}
\label{app:low-rank_advantage}

To illustrate the advantage of this new format, consider approximating the rank-$1$ function $x\mapsto \exp\pars{x_1 + \ldots + x_M}$ by Legendre polynomials.
To solve this approximation problem by means of an ALS-type algorithm, a sequence of microsteps have to be performed that read
\begin{equation}
    \operatorname*{minimise}_{\norm{C}_{\ell^0_{\boldsymbol{\beta}}} \le r}\ \norm{F - MQC}_{\ell^2}^2.
\end{equation}
Here, the vector $F$ and operator $M$ are defined as in~\eqref{eq:F_and_M} with $B = \operatorname{vec}(b\otimes \cdots \otimes b)$ given by a vector of tensor product Legendre polynomials $b:[-1,1]\to\mbb{R}^d$ of degree at most $d-1$.
The operator $Q$ maps the core tensor $C$ to the full tensor and corresponds to a choice of basis $Q^\intercal B$ for the least squares problem of the microstep.
Note that the weighted sparsity constraint $\norm{C}_{\ell^0_{\boldsymbol{\beta}}} \le r$ is less restrictive the better the sought function $u$ can be expressed in the basis $Q$.
It is therefore instructive to compare the basis $Q$ that is chosen in the $k$\textsuperscript{th} microstep of sparse ALS (Algorithm~\ref{alg:sparse_als}) to the minimal basis that is chosen by a classical ALS.
\paragraph{Sparse ALS}
In the sparse ALS, $Q\in\mcal{Q}_{R,k}$ is (up to reshaping) an orthogonal matrix where every column is a standard basis vector (cf.~Theorem~\ref{thm:sparse_tt}).
This means that the basis functions $\tilde{B}^{\mathrm{sparse}} := Q^\intercal B$ in this linear least squares problem are of the form
\begin{equation}
    \tilde{B}^{\mathrm{sparse}}_j\pars{x} := B_{\alpha^{(j)}}\pars{x}
\end{equation}
for some multi-indices $\alpha^{(j)}\in [d]^M$,
i.e.\ that every $\tilde{B}^{\mathrm{sparse}}_j$ is a product Legendre polynomial of potentially high degree.
Since the sought function
$$
    u\pars{x}
    = \exp\pars{x_1+\ldots+x_M}
    = \exp\pars{x_1}\cdots\exp\pars{x_M}
    = u_1\pars{x_1}\cdots u_M\pars{x_M}
$$
is of rank $1$, the best approximation $v = v_1\otimes\cdots\otimes v_M$ is of rank $1$ as well.
But the number of terms in this approximation is exponentially large.
This must be the case for any approximation with small error, since the approximation error $\norm{u-v}_{L^2}$ is equivalent to the approximation error the individual factors $$
    \max_{k} \norm{u_k - v_k}_{L^2}
    \lesssim \norm{u - v}_{L^2}
    \lesssim \norm{u_1 - v_1}_{L^2} + \cdots + \norm{u_M - v_M}_{L^2}
    \lesssim \max_{k} \norm{u_k - v_k}_{L^2} .
$$
Due to this error equivalence   and the symmetry of $u$, every factor $u_k$ must be approximated to the same accuracy by a polynomial $v_k$ of uniform degree $g-1$.
Thus $\tilde{B}^{\mathrm{sparse}}$ has to be the product basis $\tilde{B}^{\mathrm{sparse}} = \tilde{b}^{\otimes (k-1)}\otimes b\otimes \tilde{b}^{\otimes (M-k)}$, where $\tilde{b} : [-1,1]\to\mbb{R}^{g}$ ith the vector of Legendre polynomials of degree at most $g-1$.
This means that the basis has to be $(g^{M-1}d)$-dimensional.

\paragraph{Standard ALS}
Suppose that every component tensor other than the $k$\textsuperscript{th} has been updated at least once.
Then the current approximation has the form
\begin{equation}
    QC = E_1 \otimes \cdots \otimes E_{k-1} \otimes \operatorname{vec}\pars{C} \otimes E_{k+1} \otimes \cdots\otimes E_M ,
\end{equation}
where the vectors $E_\ell$ are the coefficients of one-dimensional Legendre polynomial approximations to the exponential function
$$
    \widetilde{\exp}_\ell\pars{x} := E_{\ell}^\intercal b\pars{x}
$$
on the interval $\bracs{-1, 1}$.
The basis function $\tilde{B}^{\mathrm{dense}} := Q^\intercal B$ for the microstep are hence give by
\begin{equation}
    \tilde{B}^{\mathrm{dense}}_j\pars{x} := b_j\pars{x_k} \prod_{\ell\ne k} \widetilde{\exp}_\ell\pars{x_\ell} .
\end{equation}

\paragraph{Comparison}
In the preceding two paragraphs we have seen that the basis dimension in the sparse ALS is exponentially larger than in the standard ALS.
From a computational point of view, this drastically increases the complexity of the micro steps.
From a statistical point of view this also decreases the probability of the RIP. 
Assuming the approximations $\widetilde\exp_\ell \approx \exp$ are sufficiently good, it holds for every $j\geq 4$ that 
\begin{equation}
    \norm{\widetilde{\exp}_\ell}_{L^\infty\pars{\bracs{-1, 1}}}
    \approx \norm{\exp}_{L^\infty\pars{\bracs{-1, 1}}}
    = e
    \le \sqrt{2j+1}
    = \norm{b_j}_{L^\infty\pars{\bracs{-1,1}}} .
\end{equation}
This means that $\norm{\tilde{B}^{\mathrm{dense}}_j}_{L^\infty} \le \norm{\tilde B^{\mathrm{sparse}}_j}_{L^\infty}$ (approximately) for the majority of indices $1\le j \le r_{k-1}dr_k$.
Since Theorem~\ref{thm:sparse_RIP_weighted} requires $\boldsymbol\beta_j\ge\norm{\tilde{B}_j^{\mathrm{sparse}}}_{L^\infty}$ for the sparse ALS and $\boldsymbol\beta_j\ge\norm{\tilde{B}_j^{\mathrm{dense}}}_{L^\infty}$ for the standard ALS, the sparsity constraint $\norm{C}_{\ell^0_{\boldsymbol{\beta}}} \le r$ is less restrictive for the standard ALS and the same approximation error can be achieved with a smaller value of $r$.
In this special case, rounding would provide a better basis for the sparse approximation, which indicates that reducing the rank may increase the practical performance of the (then less sparse) ALS.
In general, however, the basis in the low-rank representation is not uniquely defined and has to be adapted before performing the sparse approximation.

\end{document}